\documentclass[11pt,reqno,makeidx]{amsart}

\usepackage{amssymb,color}
\usepackage{amsmath}
\usepackage{amsthm}
\usepackage{mathrsfs}
\usepackage{enumerate}
\usepackage{epsfig}
\usepackage{hyperref}
\usepackage{refcount} 

\usepackage{array}
\usepackage{longtable}
\usepackage{booktabs}

\usepackage[letterpaper,left=1.25in,right=1.25in,top=1in,bottom=1in]{geometry}

\hypersetup{colorlinks=true, linkcolor=blue, citecolor=blue, urlcolor=blue}

\newtheorem{Proposition}{Proposition}[section]
\newtheorem{Remark}[Proposition]{Remark}
\newtheorem{Corollary}[Proposition]{Corollary}
\newtheorem{Lemma}[Proposition]{Lemma}

\newtheorem{Theorem}[Proposition]{Theorem}
\newtheorem{Definition}[Proposition]{Definition}

\numberwithin{equation}{section}

\def\RR{\mathbb R}

\def\ZZ{\mathbb Z}

\def\be{\begin{equation}}
\def\ee{\end{equation}}
\def\bel{\begin{equation}\label}
\def\la{\langle}
\def\ra{\rangle}

\def\del{\partial}

\def\Le{\leqslant}
\def\Ge{\geqslant}

\def\calA{\mathcal A}

\def\calF{\mathcal F}
\def\calG{\mathcal G}

\def\calM{\mathcal M}

\def\calQ{\mathcal Q}
\def\calR{\mathcal R}

\def\calT{\mathcal T}
\def\calV{\mathcal V}
\def\calW{\mathcal W}

\def\K{{\rm K}}
\def\I{{\rm I}}
\def\dd{\,{\rm d}}

\def\wt{\widetilde}

\newcommand{\Hdef}{\mathrm{H}}

\newcommand{\abs}[1]{\left|#1\right|}
\newcommand{\norm}[1]{\left\|#1\right\|}
\newcommand{\red}[1]{#1}
\newcommand{\blue}[1]{#1}

\newcommand{\green}[1]{#1}

\title{Rigorous analysis of giant magnetic vortex strings}

\author{O. Avadanei}
\address{Department of Mathematics, California Institute of Technology, 1200 E.\ California Boulevard, Mail Code 253-37, Pasadena, CA 91125, USA}
\email{avadanei@caltech.edu}

\author{W. Schlag}
\address{Department of Mathematics, Yale University, Kline Tower, 219 Prospect Street, New Haven, Connecticut, USA}
\email{wilhelm.schlag@yale.edu}

\date{}

\begin{document}

\begin{abstract}
\red{In this paper we are concerned with} axially symmetric Abrikosov--Nielsen--Olesen (ANO) vortices in the quartic Abelian Higgs model as the magnetic flux
$n\to\infty$.  The emphasis is on developing the rigorous asymptotic analysis behind the formal large-flux
expansions of Hakim, Lema\^itre, and Mallick '01 and Dumitrescu, Gaikwad~'25. For every fixed $0<\beta<4$ we prove that the radius of the
magnetic core is of size
$\sqrt2\,\beta^{-1/4} n^{1/2}$ and the string tension has the form
\[     \calT_n=\sqrt\beta\,n+\sqrt2\,s_\beta\beta^{-1/4}n^{1/2}+O(1),
\]
which confirms the formal predictions of the physics literature.
The constant $s_\beta$ is the renormalized energy of a one-dimensional interface problem.  We identify this interface with
the classical normal-to-superconducting transition layer in Ginzburg--Landau theory. The theorem of
Chapman--Howison--McLeod--Ockendon~'91 proves its existence, uniqueness and monotonicity.   The connection of the Gaussian tail of the scalar field on the left with the interface is made directly with the Weber equation, rather than by an Airy normal form.
A key step is to show that the matching map between the admissible
Cauchy data at the two ends of the linearized interface equations has
maximal rank.  Monotonicity of the interface gives a positive quadratic-form
identity which proves this for every $0<\beta<4$.  The estimates are uniform
when $\beta$ ranges in a compact subinterval of $(0,4)$.
\end{abstract}

\thanks{
The first author thanks Yale for its hospitality and support during the 2025--2026 academic year.
The second author is partially supported by the United States NSF through grant DMS-2350356. He thanks Zohar Komargodski for pointing him to the 2025 preprint by Dumitrescu and Gaikwad, and he is grateful to Nick Manton for his interest in this topic and for several helpful discussions. \red{The authors used OpenAI Codex during the preparation of the manuscript in order to carry out mathematical consistency checks, verify intermediate steps, and for the purpose of editorial proofreading. They reviewed all of the suggestions that they received in this process independently, and they take full responsibility for the mathematical content of the final text.} 
}

\maketitle

\tableofcontents

\section{Introduction}

Magnetic vortices with large quantized flux appear naturally in the
Ginzburg--Landau theory of superconductivity, which was introduced by Ginzburg and Landau in \cite{Ginzburg-Landau}, as well as in the relativistic Abelian Higgs model. This in turn enabled Abrikosov in~\cite{Abrikosov} to identify the existence of flux-carrying vortices in the type-II regime, and also motivated Nielsen--Olesen~\cite{NielsenOlesen} to introduce the corresponding relativistic vortex strings. \red{In ~\cite{HakimLemaitreMallick}, Hakim, Lema\^itre, and Mallick carried out a systematic analysis of the quartic model in the case of large winding numbers, in which they identified the normal magnetic core, the thin transition layer, and formulated the leading large-flux energy law.}
\red{Later on, Bolognesi developed in~\cite{BolognesiWallsTubes} the closely-related wall-vortex interpretation, and together with Gudnason in \cite{Bolognesi-Gudnason}, they carried out numerical simulations for this scenario in the case of very large winding numbers.} More recent large-winding analyses
include~\cite{PeninWellerGiant,PeninWeller,DG}. It also turns out that a specific scenario, known as the Bogomolny or the BPS regime, exhibits a special first-order coupling structure, and its discovery goes back to Bogomolny \cite{Bogomolny}.  For mathematical treatments of vortices and general background on solitons see the classic texts by Jaffe-Taubes and Manton-Sutcliffe~\cite{JaffeTaubes,MantonSutcliffe}. Motivated in particular by~\cite{HakimLemaitreMallick} and the
large-flux program of Dumitrescu and Gaikwad~\cite{DG}, we consider the quartic
Abelian Higgs Lagrangian
\bel{eq:Lagrangian}
 {\mathcal L}=-{\frac1{4e^2}}F_{\mu\nu}F^{\mu\nu}
       -|(\del_\mu-iA_\mu)\varphi|^2
       -\frac{\lambda}{2}(|\varphi|^2-v^2)^2 \red{,}
\ee
\red{where $F=dA$, }\blue{and $A=A_0\, dt+ A_1\, dx+ A_2\, dy+A_z\, dz$}. 

We restrict attention to straight, static strings oriented along the $z$-axis, so that the fields are independent of
$t$ and $z$ and $A_0=A_z=0$.  The energy then factors as
\red{\[
        T=-\int_{\RR^2}{\mathcal L}(x,y)\dd x\dd  y,
\]}
so the (string) tension $T$ is the energy per unit length in the $z$-direction.
Finite energy forces $|\varphi|\to v$ as $r\to\infty$ and implies flux quantization: the phase winds an integer
$n\in\ZZ$ and equivalently
\[
        n=\frac{1}{2\pi}\int_{\RR^2}F_{12}\,\dd x \dd y.
\]
The dimensionless parameter is
\bel{eq:beta}
       \beta=\frac{m_H^2}{m_V^2}=\frac{\lambda}{e^2}.
\ee
We restrict ourselves throughout to the quartic potential.  The starting point is the formal large-flux analysis of
Dumitrescu--Gaikwad\footnote{The sextic model considered there belongs to a different large-flux regime and is not
discussed in this work.} \cite{DG}. 

We make the standard axially symmetric ansatz
\bel{eq:ansatz}
 \varphi(r,\theta)=v\,\phi(r)e^{in\theta},\qquad
 A_r=0,\qquad A_\theta(r)=n(1-a(r)).
\ee
Here $A=A_\theta(r)\,d\theta$. Thus $A_\theta$ is the covariant polar
component, not the component in the orthonormal angular frame.  We refer
to $a$ as the \emph{gauge potential profile}, since it parametrizes
$A_\theta=n(1-a)$.  The magnetic field is $F_{12}=-\frac{n}{r}a'(r)$ (and in
dimensionless variables $u=evr$, it is $-\frac{n}{u}a'(u)$).  In this
gauge
\[
        F_{12}=\frac{1}{r}\partial_r A_\theta(r)
              =-\frac{n}{r}\partial_r a(r),
\]
so $a'<0$ corresponds to a positive magnetic field and the boundary values
$a(0)=1$, $a(\infty)=0$ give flux $2\pi n$.
\red{Indeed, since $A=A_1\,dx+A_2\,dy$ we have
\begin{align*}
  F&=dA=(\partial_xA_2-\partial_yA_1)\,dx\wedge dy=F_{12}\,dx\wedge dy=rF_{12}\,dr\wedge d\theta   
\end{align*}
Here we have used the polar coordinates $(r,\theta)$, for which we write $x=r\cos(\theta)$ and $y=r\sin(\theta)$, with $r\in(0,\infty)$ and $\theta\in[0,2\pi)$.

On the other hand, if we write $A=A_r\,dr+A_\theta \,d\theta$, since $A_r=0$, we get that $A=A_\theta(r) \,d\theta$. It follows that
\begin{align*}
F=dA=\partial_rA_\theta(r)\,dr\wedge d\theta
\end{align*}
Equating the two expressions for $F$, and using the ansatz $A_\theta=n(1-a(r))$, we get that
\begin{align*}
  F_{12}=\frac{1}{r}\partial_rA_\theta(r)=-\frac{na'(r)}{r},  
\end{align*}
as desired.
}
With $u=e v r$, the
dimensionless string tension is
\bel{eq:energy}
 \calT_n[\phi,a]=\frac{T_n}{2\pi v^2}
 =
 \int_0^\infty \left[
 \frac{n^2}{2u}(a')^2+u(\phi')^2+\frac{n^2}{u}a^2\phi^2
       +\frac{\beta}{2}u(1-\phi^2)^2
 \right]\dd u .
\ee
\red{To see this, we begin by noting the identities
\begin{align*}
\partial_x&=\cos(\theta)\partial_r-\frac{\sin(\theta)}{r}\partial_\theta\\
\partial_y&=\sin(\theta)\partial_r+\frac{\cos(\theta)}{r}\partial_\theta\\
A_1&=\cos(\theta)A_r-\frac{\sin(\theta)}{r}A_\theta=-\frac{\sin(\theta)}{r}A_\theta=-\frac{n\sin(\theta)(1-a(r))}{r}\\
A_2&=\sin(\theta)A_r+\frac{\cos(\theta)}{r}A_\theta=\frac{\cos(\theta)}{r}A_\theta=\frac{n\cos(\theta)(1-a(r))}{r}
\end{align*}
Together with the ansatz $\varphi(r,\theta)=v\phi(r)e^{in\theta}$, these lead to
\begin{align*}
    |(\partial_\mu-iA_\mu)&\varphi|^2=|(\partial_x-iA_1)\varphi|^2+|(\partial_y-iA_2)\varphi|^2\\
    &=v^2\left|\cos(\theta)\phi'(r)e^{in\theta}-in\frac{\sin(\theta)}{r}\phi(r)e^{in\theta}+in\frac{\sin(\theta)(1-a(r))}{r}\phi(r)e^{in\theta}\right|^2\\
    &+v^2\left|\sin(\theta)\phi'(r)e^{in\theta}+in\frac{\cos(\theta)}{r}\phi(r)e^{in\theta}-in\frac{\cos(\theta)(1-a(r))}{r}\phi(r)e^{in\theta}\right|^2\\
    &=v^2\left((\phi'(r))^2+\frac{n^2}{r^2}(a(r)\phi(r))^2\right)
\end{align*}
For the other terms of $\mathcal{L}$, our ansatz for $A$ and $\phi$ leads to
\begin{align*}
    \frac{1}{4e^2}F_{\mu\nu}F^{\mu\nu}&=\frac{n^2(a'(r))^2}{2e^2r^2}\\
    \frac{\lambda}{2}(|\varphi|^2-v^2)^2&=\frac{\lambda}{2}(v^2(\phi(r))^2-v^2)^2=\frac{\lambda v^4}{2}((\phi(r))^2-1)^2
\end{align*}
The expression of the string tension now becomes
\begin{align*}
    T&=\int_{\mathbb{R}^2}\frac{1}{2e^2}F_{12}^2+|(\partial_\mu-iA_\mu)\varphi|^2+\frac{\lambda}{2}(|\varphi|^2-v^2)^2\,dx dy\\
    &=2\pi v^2\int_0^\infty r\left(\frac{n^2a'^2(r)}{2e^2v^2r^2}+\left(\phi'^2(r)+\frac{n^2}{r^2}a^2(r)\phi^2(r)\right)+\frac{\lambda v^2}{2}(\phi^2(r)-1)^2\right)\,dr
\end{align*}
Setting $\displaystyle u=evr$ and carrying out the other constant normalizations leads to the desired expression \eqref{eq:energy} for the dimensionless string tension $\displaystyle\calT_n[\phi,a]$.
}

The Euler--Lagrange equations are
\bel{eq:vortex}
\begin{aligned}
    \phi''(u)+\frac{1}{u}\phi'(u)&=\frac{n^2}{u^2}a^2\phi(u)+\beta\phi(u)(\phi(u)^2-1),\\
 a''(u)-\frac{1}{u}a'(u)&=2a(u)\phi(u)^2,
\end{aligned} 
\ee
with boundary conditions
\bel{eq:bc}
       \phi(0)=0,\quad a(0)=1,\qquad \phi(\infty)=1,\quad a(\infty)=0.
\ee
The presence of the large parameter in~\eqref{eq:vortex} means that the asymptotic analysis of the minimizers of~\eqref{eq:energy} requires methods from singular perturbation theory, cf.~\cite{Olver, CST, CDST, CPS}. 
We refer to \red{the} first equation in~\eqref{eq:vortex} as {\em Higgs equation}, while the second one is the {\em magnetic equation}. 
To identify the basic scale of the problem, we substitute
\[
        \phi(u)=0,\qquad a(u)= 1-\frac{u^2}{R^2},\qquad 0<u<R.
\]
into \eqref{eq:energy}. This gives
\bel{eq:bulkenergy}
        E_{\rm bulk}(R)=\frac{n^2}{R^2}+\frac{\beta}{4}R^2,
\ee
and hence
\bel{eq:Rbulk}
        R_n^{(0)}= \sqrt2\,\beta^{-1/4}n^{1/2},\qquad
        E_{\rm bulk}(R_n^{(0)})=\sqrt\beta\, n,
\ee
which is well-known from the physics literature.

There are three analytic regions, cf.~\cite[Figure 4]{DG}.  In the core defined as $u\ll R=R_n^{(0)}$, the Higgs field is exponentially small and the gauge potential profile is close to
$1-u^2/R^2$, so the Higgs equation is a scalar Sturm--Liouville equation with a large parameter.  In the $O(1)$ boundary
variable $y=u-R$, the limiting equations are autonomous and nonlinear.  On the right the system linearizes about the Higgs
vacuum and the homogeneous modes are modified Bessel functions.  The left tail of the boundary problem is the point at
which the Weber normal form enters, cf.~Costin--Park--Schlag \cite{CPS}: the equation for $\Phi$ becomes, to leading order,
\[
        \Phi''(y)=(\beta y^2-\beta)\Phi(y) .
\]
The WKB asymptotics of this Weber equation are precisely given by the Gaussian tail which matches the core.

The global
interface orbit (the ``domain wall") formally identified in~\cite{DG} is exactly the normal-to-superconducting transition
layer treated by Chapman--Howison--McLeod--Ockendon~\cite{CHMO} (which we refer to as CHMO throughout). A numerical and asymptotic analysis of the aforementioned transition was carried out by Osborn and Dorsey in \cite{Osborn-Dorsey}. The normalized interface linearization is
non-degenerate throughout $0<\beta<4$.  After changing the sign of the
magnetic component, the derivative of the monotone interface is a
componentwise positive solution of a symmetric linear system.  The
quadratic-form identity in \eqref{eq:interfacePositiveRepresentation}
shows that every bounded homogeneous solution is a multiple of the
translation solution.  The normalization at $-\infty$ excludes that
multiple.  Together with the endpoint constructions, this gives the
maximal rank property in Proposition~\ref{prop:nearBPS}, uniformly on
compact parameter intervals. In the resonant or forced regime $\beta\ge4$ one would moreover need
an additional radial right-tail construction, as well as extend the validity of Lemma~2.4 in~\cite{CHMO} to $\kappa\ge \sqrt{2}$. The corresponding change in the radial asymptotics at the superconducting end at and beyond the threshold $\kappa=\sqrt{2}$ was studied by Plohr \cite{Plohr} and Perivolaropoulos \cite{Perivolaropoulos}. In particular, when $\kappa=\sqrt{2}$, the tail of the scalar field and the quadratic contribution of the gauge-field tail from forcing term in its corresponding equation are resonant, while above this threshold the aforementioned quadratic contribution provides the dominant mode.

The two global problem-specific inputs are the classical radial existence and regularity theory for the
Ginzburg--Landau vortex and the CHMO theorem~\cite{CHMO} for the one-dimensional transition layer.    Radial uniqueness
away from the BPS point is a separate delicate shooting question, which we neither address nor rely on here.  

\subsection{Interface constants and main results}

The flux asymptotics for large flux are expressed in terms of a one-dimensional
normal-to-superconducting transition layer.  For $0<\beta<4$,
\cite{CHMO} establishes a unique (affinely
normalized) interface orbit $(\Phi_\beta,\Gamma_\beta)$ solving
\bel{eq:interface}
       \Phi''=\Gamma^2\Phi+\beta\Phi(\Phi^2-1),\qquad
       \Gamma''=2\Gamma\Phi^2 ,
\ee
with endpoint conditions
\bel{eq:interfaceleft}
        \Phi(y)\to0,\qquad \Gamma(y)+\sqrt\beta\,y\to0
        \qquad (y\to-\infty),
\ee
and
\bel{eq:interfaceright}
        \Phi(y)\to1,\qquad \Gamma(y)\to0
        \qquad (y\to+\infty).
\ee
Define the interface energy density
\bel{eq:edens}
       e_\beta(y)=
       (\Phi_\beta')^2+\frac{1}{2}(\Gamma_\beta')^2
       +\Gamma_\beta^2\Phi_\beta^2+\frac{\beta}{2}(1-\Phi_\beta^2)^2 .
\ee
Since $e_\beta(y)\to\beta$ as $y\to-\infty$ and $e_\beta(y)\to0$ as
$y\to+\infty$, the renormalized surface energy is
\bel{eq:sbeta}
       s_\beta=\lim_{L\to\infty}\left[\int_{-L}^{\infty}e_\beta(y)\dd y-\beta L\right],
\ee
see \eqref{eq:uniformInterfaceLeft}, \eqref{eq:uniformInterfaceRight} for the existence of this limit.
We also set
\bel{eq:sigma}
       \sigma_\beta:=\sqrt2\,s_\beta .
\ee
We can now state the two main theorems. The first result confirms the predictions of~\cite{HakimLemaitreMallick, DG} for $0<\beta<4$.

\begin{Theorem}\label{thm:main}
For every $0<\beta<4$, and for every choice of radial minimizers
$(\phi_n,a_n)$ as in Proposition~\ref{prop:static}, one has, as $n\to\infty$,
\bel{eq:Rasymp}
       R_n=\sqrt2\,\beta^{-1/4}n^{1/2}-\frac{s_\beta}{2\beta}+O(n^{-1/2}),
\ee
and
\bel{eq:Tasymp}
       \calT_n=\sqrt\beta\,n+\sqrt2\,s_\beta\beta^{-1/4}n^{1/2}+O(1)
              =\sqrt\beta\,n+\sigma_\beta\beta^{-1/4}n^{1/2}+O(1).
\ee
The remainders are uniform for $\beta$ in a fixed compact interval
$J\Subset(0,4)$. At the BPS point $\beta=1$, $s_1=0$ and $\calT_n=n$ for all $n$.
\end{Theorem}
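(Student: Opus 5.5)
We will prove the tension asymptotics by matched upper and lower bounds with an $O(1)$ error. The definition of $R_n$ will then be pinned down by the transition location, and the BPS case will be handled separately.

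\emph{Upper bound.} We build a trial pair $(\phi,a)$ by gluing three regions. In the core $u<R-L$ we take $\phi=0$ and $a=1-u^2/R^2$, adjusted slightly. In the layer we take the rescaled interface $\phi(u)=\Phi_\beta(u-R)$ and $a(u)=\gamma\,\Gamma_\beta(u-R)$. The scale $\gamma$ is chosen so that $n a/u$ matches $\Gamma$ near $u=R$. Since $na/u\approx 2n(R-u)/R^2$ near the edge, we need $2n/R^2=\sqrt\beta$, which gives $R=R_n^{(0)}$ to leading order. For $u>R+L$ we take exponentially decaying Bessel tails. Inserting the $u$ weight, the layer contributes $R\int e_\beta+O(1)$ and the core contributes $E_{\rm bulk}$ minus the part replaced by the layer. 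This gives
\[
\calT_n\Le \frac{n^2}{R^2}+\frac{\beta}{4}R^2+\sqrt2\,\beta^{-1/4}n^{1/2}s_\beta+O(1),
\]
where the $s_\beta$ term is exactly the renormalized limit \eqref{eq:sbeta}. The exponential tails of the interface make the cutoff errors $O(1)$. We then optimize in $R$ with the shift $-s_\beta/(2\beta)$.

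\emph{Lower bound and profile structure.} For the minimizers of Proposition~\ref{prop:static} we proceed in three steps.
\begin{enumerate}[(i)]
\item We show a priori that $\phi_n$ is exponentially small, in the Gaussian/Weber sense, for $u\Le R_n-C$, where $R_n$ is defined by some level set such as $\phi_n(R_n)=1/2$. We also show that $\phi_n$ is exponentially close to $1$ for $u\Ge R_n+C$. These estimates come from comparison and maximum principle arguments for the Higgs equation, using the upper bound on the energy.
\item In the variable $y=u-R_n$ we show that $(\phi_n,\, n a_n/u)$ converges to a solution of \eqref{eq:interface} with the asymptotics \eqref{eq:interfaceleft}, \eqref{eq:interfaceright}. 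By CHMO uniqueness the limit is the translate of $(\Phi_\beta,\Gamma_\beta)$.
\item We quantify this convergence with an $O(n^{-1/2})$ error. We do this by writing $(\phi_n,a_n)$ as the interface plus a correction and solving the linearized problem. The linearized problem is invertible modulo translation thanks to the maximal rank property of Proposition~\ref{prop:nearBPS}: the matching of Weber-type data on the left with Bessel data on the right is surjective. The curvature terms $\frac1u\phi'$ and $-\frac1u a'$, together with the variation of $n/u$ across the layer, act as $O(n^{-1/2})$ forcing.
\end{enumerate}
With this description, we split the energy into three pieces. The core piece is bounded below by the bulk functional, since the $\phi$ terms are nonnegative, minimizing the magnetic part with the boundary data $a(R_n-L)$ gives $n^2/R^2$-type energy. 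The layer piece equals $R_n\int e_\beta+O(1)$. The tail piece is $O(1)$.

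\emph{Expected obstacle.} The main difficulty is step (iii). Obtaining a uniform $O(1)$ rather than $o(n^{1/2})$ error requires a quantitative nonlinear matching across three scales. This means a contraction argument in weighted spaces, in which the core solution must be connected to the left tail of the interface through the Weber equation. I would first try to treat the existence of a matched solution near the interface by the implicit function theorem, using the maximal rank of Proposition~\ref{prop:nearBPS}. I would then identify it with the minimizer through the a priori localization in (i) and a local uniqueness statement in the weighted space. The constants stay uniform on $J\Subset(0,4)$ because the CHMO interface and its linearization depend continuously on $\beta$.

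Comparing the upper and lower bounds gives \eqref{eq:Tasymp}. Matching the layer location in the energy expansion, in particular where $na/u=\Gamma$ forces $R_n$, gives \eqref{eq:Rasymp}.

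\emph{BPS point.} For $\beta=1$ the Bogomolny completion of squares gives $\calT\Ge n$, with equality for solutions of the first-order equations. These are the minimizers, so $\calT_n=n$ exactly. The interface in this case satisfies first-order equations with $e_1=\Phi'^2+\ldots$ equal to a total derivative plus the bulk term $1$, which gives $s_1=0$.
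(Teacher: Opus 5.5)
There is a genuine gap in your derivation of \eqref{eq:Rasymp}. Matching energy bounds cannot locate $R_n$ to within $O(n^{-1/2})$. Write $\mu_n=R_n-R_n^{(0)}$ and use $\partial_R^2\bigl(\frac{n^2}{R^2}+\frac{\beta}{4}R^2\bigr)=2\beta$ at $R=R_n^{(0)}$. For $\mu_n=O(1)$ this gives
\[
 \frac{n^2}{R_n^2}+\frac{\beta}{4}R_n^2+s_\beta R_n
 =\sqrt\beta\,n+s_\beta R_n^{(0)}
 +\beta\Bigl(\mu_n+\frac{s_\beta}{2\beta}\Bigr)^2-\frac{s_\beta^2}{4\beta}+O(n^{-1/2}).
\]
Upper and lower bounds that agree up to $O(1)$ therefore only give $\mu_n=O(1)$. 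To obtain $\mu_n=-s_\beta/(2\beta)+O(n^{-1/2})$ by energy comparison you would need the energy to $O(n^{-1})$. ``Optimizing in $R$'' in your upper bound concerns the trial function, not the minimizer. ``Matching where $na/u=\Gamma$'' does not fix the constant either. An $O(R_n^{-1})$ comparison of the profile with the interface yields only $b_n=2n/R_n^2=\sqrt\beta+O(R_n^{-1})$, which is again equivalent to $\mu_n=O(1)$; you would need $b_n$ to $O(R_n^{-2})$.

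The missing idea is a first-variation identity. Stationarity of $\calT_n$ under the dilations $u\mapsto u/\lambda$ gives $\calM_n=\calV_n$ (Lemma~\ref{lem:scalingVirial}). Combined with \eqref{eq:magneticCouplingIdentity}, this yields the exact relation \eqref{eq:exactReducedVirialDensity}. Its density carries the weight $R_n+y$, so $O(R_n^{-1})$ profile estimates produce an $O(1)$ error, i.e.\ $F_n(R_n)=O(R_n^{-1})$ with $F_n$ as in \eqref{eq:reducedRadiusFunction}. Since $F_n'\ge\beta/2$, Taylor expansion around $R_n^{(0)}$ then gives \eqref{eq:Rasymp}. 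This is what the paper does in Proposition~\ref{prop:energyreduction}. The exact identity \eqref{eq:exactReducedEnergy} then gives \eqref{eq:Tasymp}, with no separate trial-function upper bound needed.

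The same identity is needed before your step (iii) can even start. The core imposes the slope $b_n$ on the left of the layer. The mismatch $(b_n-\sqrt\beta)\eta$, which is the leading part of $P_n$ in \eqref{eq:exactAffineDefect}, is part of the forcing, so the comparison rate is at best $|b_n-\sqrt\beta|+R_n^{-1}$. Your step (i) only gives $b_n\to\sqrt\beta$. The paper gets $R_n=R_n^{(0)}+O(1)$ from a first, qualitative pass of the virial identity (Lemma~\ref{lem:preliminaryLocalization}) before any quantitative matching.

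There are three further problems.
\begin{itemize}
\item The theorem concerns the magnetic radius of \eqref{eq:frob}. A level-set radius $\phi_n(R_n)=1/2$ differs from it by the $O(1)$ constant $y_{\beta,1/2}$. You must tie the centering to $a_n$ through core estimates giving $\Gamma_n(y)+b_ny\to0$ (Lemmas~\ref{lem:clearingout} and~\ref{lem:fixedWindowMagnetic}).
\item The matching map \eqref{eq:EvansMatchingMap} goes from $\RR^3$ to $\RR^4$ and has rank three, so it is injective, not surjective. It yields a stability estimate for an overdetermined problem, as in \eqref{eq:finiteWindowOverdet}, not solvability.
\item An implicit-function construction would need the radius as a fourth unknown. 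Identifying the constructed solution with an arbitrary minimizer would then require a local uniqueness class that the minimizer is not known a priori to lie in. The paper never constructs solutions; it estimates the minimizer directly through its projected endpoint traces.
\end{itemize}
Your BPS argument is correct.
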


The second main theorem describes the asymptotic shape of any vortex profiles $(\phi_n,a_n)$ defined as minimizers of the string tension; see the next section for the precise constructions. The bulk of the paper is devoted to proving the assertions in this theorem, which will then be used in the proof of Theorem~\ref{thm:main}.  For the centered variables see Definition~\ref{def:PhiGamma}.

\begin{Theorem}\label{thm:asymptoticShape}
Let $0<\beta<4$, and let $(\phi_n,a_n)$ be vortex profiles defined to be any choice of radial
minimizer of~\eqref{eq:energy} as in Proposition~\ref{prop:static}.  Define the normalized
profiles $(\Phi_n,\Gamma_n)$ by \eqref{eq:boundaryvars}.  Then the following
statements hold.
For every fixed $M<\infty$,
\bel{eq:shapeH2}
 \norm{(\Phi_n,\Gamma_n)-(\Phi_\beta,\Gamma_\beta)}_{H^2([-M,M])}
 \Le C_{M,\beta}R_n^{-1}.
\ee
In particular, the transition takes place on the fixed $y$-scale and has
the limiting shape $(\Phi_\beta,\Gamma_\beta)$ uniquely determined in~\cite{CHMO}.
Equivalently, given $0<\theta<1$, let $y_{\beta,\theta}$ be the unique number
such that
\[
        \Phi_\beta(y_{\beta,\theta})=\theta,
\]
and let $u_{n,\theta}$ be the unique number such that
$\phi_n(u_{n,\theta})=\theta$.  For every
$0<\varepsilon<1/2$,
\bel{eq:shapeLevels}
 \sup_{\theta\in[\varepsilon,1-\varepsilon]}
 \abs{u_{n,\theta}-R_n-y_{\beta,\theta}}
 \Le C_{\varepsilon,\beta}R_n^{-1}.
\ee
Consequently, if
$\varepsilon\le\theta_1<\theta_2\le1-\varepsilon$, then
\bel{eq:shapeWidth}
 u_{n,\theta_2}-u_{n,\theta_1}
 =
 y_{\beta,\theta_2}-y_{\beta,\theta_1}+O_{\varepsilon,\beta}(R_n^{-1}).
\ee
Thus every fixed pair of scalar level sets has a limiting separation of
order one, whereas $R_n$ is of order $n^{1/2}$.

There is a constant $C_\beta>0$ such that, for every
$0<\alpha<\frac{1}{3}$, the scalar profile has the 
asymptotic shape\footnote{This is an overlap statement on the normal side of the transition (normal refers to the region that is not superconducting, in which the scalar field is negligible).  It
does not assert that the Gaussian formula remains valid down to $u=0$,
where the regular solution instead has the Frobenius behavior in
\eqref{eq:frob}.}
\bel{eq:shapeLeftOverlap}
 \lim_{L\to\infty}\limsup_{n\to\infty}
 \sup_{L\le -y\le R_n^\alpha}
 \left|
 \frac{\phi_n(R_n+y)}
 {C_\beta |y|^{(\sqrt\beta-1)/2}
  \exp(-\sqrt\beta\,y^2/2)}
 -1
 \right|=0.
\ee
On the superconducting side there are $K_0<\infty$, $c>0$, and, for each
$K\ge K_0$, a constant $C_{K,\beta}$ such that
\bel{eq:shapeRightComparison}
 \begin{split}
 &\sum_{j=0}^2
   |\partial_y^j(\Gamma_n(y)-\Gamma_\beta(y))|\\
   &+
   \sum_{j=0}^2
   \left|\partial_y^j((1-\Phi_n(y))-(1-\Phi_\beta(y)))\right|
 \Le C_{K,\beta}R_n^{-1}e^{-cy},
 \qquad y\ge K.
 \end{split}
\ee
The constants $C_\beta,G_\beta,H_\beta$ are positive, and the limiting
interface has the endpoint expansions
\bel{eq:shapeInterfaceTails}
 \begin{split}
 \Phi_\beta(y)
 &=C_\beta |y|^{(\sqrt\beta-1)/2}
   e^{-\sqrt\beta\,y^2/2}(1+O_\beta(|y|^{-2})),
 \qquad y\to-\infty,\\
 \Gamma_\beta(y)
 &=G_\beta e^{-\sqrt2y}(1+O_\beta(e^{-\delta_\beta y})),
 \qquad y\to+\infty,\\
 1-\Phi_\beta(y)
 &=H_\beta e^{-\sqrt{2\beta}y}
   (1+O_\beta(e^{-\delta_\beta y})),
 \qquad y\to+\infty,
 \end{split}
\ee
for some $\delta_\beta>0$.
\end{Theorem}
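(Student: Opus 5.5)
We prove Theorem~\ref{thm:asymptoticShape} by matched asymptotics made rigorous through a fixed-point construction around an approximate solution, combined with uniqueness of the limiting interface. The approximate solution has three pieces: a core solution, the CHMO interface in the boundary layer, and a Bessel-type right tail.

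\textbf{Step 1: limiting interface and its tails.} We first establish \eqref{eq:shapeInterfaceTails} for the CHMO interface $(\Phi_\beta,\Gamma_\beta)$.
\begin{itemize}
\item On the left, $\Gamma_\beta=-\sqrt\beta\,y+o(1)$. Plugging this into the $\Phi$-equation gives, to leading order, the Weber equation $\Phi''=(\beta y^2-\beta)\Phi$. Its recessive parabolic cylinder solution has the asymptotics $|y|^{(\sqrt\beta-1)/2}e^{-\sqrt\beta y^2/2}(1+O(y^{-2}))$. Variation of parameters then absorbs the Gaussian-small corrections coming from $\Gamma_\beta+\sqrt\beta y$ and from the cubic term.
\item On the right, we linearize about $(1,0)$. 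This gives decoupled modes $e^{-\sqrt2 y}$ for $\Gamma$ and $e^{-\sqrt{2\beta}y}$ for $1-\Phi$. The quadratic forcing in the $\Phi$-equation, of size $\Gamma^2\sim e^{-2\sqrt2 y}$, is subdominant exactly when $\sqrt{2\beta}<2\sqrt2$, i.e.\ $\beta<4$.
\item Positivity of $C_\beta,G_\beta,H_\beta$ follows from the monotonicity statement in \cite{CHMO}.
\end{itemize}

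\textbf{Step 2: approximate solution and linearized estimates.} We take the core solution on $u\le R-L$, the interface shifted by $R$ on the boundary layer, and Bessel functions $K_0$, $K_1$-type modes for $u\ge R+L$. The radial equations \eqref{eq:vortex}, written in $y=u-R$, differ from \eqref{eq:interface} by terms $O(R^{-1})$: the first-derivative terms $\frac1u\partial_u$, the variation of $n^2/u^2$, and the $a\mapsto\Gamma$ normalization of Definition~\ref{def:PhiGamma}. Hence the residual is $O(R_n^{-1})$ in weighted norms: Gaussian weight on the left, $e^{-cy}$ on the right.

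To correct it we need invertibility of the linearization with bounds uniform in $n$. Cauchy data from the left must be matched to decaying data on the right, and the maximal rank of this matching map is exactly what Proposition~\ref{prop:nearBPS} supplies: non-degeneracy modulo translation, via the quadratic-form identity \eqref{eq:interfacePositiveRepresentation}. The translation direction is fixed by the parameter $R_n$. A contraction argument then produces a solution within $O(R^{-1})$ of the approximation in $H^2([-M,M])$ and in the exponentially weighted norm for $y\ge K$. This gives \eqref{eq:shapeH2} and \eqref{eq:shapeRightComparison}.

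\textbf{Step 3: identifying the minimizers with the constructed solution.} This is the main obstacle. We would use the energy bounds from Proposition~\ref{prop:static}, $\calT_n\le\sqrt\beta n+O(n^{1/2})$ via a trial configuration. We combine these with compactness of the normalized profiles on $[-M,M]$, where $R_n$ is defined as in \eqref{eq:boundaryvars}, for example by a level-set normalization. Any subsequential limit solves \eqref{eq:interface} with the endpoint conditions \eqref{eq:interfaceleft}--\eqref{eq:interfaceright}, so by CHMO uniqueness it equals $(\Phi_\beta,\Gamma_\beta)$.

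Once the minimizer is close to the interface on a large window, the local uniqueness of the fixed point from Step 2 upgrades $o(1)$ closeness to the rate $O(R^{-1})$. For this we need a priori monotonicity and bounds $0\le\phi_n\le1$, $0\le a_n\le1$, which follow from the minimizing property by truncation and rearrangement.

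\textbf{Step 4: consequences.}
\begin{itemize}
\item The level-set statements \eqref{eq:shapeLevels} and \eqref{eq:shapeWidth} follow from the $C^1$ closeness implied by \eqref{eq:shapeH2}, together with the implicit function theorem, using $\Phi_\beta'\ge c_\varepsilon>0$ on the relevant levels.
\item For the overlap statement \eqref{eq:shapeLeftOverlap}, we compare $\phi_n$ with the Weber solution on $L\le -y\le R^\alpha$. The core potential $n^2a^2/u^2-\beta$ differs from $\beta y^2-\beta$ by $O(|y|^3/R)$. In the WKB phase this error integrates to $O(R^{3\alpha-1})\to0$ precisely when $\alpha<1/3$. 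The normalization constant is then matched at $|y|=L$ via the $H^2$ closeness to $\Phi_\beta$ and the tail from Step 1, with $L\to\infty$.
\end{itemize}
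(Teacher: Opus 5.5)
There is a genuine gap: nothing in your proposal determines the magnetic radius $R_n$ of \eqref{eq:frob} to within $O(1)$, and without that the rate $R_n^{-1}$ is out of reach.

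\textbf{Why the localization is needed.} In the core the slope of $\Gamma_n$ is $b_n=2n/R_n^2$. Lemma~\ref{lem:fixedWindowMagnetic} gives $\Gamma_n(-L)=b_nL+O(L^2/R_n)+O(e^{-cL^2})$. By contrast, $\Gamma_\beta(y)=\sqrt\beta\,|y|+O(e^{-cy^2})$. So the left Cauchy data of $\Gamma_n-\Gamma_\beta$ carry the slope mismatch $b_n-\sqrt\beta$. Likewise, your Weber step silently drops the term $(b_n^2-\beta)y^2$ from the potential. That term is negligible on $|y|\le R_n^\alpha$ only if $|b_n-\sqrt\beta|R_n^{2\alpha}\to0$. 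What you need is \eqref{eq:SLhyp}, namely $b_n=\sqrt\beta+O(R_n^{-1})$.

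\textbf{Why your energy bounds cannot supply it.} To leading order, $\frac{n^2}{R^2}+\frac\beta4R^2-\sqrt\beta\,n=\beta(R-R_n^{(0)})^2$. An $O(n^{1/2})$ energy error therefore localizes $R_n$ only to $O(n^{1/4})$, i.e.\ $b_n-\sqrt\beta=O(R_n^{-1/2})$. Obtaining $O(1)$ energy errors would presuppose the very boundary-layer rate you are trying to prove.

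\textbf{How the paper breaks the circularity.} It uses a first-variation identity. Stationarity of the minimizer under dilations gives $\calM_n=\calV_n$ (Lemma~\ref{lem:scalingVirial}). Together with \eqref{eq:magneticCouplingIdentity} this yields \eqref{eq:exactReducedVirialDensity}. Because this identity is linear in the profile error, qualitative convergence on fixed windows plus the core and right-tail bounds already give $F_n(R_n)\to0$. Since $F_n'\ge\beta/2$, this yields $R_n=R_n^{(0)}+O(1)$ (Lemma~\ref{lem:preliminaryLocalization}), before any quantitative estimate is attempted.

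\textbf{The same issue breaks Step~2.} Proposition~\ref{prop:nearBPS} gives a rank-three map $\RR^3\to\RR^4$. So the normalized linearized problem is injective but has index $-1$. Its one-dimensional cokernel is the pairing with the translation mode $(\Phi_\beta',\Gamma_\beta')$, which is bounded but excluded by \eqref{eq:normalized}. A contraction around your approximate solution therefore does not close for a general $O(R^{-1})$ residual. You need $R$ (equivalently $b$) as a fourth unknown. The resulting solvability condition is not local to the interface: the pairing of the curvature residual $-(R+y)^{-1}\Gamma_n'$ with $\Gamma_\beta'\to-\sqrt\beta$ is not integrable as $y\to-\infty$. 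Resolving it globally is precisely the information the paper extracts from the virial identity. Saying that ``the translation direction is fixed by $R_n$'' conflates the kernel (removed by the normalization) with this cokernel.

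\textbf{Gaps in Step~3.} Local uniqueness would require the minimizer to lie in the uniqueness neighbourhood in a norm covering the core and the exterior, which you do not establish. Also, a level-set normalization is not the theorem's $R_n$. The vanishing affine constant of the limit comes from the core bounds \eqref{eq:qSharp}--\eqref{eq:movingGamma} on $a_n-(1-u^2/R_n^2)$, which you do not supply.

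\textbf{What the paper does instead.} It avoids any construction and applies the overdetermined a priori estimate \eqref{eq:finiteWindowOverdet} directly to $V_n=(\Phi_n-\Phi_\beta,\Gamma_n-\Gamma_\beta)$ on $[-K,K]$.
\begin{itemize}
\item The projected left trace is controlled through a one-parameter family built from regularity at $u=0$ and transported through the core on a $\sqrt{\log R_n}$-interval. This is where \eqref{eq:SLhyp} enters, via $P_n(K)=O(R_n^{-1})$.
\item The right trace is controlled through the Bessel tail manifold of Lemma~\ref{lem:rightTailManifold}.
\item The quadratic term is absorbed using qualitative convergence.
\end{itemize}

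\textbf{Two smaller points.} Positivity of $G_\beta,H_\beta$ does not follow from monotonicity alone; the paper uses the monotone quantities $\mathfrak g_\beta,\mathfrak h_\beta$. In the overlap you must still rule out the dominant WKB branch, which the paper does via regularity at $u=0$ on the longer interval $[L,R_n^{\widehat\alpha}]$.
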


\subsection{Overview of the proof}

We describe the proof in the order in which its ingredients are used.  We
work throughout with an arbitrary radial minimizer.  Uniqueness of the
non-Bogomolny radial boundary-value problem is neither assumed nor needed.
Minimality supplies the variational bounds which locate the transition and,
through stationarity under radial dilations, the exact virial identity used
to determine its position more accurately.  After centering at the magnetic
radius, compactness identifies the limiting transition with the normalized
CHMO interface.  To complete the proof, we further obtain quantitative rates on this convergence, then connect the interface with the core and exterior asymptotics,
and finally we evaluate the energy and virial identities with bounded errors.

The organization after the local analysis is as follows.  Sections
\ref{sec:trueoverlap}--\ref{sec:exterior} establish the Weber and modified
Bessel descriptions needed at the two ends of the interface.  Section
\ref{sec:preliminaryLocalization} then uses only qualitative convergence and
the exact virial identity to prove the preliminary localization required by
the quantitative argument.  Section~\ref{sec:quantitativeBoundary} proves the
$O(R_n^{-1})$ estimate on fixed intervals, and
Section~\ref{sec:leftMatching} extends it into the two tails.  Only then
does Section~\ref{sec:refinedEnergy} return to the energy and virial identities
to obtain the constant correction to the radius and the
asymptotic expansion of the tension.  

\medskip
\noindent\emph{Radial minimizers and coarse localization.}
Proposition~\ref{prop:static} supplies a radial minimizer and the monotonicity
properties used throughout the paper.  Its magnetic radius $R_n$ is defined
by the regular expansion
\[
        a_n(u)=1-\frac{u^2}{R_n^2}+O(u^4)
        \qquad (u\downarrow0).
\]
Evaluating the energy on  suitable test functions \red{gives} the required upper bound for the tension.  The
exact magnetic identity \eqref{eq:magneticCouplingIdentity}, together with
the potential energy, provides the corresponding lower bound.  Comparing these
bounds proves
\[
        \frac{R_n}{R_n^{(0)}}\longrightarrow1,
        \qquad
        R_n^{(0)}=\sqrt2\,\beta^{-1/4}n^{1/2},
\]
in Proposition~\ref{prop:coarseloc}.  The same variational estimates show
that, in the core at distances to the left of $R_n$ which grow with $n$, the
Higgs field has Gaussian decay and the magnetic profile is close to its
quadratic approximation.  These estimates give compactness in every fixed
interval after translation by $R_n$.

\medskip
\noindent\emph{The limiting interface and its linearization.}
Set $y=u-R_n$.  Coarse localization and compactness show that every
subsequential limit of $(\Phi_n,\Gamma_n)$ solves the planar interface
equations.  The expansion defining $R_n$ fixes the affine normalization at
the left end (which we also call the {\em normal}, i.e., not superconducting end). \red{The uniqueness result of Chapman--Howison--McLeod--Ockendon implies that the limit coincides with the interface $(\Phi_\beta,\Gamma_\beta)$ and allows us to infer the full qualitative convergence in Lemma~\ref{lem:qualitativeBoundary}.}   Thus\red{,} all radial minimizers have the same
centered limiting profile\red{,} even though uniqueness of the finite-$n$ minimizer
is not known. \red{Lemma~\ref{lem:interfaceUniformTails} proves that the solution exhibits Gaussian decay at the normal end and decaying exponential behavior at the superconducting one.} \red{This justifies the definition of $s_\beta$ and also subsequently allows us to control truncation errors in energy integrals.} 

The linearized interface equation has a one-dimensional space
$E_\beta^-(-K)$ of Cauchy data at $-K$ satisfying the normalized recessive
condition at $-\infty$, and a two-dimensional space $E_\beta^+(K)$ of data at
$K$ which generate solutions decaying at $+\infty$.  The
positive quadratic-form identity \eqref{eq:interfacePositiveRepresentation}
and a cutoff argument show that every bounded solution of the linearized
equations is a multiple of the translation mode.  Its magnetic component
has the nonzero limit $-\sqrt\beta$ at $-\infty$, so the normalization
excludes it.  Proposition~\ref{prop:nearBPS} therefore gives rank three
for the matching map \eqref{eq:EvansMatchingMap} for every $0<\beta<4$.
Section~\ref{sec:quantitativeBoundary} uses this property to prove the
finite-interval estimate \eqref{eq:finiteWindowOverdet}.

\medskip
\noindent\emph{Local models at the two ends of the interface.}
On the normal side, write $\eta=R_n-u$.  In the core--interface overlap the
Higgs equation is a perturbation of
\[
        D''(\eta)=(\beta\eta^2-\beta)D(\eta).
\]
\red{In Section~\ref{sec:trueoverlap} we construct the radial Weber solution and also prove estimates for the resulting errors and their derivatives in the corresponding asymptotic expansions. The regularity condition at the origin then allows us to infer that only the recessive branch appears in our solution. In Lemma~\ref{lem:webermatching}, we prove using a Volterra argument that the corresponding interface coefficient is $C_\beta$. In particular, this shows that no additional connection coefficient appears when passing from the core to the interface. 

As for the superconducting side, Proposition~\ref{prop:righttail} shows that the interface exhibits two exponential tails.} The finite radial equations
are compared instead with the recessive modified Bessel functions in
Proposition~\ref{prop:exteriorBessel}.  Lemma
\ref{lem:saturatingBesselComparison} compares the normalized Bessel and
exponential solutions from a fixed point $y=K$ to infinity.  Lemma
\ref{lem:rightTailManifold} expresses the consequence at $y=K$: the Cauchy
data of the decaying radial tails form a two-parameter $C^2$ family which is
$O(R_n^{-1})$-close to the corresponding planar family, and its tangent
plane at the CHMO data is $E_\beta^+(K)$.

\medskip
\noindent\emph{First use of the virial identity.}
The quantitative comparison requires
\[
        R_n=R_n^{(0)}+O_\beta(1),
        \qquad
        \frac{2n}{R_n^2}=\sqrt\beta+O_\beta(R_n^{-1}).
\]
These estimates are proved before the quantitative boundary argument.  For
$\lambda>0$, consider the admissible dilation
\[
        \phi_{n,\lambda}(u)=\phi_n(u/\lambda),
        \qquad
        a_{n,\lambda}(u)=a_n(u/\lambda).
\]
The magnetic and potential contributions to the radial energy become
$\lambda^{-2}\calM_n$ and $\lambda^2\calV_n$, \red{where $\calM_n$ and $\calV_n$ are the magnetic and potential terms in \eqref{eq:energy},} while the two remaining
contributions are invariant.  Since the original profiles minimize the
energy, differentiation at $\lambda=1$ gives the virial, or Pohozaev,
identity
\[
        \calM_n=\calV_n
\]
of Lemma~\ref{lem:scalingVirial}.  Combining it with
\eqref{eq:magneticCouplingIdentity} gives the exact reduced identity
\[
 0=-\frac{2n^2}{R_n^2}
   +2\int_{-R_n}^{\infty}(R_n+y)j_n(y)\,\dd y,
\]
which is \eqref{eq:exactReducedVirialDensity}.

For the first evaluation of this identity, the splitting point $-M$ is
fixed.  The core estimate, qualitative convergence on fixed intervals, and
uniform right-tail decay give
\[
        F_n(R_n)\longrightarrow0,
        \qquad
        F_n(R)=-\frac{2n^2}{R^3}+\frac{\beta}{2}R+s_\beta.
\]
Since
\[
        F_n'(R)=\frac{6n^2}{R^4}+\frac{\beta}{2}
        \ge \frac{\beta}{2},
\]
comparison with the unique zero of $F_n$ proves
\eqref{eq:SLhyp}, and this yields
Lemma~\ref{lem:preliminaryLocalization}.  No quantitative boundary or tail
estimate is used in this first calculation.

\medskip
\noindent\emph{Quantitative convergence and propagation into the tails.}
With \eqref{eq:SLhyp} available, set
\[
        V_n=(\Phi_n-\Phi_\beta,\Gamma_n-\Gamma_\beta).
\]
On $[-K,K]$, subtraction of the radial and interface equations gives
\[
        \calA_\beta V_n
        =\calR_n+\calQ_\beta^{\rm int}(V_n),
        \qquad
        \|\calR_n\|_{H^1([-K,K])}=O_{K,\beta}(R_n^{-1}),
\]
where $\calQ_\beta^{\rm int}(V)$ is at least quadratic in $V$.
The rank-three property of Proposition~\ref{prop:nearBPS} first gives the
linear estimate \eqref{eq:finiteWindowOverdet}: a function on $[-K,K]$ is
controlled by its linearized equation and by the distances of its two
Cauchy traces from $E_\beta^-(-K)$ and $E_\beta^+(K)$.  The proof is a
compactness contradiction, in the sense that a failure of the estimate would produce a
nonzero homogeneous solution satisfying both endpoint conditions.

It remains to estimate those two distances for $V_n$.  Regularity at $u=0$
determines a one-parameter family of Cauchy data at $y=-K$.
Section~\ref{sec:quantitativeBoundary} constructs this family on a
logarithmically long interval and proves that its trace map is uniformly
$O(R_n^{-1})$-close to the corresponding limiting trace map.  The latter is
differentiable at the CHMO data, and its tangent line is
$E_\beta^-(-K)$.  The construction of the tail to the right in
Lemma~\ref{lem:rightTailManifold} provides the analogous uniform comparison at
$y=K$, and the derivative of the limiting trace map has image
$E_\beta^+(K)$.  Qualitative convergence implies that the actual vortex  parameters lie
near the base points of these limiting maps.  Differentiability therefore
leads to sequences $\varepsilon_{n,K}^\pm\to0$ such that the two projected
endpoint traces are bounded by $O(R_n^{-1})$ plus
$\varepsilon_{n,K}^\pm$ times the corresponding full trace.  Applying
\eqref{eq:finiteWindowOverdet} to $V_n$ yields
\[
        \|V_n\|_{H^2([-K,K])}
        \le C_{K,\beta}R_n^{-1}
        +\varepsilon_{n,K}\|V_n\|_{H^2([-K,K])}
        +C_{K,\beta}\|V_n\|_{H^2([-K,K])}^2.
\]
\red{The q}ualitative convergence \red{of $(V_n)_{n\geq 1}$ to $0$ in $H^2([-K,K])$} \red{allows us to absorb} the last two terms and \red{prove}
Theorem~\ref{thm:boundaryconvergence}. \red{In Section~\ref{sec:leftMatching} we propagate this estimate into the logarithmically-sized region located towards the normal endpoint, where the solution is characterized by a decaying Gaussian, and towards the superconducting end, where the solution instead exhibits two decaying exponential tails. In the same section we obtain the uniform core--interface matching estimate that we shall use in the proof of Theorem \ref{thm:asymptoticShape}.}

\medskip
\noindent\emph{Refined energy and virial calculation.}
The second calculation uses the quantitative estimates just obtained.  Both
the exact energy identity \eqref{eq:exactReducedEnergy} and the exact virial
identity \eqref{eq:exactReducedVirialDensity} are split at
\[
        y=-C_{\log}\sqrt{\log R_n}.
\]
Corollary~\ref{cor:bulkExpansion} evaluates the core contribution, and
Lemma~\ref{lem:boundaryExteriorEnergy} replaces the remaining radial
densities by their interface limits with bounded error.  Lemma
\ref{lem:cutoffcancel} displays the cancellation of all terms depending on
the artificial splitting point.  Applied to the energy identity, these
estimates give
\[
        \calT_n
        =\frac{n^2}{R_n^2}
         +\frac{\beta}{4}R_n^2+s_\beta R_n+O(1).
\]
Applied independently to the virial identity, they give
\[
        R_n\left(
        -\frac{2n^2}{R_n^3}+\frac{\beta}{2}R_n+s_\beta
        \right)=O(1).
\]
These are the two conclusions of Proposition~\ref{prop:energyreduction}.
The second identity, rather than formal minimization of the displayed
approximate energy, determines the constant correction to $R_n$.
Expanding around $R_n^{(0)}$ gives \eqref{eq:Rasymp}, which in turn yields \eqref{eq:Tasymp} when plugged into the
energy formula.  Finally,
Theorem~\ref{thm:asymptoticShape} follows by combining the fixed-interval
estimate with the Weber description on the left and the modified-Bessel
description on the right.

\section{Static radial theory}

We use one classical existence theorem and prove the qualitative properties needed below directly from the radial
equations.  The variational construction of Berger--Chen
\cite{BergerChen}, in the precise form recalled in
Chen--Spirn \cite[Lemma~7.1 and Theorem~7.2]{ChenSpirn}, shows that the infimum of the tension is attained 
over radial equivariant profiles.  We do not rely on a general uniqueness theorem, which seems to be unknown away from the BPS point $\beta=1$ (see however~\cite{ABG} for uniqueness for large~$\beta$).

To be precise, we denote the admissible class for the minimization of the tension~\eqref{eq:energy}  by $\mathscr A_n$. It is defined as
\begin{align}
    \red{\mathscr A_n} &:= \{ (\phi,a)\in H^1_{\rm loc}((0,\infty))^2 \text{\ \ non-negative pairs for which the four limits in \eqref{eq:bc} } \nonumber \\
   &\text{ hold and each of the
nonnegative terms in \eqref{eq:energy} is integrable} \} \label{eq:AnClass}
\end{align}

In the notation
of Chen--Spirn, $f$ is the scalar profile and $q$ is the magnetic
profile.  The exact conversion is
\[
 r=\sqrt2\,u,\qquad f(r)=\phi(u),\qquad q(r)=1-a(u),
 \qquad \lambda_{\rm CS}^2=\frac{\beta}{4},
\]
under which $G_{\rm gl}^r(f,q)/\pi=\mathcal T_n[\phi,a]$.

\begin{Proposition}\label{prop:static}
Let $n\ge1$ and $\beta>0$.  The radial functional \eqref{eq:energy} attains its infimum in $\mathscr A_n$
subject to \eqref{eq:bc}.  Every radial minimizer $(\phi_n,a_n)$ is smooth on $(0,\infty)$, satisfies
\eqref{eq:vortex}, and obeys
\bel{eq:monotone}
 0<\phi_n(u)<1,\qquad 0<a_n(u)<1,\qquad
 \phi_n'(u)>0,\qquad a_n'(u)<0,\qquad u>0.
\ee
There exist unique $c_n>0$ and $R_n>0$ such that
\bel{eq:frob}
 \phi_n(u)=c_nu^n+O(u^{n+2}),\qquad
 a_n(u)=1-\frac{u^2}{R_n^2}+O(u^4),\qquad u\downarrow0.
\ee
For the fixed minimizer there are constants $c,C>0$ (depending on $\beta,n$) such that
\bel{eq:coarseRadialTail}
 \sum_{j=0}^2\left(
  |\partial_u^ja_n(u)|+|\partial_u^j(1-\phi_n(u))|
 \right)\le Ce^{-cu},\qquad u\ge1.
\ee
\end{Proposition}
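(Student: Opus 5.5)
Existence comes from Berger--Chen in the form recalled by Chen--Spirn \cite[Lemma~7.1, Theorem~7.2]{ChenSpirn}. Under the dictionary $r=\sqrt2 u$, $f=\phi$, $q=1-a$, $\lambda_{\rm CS}^2=\beta/4$, the functional $G^r_{\rm gl}/\pi$ coincides with $\calT_n$, so a minimizer in $\mathscr A_n$ exists.

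\emph{Regularity and nonnegativity.} Since $\calT_n$ is invariant under $\phi\mapsto|\phi|$, $a\mapsto|a|$, we may assume that any minimizer is nonnegative; all the statements below are proved for an arbitrary minimizer. Compactly supported variations in $(0,\infty)$ give the weak form of \eqref{eq:vortex}. Elliptic bootstrapping in one variable then gives smoothness on $(0,\infty)$: $\phi,a\in H^1_{\rm loc}$ are continuous, the right-hand sides are continuous, so the solution is $C^2$, and one iterates.

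\emph{Bounds and monotonicity.} For $\phi<1$: truncation $\min(\phi,1)$ strictly lowers the potential term unless $\phi\le1$. The strong maximum principle applied to $\phi''+\phi'/u=\phi(n^2a^2/u^2+\beta(\phi^2-1))$ then gives $0<\phi<1$, with $\phi\not\equiv0$ forced by the boundary conditions. Similarly, $a\mapsto\min(\max(a,0),1)$ does not raise the energy. The magnetic equation $(a'/u)'=2a\phi^2/u\ge0$ shows that $a'/u$ is nondecreasing. Since $a(\infty)=0$ and $a'\in L^2(du/u)$, we get $a'/u\to0$ at infinity, hence $a'\le0$. Strict inequality follows because $a\phi^2>0$, giving $0<a<1$ and $a'<0$.

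For $\phi'>0$, I would argue by contradiction. A minimizer with an interior local maximum of $\phi$ could be replaced by its monotone rearrangement. The cleaner route is: a sign change of $\phi'$ produces a point where $\phi'=0$ and $\phi''\le0$, which forces $\phi^2\ge1-n^2a^2/(\beta u^2)$. One then compares with the increasing majorant $\tilde\phi(u)=\sup_{s\le u}\phi(s)$. This increases the gradient term only where $\tilde\phi$ is flat, where in fact $\tilde\phi'=0$, and it lowers the potential because $\tilde\phi\le1$. The coupling term $n^2a^2\phi^2/u$ increases, however, so this is where care is needed. I expect the monotonicity of $\phi$ to be the main obstacle. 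If the rearrangement fails, I would instead prove it as in Chen--Spirn via the ODE: differentiate the Higgs equation and apply a maximum-principle argument to $w=\phi'$, using $a'<0$ to sign the forcing term $2n^2aa'\phi/u^2$.

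\emph{Frobenius expansion.} Near $u=0$, finite energy forces $a\to1$ and $\phi\to0$. The Higgs equation is a regular singular perturbation of $\phi''+\phi'/u-n^2\phi/u^2=0$ with indices $\pm n$. The $u^{-n}$ branch is excluded by finite energy, and a Volterra fixed point on $(0,\delta)$ gives $\phi=c_nu^n+O(u^{n+2})$ with $c_n>0$ by positivity. The magnetic equation has indices $0,2$, and similarly $a=1-u^2/R_n^2+O(u^4)$, where $-1/R_n^2=\lim a'/(2u)<0$ by $a'<0$. This also gives uniqueness of $c_n$ and $R_n$.

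\emph{Exponential tails.} For large $u$ we have $\phi\ge1/2$. Writing $b=a$, the magnetic equation reads $b''-b'/u\ge\tfrac12b$, and comparison with the decaying modified Bessel-type solution $uK_1(u/\sqrt2)$ gives exponential decay of $a$. For $w=1-\phi$, the linearization gives $w''+w'/u\ge\beta w-Ca^2/u^2$ with $a^2$ exponentially small. A comparison/maximum principle on $[U,\infty)$ yields $w\le Ce^{-cu}$. Derivative bounds up to order two then follow from the equations and interpolation.
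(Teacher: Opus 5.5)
The gap is the claim $\phi_n'>0$: it is not proved, and neither route you sketch closes. Existence, regularity, $0<\phi_n,a_n<1$, $a_n'<0$, the Frobenius expansion and the exponential tails are essentially the paper's argument, up to routine details.

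The increasing majorant $\tilde\phi(u)=\sup_{s\le u}\phi(s)$ fails for the reason you give yourself. The coupling term $n^2a^2\tilde\phi^2/u$ increases, and nothing compensates for it.

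The fallback maximum principle for $w=\phi'$ also does not close. Write $Q=n^2a^2/u^2+\beta(\phi^2-1)$. Differentiating the Higgs equation gives
\[
-w''-\frac{w'}{u}+\Bigl(\frac{1+n^2a^2}{u^2}+\beta(3\phi^2-1)\Bigr)w=\Bigl(\frac{2n^2a^2}{u^3}-\frac{2n^2aa'}{u^2}\Bigr)\phi>0 .
\]
The forcing term is indeed positive. However, the zeroth-order coefficient has no a priori sign, since $\beta(3\phi^2-1)<0$ wherever $\phi<1/\sqrt3$. So a negative interior minimum of $w$ is not excluded; at such a point you only learn that $u^{-2}+Q+2\beta\phi^2<0$.

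There is also a sign slip in your sketch. At a point where $\phi'=0$ and $\phi''\le0$, the equation gives $Q\le0$, that is $\phi^2\le1-n^2a^2/(\beta u^2)$, not $\ge$.

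The missing idea is a first-order shooting argument on $(u\phi')'=uQ\phi$. It uses the sign of $Q'$ rather than a maximum principle for $\phi'$.

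\begin{itemize}
\item First obtain $\phi'>0$ for small $u$ from $c_n>0$. The Frobenius step must therefore come before this one.
\item If $\phi'$ vanishes somewhere, let $u_0$ be its first zero. Since $\phi'>0$ on $(0,u_0)$, you get $Q(u_0)\le0$.
\item The key observation is
\[
Q'=\frac{2n^2aa'}{u^2}-\frac{2n^2a^2}{u^3}+2\beta\phi\phi'<0\quad\text{wherever }\phi'\le0,
\]
because $a>0$ and $a'<0$. This is where $a'<0$ really enters.
\item Hence $Q<0$ immediately to the right of $u_0$, even if $Q(u_0)=0$, and $Q$ remains negative on the maximal interval where $\phi'<0$.
\item On that interval $u\phi'$ is strictly decreasing, so it can never return to zero.
\item Therefore $\phi$ decreases on all of $(u_0,\infty)$. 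This contradicts $\phi\to1>\phi(u_0)$.
\end{itemize}

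A smaller point: $a'<0$ alone gives only $\lim_{u\downarrow0}a'/(2u)\le0$. The strict inequality $R_n^{-2}>0$ comes from $a'/u$ being negative and strictly increasing, which you have already shown.
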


\begin{proof}
For the solution of the minimization problem in the class~\eqref{eq:AnClass} see~\cite{BergerChen} and
\cite[Theorem~7.2]{ChenSpirn}.  Lemma~7.1 of the latter paper establishes continuity and the limits at the origin.
Since $1-f\in H^1_{\rm rad}(\mathbb R^2)$, the Strauss radial bound implies that $f(r)\to1$ as $r\to\infty$.  The magnetic equation and finite
energy imply $q(r)\to1$, while Frobenius theory 
gives the smooth equivariant extension through the origin used below. We recall why every minimizer
has the properties~\eqref{eq:monotone}.  
Capping
the resulting nonnegative profiles by~$f\mapsto\min(f,1)$ cannot increase any derivative term and decreases each zeroth-order term on
the set where the cap by~$1$ decreases the profiles.  
Equality excludes a set of positive measure on
which either absolute-value profile exceeds~$1$, and continuity gives the
pointwise upper bounds.  
Standard ODE arguments imply that in fact $0<\phi_n,a_n<1$; we omit the routine details.
Put $g_n=a_n'/u$.  The magnetic equation gives
\[
 g_n'(u)=\frac{2a_n(u)\phi_n(u)^2}{u}>0.
\]
If $g_n(u_0)\ge0$ at some point, then $g_n>0$ for all $u>u_0$ and hence $a_n'>0$, contradicting
$a_n>0$ and $a_n(u)\to0$.  Therefore $a_n'<0$.
Frobenius theory at $u=0$, applied after smooth equivariant
extension through the origin, gives
\[
 \phi_n=c_nu^n+O(u^{n+2}),\qquad a_n=1-\alpha_nu^2+O(u^4).
\]
If $c_n=0$, uniqueness for the regular singular initial-value problem
would give $\phi_n\equiv0$, contrary to its limit at infinity.  Hence
$c_n>0$ and $\phi_n'>0$ for all sufficiently small positive $u$.  Since
$g_n<0$ and $g_n'>0$, its finite limit $g_n(0)=-2\alpha_n$ is strictly
negative.  Thus $\alpha_n>0$ and $R_n=\alpha_n^{-1/2}$.
To prove monotonicity define
\[
 Q_n(u)=\frac{n^2a_n(u)^2}{u^2}+\beta(\phi_n(u)^2-1).
\]
Then $(u\phi_n')'=uQ_n\phi_n$, and the expansion just proved gives $\phi_n'>0$ for small $u$.  Suppose that $u_0$
is its first zero.  The scalar equation gives $Q_n(u_0)\le0$.  On any interval immediately to the right on which
$\phi_n'<0$,
\[
 Q_n'=\frac{2n^2a_na_n'}{u^2}-\frac{2n^2a_n^2}{u^3}+2\beta\phi_n\phi_n'<0.
\]
Consequently $Q_n<0$ and $u\phi_n'$ is strictly decreasing there.  It can never return to zero, so $\phi_n$ would
remain strictly decreasing and could not tend to $1>\phi_n(u_0)$.  If $Q_n(u_0)=0$, the same reasoning gives
$Q_n'(u_0)<0$ and the same conclusion.  Therefore $\phi_n'>0$ everywhere.

Choose $u_0$ so large that $\phi_n\ge1/2$ on $[u_0,\infty)$.  Fix
$0<\mu<\widetilde\mu<1/\sqrt2$.  Let $w(u)=uK_1(\widetilde\mu u)$, where $K_1$ is the modified Bessel function of the second kind of order $1$.  Then $w$ satisfies
\[
        w''-u^{-1}w'-\widetilde\mu^2w=0
\]
and is a positive supersolution for the magnetic equation on this half-line.
Comparison gives $a_n(u)\le Cw(u)$.  The standard Bessel asymptotic and
its differentiated form yield
\[
        w(u)+|w'(u)|\le C u^{1/2}e^{-\widetilde\mu u}\le Ce^{-\mu u}.
\]
Thus $|a_n|+|a_n'|\le Ce^{-\mu u}$.  For
$\eta_n=1-\phi_n$ one has
\[
 \eta_n''+\frac{1}{u}\eta_n'-\beta(1-\eta_n)(2-\eta_n)\eta_n
 =-\frac{n^2a_n^2}{u^2}(1-\eta_n).
\]
Set $c_{\beta,n}(u)=\beta(1-\eta_n(u))(2-\eta_n(u))$.  After increasing $u_0$,
$c_{\beta,n}(u)\ge3\beta/4$ on $[u_0,\infty)$.  Choose
$0<\nu<\min(2\mu,\sqrt{3\beta/4})$ and denote by $G_\nu$ the positive
decaying Green kernel, normalized with respect to the Lebesgue measure $ds$, of
$-\partial_u^2-u^{-1}\partial_u+\nu^2$ on $[u_0,\infty)$ with zero
Dirichlet trace at $u_0$.
\red{To be more specific, let $K_0$ and $I_0$ be the modified Bessel functions of the second and first kinds of order $0$, so that $K_0$ has exponential decay at $+\infty$ and $I_0$ has exponential increase at $+\infty$. They are the fundamental solutions of the equation 
\begin{align*}\psi''(u)+\frac{1}{u}\psi'(u)-\psi(u)=0.
\end{align*}
Then, we have the relation
\[ G_\nu(u,v)=\begin{cases}vK_0(\nu v)\left(I_0(\nu u)-K_0(\nu u)\frac{I_0(\nu u_0)}{K_0(\nu u_0)}\right),\text{ if }u<v\\
vK_0(\nu u)\left(I_0(\nu v)-K_0(\nu v)\frac{I_0(\nu u_0)}{K_0(\nu u_0)}\right),\text{ if }v<u.\end{cases}\]
}

A sufficiently large multiple of the decaying
homogeneous solution, plus
\[
 \int_{u_0}^\infty G_\nu(u,s)
 \frac{n^2a_n(s)^2}{s^2}(1-\eta_n(s))\,ds,
\]
is a supersolution for $\eta_n$, since $c_{\beta,n}-\nu^2>0$.  The maximum
principle gives $0\le\eta_n(u)\le Ce^{-\nu u}$.  To obtain the derivative
bound without differentiating this supersolution, put
$p_n=-\eta_n'=\phi_n'\ge0$ and
$f_n=n^2a_n^2(1-\eta_n)/u^2$.  The exact defect equation gives
\[
        (u p_n)'=u(f_n-c_{\beta,n}\eta_n).
\]
The right side is exponentially integrable.  Hence $u p_n$ has a finite
limit at infinity which must be zero, since a positive limit would
give $p_n(u)\gtrsim u^{-1}$ and contradict $\eta_n(u)\to0$.  Therefore
\[
 u p_n(u)=\int_u^\infty s\red{(}c_{\beta,n}(s)\eta_n(s)-f_n(s)\red{)}\,ds.
\]
The already established bounds for $a_n$ and $\eta_n$ imply
$|\eta_n'(u)|\le Ce^{-cu}$ after decreasing $c>0$ if necessary.  Substituting this estimate into the two differential equations gives the second-derivative bounds and
proves \eqref{eq:coarseRadialTail}. \end{proof}

\begin{Definition}
\blue{The positive number $R_n$ in \eqref{eq:frob} is the \emph{magnetic core radius} (sometimes called the critical radius) used throughout the paper.  It is defined from the quadratic coefficient of the gauge potential profile $a_n$ at the origin; later we prove that translating by $R_n$ centers the transition (boundary) layer.}
\end{Definition}
\red{For the rest of the paper, we shall use the previous proposition in the asymptotic analysis of arbitrary radial minimizers. In the special case $\beta=1$, which is also known as the BPS regime, we can reduce this second order system to a first order one in \eqref{eq:BPS} via what is also known as a Bogomolny square completion. This is exactly Taubes' vortex setting, for which uniqueness is already known; see \cite[Ch. III]{JaffeTaubes}, \cite{Taubes}. We must also point out that the aforementioned reduction can only be carried out in the case $\beta=1$. }
  The radial ansatz leads instead to the singular second-order boundary-value problem
\[
 \phi''+\frac{1}{u}\phi'
 =
 \frac{n^2}{u^2}a^2\phi+\beta\phi(\phi^2-1),\qquad
 a''-\frac{1}{u}a'=2a\phi^2,
\]
with local data at the origin
\[
        \phi(u)=c u^n+O(u^{n+2}),\qquad
        a(u)=1-\alpha u^2+O(u^4),\qquad c,\alpha>0,
\]
and two conditions at infinity, $\phi(\infty)=1$, $a(\infty)=0$.  Thus uniqueness away from the BPS point would mean that
this two-parameter shooting map has at most one admissible zero.  In general this is unknown.

\blue{Consequently, throughout the rest of the paper, $(\phi_n,a_n)$ denotes an arbitrary   minimizer as in Proposition~\ref{prop:static}.  Our arguments apply to every such minimizer, so the asymptotic formulae in
Theorem~\ref{thm:main} hold for all of them.  In particular, we do not rely on any global uniqueness theorem for solutions of the Euler-Lagrange equations with the boundary conditions~\eqref{eq:bc} if $\beta\ne1$.  The uniqueness result of Chapman--Howison--McLeod--Ockendon~\cite{CHMO} is \red{only needed to conclude that the interfaces arising as limit points of the sequence of profiles $(\Phi_n,\Gamma_n)$, obtained upon centering at the magnetic radii $R_n$, coincide.}}

At the BPS point $\beta=1$ we have the following Bogomolny identity, 
\bel{eq:BPScompletion}
\calT_n[\phi,a]
=
\int_0^\infty u\left[
 \frac{1}{2}\left(\frac{n}{u}a'-(\phi^2-1)\right)^2
 +\left(\phi'-\frac{n}{u}a\phi\right)^2
\right]\dd u+n .
\ee
Thus $\calT_n\ge n$, with equality precisely for
\bel{eq:BPS}
       \frac{n}{u}a'=\phi^2-1,\qquad
       \phi'=\frac{n}{u}a\phi .
\ee
The Taubes solution  is rotationally symmetric, belongs
to $\mathscr A_n$, and satisfies \eqref{eq:BPS}. Hence the radial infimum is
exactly $n$.  Every radial minimizer has this energy and therefore makes both
squares in \eqref{eq:BPScompletion} vanish.  Consequently, the coefficient of
$n^{1/2}$ in the large-flux expansion vanishes at $\beta=1$.  At this BPS point several arguments can be simplified, but we do not pursue this here.

\section{The planar interface}

\begin{Definition}\label{def:PhiGamma}
    Let $R_n$ be the magnetic radius from \eqref{eq:frob} and introduce the centered variables
\bel{eq:boundaryvars}
       y=u-R_n,\qquad
       \Phi_n(y)=\phi_n(R_n+y),\qquad
       \Gamma_n(y)=\frac{n}{R_n+y}a_n(R_n+y).
\ee
\end{Definition}

A direct computation gives the exact equations
\begin{align}
 \Phi_n''\red{(y)}+\frac{1}{R_n+y}\Phi_n'\red{(y)}
 &=\Gamma_n^2\red{(y)}\Phi_n\red{(y)}+\beta\Phi_n\red{(y)}(\Phi_n^2\red{(y)}-1),\label{eq:exactPhi}\\
 \Gamma_n''\red{(y)}+\frac{1}{R_n+y}\Gamma_n'\red{(y)}-\frac{1}{(R_n+y)^2}\Gamma_n\red{(y)}
 &=2\Gamma_n\red{(y)}\Phi_n^2\red{(y)} .\label{eq:exactGamma}
\end{align}
\blue{Since $R_n\to\infty$, the limiting interface system is \eqref{eq:interface}, viz.
\[ 
       \Phi''=\Gamma^2\Phi+\beta\Phi(\Phi^2-1),\qquad
       \Gamma''=2\Gamma\Phi^2 ,
\]
with endpoint conditions \eqref{eq:interfaceleft}--\eqref{eq:interfaceright}. We will prove in the next section that in fact $\Phi_n\to\Phi$ and $\Gamma_n \to \Gamma$.}
The condition at $-\infty$ fixes translation: translating by $y_0$ changes the limiting affine
constant of $\Gamma+\sqrt\beta\,y$ by $-\sqrt\beta\,y_0$.

\subsection{Identification with the classical normal-to-superconducting transition}

 In the notation of
Chapman--Howison--McLeod--Ockendon \cite{CHMO}, the transition equations are
\bel{eq:CHMOeq}
 \Psi_{XX}=\kappa^2\Psi(\Psi^2-1+A^2),\qquad
 A_{XX}=\Psi^2A,
\ee
and 
\bel{eq:CHMObc}
 (\Psi,A)\rightarrow(1,0)\qquad(X\to-\infty),
 \qquad
 \Psi\rightarrow0,\qquad A_X\rightarrow2^{-1/2}\quad (X\to+\infty).
\ee
Their independent variable therefore points from the superconducting phase
toward the normal phase, whereas $y$ in this paper points in the opposite
direction.  We  use the change of variables 
\bel{eq:CHMOscaling}
 X=-\sqrt2\,y,\qquad \kappa^2=\frac{\beta}{2},\qquad
 \Phi_\beta(y)=\Psi(-\sqrt2 y),\qquad
 \Gamma_\beta(y)=\sqrt\beta\,A(-\sqrt2 y), 
\ee
leading to \eqref{eq:interface}.  If
$A(X)=X/\sqrt2+c+o(1)$ at the normal end, then
\[
 \Gamma_\beta(y)=-\sqrt\beta\,y+\sqrt\beta\,c+o(1)
 \qquad(y\to-\infty).
\]
The translation of the CHMO orbit is chosen so that $c=0$.  Notice in
particular that the applied field in the CHMO normalization is $A_X=1/\sqrt2$. The factor depending on $\kappa$ appears only after the rescaling
$q=\kappa A$ sometimes used in the literature.
We use the following form\footnote{The validity of Lemma~2.4 in~\cite{CHMO} for $\kappa\ge\sqrt{2}$ is not clear to the authors.} of the main CHMO transition-layer theorem.

\begin{Theorem}[\cite{CHMO}]\label{thm:CHMO}
For every $\sqrt{2}>\kappa>0$, the boundary value problem
\eqref{eq:CHMOeq}, \eqref{eq:CHMObc} has a solution which is unique up to
translation.  After fixing the affine constant of $A-X/\sqrt2$ at the normal
end, the solution is unique.  It is monotone:
\[
 0<\Psi<1,\qquad \Psi_X<0,\qquad A>0,\qquad A_X>0,\qquad A_{XX}>0 .
\]
\end{Theorem}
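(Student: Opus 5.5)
The proof proceeds in two stages: existence and monotonicity by a variational construction, then uniqueness up to translation by a comparison (sliding) argument in the phase plane.

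\emph{Existence.} Minimize the renormalized one-dimensional Gibbs energy
\[
\int\Big[\tfrac{1}{2\kappa^2}\Psi_X^2+\tfrac14(1-\Psi^2)^2+\tfrac12\Psi^2A^2+\tfrac12\big(A_X-2^{-1/2}\big)^2-\tfrac18\Big]\dd X
\]
on large intervals $[-L,L]$, with boundary data matching \eqref{eq:CHMObc}. The field term is normalized so that the density vanishes both in the superconducting state $(1,0)$ and in the normal state $(0,X/\sqrt2+c)$; this uses $H_c=1/\sqrt2$ in these units. Replacing $\Psi$ by $\min(|\Psi|,1)$ and $A$ by $|A|$ does not increase the energy, so one may assume $0\le\Psi\le1$ and $A\ge0$. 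A monotone rearrangement, or a direct maximum-principle argument on the Euler--Lagrange system, then gives $\Psi_X\le0$ and $A_X\ge0$. One obtains uniform $H^1_{\rm loc}$ bounds after translating so that $\Psi=1/2$ at $X=0$. The limit $L\to\infty$ yields a heteroclinic solution.

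\emph{Ruling out degenerate limits.} The main obstacle is that the limit could collapse to a trivial state. To exclude this, one uses the first integral
\[
\frac{1}{\kappa^2}\Psi_X^2+\frac12 A_X^2-\frac12(1-\Psi^2)^2-\Psi^2A^2=\text{const}.
\]
This conserved quantity fixes the constant by the endpoint values and forces $A_X\to1/\sqrt2$ at the normal end. The same identity, together with the restriction $\kappa<\sqrt2$, is used to show that the $\Psi$-tail at $+\infty$ is recessive: there $A\sim X/\sqrt2$, so $\Psi$ satisfies a Weber-type equation with only one decaying branch.

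\emph{Strict inequalities.} These follow from the ODE:
\begin{itemize}
\item $A_{XX}=\Psi^2A>0$.
\item $\Psi_X<0$ by the strong maximum principle applied to $\Psi_X$, which satisfies the linearized equation with the sign structure coming from $A_X>0$.
\end{itemize}

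\emph{Uniqueness up to translation.} Here I would use a sliding argument. Suppose $(\Psi_1,A_1)$ and $(\Psi_2,A_2)$ are two monotone solutions. Translate the second until it first touches the first in the cooperative ordering $\Psi_1\ge\Psi_2^\tau$, $A_1\le A_2^\tau$. The system is cooperative after the sign change $A\mapsto -A$, which is the structural fact mentioned in the introduction. The tails at $\pm\infty$ are controlled by the explicit exponential behavior at the superconducting end and the Weber behavior at the normal end. Under this ordering, the strong maximum principle forces either coincidence or a strict gap, and a strict gap contradicts minimality of the sliding parameter. The hardest step is verifying that touching cannot occur at infinity. This requires the precise asymptotics at both ends, and it is where $\kappa<\sqrt2$ enters: for $\kappa<\sqrt2$ the decay rates are non-resonant, $\sqrt2\kappa<1$ versus $1$ in these units, so the leading tail coefficients can be compared.

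Finally, fixing the affine constant $c$ in $A-X/\sqrt2$ removes the translation freedom, since translating by $X_0$ changes $c$ by $-X_0/\sqrt2$.
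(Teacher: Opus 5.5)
The paper does not prove this statement. It quotes it from \cite{CHMO}, with a footnote doubting CHMO's Lemma~2.4 for $\kappa\ge\sqrt2$, so your argument has to stand on its own. Its genuine gap is in the uniqueness step, and the obstruction is not where you put it.

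At the superconducting end no tail coefficients are needed. Near $(1,0)$ the linear system for the differences is a small perturbation of $-\partial_X^2+\mathrm{diag}(2\kappa^2,1)$, with off-diagonal terms of the cooperative sign. So $(1,1)$ is a strict supersolution on a far-left half-line, and the ordering propagates there from a single matching point by the maximum principle.

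The real difficulty is at the normal end. If $A_j-X/\sqrt2\to c_j$ as $X\to+\infty$, then $A_2(X-\tau)-A_1(X)\to c_2-c_1-\tau/\sqrt2$. The $A$-difference solves an equation whose potential $\Psi^2$ is Gaussian-small, so it is asymptotically affine. There is no coercivity, and the gap is an affine constant rather than a decaying mode whose coefficients could be compared. Your ordering $\Psi_1\ge\Psi_2^\tau$, $A_1\le A_2^\tau$ is therefore possible only for $\tau\le\tau_c:=\sqrt2(c_2-c_1)$. The sliding parameter necessarily stops at $\tau_c$, since interior touching earlier would force coincidence and hence equal affine constants. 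At $\tau_c$ the $A$-gap tends to $0$ at $+\infty$, so touching at infinity is forced, not excluded. One-sided sliding then gives only an ordering of $U_1$ and $U_2^{\tau_c}$, not equality. Your step ``a strict gap contradicts minimality of the sliding parameter'' fails exactly there.

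The missing idea is to slide from both sides. For the reverse ordering $\Psi_2^\tau\ge\Psi_1$, $A_2^\tau\le A_1$, the admissible $\tau$ lie in $[\tau_c,\infty)$, and the same mechanism shows that this sliding also stops exactly at $\tau_c$. That mechanism has three parts:
\begin{itemize}
\item The interior strong maximum principle applies, since the off-diagonal coefficients of the difference system are strictly negative because $\Psi,A>0$.
\item On the left, the maximum principle near $(1,0)$ applies as above.
\item On the right, write the $A$-difference as $w(X)=\ell+\int_X^\infty(s-X)w''(s)\,ds$ with $|w''|$ Gaussian-small; this keeps $w>0$ as long as its limit $\ell$ is positive. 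Then use the maximum principle for the $\Psi$-component, which is coercive where $A>1$.
\end{itemize}
The two orderings at $\tau_c$ give $U_2^{\tau_c}=U_1$. Alternatively, you would have to show directly that two ordered solutions with equal affine constants coincide, which requires comparing Weber coefficients and possibly next-order terms.

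In this argument $\kappa<\sqrt2$ plays no role, and your account of where it enters is also off. At the superconducting end the relevant comparison is the rate $\sqrt2\kappa$ of $1-\Psi$ against the rate $2$ of the forcing $\kappa^2A^2$. So $\kappa<\sqrt2$ means $\sqrt2\kappa<2$; the inequality $\sqrt2\kappa<1$ is the type-I condition. Recessiveness of the Weber tail follows from boundedness of $\Psi$ alone.

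There are also smaller points in the existence part. After normalizing $\Psi(0)=\tfrac12$, a trivial limit is already impossible. What must be excluded is that the half-level point stays at bounded distance from $\pm L$, which would give a half-line limit. This can be done because extension by the pure phases costs no energy, so such a limit would be a whole-line minimizer containing a pure-phase piece, contradicting ODE uniqueness. It does need a lower-semicontinuity argument for a sign-indefinite density. The easiest route is the identity $\frac14(1-\Psi^2)^2+\frac12(A_X-2^{-1/2})^2-\frac14=\frac14(1-\Psi^2)^2+\frac12A_X^2-2^{-1/2}A_X$, which also justifies the replacement $A\mapsto|A|$.

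Finally, two algebra slips:
\begin{itemize}
\item The subtracted constant must be $\frac14$, not $\frac18$; with $\frac18$ both pure phases have density $\frac18$.
\item The first integral is $\kappa^{-2}\Psi_X^2+A_X^2-\frac12(1-\Psi^2)^2-\Psi^2A^2$. With your coefficient $\frac12$ on $A_X^2$ it would give $A_X\to1$ instead of $2^{-1/2}$.
\end{itemize}
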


After undoing the scaling, the CHMO result gives the interface orbit in the normalization used in this paper.

\begin{Theorem}\label{thm:interface}
For every $0<\beta<4$ there exists a unique solution $(\Phi_\beta,\Gamma_\beta)$ of
\eqref{eq:interface} satisfying \eqref{eq:interfaceleft}, \eqref{eq:interfaceright}.  It obeys
\bel{eq:interface-monotone}
        0<\Phi_\beta<1,\qquad \Phi_\beta'>0,\qquad
        \Gamma_\beta>0,\qquad -\sqrt\beta<\Gamma_\beta'<0,\qquad \Gamma_\beta''>0 .
\ee
The Hamiltonian identity
\bel{eq:hamzero}
        \Phi_\beta'^2+\frac{1}{2}\Gamma_\beta'^2
        =
        \Gamma_\beta^2\Phi_\beta^2+\frac{\beta}{2}(1-\Phi_\beta^2)^2
\ee
holds for all $y\in\RR$.
\end{Theorem}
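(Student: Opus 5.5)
The plan is to transport Theorem~\ref{thm:CHMO} through the change of variables \eqref{eq:CHMOscaling} and then read off each assertion in \eqref{eq:interface-monotone} and \eqref{eq:hamzero}.

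\emph{Step 1: the equations correspond.} Set $\kappa^2=\beta/2$; then $0<\beta<4$ is exactly $0<\kappa<\sqrt2$. For $X=-\sqrt2 y$ we have $\partial_y^2=2\partial_X^2$. Writing $\Phi(y)=\Psi(X)$ and $\Gamma(y)=\sqrt\beta A(X)$, \eqref{eq:CHMOeq} gives
\[
\Phi''=2\kappa^2\Psi(\Psi^2-1+A^2)=\beta\Phi(\Phi^2-1)+\beta A^2\Phi=\Gamma^2\Phi+\beta\Phi(\Phi^2-1),
\]
\[
\Gamma''=2\sqrt\beta\,\Psi^2A=2\Gamma\Phi^2 .
\]
The boundary conditions \eqref{eq:CHMObc} at $X\to-\infty$ become \eqref{eq:interfaceright} at $y\to+\infty$. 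At $X\to+\infty$, CHMO's solutions satisfy $A=X/\sqrt2+c+o(1)$. Rescaling gives $\Gamma=-\sqrt\beta y+\sqrt\beta c+o(1)$, so \eqref{eq:interfaceleft} is equivalent to $c=0$, as already computed in the text. The map is a bijection between solution sets, so existence and uniqueness follow from Theorem~\ref{thm:CHMO} after fixing the affine constant.

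\emph{Step 2: the affine constant.} The one subtlety is that \eqref{eq:CHMObc} only imposes $A_X\to2^{-1/2}$, not the expansion $A=X/\sqrt2+c+o(1)$. I would obtain $c$ from the integrable rate: $A_X(X)-2^{-1/2}=-\int_X^\infty\Psi^2A$. This is finite and $O$ of a super-exponentially small quantity, because $\Psi$ decays like a Gaussian once $A^2$ exceeds $1$. Hence $A-X/\sqrt2$ has a limit.

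Conversely, suppose $(\Phi,\Gamma)$ solves \eqref{eq:interface} with \eqref{eq:interfaceleft} and \eqref{eq:interfaceright}. Then $\Gamma'\to-\sqrt\beta$ at $-\infty$. This follows from convexity-type reasoning: $\Gamma''=2\Gamma\Phi^2$ with $\Gamma>0$ eventually and $\Gamma+\sqrt\beta y\to0$ force $\Gamma'$ to be monotone with limit $-\sqrt\beta$. So the pulled-back pair satisfies \eqref{eq:CHMObc}, and uniqueness applies. This step is the main obstacle: one must check the class in which CHMO's uniqueness holds, i.e.\ that our left condition implies theirs. I would handle it with the monotone-limit argument just described.

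\emph{Step 3: the inequalities.} From $0<\Psi<1$ we get $0<\Phi<1$. From $\Phi'=-\sqrt2\Psi_X$ and $\Psi_X<0$ we get $\Phi'>0$. From $A>0$ we get $\Gamma>0$. From $\Gamma'=-\sqrt{2\beta}A_X<0$ we get $\Gamma'<0$. Finally $\Gamma''=2\beta A_{XX}>0$, which can also be read directly from $\Gamma''=2\Gamma\Phi^2>0$. Since $\Gamma'$ is increasing and tends to $-\sqrt\beta$ at $-\infty$, we get $\Gamma'>-\sqrt\beta$.

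\emph{Step 4: the Hamiltonian identity.} Differentiate
\[
H=\Phi'^2+\tfrac12\Gamma'^2-\Gamma^2\Phi^2-\tfrac\beta2(1-\Phi^2)^2 .
\]
This gives
\[
H'=2\Phi'(\Phi''-\Gamma^2\Phi-\beta\Phi(\Phi^2-1))+\Gamma'(\Gamma''-2\Gamma\Phi^2)=0 .
\]
So $H$ is constant. To see that the constant is zero, let $y\to+\infty$. There $\Phi\to1$ and $\Gamma\to0$, and the derivatives tend to zero. This last fact follows either from the exponential tails of the CHMO solution, or from bounded monotone functions with bounded second derivatives (Barbalat). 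Hence $H\equiv0$, which is \eqref{eq:hamzero}.
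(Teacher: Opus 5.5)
Your proposal is correct and follows the paper's route: transport Theorem~\ref{thm:CHMO} through \eqref{eq:CHMOscaling}, then show that $H$ is constant and vanishes at $+\infty$ because $\Phi',\Gamma'\to0$ there. Your additional checks fill in details the paper leaves implicit: that the affine constant exists, that \eqref{eq:interfaceleft} forces $\Gamma'\to-\sqrt\beta$ and hence CHMO's condition $A_X\to2^{-1/2}$, and that $\Gamma'>-\sqrt\beta$ follows from strict convexity. One harmless slip: $\Gamma''=2\sqrt\beta\,A_{XX}$ rather than $2\beta A_{XX}$, which does not affect the sign.
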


\begin{proof}  Existence, uniqueness and monotonicity follow from Theorem~\ref{thm:CHMO} with  the
scaling \eqref{eq:CHMOscaling}.  It remains to verify~\eqref{eq:hamzero}.  From
\eqref{eq:interface} we see that
\[
        H=\Phi'^2+\frac{1}{2}\Gamma'^2
        -\Gamma^2\Phi^2-\frac{\beta}{2}(1-\Phi^2)^2
\]
is constant.  
Because $\Gamma$ is positive, decreasing, and tends to zero, while
$\Gamma''>0$, one has $\Gamma'(y)\to0$ as $y\to+\infty$.  Also
$\Phi'\ge0$ is integrable on $[0,\infty)$ and $\Phi''$ is bounded there by
the ODE.  Hence $\Phi'$ is uniformly continuous and
$\Phi'(y)\to0$.  Together with $(\Phi,\Gamma)\to(1,0)$ this gives $H=0$ as claimed. \end{proof}  

The endpoint estimates needed below are uniform when the coupling remains
in a compact subset of $(0,4)$.  The following lemma suffices for our purposes. 

\begin{Lemma}\label{lem:interfaceUniformTails}
Let $J\Subset(0,4)$ and let $k\ge0$ be fixed.  There are
$c_J,C_{J,k}>0$ and an integer $N_k$ such that, for every $\beta\in J$,
\bel{eq:uniformInterfaceLeft}
 \abs{\partial_y^k\Phi_\beta(y)}
 +\abs{\partial_y^k(\Gamma_\beta(y)+\sqrt\beta\,y)}
 \le C_{J,k}\la y\ra^{N_k}e^{-c_Jy^2},\qquad y\le-1,
\ee
and
\bel{eq:uniformInterfaceRight}
 \abs{\partial_y^k(1-\Phi_\beta(y))}
 +\abs{\partial_y^k\Gamma_\beta(y)}
 \le C_{J,k}e^{-c_Jy},\qquad y\ge1.
\ee
After slightly reducing the constants $c_J$, the corresponding weighted quantities in these estimates depend continuously on $\beta\in J$ in the associated weighted $C^k$ norms.
\end{Lemma}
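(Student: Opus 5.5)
We will treat the two ends separately. In each case we linearize about the limiting state, use the monotonicity and the Hamiltonian identity of Theorem~\ref{thm:interface} to get qualitative convergence, and then bootstrap to weighted decay with constants controlled by compactness of $J$. Derivative bounds of all orders $k$ then follow by differentiating the equations \eqref{eq:interface} and inducting.

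\emph{Right end.} Write $\eta=1-\Phi_\beta$. Then
\[
 \eta''-2\beta\eta=-\Gamma_\beta^2(1-\eta)+\beta(3\eta^2-\eta^3),\qquad \Gamma_\beta''=2\Gamma_\beta(1-\eta)^2 .
\]
Since $\eta,\Gamma_\beta\to0$ by \eqref{eq:interfaceright}, there is $y_0$ with $\eta\le\epsilon$ on $[y_0,\infty)$. For $y\ge y_0$ we have $\Gamma_\beta''\ge(2-4\epsilon)\Gamma_\beta$, and comparison with $e^{-\mu(y-y_0)}$ gives $\Gamma_\beta\le\Gamma_\beta(y_0)e^{-\mu(y-y_0)}$. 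The maximum principle applied to the $\eta$ equation, with forcing $\Gamma_\beta^2$ decaying like $e^{-2\mu y}$ and a coercive linear part $2\beta-O(\epsilon)$, gives $\eta\le Ce^{-\nu y}$. This is the same supersolution argument as in Proposition~\ref{prop:static}, with $1/u$ terms absent. Derivatives follow from the integral representation $\eta'(y)=-\int_y^\infty \eta''$ and the ODE.

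\emph{Left end.} Set $\gamma=\Gamma_\beta+\sqrt\beta y\to0$. We have $\Phi_\beta''=(\Gamma_\beta^2-\beta+\beta\Phi_\beta^2)\Phi_\beta$ with $\Gamma_\beta^2\ge \beta y^2/2$ for $y\le -y_1$. Using $\Phi_\beta>0$, $\Phi_\beta'>0$ and $\Phi_\beta\to0$, we compare with the supersolution $\exp(-c y^2)$ for small $c$ (for instance $c<\sqrt\beta/4$). Its second derivative is $(4c^2y^2-2c)e^{-cy^2}$, which is smaller than $(\beta y^2/2-\beta)e^{-cy^2}$ for $|y|$ large. The maximum principle on $(-\infty,-y_1]$ then yields $\Phi_\beta\le Ce^{-cy^2}$, and $\Phi_\beta'$ is controlled similarly from $(\Phi_\beta'^2)'$ or directly via \eqref{eq:hamzero}. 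Next, $\gamma''=2\Gamma_\beta\Phi_\beta^2$, with the right side bounded by $C\langle y\rangle e^{-2cy^2}$. Since $\gamma\to0$ and $\gamma'=\Gamma_\beta'+\sqrt\beta\to0$ (from $\Gamma_\beta''>0$ and convexity), integrating twice from $-\infty$ gives
\[
|\gamma|+|\gamma'|\le C\langle y\rangle^{N}e^{-2cy^2}.
\]
Higher derivatives follow inductively from the equations, producing the polynomial factors $\langle y\rangle^{N_k}$.

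\emph{Uniformity and continuity in $\beta\in J$.} This is the main obstacle: the constants above depend on $y_0,y_1$ and on $\Gamma_\beta(y_0)$, which come from qualitative limits for each fixed $\beta$. I would first prove that $\beta\mapsto(\Phi_\beta,\Gamma_\beta)$ is continuous in $C^2_{\rm loc}$. Take $\beta_j\to\beta$ in $J$. Local bounds come from $0<\Phi<1$, $-\sqrt\beta<\Gamma'<0$ and the normalization, and Arzel\`a--Ascoli gives a subsequence converging to a solution of \eqref{eq:interface}. To show the limit satisfies the endpoint conditions, the monotonicity and Hamiltonian identity \eqref{eq:hamzero} prevent escape of the transition. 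More precisely, the affine normalization at $-\infty$ requires a uniform bound on $\gamma$, which I would get from the Gaussian bound once a uniform $y_1$ is available. Uniqueness in Theorem~\ref{thm:interface} then identifies the limit, so the full family converges. Uniform $y_0,y_1$ follow by contradiction and this continuity, after which the comparison constants are uniform.

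Finally, slightly reducing $c_J$ lets the weighted differences $e^{c y^2}(\Phi_{\beta_j}-\Phi_\beta)$ and the analogous right-end quantities be dominated uniformly. Their tails are then small while they converge locally, which gives continuity in the weighted $C^k$ norms.
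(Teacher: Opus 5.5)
\textbf{The gap is in the uniformity and continuity step.} Your fixed-$\beta$ estimates at both ends are sound: the Gaussian supersolution on the left, exponential comparison on the right, and double integration for $\Gamma_\beta+\sqrt\beta\,y$. The problem is the part that carries the real content of the lemma. As written, your argument there is circular.

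You want continuity in $\beta$ from compactness of the affinely normalized orbits, and then uniform $y_0,y_1$ from that continuity. But the affinely normalized family has no a priori local bound. Indeed, $\Gamma_\beta(0)=\gamma(0)=\int_{-\infty}^0 2|s|\,\Gamma_\beta\Phi_\beta^2\,ds$ is a first moment of the left tail. Moreover, nothing you have said prevents the transition layer from drifting to $+\infty$ along $\beta_j\to\beta$. In that case any local limit has $\Phi_*\le 1/2$ everywhere and is not the interface. You note yourself that preserving the affine normalization in the limit needs a uniform $y_1$, which you only obtain afterwards from the continuity. The sentence ``monotonicity and the Hamiltonian identity prevent escape of the transition'' is exactly what must be proved, and it is not. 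Note also that $0<\Phi<1$ and $-\sqrt\beta<\Gamma'<0$ alone bound $\Gamma$ at no point.

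\textbf{The missing idea is to separate translation from the affine constant.} The paper pins the translate by $\Phi_\beta(0)=1/2$.
\begin{itemize}
\item Then $-\sqrt{\beta_+}\le\Gamma_\beta'(0)<0$. A blow-up argument bounds $G=\Gamma_\beta(0)$: if $G$ were large, $\Gamma_\beta\ge G/2$ and $\Phi_\beta\ge1/2$ on $[0,A/G]$, so $\Phi_\beta''\ge G^2/16$ there, which forces $\Phi_\beta(A/G)>1$ for large $A$.
\item The Hamiltonian identity at $y=0$ then bounds $\Phi_\beta'(0)$.
\item With bounded Cauchy data, compactness gives a limit with $\Phi_*(0)=1/2$. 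Monotonicity, convexity of $\Gamma_*$, and passing $H=0$ to the limit give the following: the right endpoint behavior, $\Gamma_*'\to-\sqrt{\beta_*}$ at $-\infty$, and a finite (not necessarily zero) limit of $\Gamma_*+\sqrt{\beta_*}\,y$.
\item So the limit is a translate of the CHMO orbit, fixed by $\Phi_*(0)=1/2$. No tail uniformity is used here.
\item Continuity of this family gives a uniform $Y_J$ with $\Gamma_\beta'\le-c_J$ and $\Gamma_\beta\ge c_J|y|$ for $y\le-Y_J$. This yields uniform Gaussian supersolutions on the left.
\item On the right, $\Phi_\beta\ge1/2$ on $[0,\infty)$ together with the bound on $\Gamma_\beta(0)$ gives uniform exponential decay.
\item Only then is the affine constant $b_\beta=\Gamma_\beta(0)-\int_{-\infty}^0\int_{-\infty}^s2\Gamma_\beta\Phi_\beta^2\,dt\,ds$ shown to be continuous, by dominated convergence.
\item Translating by $b_\beta/\sqrt\beta$ transfers the uniform bounds to the affine normalization.
\end{itemize}
After this, your final paragraph on continuity in the weighted $C^k$ norms goes through.
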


\begin{proof}
Fix the translate by imposing $\Phi_\beta(0)=1/2$.  Let $\beta_-:=\min J>0$ and
$\beta_+:=\max J$.  By monotonicity,
\[
-\sqrt{\beta_+}\le \Gamma_\beta'(0)<0.
\]
We set  $G:=\Gamma_\beta(0)$.  If $G\to\infty$, then for any fixed $A>0$ and
$0\le y\le A/G$ one has $\Gamma_\beta(y)\ge G/2$ for $G$ large.  Since
$\Phi_\beta\ge1/2$ on this interval, the scalar equation yields
$\Phi_\beta''\ge G^2/16$, and integrating twice gives $\Phi_\beta(A/G)>1$ for
$A$ large, a contradiction.  Hence $\Gamma_\beta(0)$ is uniformly bounded on
$J$.  Using the Hamiltonian identity at $y=0$,
\[
(\Phi_\beta'(0))^2+\tfrac12(\Gamma_\beta'(0))^2
=\tfrac14\Gamma_\beta(0)^2+\tfrac{9\beta}{32},
\]
we also obtain a uniform bound for $\Phi_\beta'(0)$ and \red{$\Gamma_\beta'(0)$}.

Let $\beta_j\to\beta_*\in J$.  By the uniform  bounds on the Cauchy data, a
subsequence \newline \red{$(\Phi_{\beta_j}(0),\Gamma_{\beta_j}(0))$ of initial data} converges, hence $(\Phi_{\beta_j},\Gamma_{\beta_j})$
converges in $C^{k+2}_{\mathrm{loc}}$ to a solution $(\Phi_*,\Gamma_*)$ of
\eqref{eq:interface} with parameter $\beta_*$ and $\Phi_*(0)=1/2$.
Monotonicity gives limits $ \ell_+:=\lim_{y\to+\infty}\Phi_*$ and
$g_+:=\lim_{y\to+\infty}\Gamma_*$.  Since $\ell_+\ge1/2$, the equation
$\Gamma_*''=2\Phi_*^2\Gamma_*$ rules out $g_+>0$, and the scalar equation then
forces $\ell_+=1$.  Thus, $(\Phi_*,\Gamma_*)\to(1,0)$ as $y\to+\infty$.

On $\mathbb R$, the maximum principle and convexity imply $\Gamma_*>0$ and
$\Gamma_*'<0$ at every finite point: an interior zero would mean that
$\Gamma_*\equiv0$, and an interior critical point would force $\Gamma_*'$ to
vanish on $[y_0,\infty)$, both cases leading to contradictions.

As $y\to-\infty$, the limit $d_-:=\lim\Gamma_*'(y)$ exists since
$\Gamma_*''\ge0$, and $d_-<0$ because $\Gamma_*'<0$ on $\mathbb R$.
\red{For every $\beta_j$ and $y\in\mathbb{R}$, we have
\begin{align*}
  -\sqrt{\beta_j}<\Gamma'_{\beta_j}(y)<0,  
\end{align*}
and letting $j\rightarrow\infty$,
\begin{align*}
  -\sqrt{\beta_\ast}\leq\Gamma'_{\ast}(y)<0 . 
\end{align*}
}
Hence, \red{$d_-$ is finite, and}  $\Gamma_*(y)$ grows linearly to $+\infty$ as $y\to-\infty$.
If $\ell_-:=\lim_{y\to-\infty}\Phi_*(y)$ were positive, then
$\Gamma_*''=2\Phi_*^2\Gamma_*$ would not \red{be} integrable on
$(-\infty,0]$, \red{and $\Gamma'_\ast$ would not have finite limit at $-\infty$.} \red{This means that} $\ell_-=0$.  A Gaussian comparison for the scalar equation
then establishes that 
\[
\Phi_*(y)+|\Phi_*'(y)|+|\Gamma_*(y)\Phi_*(y)|\to0\qquad (y\to-\infty).
\]
Passing to the limit in $H=0$ yields $d_-^2=\beta_*$, hence
$d_-=-\sqrt{\beta_*}$.  Integrating
$$(\Gamma_*(y)+\sqrt{\beta_*}\,y)''=2\Gamma_*(y)\Phi_*^2(y)$$ twice shows that
$\Gamma_*(y)+\sqrt{\beta_*}\,y$ has a finite limit as $y\to-\infty$.  Therefore, 
$(\Phi_*,\Gamma_*)$ is a translate of the CHMO orbit at $\beta_*$, and the
normalization $\Phi_*(0)=1/2$ fixes it uniquely.  It follows that
we do not need to pass to subsequences, and the normalized orbit
depends continuously on $\beta$.

For the tails, note that $\Phi_\beta\ge1/2$ on $[0,\infty)$, so
$\Gamma_\beta''\ge \Gamma_\beta/2$ there.  A standard comparison argument then yields the
exponential bound in \eqref{eq:uniformInterfaceRight}, and the equation for
$1-\Phi_\beta$ gives the remaining term, uniformly for $\beta\in J$.
On the left, fix $\beta_*\in J$ and choose $y_*<0$ with
$\Gamma_{\beta_*}'(y_*)<-\sqrt{\beta_*}/2$.  By  continuity
and a finite cover of $J$, there exist $Y_J,c_J>0$ such that for all
$\beta\in J$,
\[
\Gamma_\beta'(y)\le -c_J,\qquad \Gamma_\beta(y)\ge c_J|y|,\qquad y\le -Y_J.
\]
A Gaussian supersolution for the scalar equation gives the first term in
\eqref{eq:uniformInterfaceLeft}.  Define
\[
 b_\beta:=\lim_{y\to-\infty}\bigl(\Gamma_\beta(y)+\sqrt\beta\,y\bigr),
 \qquad g_\beta(y):=\Gamma_\beta(y)+\sqrt\beta\,y-b_\beta.
\]
Then $g_\beta''=2\Gamma_\beta\Phi_\beta^2$ and
$g_\beta(-\infty)=g_\beta'(-\infty)=0$, so two integrations give the
corresponding bounds for $g_\beta$.  Differentiating the system yields the
bounds for $\partial_y^k$.
In addition,
\[
 b_\beta=\Gamma_\beta(0)-\int_{-\infty}^0\int_{-\infty}^s
 2\Gamma_\beta(t)\Phi_\beta(t)^2\,\dd t\,\dd s,
\]
so $\beta\mapsto b_\beta$ is continuous by dominated convergence.  The
normalized orbit is obtained by translating by $b_\beta/\sqrt\beta$. 
This translation is continuous and uniformly bounded on $J$, so the uniform
estimates apply to the affine normalization.

Finally, fix weights strictly smaller than the Gaussian and exponential rates.
Given $\varepsilon>0$, choose $M$ so that the weighted $C^k$ norms on
$(-\infty,-M]\cup[M,\infty)$ are at most $\varepsilon$, uniformly in $\beta$.
On $[-M,M]$, continuous dependence yields convergence in $C^k$.  This proves
continuity in the weighted $C^k$ norms.
\end{proof}

Recall the interface energy density $e_\beta$, the renormalized surface
energy $s_\beta$, and $\sigma_\beta$ from \eqref{eq:edens}--\eqref{eq:sigma}.
The existence of the limit in \eqref{eq:sbeta} follows already from
Lemma~\ref{lem:interfaceUniformTails}. 

\subsection{The normalized linearization}

The linearized system at the interface orbit is
\bel{eq:linode}
\begin{split}
        \xi''&=(\Gamma_\beta^2+\beta(3\Phi_\beta^2-1))\xi
             +2\Gamma_\beta\Phi_\beta\psi,\\
        \psi''&=4\Gamma_\beta\Phi_\beta\xi+2\Phi_\beta^2\psi .
\end{split}
\ee
We write
\bel{eq:Alinearization}
        \calA_\beta
        \begin{pmatrix}\xi\\ \psi\end{pmatrix}
        =
        \begin{pmatrix}
        \xi''-(\Gamma_\beta^2+\beta(3\Phi_\beta^2-1))\xi-2\Gamma_\beta\Phi_\beta\psi\\
        \psi''-4\Gamma_\beta\Phi_\beta\xi-2\Phi_\beta^2\psi
        \end{pmatrix}.
\ee
Translation \blue{invariance} gives \red{
$\calA_\beta(\Phi_\beta',\Gamma_\beta')^T=0$.}
This bounded mode does not satisfy the following normalization at $-\infty$,
because \red{$\lim_{\substack{y\rightarrow-\infty}}\Gamma_\beta'(y)=-\sqrt\beta$}.  Specifically, we impose the vanishing  
\bel{eq:normalized}
        \psi(-\infty)=0,\qquad \psi'(-\infty)=0 .
\ee
We shall now see that the left admissible data form a line and the right admissible data form a
plane.  Non-degeneracy is therefore equivalent to a maximal rank condition for
a $4\times3$ linear matching map between these spaces, as we shall see below. 

The following lemma constructs the two-dimensional stable subspace (solutions
which decay as $y\to+\infty$) by the Lyapunov--Perron method.  We present the
construction uniformly on compact subintervals of $(0,4)$.

\begin{Lemma}\label{lem:rightstable}
Let $J\Subset(0,4)$ be a compact interval.  There exist numbers
$Y_1<\infty$ and $c>0$ such that the following statements hold.
For every $\beta\in J$ and
$Y\ge Y_1$, the solutions of \eqref{eq:linode} which decay as
$y\to+\infty$ form a two-dimensional plane
\[
        E_\beta^+(Y)\subset\RR^4
\]
of Cauchy data at $y=Y$.  This plane depends continuously on
$\beta$.  It has a continuously chosen basis
$Z_{H,\beta},Z_{G,\beta}$ satisfying
\[
\begin{split}
Z_{H,\beta}(y)
&=e^{-\sqrt{2\beta}y}
  \left((1,0,-\sqrt{2\beta},0)^t+O(e^{-cy})\right),\\
Z_{G,\beta}(y)
&=e^{-\sqrt2 y}
  \left((0,1,0,-\sqrt2)^t+O(e^{-cy})\right),
\end{split}
\qquad y\ge Y.
\]
The estimates are uniform for $\beta\in J$, and remain valid after
differentiating these asymptotics with respect to~$y$.
\end{Lemma}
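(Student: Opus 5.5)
We treat \eqref{eq:linode} as a perturbation, for large $y$, of the constant-coefficient system obtained by inserting the limits $(\Phi_\beta,\Gamma_\beta)\to(1,0)$. That limit system is diagonal:
\[
\xi''=2\beta\,\xi,\qquad \psi''=2\psi .
\]
Its fundamental solutions are $e^{\pm\sqrt{2\beta}y}$ and $e^{\pm\sqrt2 y}$. Write \eqref{eq:linode} as a first-order system $Z'=(A_\infty+B_\beta(y))Z$ with $Z=(\xi,\psi,\xi',\psi')^t$. Here $A_\infty$ has eigenvalues $\pm\sqrt{2\beta}$ and $\pm\sqrt2$, with eigenvectors $(1,0,\pm\sqrt{2\beta},0)^t$ and $(0,1,0,\pm\sqrt2)^t$.

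The perturbation $B_\beta$ collects the entries $\Gamma_\beta^2+3\beta(\Phi_\beta^2-1)$, $2\Gamma_\beta\Phi_\beta$, $4\Gamma_\beta\Phi_\beta$ and $2(\Phi_\beta^2-1)$. By \eqref{eq:uniformInterfaceRight} it satisfies $|B_\beta(y)|\le C_Je^{-c_Jy}$ for $y\ge1$, uniformly for $\beta\in J$. Moreover, by the last assertion of Lemma~\ref{lem:interfaceUniformTails}, $B_\beta$ depends continuously on $\beta$ in the weighted sup norm with weight $e^{c y}$ for any $c<c_J$.

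We construct $Z_{H,\beta}$ and $Z_{G,\beta}$ by Lyapunov--Perron (Levinson) integral equations. Since $\beta\in J\Subset(0,4)$, we have $\sqrt{2\beta}\ne\sqrt2$ unless $\beta=1$, but the two decaying rates may coincide or be close. This matters for the dichotomy splitting, and it is the point we expect to be the main obstacle. The remedy is to use the dichotomy between the \emph{stable} pair and the \emph{unstable} pair, whose gap is at least $2\min(\sqrt{2\beta_-},\sqrt2)>0$, and to handle the two stable rates against each other using only the exponential decay of $B_\beta$.

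Concretely, for $Z_{H}$ set $Z=e^{-\sqrt{2\beta}y}W$ and solve
\[
W(y)=v_H-\int_y^\infty e^{(A_\infty+\sqrt{2\beta})(y-s)}P_{\rm u}\,B_\beta W\,ds+\int_{Y}^{y}e^{(A_\infty+\sqrt{2\beta})(y-s)}P_{\rm s}'B_\beta W\,ds-\int_y^\infty e^{(A_\infty+\sqrt{2\beta})(y-s)}P_{\rm s}''B_\beta W\,ds .
\]
Here $P_{\rm u}$ projects onto the unstable eigenvectors, $P_{\rm s}'$ onto the $e^{-\sqrt{2\beta}y}$ direction itself, and $P_{\rm s}''$ onto the other stable direction. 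We integrate from $+\infty$ whenever the relative exponent $\mu+\sqrt{2\beta}$ is $\ge0$, and from $Y$ otherwise, so that every kernel is bounded by a constant (or by $1$ in the equal-rate case) times $e^{0}$. The factor $e^{-c_Js}$ from $B_\beta$ then makes the map a contraction on $L^\infty([Y,\infty))$ with operator norm $\le C e^{-c_J Y}$, independently of the sign of $\sqrt{2\beta}-\sqrt2$, including $\beta=1$. Choosing $Y_1$ with $Ce^{-c_JY_1}<1/2$ gives a unique fixed point $W=v_H+O(e^{-cy})$ with $v_H=(1,0,-\sqrt{2\beta},0)^t$. The $O(e^{-cy})$ bound comes from inserting $W$ bounded back into the integrals: the $\int_Y^y$ term needs care, but it only involves the same direction, with kernel $1$, so it is a convergent tail if we instead integrate it from $+\infty$ too. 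The same construction with $v_G=(0,1,0,-\sqrt2)^t$ gives $Z_G$. Derivative asymptotics follow by reading off the components $\xi',\psi'$ of $Z$ and, for higher derivatives, from the equation.

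It remains to verify the dimension count. $Z_H(Y)$ and $Z_G(Y)$ are close to $e^{-\sqrt{2\beta}Y}v_H$ and $e^{-\sqrt2Y}v_G$, so they are independent and span a plane. Conversely, any decaying solution has vanishing unstable component in the dichotomy representation, since a nonzero unstable component forces growth like $e^{\min(\sqrt2,\sqrt{2\beta})y}/2$ by the same contraction estimates. Hence the decaying solutions form exactly the plane $E_\beta^+(Y)$ spanned by $Z_H(Y),Z_G(Y)$, and this holds for every $Y\ge Y_1$, since the plane is transported by the flow.

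Continuity in $\beta$ follows because the fixed-point map depends continuously on $\beta$ through $B_\beta$ (weighted continuity from Lemma~\ref{lem:interfaceUniformTails}), through $A_\infty$ and the projections, and through $v_H$. The contraction constants are uniform on $J$, so the fixed points, and hence the basis and the plane, depend continuously on $\beta$.
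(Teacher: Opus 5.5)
Your dichotomy argument for the \emph{plane} is sound: the stable versus unstable pair, the contraction of size $Ce^{-c_JY}$, and the absence of an unstable component for a decaying solution. This is essentially how the paper obtains $E_\beta^+(Y)$.

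The gap is in the normalized basis. Take $\beta<1$ and set $\varepsilon:=\sqrt2-\sqrt{2\beta}>0$. For $Z_H$, your rule integrates the $P_{\rm s}''$-component (the $(0,1,0,-\sqrt2)^t$ direction) from $Y$, with kernel $e^{-\varepsilon(y-s)}$. Your remark that the $\int_Y^y$ term ``only involves the same direction'' overlooks this term. With only $|B_\beta W|\lesssim e^{-c_Js}$, this term is $O(e^{-\min\{c_J,\varepsilon\}y})$.

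This is not an artifact of the estimate. To leading order, the scalar coefficient $f_\beta$ of $P_{\rm s}''B_\beta W$ is a fixed nonzero multiple of $\Gamma_\beta\Phi_\beta>0$. So the coefficient of this direction in $W$ is of the size of
\[
\int_Y^y e^{-\varepsilon(y-s)}\Gamma_\beta(s)\Phi_\beta(s)\,ds
 =e^{-\varepsilon y}\int_Y^y e^{\varepsilon s}\Gamma_\beta(s)\Phi_\beta(s)\,ds ,
\]
and the last integral increases to a positive limit. Thus your $Z_H$ contains a $Y$-dependent multiple of the faster mode. Its ratio to $e^{-\sqrt{2\beta}y}$ decays only like $e^{-\varepsilon y}$, and $\varepsilon\to0$ as $\beta\uparrow1$. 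The same happens to $Z_G$ as $\beta\downarrow1$.

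Your endpoint rule also jumps at $\beta=1$. As $\beta\uparrow1$ this coefficient tends to $\int_Y^y f_1$, while at $\beta=1$ you take $-\int_y^\infty f_1$. The two differ by the nonzero constant $\int_Y^\infty f_1$. So for any $J\ni1$, neither the uniform $O(e^{-cy})$ asymptotics nor the continuity of the basis holds; only the span is continuous. Your argument never uses $\beta<4$, which already signals that something is missing.

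The missing input is that the cross couplings $2\Gamma_\beta\Phi_\beta$ and $4\Gamma_\beta\Phi_\beta$ decay faster than the largest gap between the two stable rates. Comparison in $\Gamma_\beta''=2\Phi_\beta^2\Gamma_\beta$ gives $\Gamma_\beta\le C_Je^{-\lambda y}$ for every $\lambda<\sqrt2$. Moreover $\Delta_*:=\sup_{\beta\in J}|\sqrt{2\beta}-\sqrt2|<\sqrt2$ exactly because $\beta<4$, so one can take $\Delta_*<\lambda<\sqrt2$.

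The paper then weights component $i$ of mode $j$ by $e^{\max\{m_i,m_j\}y}$, with $m_1=\sqrt{2\beta}$ and $m_2=\sqrt2$. It integrates \emph{both} components from $+\infty$ against $\sinh(m_i(s-y))/m_i$. The cross forcing into component $i$ then decays at rate at least $\max\{m_i,m_j\}+\lambda-\Delta_*$. So the integrals converge whichever rate is larger, with the uniform gain $\sigma=\min\{\gamma,\lambda-\Delta_*\}$, where $\gamma$ is the decay rate of the diagonal coefficients. This gives the canonical normalization $\lim_{y\to\infty}e^{m_iy}u_i^{(j)}=\delta_{ij}$, which is independent of $Y$ and continuous in $\beta$ across $\beta=1$.

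Component-dependent weights are also essential here. With a single weight on $W=e^{\sqrt{2\beta}y}Z$, the diagonal term $2(\Phi_\beta^2-1)\psi$ has a coefficient decaying only at rate $\sqrt{2\beta}$. When $\beta\le1/4$, this makes the $\psi$-integral from $+\infty$ ill-defined on bounded $W$.
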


\begin{proof}
Write \eqref{eq:linode} as
\[
        Z'=A_\beta(y)Z,
        \qquad Z=(\xi,\psi,\xi',\psi')^t.
\]
The uniform right-tail estimates for the interface orbit give
\[
        A_\beta(y)=A_\beta^+ +R_\beta^+(y),
        \qquad
        \|\partial_y^jR_\beta^+(y)\|
        \le C_j e^{-\gamma y},
        \quad y\ge Y_1,\quad j=0,1,
\]
for some $\gamma>0$, uniformly for $\beta\in J$, by
Lemma~\ref{lem:interfaceUniformTails}.

Set
\[
        a_\beta=\sqrt{2\beta},
        \qquad b=\sqrt2.
\]
The stable eigenvalues of $A_\beta^+$ are $-a_\beta$ and $-b$,
while its unstable eigenvalues are $a_\beta$ and $b$.  The stable
and unstable spectral subspaces are uniformly separated for
$\beta$ in a compact subset of $(0,\infty)$.  At $\beta=1$ the
two stable eigenvalues coincide, but the geometric multiplicity remains~$2$. 

Let $T_\beta$ be the matrix whose columns are the  
stable eigenvectors followed by the corresponding unstable
eigenvectors.  This matrix and its inverse depend continuously on
$\beta$ and are uniformly bounded on $J$.  In the coordinates
\[
        T_\beta^{-1}Z=\begin{pmatrix}U\\V\end{pmatrix},
        \qquad U,V\in\RR^2,
\]
the equation becomes
\[
 \begin{pmatrix}U\\V\end{pmatrix}'
 =
 \begin{pmatrix}D_s&0\\0&D_u\end{pmatrix}
 \begin{pmatrix}U\\V\end{pmatrix}
 +\widetilde R_\beta(y)
 \begin{pmatrix}U\\V\end{pmatrix},
\]
where
\[
        D_s=\operatorname{diag}(-a_\beta,-b),
        \qquad
        D_u=\operatorname{diag}(a_\beta,b),
\]
and
\[
        \|\partial_y^j\widetilde R_\beta(y)\|
        \le C_j e^{-\gamma y},
        \qquad j=0,1.
\]
Choose
\[
0<\mu<
m_*:=\min_{\beta\in J}\min\{a_\beta,b\}.
\]
For prescribed $U(Y)=U_0$, a solution having no growing component
at $+\infty$ must satisfy
\[
\begin{split}
U(y)
&=e^{D_s(y-Y)}U_0
  +\int_Y^y e^{D_s(y-s)}
       \pi_s\widetilde R_\beta(s)(U,V)(s)\,\dd s,\\
V(y)
&=-\int_y^\infty e^{D_u(y-s)}
       \pi_u\widetilde R_\beta(s)(U,V)(s)\,\dd s,
\end{split}
\]
where $\pi_s,\pi_u$ denote the two coordinate projections.  On the
space with norm
\[
        \|(U,V)\|_{\mu,Y}
        =
        \sup_{y\ge Y}
        e^{\mu(y-Y)}\bigl|(U,V)(y)\bigr|,
\]
the linear part of this map is bounded, and its contraction constant
is at most
\[
        C\int_Y^\infty e^{-\gamma s}\,\dd s
        \le C e^{-\gamma Y}.
\]
After increasing $Y_1$, this number is strictly less than $1$,
uniformly for $\beta\in J$.  A contraction mapping argument that is uniform in $\beta$
gives a unique fixed point for each $U_0$, depending
continuously on $(\beta,U_0)$.  Since $U_0\in\RR^2$ is arbitrary,
the decaying Cauchy data form a two-dimensional plane.

\green{In order to see why} every decaying solution is obtained \green{in this way, we} first note
that \green{any such solution must be} bounded.  Variation of constants in the unstable coordinates
then gives the second integral equation above \green{while the first one follows similarly, taking into account the Cauchy data $U(Y)=U_0$}. \green{When $Y_1$ is large, this resulting }map is also a contraction in the
unweighted supremum norm.  \green{By uniqueness, it follows that the resulting }
bounded solution \green {must coincide} with the weighted fixed point.  \green{Now, the} decay \green{properties of} $\xi$ \green{and} $\psi$, \green{immediately imply that }their equations give $\xi'',\psi''\to0$. \green{Applying the Mean Value Theorem to each interval} $[y,y+1]$ \green{immediately implies that} $\xi',\psi'\to0$.

We next normalize its two modes.  It is important here to retain the
faster decay of the off-diagonal coefficients.  \green{We s}et
\[
 \Delta_*:=\sup_{\beta\in J}|a_\beta-b|<b,
 \qquad \lambda:=\tfrac12(b+\Delta_*),
\]
so that $\Delta_*<\lambda<b$. \green{Since
$\displaystyle \lim_{\substack{y\rightarrow\infty}}\Phi_\beta=1$} \green{uniformly in $\beta\in J$, we deduce that} there \green{exists} $Y_2\ge Y_1$ such that
$2\Phi_\beta^2\ge\lambda^2$ on $[Y_2,\infty)$ for all $\beta\in J$.
Since $\Gamma_\beta>0$, $\Gamma_\beta\to0$, and
$\Gamma_\beta''=2\Phi_\beta^2\Gamma_\beta$, \green{a direct} comparison yields
\[
 0<\Gamma_\beta(y)
 \le\Gamma_\beta(Y_2)e^{-\lambda(y-Y_2)}
 \le C_J e^{-\lambda y},\qquad y\ge Y_2.
\]
To see the comparison directly, \green{we} subtract the middle expression from
$\Gamma_\beta$.  The difference vanishes at $Y_2$ and at infinity
and satisfies $w''-\lambda^2w\ge0$, so it cannot have a positive
maximum.  \green{We can also immediately see that }
$\Gamma_\beta'(y)=-\int_y^\infty2\Phi_\beta(s)^2\Gamma_\beta(s)\,\dd s$,
which gives the same bound for $|\Gamma_\beta'|$.

With $u=(\xi,\psi)^t$, $m_1=a_\beta$, and $m_2=b$, the equations read
\[
 u_i''-m_i^2u_i=\sum_{k=1}^2q_{ik,\beta}(y)u_k,
 \qquad
 (q_{ik,\beta})=
 \begin{pmatrix}
  \Gamma_\beta^2+3\beta(\Phi_\beta^2-1)&2\Gamma_\beta\Phi_\beta\\
  4\Gamma_\beta\Phi_\beta&2(\Phi_\beta^2-1)
 \end{pmatrix}.
\]
\green{For a sufficiently large constant} $M_0$, the \green{previous} estimates give
\[
 |q_{ii,\beta}(y)|\le M_0e^{-\gamma y},\qquad
 |q_{ik,\beta}(y)|\le M_0e^{-\lambda y}\quad(i\ne k),
 \qquad y\ge Y_2.
\]
\green{We s}et $\sigma:=\min\{\gamma,\lambda-\Delta_*\}>0$ and \green{we choose} $Y_1$
to be at least $Y_2$.  For $j=1,2$, \green{we} define
\[
 r_i^{(j)}:=\max\{m_i,m_j\},\qquad
 \|u\|_{j,\beta,Y}
 :=\max_{i=1,2}\sup_{y\ge Y}e^{r_i^{(j)}y}|u_i(y)|.
\]
\green{We c}onsider the linear integral equations
\[
 u_i(y)=\delta_{ij}e^{-m_jy}
 +\int_y^\infty\frac{\sinh(m_i(s-y))}{m_i}
       \sum_{k=1}^2q_{ik,\beta}(s)u_k(s)\,\dd s.
\]
The homogeneous term has norm one.  Since
$r_i^{(j)}\ge m_i$ and
$|r_i^{(j)}-r_k^{(j)}|\le\Delta_*$, the forcing \green{term in the integrand} satisfies
\[
 \left|\sum_{k=1}^2q_{ik,\beta}(s)u_k(s)\right|
 \le2M_0\|u\|_{j,\beta,Y}e^{-(r_i^{(j)}+\sigma)s}.
\]
For $r\ge m_i$ \green{we have the exact identity}
\[
 \int_y^\infty\frac{\sinh(m_i(s-y))}{m_i}
       e^{-(r+\sigma)s}\,\dd s
 =\frac{e^{-(r+\sigma)y}}{(r+\sigma)^2-m_i^2}.
\]
\green{This implies that} the integral operator has norm at most
\[
 \widehat\kappa_Y
 :=\frac{2M_0e^{-\sigma Y}}{\sigma(2m_*+\sigma)}.
\]
\green{By further increasing} $Y_1$\green{, we may ensure that} $\widehat\kappa_Y\le1/2$.
\green{It follows that there} is a unique fixed point $u^{(j)}$, with
$\|u^{(j)}\|_{j,\beta,Y}\le2$.  \green{If we set}
$v_i=u_i^{(j)}-\delta_{ij}e^{-m_jy}$ and
$f_i=\sum_kq_{ik,\beta}u_k^{(j)}$, we obtain
\[
 |f_i(y)|\le4M_0e^{-(r_i^{(j)}+\sigma)y},\qquad
 |v_i(y)|\le
 \frac{4M_0e^{-(r_i^{(j)}+\sigma)y}}
      {(r_i^{(j)}+\sigma)^2-m_i^2}.
\]
\green{By differentiating under the integral sign in} the \green{previous} absolutely convergent integral\green{, we obtain the relations}
\[
 v_i'(y)=-\int_y^\infty\cosh(m_i(s-y))f_i(s)\,\dd s,
 \qquad v_i''=m_i^2v_i+f_i.
\]
In particular,
\[
 |v_i'(y)|\le
 \frac{4M_0(r_i^{(j)}+\sigma)e^{-(r_i^{(j)}+\sigma)y}}
      {(r_i^{(j)}+\sigma)^2-m_i^2},\qquad
 |v_i''(y)|\le C_J e^{-(r_i^{(j)}+\sigma)y}.
\]
All \green{of the }denominators \green{in the previous relation} are bounded \green{from} below by $\sigma(2m_*+\sigma)$.
Thus
\[
 Z_{H,\beta}=(u_1^{(1)},u_2^{(1)},(u_1^{(1)})',(u_2^{(1)})')^t,
 \qquad
 Z_{G,\beta}=(u_1^{(2)},u_2^{(2)},(u_1^{(2)})',(u_2^{(2)})')^t
\]
solve \eqref{eq:linode} and have the stated asymptotics and derivative
bounds, with $c=\sigma$.  Moreover,
\[
 \lim_{y\to\infty}e^{m_i y}u_i^{(j)}(y)=\delta_{ij}.
\]
These identities \green{readily prove linear }independence.  The two solutions therefore span
the decaying plane \green{that we have }already constructed.  \green{This} also \green{proves the} uniqueness of
this coefficient normalization and \green{the} independence \green{with respect to} the choice of $Y$.

We finally \green{prove that} these normalized modes \green{are continuous in} $\beta$ \green{with respect to their defining norms}.  For fixed $j$ \green{we set}
$F_i(y)=e^{r_i^{(j)}y}u_i^{(j)}(y)$.  This identifies \green{all the} spaces above
with the \green{one} of bounded continuous pairs on $[Y,\infty)$.
\green{If we set} $s=y+t$, the conjugated integral operator \green{appearing in the equations for $F_i$ takes the form}
\[
 K_{ik,\beta}^{(j)}(y,t)
 =\frac{\sinh(m_it)}{m_i}
   e^{(r_i^{(j)}-r_k^{(j)})y-r_k^{(j)}t}
   q_{ik,\beta}(y+t).
\]
The \green{previous} bounds \green{for $q_{ik,\beta}$} and $r_i^{(j)}\ge m_i$ \green{readily }imply \green{that}
\[
 |K_{ik,\beta}^{(j)}(y,t)|
 \le\frac{M_0}{2m_*}e^{-\sigma y}e^{-\sigma t},
 \qquad y\ge Y,\quad t\ge0.
\]
\green{The continuous dependence of the interface and of $r_i^{(j)}$ on $\beta$ implies that these kernels} converge uniformly \green{on bounded rectangles} as
$\beta\to\beta_0$.  \green{Our previous} bound \green{also guarantees that the} integrals over $t\ge L$
and the operator norms over $y\ge L$ \green{are uniformly} $O_J(e^{-\sigma L})$.
\green{This will immediately imply } that the conjugated operators, \green{which we shall denote} by
$\widehat{\mathcal L}_\beta$, are continuous in \green{$\beta$} \green{with respect to the} operator norm.

\green{We now turn back to the fixed point integral equations. Their} homogeneous term\green{s are} the constant vector\green{s} $e_j$, so subtract\green{ing}
\green{the} fixed point equations \green{for any fixed index $\displaystyle j=\overline{1,2}$ cancels the homogeneous terms out and} gives
\[
 \|F_\beta-F_{\beta_0}\|_\infty
 \le4\|\widehat{\mathcal L}_\beta
          -\widehat{\mathcal L}_{\beta_0}\|\longrightarrow0.
\]
 \green{By applying the same dominated integral argument to the previous integral }formula for $v_i'$, \green{we immediately deduce that first derivatives are locally uniformly continuous. By using the equations for the second derivatives and repeating the argument, we reach the same conclusion in their case}.  \green{All of these estimates imply the following bound which is uniform on $J$} 
\[
 |Z_{H,\beta}(y)|+|Z_{H,\beta}'(y)|
 +|Z_{G,\beta}(y)|+|Z_{G,\beta}'(y)|\le C_J e^{-m_*y}.
\]
This proves \green{the} continuity of the Cauchy data of the normalized modes.
\end{proof}

\blue{For $y_0<Y$, we write $E_\beta^+(y_0)$ for the image of
$E_\beta^+(Y)$ under the exact flow of \eqref{eq:linode} from $Y$ to
$y_0$.  We next construct the corresponding one-dimensional space of
admissible Cauchy data at the left endpoint.
For a solution
\[
        Z=(\xi,\psi,\xi',\psi')^t
\]
of \eqref{eq:linode}, we say that its scalar component is
\emph{recessive at $-\infty$} if
\[
 \lim_{y\to-\infty}
 \frac{\xi(y)}
 {|y|^{(\sqrt\beta-1)/2}e^{-\sqrt\beta y^2/2}}
\]
exists and is finite.  Under the change of variables
\[
        x=\sqrt2\,\beta^{1/4}y,\qquad \nu=\sqrt\beta,
\]
this is precisely the recessive class for the perturbed Weber equation
constructed later in Lemma~\ref{lem:webermatching}.
}

\begin{Lemma}\label{lem:leftline}
Let $J\Subset(0,4)$ be compact.  For $Y\ge Y_0(J)$ and
$\beta\in J$, the solutions of
\eqref{eq:linode} which are scalar-recessive at $-\infty$ and satisfy
\eqref{eq:normalized} form a one-dimensional space
$E_\beta^-(-Y)\subset\RR^4$ of Cauchy data at $-Y$.  This line depends
continuously on $\beta$.  After normalizing its first coordinate to one,
it is spanned by
\bel{eq:leftEvansLine}
 \ell_\beta^-(-Y)=
 \left(1,O(e^{-cY^2}),m_\beta(Y)+O(Y^{-3}),O(e^{-cY^2})\right)^t,
\ee
where $m_\beta(Y)=\sqrt\beta Y-\frac{\sqrt\beta-1}{2Y}.$
The estimates are uniform for $\beta\in J$.
\end{Lemma}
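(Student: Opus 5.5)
The plan is to build the left admissible solutions by a Volterra integral equation on $(-\infty,-Y]$, integrating from $-\infty$. The key structural fact is that the magnetic component is driven only through the Gaussian-small coefficients $\Gamma_\beta\Phi_\beta$ and $\Phi_\beta^2$. By Lemma~\ref{lem:interfaceUniformTails}, on $y\le -Y_0$ we have $\Gamma_\beta(y)=-\sqrt\beta\,y+O(e^{-cy^2})$, and $\Phi_\beta$, $\Gamma_\beta\Phi_\beta$, $\Phi_\beta^2$ are all $O(\la y\ra^{N}e^{-cy^2})$, uniformly for $\beta\in J$. The scalar equation therefore reads
\[
 \xi''=(\beta y^2-\beta+q_\beta(y))\xi+2\Gamma_\beta\Phi_\beta\psi,
 \qquad |q_\beta(y)|\le C\la y\ra^{N}e^{-cy^2},
\]
which is a Gaussian-small perturbation of the Weber equation $D''=(\beta y^2-\beta)D$.

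First I fix a pair of Weber solutions on $y\le -Y_0$. The recessive one is
\[
 w_-(y)=|y|^{(\sqrt\beta-1)/2}e^{-\sqrt\beta y^2/2}\bigl(1+O(y^{-2})\bigr).
\]
In the variable $x=\sqrt2\beta^{1/4}y$ it is a parabolic cylinder function, and its expansion, differentiable in $y$ and uniform in $\beta\in J$, is classical. The dominant one $w_+$ behaves like $|y|^{-(\sqrt\beta+1)/2}e^{\sqrt\beta y^2/2}$, and I normalize the Wronskian so that $W(w_-,w_+)=1$.

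For the coupled system I then look for solutions of the form
\[
 \xi=w_-+\int_{-\infty}^y G(y,s)\bigl[q_\beta\xi+2\Gamma_\beta\Phi_\beta\psi\bigr](s)\,ds,
 \qquad
 \psi(y)=\int_{-\infty}^y (y-s)\bigl[4\Gamma_\beta\Phi_\beta\xi+2\Phi_\beta^2\psi\bigr](s)\,ds,
\]
where $G(y,s)=w_+(y)w_-(s)-w_-(y)w_+(s)$. The $\psi$-equation builds in the normalization \eqref{eq:normalized}. I work in the weighted norm $\sup_{y\le-Y}\bigl(|\xi|/w_-+|\psi|\bigr)$. The kernel satisfies $|G(y,s)|w_-(s)/w_-(y)\lesssim |s|^{-1}$ for $s\le y$, and all coefficients are Gaussian small. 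The term in $\psi$ is also harmless: the factor $\Gamma_\beta\Phi_\beta$ times $w_+(y)w_-(s)$ remains $O(e^{-cY^2})$ relative to $w_-(y)$ provided $c$ is taken slightly less than the Gaussian rate. Hence the map is a contraction for $Y\ge Y_0(J)$, uniformly in $\beta$, and it has a unique fixed point. The resulting solution is scalar-recessive and normalized, it is nonzero, and it depends continuously on $\beta$ by the continuity statement in Lemma~\ref{lem:interfaceUniformTails}.

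The dimension count is where I expect the main difficulty. I must show that every normalized, scalar-recessive solution is a multiple of this one. Given such a solution, subtract the multiple of the fixed point that matches the limit of $\xi/w_-$. The difference $(\tilde\xi,\tilde\psi)$ then satisfies $\tilde\xi/w_-\to0$ and $\tilde\psi,\tilde\psi'\to0$. I will show that it satisfies the homogeneous version of the integral system, and then conclude $\tilde\xi=\tilde\psi=0$ by the same contraction estimate. To get the homogeneous system, the $\psi$-equation is obtained by integrating twice from $-\infty$, which is legitimate because the forcing is integrable. For $\xi$, variation of constants gives $\tilde\xi=\alpha w_-+\beta' w_++\int G\,[\cdots]$, and recessiveness forces $\beta'=0$. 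The subtle point is that the recessive condition only gives a finite limit, so one must check that the $w_+$-coefficient really vanishes rather than being hidden by the Gaussian-small forcing. I would settle this by showing that the integral term is $o(w_-)$, which together with the growth of $w_+$ forces $\beta'=0$; the condition $\tilde\xi/w_-\to 0$ then gives $\alpha=0$.

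Finally, the Cauchy data are evaluated at $-Y$. The contraction gives $\xi=w_-(1+O(e^{-cY^2}))$ with the same error for $\xi'$ relative to $w_-'$, and $\psi,\psi'=O(e^{-cY^2})\,w_-(-Y)$. Dividing by $\xi(-Y)$ produces the $O(e^{-cY^2})$ entries in \eqref{eq:leftEvansLine}. The third entry is $w_-'/w_-$ at $y=-Y$. Differentiating $\log w_-$ gives
\[
 \frac{w_-'}{w_-}=-\sqrt\beta\,y+\frac{\sqrt\beta-1}{2y}+O(|y|^{-3}),
\]
and the $O(y^{-2})$ correction in $w_-$ differentiates to an $O(y^{-3})$ contribution. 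At $y=-Y$ this equals $\sqrt\beta Y-\frac{\sqrt\beta-1}{2Y}+O(Y^{-3})=m_\beta(Y)+O(Y^{-3})$, uniformly for $\beta\in J$.
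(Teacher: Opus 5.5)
Your overall scheme matches the paper's: Volterra integration from $-\infty$, the recessive Weber solution for $\xi$, double integration for $\psi$ building in \eqref{eq:normalized}, variation of constants for the dimension count, and the logarithmic derivative of the Weber solution for the third entry. The contraction as you set it up fails, however, because of the norm on $\psi$.

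\textbf{Why the coupling term is not small.} The coefficient $\Gamma_\beta\Phi_\beta$ is not ``Gaussian small'' relative to the Weber pair. $\Phi_\beta$ is itself the recessive solution, so $\Phi_\beta(s)\simeq|s|^{(\sqrt\beta-1)/2}e^{-\sqrt\beta s^2/2}$ decays at exactly the rate at which $w_+$ grows. Take $\psi$ bounded but not decaying, e.g.\ a nonzero constant in your ball. The part $-w_-(y)w_+(s)$ of $G$ then contributes
\[
w_+(s)\Gamma_\beta(s)\Phi_\beta(s)\simeq|s|^{-(\sqrt\beta+1)/2+1+(\sqrt\beta-1)/2}=|s|^{0},
\]
so $\int_{-\infty}^yG(y,s)\,2\Gamma_\beta\Phi_\beta\psi\,ds$ diverges. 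Even the $w_+(y)w_-(s)$ part is only $O(|y|^{-1})$ relative to $w_-(y)$, not $O(e^{-cY^2})$. Lowering the Gaussian exponent cannot help, because the cancellation is exact. So your map is not even defined on the space $\sup_{y\le-Y}(|\xi|/w_-+|\psi|)<\infty$, and the asserted contraction does not hold.

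\textbf{How to repair it.} Weight $\psi$ by its true, doubly Gaussian size. The paper does this with $p_\beta=\mathcal G(|d_\beta|D_\beta)\simeq|y|^{\sqrt\beta-2}e^{-\sqrt\beta y^2}$ and the norm $\sup(|\psi|/p_\beta+|\psi'|/p_\beta')$.
\begin{itemize}
\item In this norm $\Gamma_\beta\Phi_\beta\psi/w_-$ carries an extra factor $e^{-\sqrt\beta y^2}$, so the $\psi\to\xi$ block has norm $\varepsilon_Y\to0$.
\item The $\xi\to\psi$ block $\mathcal G(4\Gamma_\beta\Phi_\beta\,\cdot\,)$ is then only $O(1)$ from the $w_-$-weighted space into the $p_\beta$-weighted one.
\item So you need either an unbalanced weighted sum norm, or the paper's device: substitute $\xi=aD_\beta+\mathcal S_\beta(c_\beta\psi)$ into the $\psi$-equation and contract in $\psi$ alone, where the composed operator is $O(\varepsilon_Y)$.
\end{itemize}

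\textbf{The dimension count has the same issue.} Knowing only $\tilde\psi\to0$ does not make $\int_{-\infty}^y w_+\Gamma_\beta\Phi_\beta\tilde\psi\,ds$ converge, so your variation-of-constants formula for $\tilde\xi$ is not yet justified. First integrate $\tilde\psi''=4\Gamma_\beta\Phi_\beta\tilde\xi+2\Phi_\beta^2\tilde\psi$ twice from $-\infty$. Using $\tilde\xi=O(w_-)$ and $\mathcal G(\Phi_\beta^2)\lesssim p_\beta$, this gives $\tilde\psi=O(p_\beta)$; only then is the $w_+$-coefficient argument meaningful.

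With these changes your argument goes through. The Cauchy-data bounds you state, in particular $|\psi|+|\psi'|=O(e^{-cY^2})\,|\xi|$ at $y=-Y$, then follow directly from the $p_\beta$ weight.
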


\begin{proof}
Set
\[
 V_\beta=\Gamma_\beta^2+\beta(3\Phi_\beta^2-1),\qquad
 c_\beta=2\Gamma_\beta\Phi_\beta,
\]
\[
 d_\beta=4\Gamma_\beta\Phi_\beta,\qquad
 r_\beta=2\Phi_\beta^2.
\]
\blue{By Lemma~\ref{lem:webermatching}, after increasing $Y$ if necessary,
there is a unique solution $D_\beta$ of
\[
        D_\beta''(y)=V_\beta(y)D_\beta(y),
        \qquad y\le -Y,
\]
satisfying
\[
 \lim_{y\to-\infty}
 \frac{D_\beta(y)}
 {|y|^{(\sqrt\beta-1)/2}e^{-\sqrt\beta y^2/2}}=1.
\]}
Moreover, $D_\beta$ is positive on $(-\infty,-Y]$.
This normalization depends continuously on $\beta$ in the weighted
topology below.  Under $x=\sqrt2\,\beta^{1/4}y$, the model coefficient
$\beta y^2-\beta$ becomes
\[
 x^2/4-\sqrt\beta/2,
\]
and
\[
 V_\beta(y)-(\beta y^2-\beta)
 =\Gamma_\beta(y)^2-\beta y^2+3\beta\Phi_\beta(y)^2
\]
is bounded by a polynomial times a Gaussian, uniformly for $\beta\in J$.
Lemma~\ref{lem:webermatching}, with $\nu=\sqrt\beta$, gives
\[
 \frac{D_\beta'(-Y)}{D_\beta(-Y)}
 =m_\beta(Y)+O(Y^{-3})
\]
and weighted continuous dependence on $\beta$.

For functions $f,h$ on the left half-line, define
\[
 (\mathcal S_\beta f)(y)=D_\beta(y)\int_{-\infty}^yD_\beta(s)^{-2}
       \int_{-\infty}^sD_\beta(t)f(t)\,dt\,ds,
 \qquad
 (\mathcal Gh)(y)=\int_{-\infty}^y(y-s)h(s)\,ds .
\]
Then $(\partial_y^2-V_\beta)\mathcal S_\beta f=f$,  and $(\mathcal Gh,\mathcal Gh')$ vanish at
$-\infty$.  Set
\[
 p_\beta=\mathcal G(|d_\beta|D_\beta),\qquad
 \|u\|_D=\sup_{y\le-Y}\frac{|u(y)|}{D_\beta(y)},
\]
\[
 \|\psi\|_p=\sup_{y\le-Y}
 \left\{\frac{|\psi(y)|}{p_\beta(y)}
       +\frac{|\psi'(y)|}{p_\beta'(y)}\right\}.
\]
The denominators are positive.  The two-sided Weber tail
\eqref{eq:lefttail}, proved from Theorem~\ref{thm:interface},
Lemma~\ref{lem:interfaceUniformTails}, and
Lemma~\ref{lem:webermatching}, gives, uniformly for $\beta\in J$\footnote{Throughout, $a\simeq b$ for $a,b>0$ means $C^{-1}\le \frac{a}{b}\le C$ for some constant $C$. This constant might depend on parameters, which will be clear from the context.},
\[
\begin{split}
 D_\beta(y)&\simeq  |y|^{a_\beta}e^{-k_\beta y^2/2},\\
 p_\beta'(y)&\simeq  |y|^{2a_\beta}e^{-k_\beta y^2},\\
 p_\beta(y)&\simeq  |y|^{2a_\beta-1}e^{-k_\beta y^2},
 \qquad y\le-Y,
\end{split}
\]
where $k_\beta=\sqrt\beta$ and $a_\beta=(k_\beta-1)/2$.
Indeed,
\[
|d_\beta(y)|D_\beta(y)\simeq 
|y|^{2a_\beta+1}e^{-k_\beta y^2},
\]
and the last two estimates follow by one and two integrations from
$-\infty$. 
These bounds imply
\[
 \|\mathcal S_\beta(c_\beta\psi)\|_D
 \le\varepsilon_Y\|\psi\|_p,
 \qquad
 \|\mathcal G(r_\beta\psi)\|_p
 \le\varepsilon_Y\|\psi\|_p,
\]
\[
 \|\mathcal G(d_\beta\mathcal S_\beta(c_\beta\psi))\|_p
 \le\varepsilon_Y\|\psi\|_p,\qquad \varepsilon_Y\longrightarrow0.
\]
Each estimate follows from
\[
 \int_{-\infty}^y\langle s\rangle^Ne^{-cs^2}\,ds
 \le C_N\langle y\rangle^{N-1}e^{-cy^2},
\]
uniformly for $\beta\in J$.  For every scalar coefficient $a$, the
equation
\[
 \psi=\mathcal G\red{(}d_\beta(aD_\beta+
        \mathcal S_\beta(c_\beta\psi))+r_\beta\psi\red{)}
\]
is a contraction when $Y$ is large.  Set
\[
\xi=aD_\beta+\mathcal S_\beta(c_\beta\psi).
\]
Variation of constants shows that every normalized recessive solution has
this form.  The admissible space is therefore one-dimensional.  At
$-Y$, the same estimates give
\[
 \frac{|\psi(-Y)|+|\psi'(-Y)|}{|\xi(-Y)|}\le Ce^{-cY^2},
 \qquad
 \left|\frac{\xi'(-Y)}{\xi(-Y)}-m_\beta(Y)\right|
 \le C(Y^{-3}+e^{-cY^2}).
\]
Uniform contraction and weighted continuity prove continuous dependence on
$\beta$.
\end{proof}
\green{It turns out that t}he signs of the derivatives of the interface \green{allow us} to
identify all \green{of the }bounded solutions of the linearized equations.  The \green{following} argument
\green{uses only the }monotonicity \green{and the second-order equations of the interface components.}

\begin{Lemma}\label{lem:boundedInterfaceKernel}
Let $0<\beta<4$.  Every bounded solution $(\xi,\psi)$ of
\eqref{eq:linode} on $\RR$ is a constant multiple of
$(\Phi_\beta',\Gamma_\beta')$.  In particular, a bounded solution
satisfying \eqref{eq:normalized} vanishes identically.
\end{Lemma}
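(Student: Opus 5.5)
The plan is to symmetrize the linearization, use the componentwise positive translation mode as a ground state, and apply a Picone-type (ground state) identity with a cutoff. Put $\zeta=-\psi/\sqrt2$. Then \eqref{eq:linode} becomes
\[
 \xi''=V_\beta\xi-2\sqrt2\,\Gamma_\beta\Phi_\beta\,\zeta,\qquad
 \zeta''=-2\sqrt2\,\Gamma_\beta\Phi_\beta\,\xi+2\Phi_\beta^2\zeta,
\]
where $V_\beta=\Gamma_\beta^2+\beta(3\Phi_\beta^2-1)$. This system is $u''=M(y)u$ with $u=(\xi,\zeta)^t$ and $M$ symmetric. The off-diagonal entry $m_{12}=-2\sqrt2\,\Gamma_\beta\Phi_\beta$ is strictly negative, since $\Gamma_\beta,\Phi_\beta>0$.

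The translation mode becomes $w=(w_1,w_2)=(\Phi_\beta',-\Gamma_\beta'/\sqrt2)$. By \eqref{eq:interface-monotone} it is componentwise strictly positive on $\RR$ and satisfies $w''=Mw$. We use the quadratic form
\[
 Q(v)=\int\bigl(|v'|^2+v\cdot Mv\bigr)\,dy
\]
for compactly supported $v$.

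First I will prove the ground-state identity. Write $v_1=w_1f$ and $v_2=w_2g$ with $f,g$ compactly supported. Integrate by parts and use $w_i''=\sum_k m_{ik}w_k$. The diagonal terms reorganize, and one obtains
\[
 Q(v)=\int\Bigl(w_1^2f'^2+w_2^2g'^2+2\sqrt2\,\Gamma_\beta\Phi_\beta\,w_1w_2\,(f-g)^2\Bigr)\,dy .
\]
The sign of $m_{12}$ is what makes the last term nonnegative. I expect the main care to be needed in checking this algebra: the cross terms $2m_{12}w_1w_2fg$ combine with the pieces $m_{12}w_1w_2(f^2+g^2)$ generated from $w_i''$ to give $-m_{12}w_1w_2(f-g)^2$. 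This is presumably the identity \eqref{eq:interfacePositiveRepresentation} mentioned in the introduction.

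Next comes the cutoff step. Let $u$ be a bounded solution and let $\chi_L$ be a cutoff equal to $1$ on $[-L,L]$, supported in $[-2L,2L]$, with $|\chi_L'|\le 1/L$. Since $u''=Mu$, integration by parts gives the exact localization identity
\[
 Q(\chi_Lu)=\int(\chi_L')^2|u|^2\,dy\le \frac{4}{L}\,\sup|u|^2 .
\]
Apply the ground-state identity to $v=\chi_Lu$, for which $f=\chi_L\xi/w_1$ and $g=\chi_L\zeta/w_2$. These are smooth because $w>0$. On $[-L,L]$ the integrand equals that of $\xi/w_1$ and $\zeta/w_2$. Letting $L\to\infty$, monotone convergence shows
\[
 \int_\RR\Bigl(w_1^2\bigl((\xi/w_1)'\bigr)^2+w_2^2\bigl((\zeta/w_2)'\bigr)^2+2\sqrt2\,\Gamma_\beta\Phi_\beta w_1w_2(\xi/w_1-\zeta/w_2)^2\Bigr)=0 .
\]
Because all weights are positive, $\xi/w_1$ and $\zeta/w_2$ are constants, and the two constants are equal. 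Hence $u=cw$, that is, $(\xi,\psi)=c(\Phi_\beta',\Gamma_\beta')$.

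Only boundedness of $u$ enters this argument; no decay is needed, and the rate $1/L$ is what makes it work. Finally, suppose the solution also satisfies \eqref{eq:normalized}. Then $\psi=c\Gamma_\beta'\to -c\sqrt\beta$ as $y\to-\infty$, by Lemma~\ref{lem:interfaceUniformTails}. Since $\psi(-\infty)=0$, this forces $c=0$, so the solution vanishes identically.
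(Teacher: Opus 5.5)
Your proposal is correct and is essentially the paper's proof: the substitution $\zeta=-\psi/\sqrt2$ combines the paper's weighting by $\operatorname{diag}(2,1)$ with its sign conjugation by $S=\operatorname{diag}(1,-1)$, so your $Q$ is exactly half of the paper's $\mathscr Q_\beta$. From there you use the same ground-state representation built on the positive translation mode, the same cutoff identity $Q(\chi_L u)=\int(\chi_L')^2|u|^2\,dy$, and the same limit $\Gamma_\beta'(-\infty)=-\sqrt\beta$; the only nit is that a smooth cutoff needs $|\chi_L'|\le C/L$ rather than $1/L$, which changes nothing.
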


\begin{proof}
\green{We f}ix $\beta$ and abbreviate $\Phi=\Phi_\beta$, $\Gamma=\Gamma_\beta$.
The differential expression
\bel{eq:symmetricInterfaceOperator}
 \mathfrak L_\beta
 :=-\begin{pmatrix}2&0\\0&1\end{pmatrix}\calA_\beta
 =\begin{pmatrix}
 -2\partial_y^2+2(\Gamma^2+\beta(3\Phi^2-1))&4\Gamma\Phi\\
 4\Gamma\Phi&-\partial_y^2+2\Phi^2
 \end{pmatrix}
\ee
\green{describes a} symmetric \green{operator} on \green{the space of} compactly supported functions.  \green{We set}
\[
 S=\operatorname{diag}(1,-1),\qquad
 p=\Phi'>0,\qquad q=-\Gamma'>0,\qquad c=4\Gamma\Phi>0.
\]
Differentiating \green{the interface equations} \eqref{eq:interface} shows that
\[
 \widetilde{\mathfrak L}_\beta:=S\mathfrak L_\beta S
 =\begin{pmatrix}-2\partial_y^2+V_1&-c\\-c&-\partial_y^2+V_2\end{pmatrix},
 \qquad
 \widetilde{\mathfrak L}_\beta\binom{p}{q}=0,
\]
where $V_1=2(\Gamma^2+\beta(3\Phi^2-1))$ and $V_2=2\Phi^2$.
Since $p,q$ are positive at every finite point, it follows that
\bel{eq:positiveSolutionPotentials}
        V_1=2\frac{p''}{p}+c\frac{q}{p},
        \qquad
        V_2=\frac{q''}{q}+c\frac{p}{q}.
\ee
For $v=(v_1,v_2)\in C_c^\infty(\RR;\RR^2)$\green{, we} define
\[
 \mathscr Q_\beta[v]
 =\int_{\RR}
 2(v_1')^2+(v_2')^2+V_1v_1^2+V_2v_2^2-2cv_1v_2\,dy.
\]
The scalar identity
\[
 (v')^2+\frac{p''}{p}v^2
 =p^2\left(\left(\frac{v}{p}\right)'\right)^2
   +\left(\frac{p'}{p}v^2\right)'
\]
and its counterpart for $q$, together with \green{the identity}
\[
 c\frac{q}{p}v_1^2+c\frac{p}{q}v_2^2-2cv_1v_2
 =cpq\left(\frac{v_1}{p}-\frac{v_2}{q}\right)^2,
\]
give the exact representation
\bel{eq:interfacePositiveRepresentation}
\begin{split}
 \mathscr Q_\beta[v]
 ={}&2\int_{\RR}p^2\left(\left(\frac{v_1}{p}\right)'\right)^2dy
 +\int_{\RR}q^2\left(\left(\frac{v_2}{q}\right)'\right)^2dy\\
 &+\int_{\RR}cpq\left(\frac{v_1}{p}-\frac{v_2}{q}\right)^2dy.
\end{split}
\ee
\green{Here, t}he integrated derivatives \green{vanish upon integration} \green{due to their} compact support \green{property}.  In particular,
$\mathscr Q_\beta[v]\ge0$.

\green{Let} $U=(\xi,\psi)$ \green{be} a bounded solution of
\eqref{eq:linode}, and \green{we} set $v=SU$.  \green{We know that}
$\widetilde{\mathfrak L}_\beta v=0$.  \green{We c}hoose
$\chi\in C_c^\infty(\RR)$ such that $0\le\chi\le1$, $\chi=1$ on
$[-1,1]$, and $\operatorname{supp}\chi\subset[-2,2]$, and \green{we set}
$\chi_T(y)=\chi(y/T)$.  Testing the equation against $\chi_T^2v$
and integrating by parts gives
\bel{eq:interfaceKernelCutoff}
 \mathscr Q_\beta[\chi_Tv]
 =\int_{\RR}(\chi_T')^2(2v_1^2+v_2^2)\,dy
 \le \frac{C}{T}\|v\|_{L^\infty(\RR)}^2.
\ee
Indeed, \green{on one hand} the derivative terms \green{resulting from testing $\chi_T^2v$ against the} equation are
\begin{align*}
    2\chi_T^2(v_1')^2+4\chi_T\chi_T'v_1v_1'
 +\chi_T^2(v_2')^2+2\chi_T\chi_T'v_2v_2',
\end{align*}\green{ while}
expanding the derivatives in $\mathscr Q_\beta[\chi_Tv]$ leaves \green{us with} the integral displayed in \eqref{eq:interfaceKernelCutoff}.

\green{We f}ix a bounded interval $I$.  For $T$ large enough, \green{we have }$\chi_T=1$ on $I$.
By \eqref{eq:interfacePositiveRepresentation}, each of the three
nonnegative integrals over $I$ is bounded by the right\green{-hand} side of
\eqref{eq:interfaceKernelCutoff}.  Letting $T\to\infty$ \green{implies that}
\[
 \left(\frac{v_1}{p}\right)'=0,\qquad
 \left(\frac{v_2}{q}\right)'=0,\qquad
 \frac{v_1}{p}=\frac{v_2}{q}
 \quad\hbox{on }I.
\]
As $I$ was arbitrar\green{ily chosen}, \green{it immediately follows that there exists a \green{real} constant $a$ such that} $v=a(p,q)$ on $\RR$,
and \green{that} $U=a(\Phi',\Gamma')$.  The condition
$\psi(-\infty)=0$ and the limit \green{condition} $\Gamma'(-\infty)=-\sqrt\beta$
\green{immediately imply that} $a=0$\green{, hence $U=0$, as desired.}   

\green{We note that no integrability assumptions on the solution $U$ or on the translation mode were needed. In particular, the nonzero limit of $q$ at the left endpoint didn't generate any boundary terms in our argument.}
\end{proof}

At $\beta=1$ the additional first-order equations yield a square
completion and determine the surface energy exactly.  We retain this
calculation for its later use in the energy formula.

\begin{Lemma}\label{lem:BPSkernel}
At $\beta=1$ the normalized operator has trivial kernel in the endpoint class described by
Lemmas~\ref{lem:rightstable} and~\ref{lem:leftline}.
\end{Lemma}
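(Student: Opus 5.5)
At $\beta=1$ the plan is to reduce the statement to Lemma~\ref{lem:boundedInterfaceKernel}, with a square-completion argument as an independent confirmation. Let $Z$ be a solution of \eqref{eq:linode} whose Cauchy data at $-Y$ lie in $E_1^-(-Y)$ and whose data at $Y$ lie in $E_1^+(Y)$. I will show that $Z$ is bounded on $\RR$.

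\textbf{Boundedness.}
\begin{itemize}
\item On $[Y,\infty)$, Lemma~\ref{lem:rightstable} writes $Z$ as a combination of $Z_{H,1}$ and $Z_{G,1}$. Both decay like $e^{-\sqrt2 y}$.
\item On $(-\infty,-Y]$, the proof of Lemma~\ref{lem:leftline} writes $\xi=aD_1+\mathcal S_1(c_1\psi)$ with $\|\psi\|_p<\infty$. So $\xi$ is bounded by a Gaussian and $\psi$ by $p_1$, which is also a Gaussian.
\item On $[-Y,Y]$, boundedness is automatic.
\end{itemize}
Hence $Z$ is bounded, and it satisfies the normalization \eqref{eq:normalized}. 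Lemma~\ref{lem:boundedInterfaceKernel}, applied at $\beta=1\in(0,4)$, then forces $Z\equiv0$. This is the main argument, and it uses only results already established.

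\textbf{The BPS structure.} Since the surrounding text says the first-order structure is retained for the energy formula, I will also record it. At $\beta=1$ the interface satisfies the planar Bogomolny equations
\[
\Phi'=\Gamma\Phi,\qquad \Gamma'=\Phi^2-1.
\]
This is the limit of \eqref{eq:BPS} under \eqref{eq:boundaryvars}. To check it, note that the pair solving these first-order equations with the endpoint conditions also solves \eqref{eq:interface}. Uniqueness in Theorem~\ref{thm:interface}, fixed by the normalization at $-\infty$, then identifies it with $(\Phi_1,\Gamma_1)$.

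Linearizing, set
\[
\mathcal B(\xi,\psi)=\bigl(\xi'-\Gamma\xi-\Phi\psi,\ \psi'-2\Phi\xi\bigr).
\]
After weighting with $\operatorname{diag}(2,1)$, the quadratic form of $\mathfrak L_1$ should satisfy
\[
\mathscr Q_1[v]=\int 2|\mathcal B_1 v|^2+\ldots,
\]
that is, $\mathfrak L_1=\mathcal B^*W\mathcal B$ up to the sign conjugation $S$. I will verify this by expanding, using the first-order equations to cancel the cross terms. This is the routine but error-prone step.

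With the factorization, any solution in the endpoint class decays fast enough for an integration by parts with no boundary terms. Gaussian decay on the left is needed here, because the $\psi$-normalization removes the linear growth of the translation mode. The integration by parts gives $\mathcal B Z=0$. This first-order system has a two-dimensional solution space containing $(\Phi',\Gamma')$. The translation mode is excluded by $\psi(-\infty)=0$. The other solution is unbounded at one end, which I will read off from the constant-coefficient limits.

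\textbf{Main obstacle.} The only delicate point is justifying the integration by parts against the endpoint classes: decay at $+\infty$ from Lemma~\ref{lem:rightstable}, and Gaussian weights at $-\infty$ from Lemma~\ref{lem:leftline}. The cutoff estimate \eqref{eq:interfaceKernelCutoff} handles this exactly as in the proof of Lemma~\ref{lem:boundedInterfaceKernel}, so the first route already settles the claim.
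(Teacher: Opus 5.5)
Your main argument is correct, but it is not the paper's route. Boundedness of an endpoint-class solution is immediate: it is Gaussian on the left by the representation in the proof of Lemma~\ref{lem:leftline}, and decays like $e^{-\sqrt2 y}$ on the right by Lemma~\ref{lem:rightstable}. Lemma~\ref{lem:boundedInterfaceKernel} at $\beta=1$ then finishes. This is precisely the first step of the proof of Proposition~\ref{prop:nearBPS} specialized to $\beta=1$. There is no circularity, since Lemma~\ref{lem:boundedInterfaceKernel} rests only on the interface equations and the CHMO monotonicity, via \eqref{eq:interfacePositiveRepresentation} with $p=\Phi'$, $q=-\Gamma'$.

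The paper instead argues from the Bogomolny structure. It builds the BPS orbit by quadrature with $\gamma(\varphi)=(\varphi^2-1-2\log\varphi)^{1/2}$ and identifies it with $(\Phi_1,\Gamma_1)$ through Theorem~\ref{thm:interface}. It then completes the square, $\frac12\int\langle\calA_1(\xi,\psi),(-2\xi,-\psi)\rangle\,dy=\int\bigl[(\xi'-\Gamma\xi-\Phi\psi)^2+\frac12(\psi'-2\Phi\xi)^2\bigr]dy$. The boundary terms $[\Gamma\xi^2+2\Phi\xi\psi-\xi\xi'-\frac12\psi\psi']$ vanish directly by the endpoint decay, so no cutoff is needed. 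A zero mode therefore solves the first-order system. That system's stable space at $+\infty$ is one-dimensional (eigenvalues $\pm\sqrt2$) and spanned by $(\Phi',\Gamma')$, which the condition $\psi(-\infty)=0$ excludes.

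Your route is shorter and shows the lemma is a special case of Proposition~\ref{prop:nearBPS}. The paper's route is independent of the general kernel lemma and makes explicit the factorization $\mathfrak L_1=\mathcal B^*W\mathcal B$ that you only sketch. Its explicit orbit also gives $s_1=0$, which is used in Theorem~\ref{thm:main}.

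Two small remarks on your secondary sketch. First, it presupposes that the first-order system has a solution with the interface endpoint conditions; the paper's quadrature supplies this. Second, the translation mode does not grow linearly at $-\infty$: its magnetic component tends to $-1$, and it is $\Gamma$ itself that grows linearly.
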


\begin{proof}
At $\beta=1$, the interface satisfies
\bel{eq:BPSinterface}
        \Phi'=\Gamma\Phi,\qquad \Gamma'=\Phi^2-1 .
\ee
To establish this directly, set
\[
 \gamma(\varphi)=\bigl(\varphi^2-1-2\log\varphi\bigr)^{1/2},
 \qquad 0<\varphi<1.
\]
The scalar equation
\[
 \Phi'=\Phi\gamma(\Phi),\qquad \Gamma=\gamma(\Phi)
\]
has a solution increasing from $0$ to $1$ on $\RR$, because
$dy/d\Phi=(\Phi\gamma(\Phi))^{-1}$ has divergent integrals at both
endpoints.  Differentiating yields
\[
 \gamma'(\varphi)=\frac{\varphi^2-1}{\varphi\gamma(\varphi)},
 \qquad
 \Gamma'=\gamma'(\Phi)\Phi'=\Phi^2-1.
\]
Thus the pair solves \eqref{eq:BPSinterface}.  The affine constant at the
normal end exists since
\[
 \int_{-\infty}^{y_0}\Phi(y)^2\,\dd y
 =\int_0^{\Phi(y_0)}\frac{\varphi}{\gamma(\varphi)}\,\dd\varphi<\infty
\]
and $(\Gamma+y)'=\Phi^2$.  The integral is finite at zero because
$\gamma(\varphi)\sim\sqrt{-2\log\varphi}$.  If
$\ell=\lim_{y\to-\infty}\{\Gamma(y)+y\}$, then replacing the orbit by
\[
        (\Phi(y+\ell),\Gamma(y+\ell))
\]
sets the affine constant to zero.  Theorem~\ref{thm:interface} identifies
the resulting solution of \eqref{eq:interface} with
$(\Phi_1,\Gamma_1)$.
The surface energy satisfies
\bel{eq:BPSsurface}
\begin{split}
        &\Phi'^2+\frac{1}{2}\Gamma'^2+\Gamma^2\Phi^2
        +\frac{1}{2}(1-\Phi^2)^2\\
        &\qquad =
        (\Phi'-\Gamma\Phi)^2
        +\frac{1}{2}(\Gamma'-(\Phi^2-1))^2
        +\frac{d}{dy}\{\Gamma(\Phi^2-1)\}.
\end{split}
\ee
Along the BPS orbit both squares vanish.  Since
$\Gamma(y)(\Phi(y)^2-1)=-|y|+o(1)$ at $-\infty$ and tends to zero at
$+\infty$, integration of \eqref{eq:BPSsurface} in
\eqref{eq:sbeta} gives $s_1=0$.
For a homogeneous solution in the endpoint class, set
\[
\calQ_1[\xi,\psi]=\frac12\int_{\RR}
\langle\calA_1(\xi,\psi),(-2\xi,-\psi)\rangle\,dy.
\]
Integration by parts gives
\bel{eq:BPSfactor}
        \calQ_1[\xi,\psi]
        =
        \int_{\RR}\left[
        (\xi'-\Gamma\xi-\Phi\psi)^2
        +\frac{1}{2}(\psi'-2\Phi\xi)^2
        \right]\dd y .
\ee
\red{To see this, we note that on} a finite interval $[a,b]$, the boundary contribution is
\[
 \left[\Gamma\xi^2+2\Phi\xi\psi-\xi\xi'
       -\frac{1}{2}\psi\psi'\right]_a^b.
\]
It vanishes at $+\infty$ by exponential decay.  At $-\infty$, the
scalar component is recessive and satisfies $|y|\xi(y)\to0$.  The
normalized magnetic component and its derivative tend to zero, and the
derivative estimates in
Lemmas~\ref{lem:rightstable} and~\ref{lem:leftline} give the same conclusion.
This proves \eqref{eq:BPSfactor}. 
A homogeneous zero mode therefore satisfies
\bel{eq:BPSlin}
        \xi'=\Gamma\xi+\Phi\psi,\qquad
        \psi'=2\Phi\xi .
\ee
At $+\infty$, \eqref{eq:BPSlin} has eigenvalues $\pm\sqrt2$, so its
stable space is one-dimensional.  Differentiating
\eqref{eq:BPSinterface} shows that the nonzero decaying solution
$(\Phi',\Gamma')$ spans it.  Hence every zero mode in the normalized endpoint class
is a multiple of this translation mode.  Since
$\Gamma'(y)\to-1$ at $-\infty$, the normalization
$\psi(-\infty)=0$ excludes it.
\end{proof}

\begin{Proposition}\label{prop:nearBPS}
Let $J\Subset(0,4)$ be a compact interval.  For every $\beta\in J$,
the normalized homogeneous problem
\eqref{eq:linode}, \eqref{eq:normalized} has no nonzero solution which is
scalar-recessive at $-\infty$ and decays at $+\infty$.
There is $K_0=K_0(J)$ such that, for every fixed $K\ge K_0$,
the linear matching map $\RR^3\longrightarrow\RR^4$ given by
\bel{eq:EvansMatchingMap}
 \mathcal M_{\beta,K}(a,b_1,b_2)
 =T_\beta(K,-K)a\ell_\beta^-(-K)
  -b_1Z_{H,\beta}(K)-b_2Z_{G,\beta}(K)
\ee
has rank three for every $\beta\in J$.  Moreover, there is
$c_{J,K}>0$ such that
\bel{eq:uniformMatchingRank}
        |\mathcal M_{\beta,K}z|\ge c_{J,K}|z|,
        \qquad \beta\in J,\quad z\in\RR^3.
\ee
Here $T_\beta(K,-K)$ is the transfer matrix of
\eqref{eq:linode}.  The same rank condition then holds at every other
matching point by invertibility of the exact flow.
\end{Proposition}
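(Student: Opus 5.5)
The plan is to reduce everything to Lemma~\ref{lem:boundedInterfaceKernel}, and then upgrade the pointwise rank statement to the uniform bound \eqref{eq:uniformMatchingRank} by continuity and compactness of $J$.

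\emph{Step 1: no nonzero homogeneous solution.} Let $(\xi,\psi)$ solve \eqref{eq:linode}, satisfy \eqref{eq:normalized}, be scalar-recessive at $-\infty$, and decay at $+\infty$. Boundedness on $\RR$ is immediate from these endpoint conditions. On $[Y,\infty)$ the solution decays. On $(-\infty,-Y]$ it lies in the line of Lemma~\ref{lem:leftline}, so $|\xi|\lesssim D_\beta$ and $|\psi|\lesssim p_\beta$ are Gaussian-small. On the compact middle interval it is continuous. Lemma~\ref{lem:boundedInterfaceKernel} therefore shows that $(\xi,\psi)=a(\Phi_\beta',\Gamma_\beta')$, and $\psi(-\infty)=0$ together with $\Gamma_\beta'(-\infty)=-\sqrt\beta\ne0$ forces $a=0$.

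\emph{Step 2: rank three at a fixed $\beta$ and $K$.} Take $K\ge K_0(J):=\max(Y_0(J),Y_1(J))$, so that Lemmas~\ref{lem:rightstable} and~\ref{lem:leftline} apply at $\pm K$. Suppose $\mathcal M_{\beta,K}(a,b_1,b_2)=0$.
\begin{itemize}
\item The solution $W$ defined by transporting $a\ell_\beta^-(-K)$ with the exact flow satisfies the left normalized recessive condition.
\item Its Cauchy data at $K$ equal $b_1Z_{H,\beta}(K)+b_2Z_{G,\beta}(K)$. By uniqueness for the linear ODE, $W$ coincides on $[K,\infty)$ with $b_1Z_{H,\beta}+b_2Z_{G,\beta}$, and hence decays.
\end{itemize}
Step~1 gives $W\equiv0$, so $a\ell_\beta^-(-K)=0$. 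Since the first coordinate of $\ell_\beta^-$ is $1$, this gives $a=0$. Then $b_1Z_{H,\beta}(K)+b_2Z_{G,\beta}(K)=0$, and the linear independence of $Z_{H,\beta}(K)$ and $Z_{G,\beta}(K)$ gives $b_1=b_2=0$. Independence holds because $Z_{H,\beta},Z_{G,\beta}$ are independent solutions, and a solution vanishing in Cauchy data at one point vanishes identically. Hence $\mathcal M_{\beta,K}$ is injective, that is, of rank three.

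\emph{Step 3: uniformity in $\beta$ (the main point to check).} Set
\[
 m(\beta):=\min_{|z|=1}|\mathcal M_{\beta,K}z|,
\]
which is positive by Step~2. I would show that $\beta\mapsto\mathcal M_{\beta,K}$ is continuous into $4\times3$ matrices; then $m$ is continuous and attains a positive minimum $c_{J,K}$ on the compact set $J$.
\begin{itemize}
\item $\ell_\beta^-(-K)$ is continuous by Lemma~\ref{lem:leftline}, since its first coordinate is normalized to $1$.
\item $Z_{H,\beta}(K)$ and $Z_{G,\beta}(K)$ are continuous by the last part of Lemma~\ref{lem:rightstable}.
\item The transfer matrix $T_\beta(K,-K)$ is continuous by continuous dependence of ODE solutions on coefficients over the compact interval $[-K,K]$. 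This uses that the coefficients $\Gamma_\beta,\Phi_\beta$ depend continuously on $\beta$ in $C^0([-K,K])$, which is part of Lemma~\ref{lem:interfaceUniformTails}.
\end{itemize}
The constant $c_{J,K}$ may depend on $K$, which is allowed.

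\emph{Step 4: other matching points.} The final sentence of the statement follows because moving the matching point from $(-K,K)$ to $(-K',K')$ composes the line and the plane with invertible transfer matrices, which preserves rank.
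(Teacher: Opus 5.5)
Your proposal is correct and follows essentially the same route as the paper. In both, the joined left/right solution is bounded, so Lemma~\ref{lem:boundedInterfaceKernel} and the normalization $\psi(-\infty)=0$ force it to vanish, and independence of the endpoint data then gives injectivity. The uniform constant comes from continuity in $\beta$ of $\ell_\beta^-(-K)$, $Z_{H,\beta}(K)$, $Z_{G,\beta}(K)$ and $T_\beta(K,-K)$ plus compactness; the paper minimizes $|\mathcal M_{\beta,K}z|$ directly over $J\times\{|z|=1\}$, which is equivalent to your continuity argument for $m(\beta)$.
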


\begin{proof}
\green{We first note that any} solution \green{with the aforementioned endpoint properties must be bounded on} $\RR$.  \green{Indeed, this immediately follows since} its
scalar component decays at the left endpoint, its magnetic component
there tends to zero, and both components decay at the right endpoint.
\green{By} Lemma~\ref{lem:boundedInterfaceKernel}\green{, any such solution must be zero}.
\green{Let} $K_0(J)$ \green{be chosen} so that both endpoint constructions apply uniformly on
$J$.  \green{We assume} that
\[
        \mathcal M_{\beta,K}(a,b_1,b_2)=0
\]
\green{for some coefficients $a$, $b_1$, and $b_2$.}

\green{We can immediately see that} the left solution
with coefficient $a$ and the right solution with coefficients $(b_1,b_2)$
have identical Cauchy data at $K$.  Uniqueness for the initial-value
problem \green{allows us to join them and construct} a bounded normalized solution on $\RR$, which\green{,} by the preceding paragraph\green{, must be identically zero}.  The \green{linear} independence of the chosen
endpoint data \green{immediately implies that} $a=b_1=b_2=0$.  Conversely, \green{any} nonzero solution
in the endpoint class would provide a nonzero vector in this kernel.
This proves \green{the desired }rank\green{-}three \green{property} and the asserted equivalence.

\green{We note that f}or fixed $K$, the endpoint constructions and the transfer matrix
depend continuously on $\beta$.  \green{This means that the function}
$(\beta,z)\rightarrow|\mathcal M_{\beta,K}z|$ is positive and continuous on
\[
        J\times\{z\in\RR^3:|z|=1\}.
\]
Its minimum on this compact set is \green{well-defined and} positive and gives the
\green{desired} constant $c_{J,K}$ in \eqref{eq:uniformMatchingRank}.
\end{proof}

In the remainder of the paper the parameter may range in any fixed compact
interval $J\Subset(0,4)$.  The constants in the estimates and the lower
bounds required on $n$ may depend on this interval.  The non-degeneracy
\green{property we have }just proved is uniform in this sense.  It does not assert a positive
spectral gap for an operator on \green{the} unweighted $L^2(\RR)$ \green{space}. \green{This property will be crucial later on in the proof of the finite-interval estimate \eqref{eq:finiteWindowOverdet}.}

\section[Localizing the critical radius $R_n$ by minimization]{Localizing the critical radius \texorpdfstring{$R_n$}{Rn} by minimization}\label{sec:locRn}

As noted before, we do not rely on the Euler-Lagrange equations~\eqref{eq:vortex} and the boundary conditions~\eqref{eq:bc} alone. Rather, as we shall now see, our analysis will depend crucially on the minimization of the tension. 
The first consequence of minimality is the following coarse localization. Throughout this section, we fix some compact interval $J\Subset(0,\infty)$, and all results will hold uniformly for $\beta\in J$. 

\begin{Proposition}\label{prop:coarseloc}
As $n\to\infty$, 
\bel{eq:coarseloc}
       \frac{R_n}{R_n^{(0)}}\longrightarrow 1,
       \qquad R_n^{(0)}=\sqrt2\,\beta^{-1/4}n^{1/2}.
\ee
Equivalently,
\bel{eq:bncoarse}
       b_n:=\frac{2n}{R_n^2}\longrightarrow \sqrt\beta .
\ee
\end{Proposition}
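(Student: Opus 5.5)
The plan is to squeeze the radial tension between matching upper and lower bounds of the form $\sqrt\beta\,n(1+o(1))$. From these, the magnetic and potential parts $\calM_n,\calV_n$ (the first and last terms in \eqref{eq:energy}) must each be $\frac{\sqrt\beta}{2}n(1+o(1))$. Then $R_n$ is read off from an exact integration-by-parts identity for $\calM_n$ that involves only the quadratic coefficient of $a_n$ at the origin. Write $\calK_n=\int u(\phi_n')^2\dd u$ and $\calC_n=n^2\int a_n^2\phi_n^2u^{-1}\dd u$ for the kinetic and coupling terms, so that $\calT_n=\calM_n+\calK_n+\calC_n+\calV_n$.

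\emph{Upper bound.} Take the competitor $a=1-u^2/R^2$ on $[0,R]$ and $a=0$ afterwards, and $\phi=0$ on $[0,R]$, rising linearly to $1$ on $[R,R+1]$ and equal to $1$ afterwards. This pair lies in $\mathscr A_n$ after a trivial smoothing. The coupling term vanishes identically, and the bulk contributes \eqref{eq:bulkenergy}. The layer $[R,R+1]$ contributes $O(R)$ from the kinetic and potential terms. With $R=R_n^{(0)}$, minimality gives $\calT_n\le\sqrt\beta\,n+O(n^{1/2})$, uniformly for $\beta\in J$.

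\emph{Lower bound and equipartition.} First, minimality under the dilations $u\mapsto u/\lambda$ gives $\calM_n=\calV_n$: the magnetic and potential terms scale as $\lambda^{-2}$ and $\lambda^{2}$, and the other two terms are invariant. This is differentiation at $\lambda=1$, proved directly here. Fix $\varepsilon\in(0,1)$ and let $\rho_\varepsilon$ be the point with $a_n(\rho_\varepsilon)=\varepsilon$; it is unique by \eqref{eq:monotone}.

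By Cauchy--Schwarz on $[0,\rho_\varepsilon]$,
\[
\int_0^{\rho_\varepsilon}\frac{(a')^2}{u}\dd u\ \ge\ \frac{(1-\varepsilon)^2}{\int_0^{\rho_\varepsilon}u\dd u},
\qquad\text{so}\qquad
\calM_n\ge \frac{n^2(1-\varepsilon)^2}{\rho_\varepsilon^2}.
\]
On $[0,\rho_\varepsilon]$ we have $a_n\ge\varepsilon$, so the coupling term gives $\int_0^{\rho_\varepsilon}\phi_n^2u^{-1}\dd u\le \calT_n/(\varepsilon^2n^2)=O(n^{-1})$. Hence
\[
\int_0^{\rho_\varepsilon}u(1-(1-\phi_n^2)^2)\dd u\le 2\rho_\varepsilon^2\int_0^{\rho_\varepsilon}\phi_n^2u^{-1}\dd u=\rho_\varepsilon^2\,O(n^{-1}),
\]
which yields $\calV_n\ge\frac\beta4\rho_\varepsilon^2(1-O(n^{-1}))$. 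Combining, $\calT_n\ge\calM_n+\calV_n\ge 2\sqrt{\calM_n\calV_n}\ge\sqrt\beta\,n(1-\varepsilon)(1-o(1))$. Together with the upper bound and $\varepsilon\downarrow0$, this gives $\calT_n=\sqrt\beta n(1+o(1))$. Using $\calM_n=\calV_n$, it also gives $\calM_n=\calV_n=\frac{\sqrt\beta}{2}n(1+o(1))$ and $\calK_n+\calC_n=o(n)$. The equipartition $\calM_n=\calV_n$ is what converts the AM--GM lower bound into individual control of $\calM_n$.

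\emph{Identifying $R_n$.} Put $g_n=a_n'/u$; as in Proposition~\ref{prop:static}, $g_n(0)=-2/R_n^2$, $g_n'=2a_n\phi_n^2/u$, and $g_n\to0$ at infinity. Integrating $\int a_n'g_n$ by parts, with boundary terms justified by \eqref{eq:frob} and \eqref{eq:coarseRadialTail}, gives the exact identity
\[
\calM_n=\frac{n^2}{2}\int_0^\infty a_n'g_n\dd u=\frac{n^2}{R_n^2}-\calC_n .
\]
Hence $\calM_n\le n^2/R_n^2\le\calM_n+\calC_n\le \calM_n+o(n)$. Both sides are $\frac{\sqrt\beta}{2}n(1+o(1))$, so $n^2/R_n^2=\frac{\sqrt\beta}{2}n(1+o(1))$. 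This is exactly \eqref{eq:bncoarse}, and taking square roots gives \eqref{eq:coarseloc}.

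The main obstacle is justifying the virial identity $\calM_n=\calV_n$ rigorously. One must check that the dilated pair stays in $\mathscr A_n$, which holds since dilation preserves \eqref{eq:bc} and the integrability of each term. One must also check that $\lambda\mapsto\calT_n[\phi_{n,\lambda},a_{n,\lambda}]=\lambda^{-2}\calM_n+\calK_n+\calC_n+\lambda^2\calV_n$ is differentiable at its minimum $\lambda=1$, which is immediate once the four finite terms are separated. The remaining steps only need uniformity of the $o(1)$ terms in $\beta\in J$, which holds because all constants above depend continuously on $\beta$.
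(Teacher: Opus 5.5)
Your proof is correct, and it takes a different route from the paper's.

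\textbf{The paper's route.} The paper does not separate $\calM_n$ from $\calV_n$ at this stage, and it does not use the virial identity here. It inserts the magnetic identity \eqref{eq:magneticCouplingIdentity} first, so that $\calT_n\ge n^2/R_n^2+\calV_n$. It then bounds $\calV_n$ from below directly in terms of $R_n$. For this it uses the pointwise comparison \eqref{eq:anLowerBound}, $a_n\ge(1-u^2/R_n^2)_+$, which comes from the monotonicity of $a_n'/u$, and the coupling term, to force $\phi_n$ to be small on $[0,(1-L^{-1})R_n]$. This needs a preliminary two-sided bound $R_n^2\simeq n$ and an auxiliary cutoff $L$ sent to infinity after $n$. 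The conclusion comes from $(x_n-x_n^{-1})^2\to0$ with $x_n=R_n/R_n^{(0)}$.

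\textbf{Your route.} You work at a level set $\rho_\varepsilon$ of $a_n$. Because $\rho_\varepsilon$ cancels in the product $\calM_n\calV_n$, you obtain $\calT_n=\sqrt\beta\,n(1+o(1))$ and $\mathcal K_n+\mathcal C_n=o(n)$ with no a priori information about $R_n$. The magnetic identity enters only at the end, to convert $\calM_n$ into $n^2/R_n^2$.

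\textbf{What each buys.} Your argument is shorter and avoids both the comparison \eqref{eq:anLowerBound} and the double limit. The paper's argument produces \eqref{eq:RtwosidedCoarse} and \eqref{eq:anLowerBound} along the way, and both are reused in the core barrier of Lemma~\ref{lem:coreGaussianBarrier}.

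\textbf{On the virial step.} The virial identity is not the crux you describe, and it is not what gives individual control of $\calM_n$. Near-equality in AM--GM already yields $(\calM_n-\calV_n)^2=(\calM_n+\calV_n)^2-4\calM_n\calV_n\le\beta n^2(2\varepsilon+o(1))$. Hence $\calM_n=\frac{\sqrt\beta}{2}n(1+o(1))$ holds without it. The exact identity $\calM_n=\calV_n$ is proved in the paper exactly as you propose, in Lemma~\ref{lem:scalingVirial}. What it buys is a better rate, because it is a first-order condition. With it and $\varepsilon\sim n^{-1/3}$ you get $b_n=\sqrt\beta+O(n^{-1/3})$. A purely quadratic deficit, as in the paper's $(x_n-x_n^{-1})^2$ or in your argument without the virial step, gives only $O(n^{-1/6})$.
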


\begin{proof}  Choose a fixed smooth
cutoff $\chi$ with $\chi(t)=0$ for $t\le0$, $\chi(t)=1$ for $t\ge1$, and $0\le\chi\le1$, and set
\[
        a_R(u)=\left(1-\frac{u^2}{R^2}\right)_+,\qquad
        \phi_R(u)=\chi(u-R).
\]
 These comparison functions belong to the admissible class~$\calA_n$ and give
\bel{eq:trialUpperBound}
        \calT_n\Le \frac{n^2}{R^2}+\frac{\beta}{4}R^2+\red{C_{J}}R .
\ee
Choosing $R=R_n^{(0)}$ implies that $\calT_n\Le\sqrt\beta\,n+C_JR_n^{(0)}$.
For the lower bound we introduce the magnetic field 
\bel{eq:magneticField}
        \mathfrak B_n(u)=-\frac{n}{u}a_n'(u)>0 .
\ee
The magnetic equation gives $\mathfrak B_n'(u)=-(2n/u)a_n(u)\phi_n(u)^2\Le0$ and
\[\int_0^\infty \mathfrak B_n(u)u\,du=n.\]    Since $a_n(0)=1$, $\mathfrak B_n(0)=2n/R_n^2$, and
$a_n\mathfrak B_n\to0$ at infinity,
\[
\begin{split}
        \int_0^\infty \frac{n^2}{u}a_n(u)^2\phi_n(u)^2\,du
        &=-\frac{n}{2}\int_0^\infty a_n(u)\mathfrak B_n'(u)\,du\\
        &=-\frac{n}{2}[a_n\mathfrak B_n]_{0}^{\infty}
          +\frac{n}{2}\int_0^\infty a_n'(u)\mathfrak B_n(u)\,du\\
        &=\frac{n^2}{R_n^2}
          -\int_0^\infty \frac{1}{2}\mathfrak B_n(u)^2u\,du .
\end{split}
\]
Thus \red{we have the magnetic identity}
\bel{eq:magneticCouplingIdentity}
        \int_0^\infty \frac{1}{2}\mathfrak B_n(u)^2u\,du
        +\int_0^\infty \frac{n^2}{u}a_n(u)^2\phi_n(u)^2\,du
        =\frac{n^2}{R_n^2}.
\ee
In particular $n^2/R_n^2\le\calT_n\le C_Jn$, and hence
\bel{eq:RlowerCoarse}
        R_n^2\Ge \red{c_J}n .
\ee
Also,
\bel{eq:anLowerBound}
        a_n(u)=1-\frac{1}{n}\int_0^u\mathfrak B_n(s)s\,ds
        \Ge \left(1-\frac{u^2}{R_n^2}\right)_+ .
\ee
Indeed, Proposition~\ref{prop:static} shows that $u\mapsto a_n'(u)/u$ is increasing on $(0,\infty)$ and that
$$\lim_{u\to0_+} a_n'(u)/u=-2/R_n^2.$$  Hence $a_n'(u)\ge -2u/R_n^2$ for all $u>0$, and integrating from $0$ gives
$a_n(u)\ge 1-u^2/R_n^2$.  Since $a_n\ge0$, this implies \eqref{eq:anLowerBound}.
Fix $L\gg1$ independent of $n$ and put $\theta_L=1-L^{-1}$.  On $0<u<\theta_LR_n$ one has
$a_n(u)\ge c/L$ for some $c>0$ independent of $n$.  The second term in the energy \eqref{eq:magneticCouplingIdentity} therefore yields
\[
        \frac{c n^2}{L^2}\int_0^{\theta_LR_n}\frac{\phi_n(u)^2}{u}\,du\Le \calT_n\Le C_J n
\]
and consequently
\[
        \int_0^{\theta_LR_n}\phi_n(u)^2u\,du
        \Le R_n^2\int_0^{\theta_LR_n}\frac{\phi_n(u)^2}{u}\,du
        \Le C_J L^2 \frac{R_n^2}{n} .
\]
Hence
\[
\begin{split}
        \int_0^\infty \frac{\beta}{2}u(1-\phi_n^2)^2\,du
        &\Ge \int_0^{\theta_LR_n}\frac{\beta}{2}u(1-2\phi_n^2)\,du  \\
        &\Ge \frac{\beta}{4}\theta_L^2R_n^2-C_JL^2\frac{R_n^2}{n}\\
        & \Ge \frac{\beta}{4}R_n^2 - C_J L^{-1} R_n^2 - C_JL^2\frac{R_n^2}{n}.
\end{split}
\]
For fixed $L$, the last term is absorbed into the leading $R_n^2$ term for all sufficiently large~$n$.  Together with
\eqref{eq:magneticCouplingIdentity} and \eqref{eq:trialUpperBound} this implies
\[
        \frac{n^2}{R_n^2}+c_{\red{J},L}R_n^2\Le C_Jn,
\]
and therefore, after choosing one large fixed value of $L$,
\bel{eq:RtwosidedCoarse}
        c_Jn\Le R_n^2\Le C_Jn .
\ee
The constants in \eqref{eq:RtwosidedCoarse} are now uniform in $n$ and depend only on $\red{J}$ (after fixing a single large value of $L$ in the preceding step).  With this coarse bound in hand, we may vary $L$ in the potential estimate above without revisiting \eqref{eq:RtwosidedCoarse}. In fact,  by \eqref{eq:RtwosidedCoarse} the resulting error is $O_J(L^2)$.
Combining \eqref{eq:magneticCouplingIdentity} with this refined bound\red{, along with the previous bounds \eqref{eq:RtwosidedCoarse} and $\displaystyle\mathcal{T}_n\leq \sqrt{\beta}n+C_JR_n^{(0)}$,} gives
\[
        \frac{n^2}{R_n^2}+\frac{\beta}{4}R_n^2
        \Le \sqrt\beta\,n+C_JR_n^{(0)}+C_JL^{-1}R_n^2+C_JL^2 .
\]
Here,  the role of $L$ is only to obtain a relative localization.  The term $C_JL^2$ is harmless because $L$ is fixed while $n\to\infty$, whereas the term $C_JL^{-1}R_n^2$ is the only error on the leading scale and is removed only after this first limit by sending $L\to\infty$.
Define 
\[
 x_n:=\frac{R_n}{R_n^{(0)}},
 \qquad (R_n^{(0)})^2=\frac{2n}{\sqrt\beta}.
\]
Then
\[
 \frac{n^2}{R_n^2}=\frac{\sqrt\beta\,n}{2}\,x_n^{-2},
 \qquad
 \frac{\beta}{4}R_n^2=\frac{\sqrt\beta\,n}{2}\,x_n^{2},
\]
and hence
\[
 \frac{n^2}{R_n^2}+\frac{\beta}{4}R_n^2
 =\frac{\sqrt\beta\,n}{2}\,(x_n^{-2}+x_n^2), 
\]
as well as 
\[
 \frac{n^2}{R_n^2}+\frac{\beta}{4}R_n^2-\sqrt\beta\,n
 =\frac{\sqrt\beta\,n}{2}\,(x_n-x_n^{-1})^2.
\]
Subtract $\sqrt\beta\,n$ from the preceding energy inequality to obtain
\[
 \frac{\sqrt\beta\,n}{2}\,(x_n-x_n^{-1})^2
 \Le C_JR_n^{(0)}+C_JL^{-1}R_n^2+C_JL^2.
\]
\red{We d}ivide by $n$.  Since $\beta\in \red{J}\Subset(0,\infty)$, one has
$R_n^{(0)}/n=O_J(n^{-1/2})$, and \eqref{eq:RtwosidedCoarse} gives
$R_n^2/n\Le C_J$.  Consequently
\[
 (x_n-x_n^{-1})^2\Le C_J\left(n^{-1/2}+L^{-1}+\frac{L^2}{n}\right).
\]
For each fixed $L$, let $n\to\infty$ to get
\[
 \limsup_{n\to\infty}(x_n-x_n^{-1})^2\Le \frac{C_J}{L}.
\]
The constant $C_J$ is independent of the cutoff $L$, so letting $L\to\infty$
yields $x_n-x_n^{-1}\to0$.  Since $x_n>0$ one has the exact identity
\[
 |x_n-1|=|x_n-x_n^{-1}|\,\frac{x_n}{x_n+1}
 \le |x_n-x_n^{-1}|,
\]
and therefore $x_n\to1$, i.e., 
\[
 \frac{R_n}{R_n^{(0)}}\to1.
\]
Finally,
\[
 b_n=\frac{2n}{R_n^2}=\frac{2n}{(R_n^{(0)})^2}\,\frac{(R_n^{(0)})^2}{R_n^2}
     =\sqrt\beta\,x_n^{-2}\to\sqrt\beta,
\]
which is \eqref{eq:bncoarse}. \end{proof}

\subsection{Core estimates and compactness of the transition profile}

Following~\cite[Figure 5]{DG}, we distinguish between the {\em core} $u<R_n$ (which Dumitrescu and Gaikwad split into the {\em deep core} and {\em outer core}), followed by the {\em boundary or transition layer}, which we \blue{later show is localized to an $O(1)$ neighborhood of $u=R_n$} (equivalently $y=u-R_n=O(1)$), and which is then followed to the right by the {\em exterior} domain. 
We now establish the estimates we will repeatedly use when passing between these regions: (i) a Gaussian-type core barrier showing that $\phi_n$ is already exponentially small a distance $\eta\gg1$ to the left of $u=R_n$; (ii) ``moving'' expansions\footnote{``Moving'' refers to working in the shifted coordinate $\eta=R_n-u$, i.e., in a frame centered at the $n$-dependent radius $u=R_n$ \blue{(later shown to center the transition layer)}.}
 for the magnetic quantities $a_n$ and $\Gamma_n^-$ in terms of $\eta=R_n-u$ on the window $L_0\le\eta\le R_n^\alpha$\red{, where $\Gamma^{-}_n$ is defined in \eqref{eq:PhiGamma_def}}; and (iii) a centering or compactness step on fixed $O(1)$ windows around the interface that produces a limiting transition (boundary-layer) profile.  Later matching arguments may use this limit, but its existence comes solely from (i)--(iii).

To analyze the vortex profiles to the left of the interface, throughout this section we define $u=R_n-\eta$, $R_n\ge\eta\ge0$ 
\bel{eq:PhiGamma_def}
        \Phi_n^-(\eta):=\phi_n(R_n-\eta),
        \qquad
        \Gamma_n^-(\eta):=\frac{n}{R_n-\eta}a_n(R_n-\eta).
\ee
Our first core estimate is a barrier argument establishing Gaussian decay of the Higgs field to the left of the interface.

\begin{Lemma}\label{lem:coreGaussianBarrier}
Fix $0<\alpha<1$.  There are constants
$L_0,\red{c_J},\red{C_J}>0$ \red{depending on $J$} such that, for all $n\red{\geq n_{J,\alpha}}$ and all
$L_0\le \eta\le R_n^\alpha$,
\bel{eq:coreGaussianBarrier}
       \Phi_n^-(\eta)
       \Le \red{C_J} e^{-\red{c_J}\eta^2} .
\ee
Consequently, if $\eta_*=R_n^\alpha$, then for every $A>0$,
\bel{eq:innerAgmonEndpoint}
        \Phi_n^-(\eta_*)\le C_{A\red{,J}} R_n^{-A} .
\ee
\end{Lemma}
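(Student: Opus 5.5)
Work in the variable $\eta=R_n-u$ and use the Higgs equation written for $\Phi_n^-$,
\[
 (\Phi_n^-)''-\frac{1}{R_n-\eta}(\Phi_n^-)'=Q_n^-(\eta)\,\Phi_n^-,\qquad
 Q_n^-(\eta)=\frac{n^2a_n(u)^2}{u^2}+\beta(\phi_n^2-1),\quad u=R_n-\eta.
\]
The plan is a comparison (maximum principle) argument on the window $[L_0,R_n^\alpha]$ against an explicit Gaussian supersolution. The first step is a lower bound for the potential. By \eqref{eq:anLowerBound}, $a_n(u)\ge 1-u^2/R_n^2=\eta(2R_n-\eta)/R_n^2$, so
\[
 \frac{n a_n(u)}{u}\ge \frac{n\,\eta(2R_n-\eta)}{R_n^2(R_n-\eta)}\ge \frac{2n}{R_n^2}\,\eta=b_n\eta .
\]
Proposition~\ref{prop:coarseloc} gives $b_n\ge \sqrt\beta/2$ for $n\ge n_J$, uniformly on $J$. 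Using $\phi_n<1$, we get $Q_n^-(\eta)\ge \tfrac{\beta}{4}\eta^2-\beta\ge \tfrac{\beta}{8}\eta^2$ once $\eta\ge L_0$ with $L_0$ large, uniformly for $\beta\in J$. The restriction $\eta\le R_n^\alpha\ll R_n$ makes the first-order coefficient $1/(R_n-\eta)$ at most $2/R_n$, so it is negligible.

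Next, take the barrier $w(\eta)=C e^{-c\eta^2}$ with $c$ small, say $c=\sqrt{\beta_-}/8$. Then
\[
 w''-\frac{1}{R_n-\eta}w'-Q_n^-w\le\big(4c^2\eta^2+2c\eta/(R_n-\eta)-\tfrac{\beta}{8}\eta^2\big)w<0
\]
on the window for $\eta\ge L_0$ and $n$ large, so $w$ is a strict supersolution. The difference $z=\Phi_n^--w$ satisfies $z''-\frac{1}{R_n-\eta}z'-Q_n^-z\ge0$ wherever $\Phi_n^-$ solves its equation, with $Q_n^->0$. Hence $z$ cannot have a positive interior maximum, and it suffices to check the two endpoints.

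At $\eta=L_0$ we use $\Phi_n^-\le1$, and choose $C=e^{cL_0^2}$ so that $w(L_0)=1$. At the inner endpoint $\eta=R_n^\alpha$ no a priori bound is available except $\Phi_n^-\le1$, and this is the main obstacle. I would avoid it by extending the comparison to the whole interval $L_0\le\eta\le R_n$, i.e.\ $0<u\le R_n-L_0$. On that region one needs a global supersolution compatible with the $u\to0$ behavior. The natural choice uses monotonicity instead: $\phi_n$ is increasing (Proposition~\ref{prop:static}), so on $[R_n^\alpha,R_n]$ we have $\Phi_n^-\le\Phi_n^-(\eta)$ for smaller $\eta$ anyway. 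So it is enough to run the maximum principle on $[L_0,2R_n^\alpha]$, where at the inner endpoint $z\le 1-w$ is useless.

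Instead, I would use a supersolution that also absorbs the inner endpoint:
\[
 w=C\big(e^{-c\eta^2}+e^{-c\,(2\eta_1-\eta)^2}\big),\qquad \eta_1=2R_n^\alpha,
\]
i.e.\ a reflected Gaussian that equals at least $C\ge1$ at $\eta_1$. Each term is a supersolution on the window, because for the second term $(2\eta_1-\eta)^2\le 4\eta^2\cdot$const there and $c$ is small, while $Q_n^-\gtrsim \eta^2$. At $\eta=\eta_1$ we have $w\ge1\ge\Phi_n^-$, and on $[L_0,R_n^\alpha]$ the reflected term is at most $e^{-cR_n^{2\alpha}}\le e^{-c\eta^2}$. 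This yields \eqref{eq:coreGaussianBarrier} with $2C$.

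Finally, \eqref{eq:innerAgmonEndpoint} is immediate: $e^{-cR_n^{2\alpha}}\le C_{A,J}R_n^{-A}$ for every $A$. The requirement $\eta\le 2R_n^\alpha<R_n/2$ needs $n\ge n_{J,\alpha}$.
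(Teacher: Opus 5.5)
Your potential bound is correct; in fact $na_n/u\ge b_n\eta$ holds on all of $0\le\eta<R_n$, which is cleaner than the paper's two-case argument. The comparison on the window is also correct. The gap is at the inner endpoint: you flag it as the main obstacle but do not resolve it.

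\textbf{The reflected barrier fails at both ends.}
\begin{itemize}
\item At $\eta=\eta_1$ you have $2\eta_1-\eta=\eta_1$, so $e^{-c(2\eta_1-\eta)^2}$ equals $e^{-c\eta_1^2}$ there, not something $\ge1$. Hence $w(\eta_1)=2e^{-c(\eta_1^2-L_0^2)}\ll 1$. The only a priori information is $\Phi_n^-(\eta_1)\le1$, so the endpoint inequality $w(\eta_1)\ge\Phi_n^-(\eta_1)$ is essentially the Gaussian bound you are trying to prove.
\item The claim that the second term is a supersolution is also false. Its $w_2''/w_2=4c^2(2\eta_1-\eta)^2-2c$ is about $16c^2\eta_1^2\sim R_n^{2\alpha}$ near $\eta=L_0$, whereas $Q_n^-(L_0)=O(L_0^2)$. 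The inequality $(2\eta_1-\eta)^2\lesssim\eta^2$ fails whenever $\eta\ll\eta_1$.
\item The monotonicity remark does not supply an endpoint bound either.
\end{itemize}

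\textbf{What is missing} is either a \emph{growing} barrier or no inner endpoint at all.

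The quickest repair is $v(\eta)=e^{c(\eta^2-\eta_1^2)}$. Since $v'>0$, the drift term $-v'/(R_n-\eta)$ has a favorable sign, and $Lv\le\bigl(4c^2\eta^2+2c-\tfrac{\beta}{8}\eta^2\bigr)v<0$ for $\eta\ge L_0$ with your choice of $c$. Moreover $v(\eta_1)=1$, and $v(\eta)\le e^{-3c\eta^2}$ for $\eta\le R_n^\alpha$ because $\eta_1^2=4R_n^{2\alpha}\ge 4\eta^2$. So $e^{c(L_0^2-\eta^2)}+v$ dominates $\Phi_n^-$ at both endpoints, and the comparison closes.

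The paper instead avoids any inner endpoint. It runs the comparison on all of $[L_0,R_n)$ for $\Psi_n=(R_n-\eta)^{1/2}\Phi_n^-$. This conjugation removes the drift at the cost of a $-1/(4u^2)$ term, which is dominated by $n^2a_n^2/u^2$. The paper then:
\begin{itemize}
\item shows the potential is $\ge c_0\eta^2$ all the way to the origin;
\item uses $\Psi_n\to0$ as $\eta\uparrow R_n$ (Frobenius behavior) as the boundary condition at $u=0$;
\item divides by $(R_n-\eta)^{1/2}\simeq R_n^{1/2}$ at the end for $\eta\le R_n^\alpha$.
\end{itemize}
This is the global option you mentioned and then abandoned. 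It needs no special barrier, because near the origin $n^2a_n^2/u^2\gtrsim R_n^4/u^2$ dominates every other term.
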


\begin{proof}   To remove a first order derivative in the Higgs equation we introduce 
\bel{eq:Psin_def}
        \Psi_n(\eta)=(R_n-\eta)^{1/2}\Phi_n^-(\eta).
\ee
The ODE then reads
\bel{eq:coreLangerEquation}
        \Psi_n''=\mathcal V_n\Psi_n,
        \qquad
        \mathcal V_n(\eta)=\Gamma_n^-(\eta)^2-\beta-\frac{1}{4(R_n-\eta)^2}+\beta\Phi_n^-(\eta)^2 .
\ee
\red{We claim that for $L_0$ sufficiently large,}
\[
        \Gamma_n^-(\eta)
        \ge \frac{n}{R_n-\eta}\left(1-\frac{(R_n-\eta)^2}{R_n^2}\right)
        \ge c_J\eta,
        \qquad L_0\le\eta<R_n,
\]
 \red{We distinguish two cases. I}f $\eta\le R_n/2$, then
\[
        1-\frac{u^2}{R_n^2}
        =\frac{(R_n-u)(R_n+u)}{R_n^2}
        \ge \frac{\eta}{R_n},
\]
and therefore\red{, by \eqref{eq:RtwosidedCoarse},}
\[
        \Gamma_n^-(\eta)=\frac{n a_n(u)}{u}
        \ge \red{c_J} \frac{n\eta}{R_n^2}\ge \red{c_J}\eta .
\]
If $\eta\ge R_n/2$, then $u\le R_n/2$ and the same lower bound for $a_n$ gives $a_n(u)\ge3/4$.  \red{Thus, by \eqref{eq:RtwosidedCoarse},}
\[
        \Gamma_n^-(\eta)\ge \frac{3n}{4u}\ge \red{c_J}\frac{n}{R_n}\ge \red{c_J}R_n\ge \red{c_J}\eta .
\]
The negative terms in $\mathcal V_n$ are $-(\beta+1/(4u^2))$ and are dominated by the angular term
$n^2a_n(u)^2/u^2$ as $u\to0+$ \red{, hence 
\bel{eq:coreLangerPotentialLower}
        \mathcal V_n(\eta)\ge c_0\eta^2,
        \qquad L_0\le\eta<R_n .
\ee
To see that, we note that when $\displaystyle\eta\leq\frac{R_n}{2}$, it would immediately follow that 
\begin{align*}
\mathcal{V}_n(\eta)\geq c_J\eta^2-C_J\geq c_J\eta^2,
\end{align*}upon shrinking $c_J>0$, whereas when $\displaystyle \eta>\frac{R_n}{2}$, the inequality $\displaystyle a_n(u)\geq 1-\frac{u^2}{R_n^2}\geq \frac{3}{4}$, together with \eqref{eq:RtwosidedCoarse} readily imply that
\begin{align*}
    \mathcal{V}_n(\eta)\geq\Gamma_n^{-}(\eta)^2-\beta-\frac{1}{4u^2}\geq\frac{n^2a_n^2(u)-\frac{1}{4}}{u^2}-C_J\geq c_J\frac{n^2}{R_n^2}-C_J\geq c_J\eta^2, 
\end{align*}
upon shrinking $c_J>0$ when necessary.} Thus, after increasing $L_0$ if necessary,
\eqref{eq:coreLangerPotentialLower} holds on the whole interval.
Let
\[
        W(\eta)=C_0R_n^{1/2}\exp[-\kappa(\eta^2-L_0^2)],
\]
where $0<\kappa\ll c_0$.  Choose $C_0$ so that $\Psi_n(L_0)\le W(L_0)$.  This is possible uniformly in $n$
since $0\le\Phi_n\le1$.  By \eqref{eq:coreLangerPotentialLower},
\[
        W''-\mathcal V_n W\le0,
        \qquad L_0\le\eta<R_n .
\]
At the other endpoint, $\Psi_n(\eta)\to0$ as $\eta\uparrow R_n$, by the Frobenius behavior $\phi_n(u)=O(u^n)$ at $u=0$.
Hence for all sufficiently small $\delta>0$ one has $\Psi_n(R_n-\delta)\le W(R_n-\delta)$.

We compare on $[L_0,R_n-\delta]$ and then let $\delta\downarrow0$.  For such $\delta$ the positive part
$h=(\Psi_n-W)_+$ has zero trace at the endpoints.  On the set where $h>0$,
$(\partial_\eta^2-\mathcal V_n)(\Psi_n-W)\ge0$.  Hence
\[
        0\le \int h(\partial_\eta^2-\mathcal V_n)h\,d\eta
        =-\int |h'|^2\,d\eta-\int \mathcal V_n h^2\,d\eta\le0 .
\]
Thus $h=0$ \red{for every small $\delta>0$}, and 
\bel{eq:LangerPsiGaussian}
        0\le \Psi_n(\eta)\le \red{C_J} R_n^{1/2}e^{-c\eta^2},
        \qquad L_0\le\eta\red{\le}R_n\red{-\delta} .
\ee
\red{Since the previous estimate is uniform in $\delta>0$, it can immediately be extended to the entire range $[L_0,R_n)$.}

For $\eta\le R_n^\alpha$ with $\alpha<1$, one has $R_n-\eta\simeq R_n$, and
\eqref{eq:LangerPsiGaussian} gives \eqref{eq:coreGaussianBarrier}.  Finally,
$e^{-cR_n^{2\alpha}}\le C_A R_n^{-A}$ for every fixed $A$, which gives
\eqref{eq:innerAgmonEndpoint}.  \end{proof}  

Only a bound on~$\Phi_n^{-}$  is asserted in Lemma~\ref{lem:coreGaussianBarrier}.  The  derivative estimate is established in the following 
Lemma~\ref{lem:clearingout}, after the proof develops a magnetic Volterra estimate that supplies the
matching upper bound on~$\Gamma_n^-$ (see~\eqref{eq:PhiGamma_def} for the definitions). We also define
\[
 q_n(u):=a_n(u)-\left(1-\frac{u^2}{R_n^2}\right)
\]
As the reader can infer, we view $u\in [0,R_n-R_n^\alpha]$ as the deep core (the value of $\alpha$ will be determined later). The inner most region $0\le u\le L_0$ is controlled by Frobenius-type bounds rather than by Frobenius series. The proof of the next lemma exhibits this mechanism.

\begin{Lemma}\label{lem:clearingout}
Fix $0<\alpha<1$. There are constants
$L_0,\red{c_J},\red{C_J}>0$ \red{depending on $J$} such that the following estimates \red{hold} for all $n\red{\geq n_{J}}$. Uniformly for
$L_0\le \eta\le R_n^\alpha$,
\bel{eq:movingclear}
       \red{\Phi_n^-(\eta)+\frac{|\partial_\eta\Phi_n^-(\eta)|}{1+\eta}
       \Le \red{C_J} e^{-\red{c_J}\eta^2}},
\ee
and
\bel{eq:qSharp}
        0\le q_n(R_n-\eta)
        \le \red{C_J\frac{e^{-\red{c_J}\eta^2}}{R_n(1+\eta)}},
\ee
\bel{eq:qSharpDer}
        |q_n'(R_n-\eta)|
        \le \red{C_J\frac{e^{-\red{c_J}\eta^2}}{R_n}} .
\ee
Consequently, with $b_n$ as in \eqref{eq:bncoarse}, 
\bel{eq:movingGamma}
       \red{\Gamma_n^-(\eta)
       =b_n\eta+O_J\!\left(\frac{\eta^2}{R_n}\right)
       +O_J\!\left(\frac{e^{-\red{c_J}\eta^2}}{1+\eta}\right)},
\ee
\bel{eq:movingGammaDer}
       \red{\partial_\eta\Gamma_n^-(\eta)
       =b_n+O_J\!\left(\frac{\eta}{R_n}\right)
       +O_J\!\left(e^{-\red{c_J}\eta^2}\right)},
\ee
and the second derivative satisfies the symbol bound
\bel{eq:movingGammaSecond}
       \red{\partial_\eta^2\Gamma_n^-(\eta)
       =O_J\!\left(\frac{1}{R_n}\right)+O_J\!\left(\frac{\eta}{R_n^2}\right)
       +O_J\!\left((1+\eta)e^{-c\eta^2}\right)
       }.
\ee
\end{Lemma}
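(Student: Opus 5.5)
The plan has three steps. First, get the magnetic estimates from the exact Volterra representation of $q_n$. Second, use them to control $\partial_\eta\Phi_n^-$. Third, translate everything into the expansions of $\Gamma_n^-$.

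\emph{Step 1: the Volterra formula for $q_n$.} Recall from the proof of Proposition~\ref{prop:coarseloc} that $\mathfrak B_n=-(n/u)a_n'$ satisfies $\mathfrak B_n(0)=2n/R_n^2$ and $\mathfrak B_n'=-(2n/u)a_n\phi_n^2\le0$. Writing $a_n(u)=1-\frac1n\int_0^u\mathfrak B_n(s)s\,ds$ gives the exact formula
\[
q_n(u)=\frac1n\int_0^u s\bigl(\mathfrak B_n(0)-\mathfrak B_n(s)\bigr)\,ds
=\frac1n\int_0^u s\int_0^s\frac{2n}{t}a_n(t)\phi_n(t)^2\,dt\,ds .
\]
This shows at once that $q_n\ge0$ and $q_n'(u)=\frac{u}{n}(\mathfrak B_n(0)-\mathfrak B_n(u))\ge0$. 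It then suffices to bound $\int_0^u t^{-1}\phi_n(t)^2\,dt$ for $u=R_n-\eta$.

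\emph{Step 2: bounding the Higgs mass in the deep core.} Split the integral at $t=R_n-R_n^\alpha$ and handle the two parts separately.
\begin{itemize}
\item On $[R_n-R_n^\alpha,u]$, Lemma~\ref{lem:coreGaussianBarrier} gives $\phi_n\le Ce^{-c(R_n-t)^2}$. The Gaussian integral from $\eta$ is then $\lesssim e^{-c\eta^2}/(1+\eta)$ after shrinking $c$.
\item On $t\le R_n-R_n^\alpha$, use \eqref{eq:LangerPsiGaussian}, which holds on all of $[L_0,R_n)$: $\phi_n(t)\le CR_n^{1/2}t^{-1/2}e^{-c(R_n-t)^2}$. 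Near $t=0$ this is not integrable against $t^{-1}$, so I also use the Frobenius/barrier control $\phi_n(t)\le (t/t_0)^{n}$-type smallness for $t\le cR_n$. This follows from comparison in $(u\phi')'=uQ_n\phi$ with $Q_n\ge n^2/(4u^2)$ there, together with $\phi\le 1$. The resulting contribution is $O(e^{-cR_n^{2\alpha}})$, which is negligible compared to $e^{-c\eta^2}$ for $\eta\le R_n^\alpha$.
\end{itemize}
Inserting these into the double integral and using $s\le R_n$ and $n\lesssim R_n^2$ gives \eqref{eq:qSharpDer} in the form $|q_n'|\le \frac{u}{n}\cdot 2n\int_0^u t^{-1}\phi_n^2\lesssim R_n\cdot R_n^{-1}\cdot e^{-c\eta^2}/(1+\eta)\cdot$. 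The factor $1/t\simeq 1/R_n$ on the relevant range yields the stated $R_n^{-1}e^{-c\eta^2}$. One more integration in $s$ from the far left gives \eqref{eq:qSharp}. The decisive point is to integrate inward starting from the tail of $\mathfrak B_n(0)-\mathfrak B_n$ rather than from $0$: since $q_n(u)=\int_0^u q_n'$ while $q_n'$ is supported essentially near $R_n$, I would instead bound $q_n(u)\le\int_{R_n-R_n^\alpha}^{u}q_n'+$ (a negligible term). Integrating the Gaussian once more in $\eta$ then produces the $1/(1+\eta)$ factor.

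\emph{Step 3: the derivative of $\Phi_n^-$ and the expansions of $\Gamma_n^-$.} For $\partial_\eta\Phi_n^-$, integrate $(u\phi_n')'=uQ_n\phi_n$ from $0$, using $u\phi_n'\to0$:
\[
u\phi_n'(u)=\int_0^u tQ_n\phi_n\,dt .
\]
Now $Q_n\lesssim n^2/t^2+\beta$, and with the bounds of Step 2 this gives $|\phi_n'|\lesssim(1+\eta)e^{-c\eta^2}$, since $Q_n\simeq\eta^2$ near $u$ by Step 1. Finally,
\[
\Gamma_n^-=\frac{n}{u}\Bigl(1-\frac{u^2}{R_n^2}\Bigr)+\frac{n}{u}q_n,\qquad
\frac{n}{u}\Bigl(1-\frac{u^2}{R_n^2}\Bigr)=\frac{n(2R_n\eta-\eta^2)}{R_n^2(R_n-\eta)}=b_n\eta+O(\eta^2/R_n).
\]
Together with $n/u\simeq R_n$ this gives \eqref{eq:movingGamma}. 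Differentiating once and twice, and using the magnetic equation $q_n''=a_n''+2/R_n^2$ with $a_n''=a_n'/u+2a_n\phi_n^2$ for the second derivative, gives \eqref{eq:movingGammaDer} and \eqref{eq:movingGammaSecond}.

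The main obstacle is the bookkeeping of powers of $R_n$ in Step 2, in particular justifying the negligible deep-core contribution near $u=0$, where the Langer bound degenerates.
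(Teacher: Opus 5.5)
Your Step~1 formula is correct (it is the paper's \eqref{eq:qIntegralExact} in another form). The problem is the reduction ``it then suffices to bound $\int_0^u t^{-1}\phi_n(t)^2\,dt$''. It throws away the factor $a_n(t)$ in $q_n'(u)=2u\int_0^u t^{-1}a_n(t)\phi_n(t)^2\,dt$, and that factor is the only source of the $R_n^{-1}$ in \eqref{eq:qSharp}--\eqref{eq:qSharpDer}. With $a_n\le1$ and $\Phi_n^-(\tau)\le Ce^{-c\tau^2}$, what you actually get is
\[
 q_n'(R_n-\eta)\lesssim (R_n-\eta)\int_\eta^{R_n^\alpha}\frac{e^{-2c\tau^2}}{R_n-\tau}\,d\tau+(\text{deep core})\lesssim\frac{e^{-c\eta^2}}{1+\eta},
 \qquad
 q_n(R_n-\eta)\lesssim\frac{e^{-c\eta^2}}{(1+\eta)^2}.
\]
The factor $1/t\simeq R_n^{-1}$ has already been cancelled by the prefactor $u\simeq R_n$, so using it again in Step~2 double counts. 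Both bounds are too weak by a factor $R_n/(1+\eta)$, and this breaks everything downstream. In $\Gamma_n^-=\frac nu\bigl(1-\frac{u^2}{R_n^2}\bigr)+\frac nu q_n$, the prefactor $n/u\simeq R_n$ turns your bound on $q_n$ into an error of size $R_ne^{-c\eta^2}(1+\eta)^{-2}$, which is of order $R_n$ at fixed $\eta$. So \eqref{eq:movingGamma}, \eqref{eq:movingGammaDer}, and the claim $Q_n\simeq\eta^2$ on which your Step~3 derivative bound relies are not established.

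The missing idea is that at the magnetic radius the gauge profile is itself small:
\[
 a_n(R_n-\tau)=\frac{2\tau}{R_n}+O(\tau^2/R_n^2)+q_n(R_n-\tau).
\]
Because this contains $q_n$, the estimate must be closed self-consistently. The paper keeps $a_n$ in the integrand and bounds the kernel by $C(\tau-\eta)$ on $\eta\le\tau\le R_n^\alpha$. This gives the Volterra inequality \eqref{eq:qVolterraIneq}, which is a contraction in the norm $\sup_{L_0\le\eta\le R_n^\alpha}R_n(1+\eta)e^{c\eta^2/2}|q_n(R_n-\eta)|$ once $L_0$ is large; a Volterra--Gronwall argument would also work. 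The bound \eqref{eq:qSharpDer} then follows by reinserting \eqref{eq:qSharp} into $a_n$.

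Finitely many substitutions of a crude bound into $a_n\le 2\tau/R_n+q_n$ do not suffice. Each one improves the part of the bound lacking $R_n^{-1}$ only by an $n$-independent factor, so at $\eta=L_0$ you never reach $O(R_n^{-1})$; the full fixed-point closure is needed.

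Once the sharp bound on $q_n$ is in place, the rest of your plan is sound. Your comparison on $(0,cR_n]$ is a valid substitute for the paper's comparison with $\phi_n(1)u^m$ on $(0,1]$. Integrating $(u\phi_n')'=uQ_n\phi_n$ from the origin, using $\Gamma_n^-\lesssim 1+\eta$, is a legitimate alternative to the paper's local interpolation estimate for $\Psi_n$.
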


\begin{proof}  
By \eqref{eq:anLowerBound},  $q_n\ge0$, $q_n(0)=q_n'(0)=0$, and from the magnetic ODE in~\eqref{eq:vortex}
\bel{eq:qeqforbootstrap}   \left(u^{-1}q_n'(u)\right)'=2u^{-1}a_n(u)\phi_n(u)^2 .
\ee
Hence, for $u=R_n-\eta$,
\bel{eq:qIntegralExact}
        q_n(R_n-\eta)=
        \int_\eta^{R_n}
        \frac{(R_n-\eta)^2-(R_n-\tau)^2}{R_n-\tau}
        a_n(R_n-\tau)(\Phi_n^-(\tau))^2\,d\tau .
\ee
\red{Indeed, since $q_n'(0)=q_n''(0)=0$, we have 
\begin{align*}
   2u\int_0^u&\frac{a_n(\lambda)(\phi_n(\lambda))^2}{\lambda}\,d\lambda=u\int_0^u(\lambda^{-1}q'_n(\lambda))'\,d\lambda=u(u^{-1}q'_n(u))=q_n'(u), 
\end{align*}
for every $u$, which along with $q_n(0)=0$, implies that
\begin{align*}
    q_n(u)=
        \int_0^{u}
        \frac{u^2-\lambda^2}{\lambda}
        a_n(\lambda)(\phi_n(\lambda))^2\,d\lambda,
\end{align*}
as claimed. 
}
The integral \red{in \eqref{eq:qIntegralExact}} is finite because $\Phi_n^-(\tau)=\phi_n(R_n-\tau)=O((R_n-\tau)^n)$ as $\tau\uparrow R_n$.
On $0<u\le1$ one has $a_n(u)\ge1/2$ for all large $n$ (we will assume this from now on).  Set
$m=\lfloor n/3\rfloor$ and define $v(u)=\phi_n(1)u^m$.  A direct computation shows that, for $0<u\le1$,
\[
        v''+u^{-1}v'
        =m^2u^{-2}v
        \le \left(\frac{n^2a_n(u)^2}{u^2}+\beta(\phi_n(u)^2-1)\right)v
\]
for $0<u\le1$.  The inequality holds since 
$a_n(u)\ge1/2$ on $0<u\le1$, and therefore
\[
\begin{split}
 &\frac{n^2a_n(u)^2}{u^2}+\beta(\phi_n(u)^2-1)-\frac{m^2}{u^2}\\
 &\qquad\ge \frac{n^2/4-m^2}{u^2}-\beta
 \ge \frac{5n^2}{36u^2}-\beta>0, 
\end{split}
\]
where $m\le n/3$ was used.  
\red{We a}pply the maximum principle on $[\varepsilon,1]$ to $\phi_n-v$ and then let
$\varepsilon\downarrow0$: at $u=1$ the difference is zero, while near $u=0$ it is negative because
$\phi_n(u)=O(u^n)$ and $m<n$.  We infer that
\bel{eq:normalizedOriginPhi}
        0\le\phi_n(u)\le \phi_n(1)u^m,\qquad 0<u\le1 .
\ee
Consequently
\bel{eq:normalizedOriginIntegral}
        \int_0^1\frac{\phi_n(u)^2}{u}\,du
        \le \frac{\red{C_J}}{n}\phi_n(1)^2 .
\ee
Let $T_n(\eta)$ denote the contribution to \eqref{eq:qIntegralExact} from $\tau\ge R_n^\alpha$. In this range, the original variable satisfies 
$u=R_n-\tau\in [0,R_n-R_n^\alpha]$.  If $1\le u\le R_n-R_n^\alpha$, monotonicity and Lemma~\ref{lem:coreGaussianBarrier} give
\[
        \phi_n(u)\le\phi_n(R_n-R_n^\alpha)
        \le \red{C_J} e^{-\red{c_J}R_n^{2\alpha}} .
\]
For the kernel
\[
        K(\eta,u)=\frac{(R_n-\eta)^2-u^2}{u}
\]
we have $|K(\eta,u)|\le C R_n^2/u$ and $|\partial_\eta K(\eta,u)|\le C R_n/u$.  Hence the part
$1\le u\le R_n-R_n^\alpha$ contributes at most
$C R_n^2\log R_n\,e^{-2cR_n^{2\alpha}}$, and its $\eta$-derivative at most
$\red{C_J} R_n\log R_n\,e^{-2cR_n^{2\alpha}}$.  On $0<u\le1$, \eqref{eq:normalizedOriginIntegral} and
$\phi_n(1)\le\phi_n(R_n-R_n^\alpha)\le \red{C_J} e^{-cR_n^{2\alpha}}$ give
\[
        \int_0^1 |K(\eta,u)|a_n(u)\phi_n(u)^2\,du
        \le \red{C_J} \frac{R_n^2}{n}e^{-2\red{c_J}R_n^{2\alpha}},
\]
and the differentiated kernel gives the same bound with $R_n/n$ in place of $R_n^2/n$.  Since $n\simeq R_n^2$, this yields
$$T_n(\eta)+|\partial_\eta T_n(\eta)|\le C R_n^3 e^{-2cR_n^{2\alpha}},$$ uniformly for $L_0\le\eta\le R_n^\alpha$.
We  rewrite this
\red{bound in the} form
\bel{eq:qTailGaussian}
        T_n(\eta)+(1+\eta)\,|\partial_\eta T_n(\eta)|
        \le \red{C_J \frac{e^{-\red{c_J}\eta^2}}{R_n(1+\eta)}},
        \qquad L_0\le\eta\le R_n^\alpha,
\ee
which is convenient for the remainder of the proof. 
On $\eta\le\tau\le R_n^\alpha$ one has
\[
        \frac{(R_n-\eta)^2-(R_n-\tau)^2}{R_n-\tau}\le C_J(\tau-\eta),
        \qquad
        a_n(R_n-\tau)=\frac{2\tau}{R_n}+O\!\left(\frac{\tau^2}{R_n^2}\right)+q_n(R_n-\tau).
\]
The\red{se, together with \eqref{eq:coreGaussianBarrier}, imply that}
\bel{eq:qVolterraIneq}
        0\le q_n(R_n-\eta)
        \le C\int_\eta^{R_n^\alpha}(\tau-\eta)
        \left(\frac{1+\tau}{R_n}+q_n(R_n-\tau)\right)
        e^{-\red{c_J}\tau^2}\,d\tau
        +\red{C_J \frac{e^{-\red{c_J}\eta^2}}{R_n(1+\eta)}} .
\ee
We will repeatedly use the following Gaussian moment estimate: for each fixed integer $j\ge0$ and  $c>0$,
\bel{eq:gaussianMomentBound}
        \int_\eta^\infty (1+\tau)^j e^{-c\tau^2}\,d\tau
        \le C_{j\red{,c}}(1+\eta)^{j-1}e^{-c\eta^2},\qquad \eta\ge1.
\ee
For $L_0$ large the Volterra inequality~\eqref{eq:qVolterraIneq} is a contraction in the weighted norm
\[
        \|q\|_*=
        \sup_{L_0\le\eta\le R_n^\alpha}
        R_n(1+\eta)e^{c\eta^2/2}|q(R_n-\eta)| .
\]
Indeed, pointwise the inhomogeneous term, including the tail term
\eqref{eq:qTailGaussian}, is bounded by
$\red{C_J} R_n^{-1}(1+\eta)^{-1}e^{-\red{c_J}\eta^2/2}$, and thus its $\|\cdot\|_*$-norm
is bounded by $\red{C_J}$.  Write $\mathcal K$ for the Volterra operator
\[
        (\mathcal K q)(\eta):=\red{C_J}\int_\eta^{R_n^\alpha}(\tau-\eta)q(\tau)e^{-\red{c_J}\tau^2}\,d\tau,
\]
so that the $q$--term in \eqref{eq:qVolterraIneq} is $\mathcal K(q_n(R_n-\cdot))$.  It satisfies
$\|\mathcal Kq\|_*\le \red{C_J} e^{-\red{c_J}L_0^2/4}\|q\|_*$, and choosing $L_0$ large makes the last
coefficient smaller than $1/2$.  This proves \eqref{eq:qSharp}, in fact with the final polynomial term multiplied by
$e^{-\red{c_J}\eta^2}$.  \red{As we have already seen, d}ifferentiating \eqref{eq:qIntegralExact} gives no endpoint term, because the kernel vanishes at
$\tau=\eta$.  More explicitly,
\[
        -q_n'(R_n-\eta)
        =\partial_\eta q_n(R_n-\eta)
        =-2(R_n-\eta)\int_\eta^{R_n}
        \frac{a_n(R_n-\tau)(\Phi_n^-(\tau))^2}{R_n-\tau}\,d\tau .
\]
Split this integral at $R_n^\alpha$.  The part $\tau\ge R_n^\alpha$ is exactly the differentiated ``deep-core tail" already
estimated in \eqref{eq:qTailGaussian}. One easily checks that  its contribution is  bounded by
\[
        \red{C_JR_n^{-1}}(1+\eta)^{-2}e^{-\red{c_J}\eta^2}
\]
  On the remaining interval
$\eta\le\tau\le R_n^\alpha$ we use
\[
        \frac{R_n-\eta}{R_n-\tau}\le C_J,\qquad
        a_n(R_n-\tau)\le \red{C_J}\left(\frac{1+\tau}{R_n}+q_n(R_n-\tau)\right),
\]
and obtain
\[
        |q_n'(R_n-\eta)|
        \le C\int_\eta^{R_n^\alpha}
        \left(\frac{1+\tau}{R_n}+q_n(R_n-\tau)\right)e^{-c\tau^2}\,d\tau
        +\red{C_J R_n^{-1}}(1+\eta)^{-2}e^{-c\eta^2}.
\]
The term without $q$ is bounded by $C R_n^{-1}e^{-c\eta^2}$ using \eqref{eq:gaussianMomentBound}, and the term involving $q$ is absorbed by \eqref{eq:qSharp}.  This yields \eqref{eq:qSharpDer}.
Next, write
\[
        \Gamma_n^-(\eta)=\frac{n}{R_n-\eta}\left(1-\frac{(R_n-\eta)^2}{R_n^2}\right)+\frac{n}{R_n-\eta}\,q_n(R_n-\eta).
\]
The first term satisfies
\[
        \frac{n}{R_n-\eta}\left(1-\frac{(R_n-\eta)^2}{R_n^2}\right)=b_n\eta+O_J(\eta^2/R_n),
\]
and the remainder is controlled by \eqref{eq:qSharp} (respectively \eqref{eq:qSharpDer} after differentiation).  This proves \eqref{eq:movingGamma} and \eqref{eq:movingGammaDer}.

We now turn to the derivative bound of \eqref{eq:movingclear}.  The estimates just obtained imply
\[
        0\le \Gamma_n^-(\eta)\le C_J(1+\eta),
        \qquad L_0\le\eta\le R_n^\alpha .
\]
In the equation~\eqref{eq:coreLangerEquation} for $\Psi_n\red{(\eta)}=(R_n-\eta)^{1/2}\Phi_n^-\red{(\eta)}$ the coefficient is therefore bounded by
$C_J(1+\eta)^2$ on this interval.  Let
\[
        I_\eta=\left[\eta-\frac{c_0}{1+\eta},\eta+\frac{c_0}{1+\eta}\right]
        \cap [L_0,R_n^\alpha]
\]
with some $c_0>0$ fixed and small. Applying  the bound  \eqref{eq:coreGaussianBarrier} on this interval and the equation
$\Psi_n''=\mathcal V_n\Psi_n$ yields 
\[
        \red{\sup_{\red{\lambda\in}I_\eta}|\Psi_n\red{(\lambda)}|
        \le C_J R_n^{1/2}e^{-\red{c_J}\eta^2}},\qquad
        \red{\sup_{\red{\lambda\in}I_\eta}|\Psi_n''\red{(\lambda)}|
        \le C_J R_n^{1/2}(1+\eta)^2e^{-\red{c_J}\eta^2}}.
\]
Using 
\[
        |f'(x)|\le 2\rho^{-1}\sup_{|s-x|\le\rho}|f(s)|
              +\rho\sup_{|s-x|\le\rho}|f''(s)|
\]
with $\rho=c_0(1+\eta)^{-1}$ (and with the corresponding one-sided version when $\eta$ is within $\rho$ of an endpoint
of $[L_0,R_n^\alpha]$), yields
\[
        |\Psi_n'(\eta)|\le
        \red{C_J R_n^{1/2}(1+\eta)e^{-\red{c_J}\eta^2}}.
\]
Since $R_n-\eta\simeq R_n$ for $\eta\le R_n^\alpha$, differentiating
$\Phi_n^-\red{(\eta)}=(R_n-\eta)^{-1/2}\Psi_n\red{(\eta)}$ gives
\[
        \red{\frac{|\partial_\eta\Phi_n^-(\eta)|}{1+\eta}
        \le C_J e^{-\red{c_J}\eta^2}},
\]
as claimed. 
For the second derivative of $\Gamma_n^-$ we use 
\bel{eq:GammaEtaEquation}
        \partial_\eta^2\Gamma_n^-(\eta)
        -\frac{1}{R_n-\eta}\partial_\eta\Gamma_n^-(\eta)
        -\frac{1}{(R_n-\eta)^2}\Gamma_n^-(\eta)
        =2\Gamma_n^-(\eta)\Phi_n^-(\eta)^2 .
\ee
Together with \eqref{eq:movingclear}, \eqref{eq:movingGamma}, and \eqref{eq:movingGammaDer}, this gives
\eqref{eq:movingGammaSecond}.
\end{proof}  
\red{\begin{Remark}
    In what follows, we note that the following arguments are applicable to any branch whose core satisfies the bounds from Lemma~\ref{lem:clearingout}. We then use the minimality property of the solution in order to single out the relevant branch and to coarsely localize its magnetic radius. We later on use the renormalized virial identity (see Proposition~\ref{prop:energyreduction}, in particular \eqref{eq:reducedVirial}, and Corollary~\ref{cor:sharploc}) in order to prove the sharper $O(1)$ localization. 
\end{Remark}
}

The previous lemma develops bounds on an interval $u\in [R_n-R_n^\alpha,R_n-L_0]$. We now extend some of them to the full core interval $u\in [0,R_n-M]$. 

\begin{Corollary}
    In the notation of Lemma~\ref{lem:clearingout}, \red{there exists a positive constant $c$ such that} for every $M$ with $L_0\le M\le R_n^\alpha$,
\bel{eq:fixedclear}
       \sup_{0\le u\le R_n-M}\phi_n(u)
       +\sup_{0\le u\le R_n-M}\frac{|\phi_n'(u)|}{1+R_n-u}
       \Le \red{C_J e^{-cM^2}},
\ee
and
\bel{eq:fixedgauge}
       \sup_{0\le u\le R_n-M}|q_n(u)|
       +R_n\sup_{0\le u\le R_n-M}|q_n'(u)|
       \Le \red{C_Je^{-cM^2}}.
\ee
\end{Corollary}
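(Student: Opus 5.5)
Since $\phi_n$ is increasing (Proposition~\ref{prop:static}), the supremum of $\phi_n$ over $[0,R_n-M]$ is attained at $u=R_n-M$, i.e.\ equals $\Phi_n^-(M)$. For $L_0\le M\le R_n^\alpha$, Lemma~\ref{lem:coreGaussianBarrier} bounds it by $C_Je^{-c_JM^2}$.

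For the derivative of $\phi_n$ we split $[0,R_n-M]$ into three ranges.

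\emph{Moving window} $M\le\eta\le R_n^\alpha$. Here \eqref{eq:movingclear} gives
\[
|\phi_n'(u)|/(1+\eta)\le C_Je^{-c_J\eta^2}\le C_Je^{-c_JM^2}.
\]

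\emph{Deep core} $1\le u\le R_n-R_n^\alpha$. We use the Langer form \eqref{eq:coreLangerEquation}. The bound \eqref{eq:LangerPsiGaussian} holds on all of $[L_0,R_n)$ and gives $\Psi_n\le C_JR_n^{1/2}e^{-c\eta^2}$. The potential satisfies $\mathcal V_n\lesssim n^2/u^2+\beta$, so $|\Psi_n''|$ is at most a polynomial in $R_n$ times $e^{-c\eta^2}$. The interpolation inequality used in the proof of Lemma~\ref{lem:clearingout}, with $\rho$ comparable to $\min(1,u/n)$, then gives $|\Psi_n'|\le \mathrm{poly}(R_n)e^{-c\eta^2}$. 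Since $\eta\ge R_n^\alpha\ge M$, the polynomial factor is absorbed by $e^{-c\eta^2/2}\le e^{-cR_n^{2\alpha}/2}$ after lowering $c$. Converting back through $\phi_n=u^{-1/2}\Psi_n$ costs at most a factor $u^{-3/2}\le 1$ for $u\ge1$.

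\emph{Innermost range} $0<u\le1$. Here I would avoid Frobenius series and integrate the Higgs equation. From $(u\phi_n')'=uQ_n\phi_n$ with $u\phi_n'\to0$ at $0$,
\[
|u\phi_n'(u)|\le \int_0^u s\,|Q_n(s)|\phi_n(s)\,ds\le C n^2\int_0^u s^{-1}\phi_n(1)s^m\,ds,
\]
using \eqref{eq:normalizedOriginPhi} and $|Q_n|\le Cn^2s^{-2}$. This yields $|\phi_n'(u)|\le Cn\,\phi_n(1)u^{m-2}\le Cn\,\phi_n(1)$ (take $n\ge 6$, so $m\ge 2$). Finally $\phi_n(1)\le C_Je^{-cR_n^{2\alpha}}$, and $n\simeq R_n^2$ is absorbed.

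For \eqref{eq:fixedgauge} we again split into the same ranges. On the moving window, \eqref{eq:qSharp}--\eqref{eq:qSharpDer} give $|q_n|+R_n|q_n'|\le C_Je^{-c\eta^2}$ there.

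For $\eta\ge R_n^\alpha$ we use the explicit representations. By \eqref{eq:qIntegralExact} with origin form
\[
q_n(u)=\int_0^u\frac{u^2-\lambda^2}{\lambda}a_n\phi_n^2\,d\lambda,\qquad q_n'(u)=2u\int_0^u\lambda^{-1}a_n\phi_n^2\,d\lambda .
\]
Split the integrals at $\lambda=1$. The part $\lambda\le1$ is bounded via \eqref{eq:normalizedOriginIntegral} by $C R_n^2 n^{-1}\phi_n(1)^2$. The part $1\le\lambda\le u$ is bounded by $CR_n^2\log R_n\,\phi_n(R_n-R_n^\alpha)^2$, using monotonicity of $\phi_n$. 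Both are $\le C_J e^{-cR_n^{2\alpha}}$ after multiplying by $R_n$ and absorbing polynomial factors, and $e^{-cR_n^{2\alpha}}\le e^{-cM^2}$.

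This is essentially the tail estimate $T_n$ already done in the proof of Lemma~\ref{lem:clearingout}, so I would cite that computation. Note that \eqref{eq:fixedgauge} does not need any signed information on $q_n$; the nonnegativity $q_n\ge0$ is already known.

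The main obstacle I expect is the derivative bound for $\phi_n$ in the deep core, where $\eta$ is comparable to $R_n$. There the Langer potential is of size $n^2/u^2$, which is huge near the origin, so the interpolation step produces large polynomial losses. The plan is to accept these losses and absorb them with the superpolynomially small factor $e^{-cR_n^{2\alpha}}$, which is available because $M\le R_n^\alpha$. The constant $c$ in the Corollary is accordingly taken smaller than $c_J$.
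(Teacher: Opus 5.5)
Your proposal is correct and follows essentially the same route as the paper's proof. The shared steps are: monotonicity plus the Gaussian barrier for $\phi_n$; the moving-window bounds \eqref{eq:movingclear}, \eqref{eq:qSharp}, \eqref{eq:qSharpDer} for $M\le\eta\le R_n^\alpha$; local interpolation for $u^{1/2}\phi_n$ on intervals of length $\sim u/n$ in the deep core, with the polynomial losses absorbed by $e^{-cR_n^{2\alpha}}$; integration of $(u\phi_n')'=uQ_n\phi_n$ together with \eqref{eq:normalizedOriginPhi} on $0<u\le1$; and the exact integral formulas for $q_n,q_n'$ split at $\lambda=1$. One cosmetic slip: the integration on $0<u\le1$ actually gives $|\phi_n'(u)|\le Cn\,\phi_n(1)u^{m-1}$, which of course implies the weaker $u^{m-2}$ bound you wrote.
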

\begin{proof}
Fix $M$ as above. 
On $R_n-R_n^\alpha\le u\le R_n-M$ we have $M\le\eta\le R_n^\alpha$, and \eqref{eq:fixedclear}, \eqref{eq:fixedgauge} \red{immediately follow from} \eqref{eq:movingclear}, \eqref{eq:qSharp}, and $\eqref{eq:qSharpDer}$ multiplied by~$R_n$,  rewritten in the $u$--variable.
On $0\le u\le R_n-R_n^\alpha$, monotonicity and \eqref{eq:coreGaussianBarrier} at $\eta=R_n^\alpha$ imply 
$\phi_n(u)\le C e^{-cR_n^{2\alpha}}$.  The derivative bound on $1\le u\le R_n-R_n^\alpha$ follows by applying the same local interpolation argument used in the proof of Lemma~\ref{lem:clearingout} to $\psi_n=u^{1/2}\phi_n$, which satisfies $\psi_n''=\mathcal V_n\psi_n$ with $|\mathcal V_n(u)|\lesssim n^2/u^2$. \red{Here, the interpolation argument is applied on intervals of length comparable to $\displaystyle\frac{u}{n}$.} On $0<u\le1$ \red{we} integrate $(u\phi_n')'=uQ_n\phi_n$ and \red{we} use 
\red{the formulas
\begin{align*}
    q_n'(u)&=2u\int_0^u\frac{a_n(s)\phi_n(s)^2}{s}\,ds\\
    q_n(u)&=\int_0^u \frac{u^2-s^2}{s}a_n(s)\phi_n(s)^2\,ds,
\end{align*}
along with }\eqref{eq:normalizedOriginPhi}.

For the error term $q_n(u)=a_n(u)-\bigl(1-u^2/R_n^2\bigr)$ on $0\le u\le R_n-R_n^\alpha$, integrate \eqref{eq:qeqforbootstrap} with $q_n(0)=q_n'(0)=0$ to obtain the exact radial formulae for $q_n$ and $q_n'$.  Using $0<a_n\le1$, \eqref{eq:normalizedOriginIntegral}, and the bound $\phi_n\le Ce^{-cR_n^{2\alpha}}$ on $[1,R_n-R_n^\alpha]$, we obtain
$$\sup_{0\le u\le R_n-M}|q_n(u)|+\red{\sup_{0\le u\le R_n-M}}R_n|q_n'(u)|\le \red{C_Je^{-cM^2}}.$$
Combining the two regions yields \eqref{eq:fixedclear} and \eqref{eq:fixedgauge}.
\end{proof}

In the overlap region $R_n-R_n^\alpha\le u\le R_n-L_0$ we will use the refined estimates \eqref{eq:qSharp}--\eqref{eq:movingGammaSecond}.
We next record a convenient normalization of the interface position using the gauge potential profile: we fix the translation by evaluating $a_n$ (equivalently $\Gamma_n$) at a fixed distance $L$ to the left of $u=R_n$.  This will be used to pin down limits along suitable subsequences of the translated boundary layer. As before, the constants may depend on the fixed compact set in which $\beta$ lives, but not on $n$.

\begin{Lemma}\label{lem:fixedWindowMagnetic}
\red{There exists a positive constant $c$ such that f}or every fixed $L\ge L_0$, \red{and for all sufficiently large $n$, depending on $J$ and the fixed $L$, we have}
\bel{eq:fixedLmagCompact}
\begin{split}
        a_n(R_n-L)&=\red{1-\frac{(R_n-L)^2}{R_n^2}
        +O_J\!\left(\frac{e^{-cL^2}}{R_n(1+L)}\right)},\\
        -a_n'(R_n-L)&=\red{\frac{2(R_n-L)}{R_n^2}
        +O_J\!\left(\frac{e^{-cL^2}}{R_n}\right)}.
\end{split}
\ee
Equivalently,

\begin{equation}\label{eq:fixedGammaWeak}
\begin{aligned}
        \red{\Gamma_n(-L)}&=b_nL+\red{O_J\left(\frac{L^2}{R_n}\right)}+O_J\left(\frac{e^{-cL^2}}{1+L}\right)\\
        \red{\Gamma_n'(-L)}&=-b_n+\red{O_J\left(\frac{L}{R_n}\right)}+O_J(e^{-cL^2}).
        \end{aligned}
        \end{equation}

\end{Lemma}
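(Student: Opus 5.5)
This lemma is the estimates \eqref{eq:qSharp} and \eqref{eq:qSharpDer} of Lemma~\ref{lem:clearingout}, evaluated at the single point $\eta=L$ and rewritten in terms of $a_n$ and $\Gamma_n$. For a fixed $L\ge L_0$ and $n$ large enough that $L\le R_n^\alpha$ (for any fixed $\alpha\in(0,1)$), the definition $q_n(u)=a_n(u)-(1-u^2/R_n^2)$ gives at $u=R_n-L$
\[
a_n(R_n-L)=1-\frac{(R_n-L)^2}{R_n^2}+q_n(R_n-L),\qquad
-a_n'(R_n-L)=\frac{2(R_n-L)}{R_n^2}-q_n'(R_n-L).
\]
The first line of \eqref{eq:fixedLmagCompact} then follows from \eqref{eq:qSharp}, which bounds $q_n$ by $C_Je^{-c_JL^2}/(R_n(1+L))$. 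The second line follows from \eqref{eq:qSharpDer}, which bounds $|q_n'|$ by $C_Je^{-c_JL^2}/R_n$. We take $c=c_J$.

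For the equivalent form \eqref{eq:fixedGammaWeak}, note that $\Gamma_n(-L)=\Gamma_n^-(L)$ by Definition~\ref{def:PhiGamma} and \eqref{eq:PhiGamma_def}. The first line is therefore exactly \eqref{eq:movingGamma} at $\eta=L$. Directly, we expand
\[
\frac{n}{R_n-L}\Bigl(1-\frac{(R_n-L)^2}{R_n^2}\Bigr)=\frac{n L(2R_n-L)}{R_n^2(R_n-L)} .
\]
Using $n=b_nR_n^2/2$, this equals $b_nL\,(1-L/(2R_n))(1-L/R_n)^{-1}=b_nL+O_J(L^2/R_n)$, where $b_n$ is bounded by Proposition~\ref{prop:coarseloc}. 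The remainder is $\frac{n}{R_n-L}q_n(R_n-L)$. Since $n/(R_n-L)\simeq R_n$ by \eqref{eq:RtwosidedCoarse}, \eqref{eq:qSharp} makes this remainder $O_J(e^{-cL^2}/(1+L))$.

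For the derivative, the $y$-variable and $\eta$-variable run in opposite directions, so $\Gamma_n'(-L)=-\partial_\eta\Gamma_n^-(L)$. Then \eqref{eq:movingGammaDer} gives $-b_n+O_J(L/R_n)+O_J(e^{-cL^2})$. If we compute directly instead, differentiating $\frac{n}{u}a_n(u)$ produces the term $-\frac{n}{u^2}a_n(u)$, which is $O_J(L/R_n)$ because $a_n=O(L/R_n)$ there. It also produces $\frac{n}{u}a_n'(u)$, which is handled by the second line of \eqref{eq:fixedLmagCompact}, with the factor $n/u\simeq R_n$ turning the $O(e^{-cL^2}/R_n)$ error into $O(e^{-cL^2})$.

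There is no real obstacle here. The only care needed is to track the sign change between $y$ and $\eta$, and to require $n\ge n(J,L)$ so that $L\le R_n^\alpha$ and $L/R_n$ is small.
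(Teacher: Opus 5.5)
Your proposal is correct and follows the paper's proof. You evaluate \eqref{eq:qSharp} and \eqref{eq:qSharpDer} at $\eta=L$ (with $n\ge n(J,L)$ so that $L\le R_n^\alpha$), then multiply by $n/(R_n-L)\simeq R_n$ and differentiate $\Gamma_n(y)=\frac{n}{R_n+y}a_n(R_n+y)$; the explicit expansion $b_nL(1-L/(2R_n))(1-L/R_n)^{-1}$ and the sign relation $\Gamma_n'(-L)=-\partial_\eta\Gamma_n^-(L)$ simply fill in details the paper leaves implicit.
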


\begin{proof}  Put $\eta=L$ in
\eqref{eq:qSharp} and \eqref{eq:qSharpDer}.  This gives the two estimates for $a_n$ and $a_n'$ in
\eqref{eq:fixedLmagCompact}.  Multiplication by $n/(R_n-L)$ gives the first estimate in
\eqref{eq:fixedGammaWeak}. Differentiating
\[ 
        \Gamma_n(y)=\frac{n}{R_n+y}a_n(R_n+y)
\]
and using the derivative estimate for $q_n$ gives the second.  \end{proof}  

The error $O(e^{-cL^2})$ is a fixed-$L$ tail error and does not vanish as $n\to\infty$.
Lemma~\ref{lem:rightEndpointCompact}  assumes local compactness of translates of $(\Phi_n,\Gamma_n)$ and that any subsequential limit solves~\eqref{eq:interface} with the left boundary condition. It then shows that the right boundary condition must also hold.  Lemma~\ref{lem:qualitativeBoundary} supplies the required compactness.

\begin{Lemma}\label{lem:rightEndpointCompact}
Let $(\Phi,\Gamma)$ be a $C^2_{\rm loc}$ limit on (all) compact $y$-intervals of a subsequence of the translated radial
minimizers $(\Phi_n,\Gamma_n)$.  Suppose that the limit satisfies the interface equations
\eqref{eq:interface} and the left normalization
\[
        \Phi(y)\to0,\qquad \Gamma(y)+\sqrt\beta\,y\to0
        \qquad (y\to-\infty).
\]
Then the limit inherits
\[
        0\le\Phi\le1,\qquad \Phi'\ge0,\qquad \Gamma\ge0,\qquad \Gamma'\le0 
\]
\red{and}
\[
        \Phi(y)\to1,\qquad \Gamma(y)\to0
        \qquad (y\to+\infty).
\]
\end{Lemma}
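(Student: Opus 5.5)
First, the sign and monotonicity properties pass to the limit directly. By Proposition~\ref{prop:static}, for every $n$ we have $0<\phi_n<1$, $\phi_n'>0$, $0<a_n<1$ and $a_n'<0$. Hence $0<\Phi_n<1$, $\Phi_n'>0$ and $\Gamma_n>0$. For the magnetic component,
\[
\Gamma_n'(y)=\frac{n a_n'(R_n+y)}{R_n+y}-\frac{n a_n(R_n+y)}{(R_n+y)^2}<0 .
\]
Since $C^2_{\rm loc}$ convergence preserves non-strict inequalities, the limit satisfies $0\le\Phi\le1$, $\Phi'\ge0$, $\Gamma\ge0$ and $\Gamma'\le0$ on every compact interval, and therefore on all of $\RR$.

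For the right endpoint behaviour I would reproduce the argument of Lemma~\ref{lem:interfaceUniformTails}. By monotonicity and boundedness, the limits $\ell_+=\lim_{y\to+\infty}\Phi(y)\in[0,1]$ and $g_+=\lim_{y\to+\infty}\Gamma(y)\ge0$ exist. The main step is to show that $\ell_+>0$. Since $\Phi'\ge0$, it is enough to prove that $\Phi\not\equiv0$. If $\Phi\equiv0$, the magnetic equation gives $\Gamma''=0$, so $\Gamma$ is affine; the left normalization then forces $\Gamma=-\sqrt\beta\,y$, which is negative for $y>0$ and contradicts $\Gamma\ge0$. Hence $\Phi(y_0)>0$ for some $y_0$, and so $\ell_+>0$.

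Next I rule out $g_+>0$. On $[y_0,\infty)$ we have $\Gamma''=2\Phi^2\Gamma\ge 2\Phi(y_0)^2 g_+>0$, so $\Gamma$ would be eventually increasing and unbounded. This contradicts $\Gamma'\le0$. Therefore $g_+=0$.

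It remains to identify $\ell_+$. The convex, nonincreasing function $\Gamma$ tends to $0$, so $\Gamma'\to0$. The scalar equation then gives
\[
\Phi''\to \beta\ell_+(\ell_+^2-1).
\]
If $0<\ell_+<1$, this limit is a nonzero constant, which is incompatible with $\Phi$ being bounded and convergent. Hence $\ell_+=1$.

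The only point needing care is the nontriviality step, i.e.\ showing $\Phi\not\equiv0$. The affine argument above settles it using only the left normalization together with $\Gamma\ge0$. I do not expect any other obstacle; the rest is standard monotone-limit reasoning.
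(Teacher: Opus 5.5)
Your proof is correct and follows the paper's argument: the monotonicity properties pass to the limit, the affine argument rules out $\Phi\equiv0$, the magnetic equation rules out $g_+>0$, and the scalar equation forces $\ell_+=1$. The only difference is that you get both contradictions directly, from $\Gamma''\ge c>0$ against $\Gamma'\le0$ and from $\Phi''\to\beta\ell_+(\ell_+^2-1)<0$ against boundedness of $\Phi$, so you skip the paper's preliminary uniform-continuity step showing $\Phi',\Gamma'\to0$.
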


\begin{proof}  For each $n$, the radial monotonicity in Proposition~\ref{prop:static} gives
\[
        \Phi_n'(y)=\phi_n'(R_n+y)\ge0 .
\]
Also,
\[
        \Gamma_n'(y)
        =\frac{n}{(R_n+y)^2}\bigl((R_n+y)a_n'(R_n+y)-a_n(R_n+y)\bigr)<0,
\]
because $a_n'<0$ and $a_n>0$.  Passing to the limit, $\Phi$ is nondecreasing and $\Gamma$ is
nonincreasing.  Thus
\[
        0\le\Phi\le1,\qquad \Phi'\ge0,\qquad \Gamma\ge0,\qquad \Gamma'\le0 .
\]
Since $0\le \Phi\le1$ and $\Gamma\ge0$, the limits
\[
        \ell:=\lim_{y\to+\infty}\Phi(y)\in[0,1],
        \qquad
        g:=\lim_{y\to+\infty}\Gamma(y)\in[0,\infty)
\]
exist.
\red{We now prove that $\Phi'$ and $\Gamma'$ have zero limits at $+\infty$. To see that, we first note that the boundedness of $\Phi$ and $\Gamma$, along with the interface equations, imply that $\Phi'$ and $\Gamma'$ are uniformly continuous. Now, if we assume for the sake of contradiction that there exists a positive $\varepsilon>0$ and a sequence $y_k\rightarrow\infty$ with $\Phi'(y_k)\geq\varepsilon$, uniform continuity for $\Phi'$ and $\Gamma'$ implies that $\Phi'\geq\frac{\varepsilon}{2}$ on fixed neighborhoods of the points $y_k$, each of which has the same size. However, this scenario cannot occur, since this would force $\Phi$ to gain a fixed positive amount on infinitely many disjoint intervals, even though it is bounded and nondecreasing. This shows that $\displaystyle \lim_{
\substack{y\rightarrow+\infty}
}\Phi'(y)=0$. We can similarly prove that $\displaystyle \lim_{
\substack{y\rightarrow+\infty}
}\Gamma'(y)=0$. 
}
 If $g\ell^2>0$, then $\Gamma''(y)=2\Gamma\Phi^2\to2g\ell^2>0$, contradicting $\Gamma'(y)\to0$.  Thus either
$g=0$ or $\ell=0$.
The alternative $\ell=0$ is impossible.  Since $\Phi$ is nondecreasing and nonnegative, $\ell=0$ would imply
$\Phi\equiv0$.  Then the second interface equation gives $\Gamma''=0$, so $\Gamma$ is affine.  The left normalization
$\Gamma(y)+\sqrt\beta\,y\to0$ as $y\to-\infty$ forces $\Gamma(y)=-\sqrt\beta\,y$, which is negative for large positive
$y$, contradicting $\Gamma\ge0$.  Hence $\ell>0$, and therefore $g=0$.

Finally, if $0<\ell<1$, then the right-hand side of the $\Phi$ equation satisfies
\[
        \red{\lim_{\substack{y\rightarrow\infty}}}\Gamma(y)^2\Phi(y)+\beta\Phi(y)(\Phi(y)^2-1)
        \red{=} \beta\ell(\ell^2-1)<0 .
\]
Thus $\Phi''$ is bounded above by a negative constant on a tail, contradicting $\Phi'(y)\to0$ and $\Phi'\ge0$.
Consequently $\ell=1$.  We have proved $\Phi(y)\to1$ and $\Gamma(y)\to0$. \end{proof}  

We can now prove the first, qualitative convergence to the CHMO layer on fixed windows. Note that \eqref{eq:vortexlimit} is much stronger than local compactness of $ (\Phi_n,\Gamma_n)$. The key to the full convergence is the uniqueness of the CHMO layer up to translations. 

\begin{Lemma}\label{lem:qualitativeBoundary}
Let $0<\beta<4$ and assume the coarse localization of
Proposition~\ref{prop:coarseloc}. Then, for every fixed $M<\infty$,
\bel{eq:vortexlimit}
        (\Phi_n,\Gamma_n)\longrightarrow(\Phi_\beta,\Gamma_\beta)
        \qquad\hbox{in }C^1([-M,M]) .
\ee
The convergence is uniform on compact parameter intervals in the following
sequential sense.  If $K\Subset(0,4)$,
$\beta_j\in K$, $n_j\to\infty$, and one chooses an arbitrary radial
minimizer for each pair $(n_j,\beta_j)$, then
\bel{eq:uniformQualitativeBoundary}
 \left\|(\Phi_{n_j}^{(\beta_j)},\Gamma_{n_j}^{(\beta_j)})
       -(\Phi_{\beta_j},\Gamma_{\beta_j})\right\|_{C^1([-M,M])}
 \longrightarrow0 .
\ee
\end{Lemma}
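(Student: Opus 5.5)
The plan is a compactness-plus-uniqueness argument. First I will show that $(\Phi_n,\Gamma_n)$ is bounded in $C^3([-M',M'])$ for every $M'$, uniformly in $n$ (and uniformly for $\beta$ in a compact $K\Subset(0,4)$).

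Boundedness of $\Phi_n$ is immediate from $0<\phi_n<1$. For $\Gamma_n$, monotonicity gives $0<\Gamma_n(y)\le\Gamma_n(-M')$, and Lemma~\ref{lem:fixedWindowMagnetic} with $L=M'$ (enlarged to $L\ge L_0$) bounds $\Gamma_n(-M')\le C_J(1+M')$ and $|\Gamma_n'(-M')|\le C_J$. Since $\Gamma_n'<0$ and \eqref{eq:exactGamma} gives $\Gamma_n''\ge -\frac{1}{R_n+y}\Gamma_n'+\dots\ge -C/R_n$, the function $|\Gamma_n'|$ stays bounded on $[-M',M']$. Then \eqref{eq:exactPhi} bounds $\Phi_n''$, and interpolation bounds $\Phi_n'$. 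Differentiating the exact equations gives third-derivative bounds. Arzel\`a--Ascoli and a diagonal argument then extract, from any subsequence, a further subsequence converging in $C^2_{\rm loc}$. Since $R_n\to\infty$ by Proposition~\ref{prop:coarseloc}, the $1/(R_n+y)$ terms disappear and the limit $(\Phi,\Gamma)$ solves \eqref{eq:interface}.

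The key step is the left normalization of the limit. Passing to the limit in \eqref{eq:fixedGammaWeak} and \eqref{eq:movingclear}, using $b_n\to\sqrt\beta$ from \eqref{eq:bncoarse}, gives for every fixed $L\ge L_0$
\[
|\Gamma(-L)-\sqrt\beta L|\le C\frac{e^{-cL^2}}{1+L},\qquad |\Gamma'(-L)+\sqrt\beta|\le Ce^{-cL^2},\qquad \Phi(-L)\le Ce^{-cL^2}.
\]
The constants here are independent of $n$ and $L$, so the limit satisfies $\Phi(y)\to0$ and $\Gamma(y)+\sqrt\beta y\to0$ as $y\to-\infty$. This is exactly why $R_n$, rather than some other centering, is used: the $O(L^2/R_n)$ errors vanish in the limit, leaving an exactly normalized affine constant.

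Lemma~\ref{lem:rightEndpointCompact} then supplies the right endpoint conditions \eqref{eq:interfaceright}, and Theorem~\ref{thm:interface} (the CHMO uniqueness, for $0<\beta<4$) identifies the limit as $(\Phi_\beta,\Gamma_\beta)$. Since every subsequence has a further subsequence with this same limit, the whole sequence converges in $C^2([-M,M])$, hence in $C^1$, which proves \eqref{eq:vortexlimit}.

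For the uniform statement \eqref{eq:uniformQualitativeBoundary}, I will argue by contradiction. Suppose $\beta_j\in K$ and the $C^1$ distance stays at least $\varepsilon$. Pass to a subsequence with $\beta_j\to\beta_*\in K$. All the bounds above are uniform on $K$, since Proposition~\ref{prop:coarseloc} and Lemmas~\ref{lem:clearingout}, \ref{lem:fixedWindowMagnetic} hold with $J=K$, and Lemma~\ref{lem:rightEndpointCompact} applies to the limiting equation with $\beta_*$. The same extraction therefore yields the limit $(\Phi_{\beta_*},\Gamma_{\beta_*})$. Lemma~\ref{lem:interfaceUniformTails} gives continuity $\beta\mapsto(\Phi_\beta,\Gamma_\beta)$ in $C^1([-M,M])$, so the distance tends to zero, a contradiction.

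The main obstacle I expect is the interplay of limits in the left normalization. One must check that $b_n\to\sqrt\beta$ holds uniformly along $\beta_j\to\beta_*$; for this the uniform version of \eqref{eq:bncoarse} is needed, which requires rerunning the proof of Proposition~\ref{prop:coarseloc} with $\beta$ varying. One must also verify that the constant in the $e^{-cL^2}$ error is independent of $L$, so that letting $L\to\infty$ after $n\to\infty$ is legitimate.
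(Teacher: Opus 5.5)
Your proposal is correct and follows essentially the same route as the paper: uniform local $C^3$ bounds, then Arzel\`a--Ascoli with a diagonal extraction, then the left normalization from Lemma~\ref{lem:fixedWindowMagnetic} and \eqref{eq:movingclear} (letting $n\to\infty$ before $L\to\infty$), then Lemma~\ref{lem:rightEndpointCompact} with CHMO uniqueness, and finally a contradiction argument with $\beta_j\to\beta_*$ and Lemma~\ref{lem:interfaceUniformTails} for uniformity; the $J$-uniform constants already built into Proposition~\ref{prop:coarseloc} settle what you flag as the main obstacle. The remaining differences are minor. You bound $\Gamma_n'$ via $\Gamma_n'<0<\Gamma_n''=-\frac{\Gamma_n'}{R_n+y}+\frac{\Gamma_n}{(R_n+y)^2}+2\Gamma_n\Phi_n^2$, where the paper uses Gronwall. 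Your step ``\eqref{eq:exactPhi} bounds $\Phi_n''$, then interpolation bounds $\Phi_n'$'' is circular as written, but it closes either by absorbing the $(R_n+y)^{-1}\Phi_n'$ term or by combining $\Phi_n'\ge0$, $\Phi_n\le1$, $\Phi_n''\le C$; this replaces the paper's integrating-factor argument from a point where $\Phi_n'\le1$.
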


\begin{proof}  Fix $M<\infty$ and choose
$L>\max\{L_0,M+2\}$.  Lemma~\ref{lem:fixedWindowMagnetic} provides uniform
bounds on the left Cauchy data $\Gamma_n(-L)$ and $\Gamma_n'(-L)$.  On
$[-L,M+1]$ the exact magnetic equation \eqref{eq:exactGamma} reads
\[
        \Gamma_n''(y)
        =-\frac1{R_n+y}\,\Gamma_n'(y)
          +\frac1{(R_n+y)^2}\,\Gamma_n(y)
          +2\Gamma_n(y)\Phi_n^2(y).
\]
Since $0\le\Phi_n\le1$ and $R_n+y\ge R_n-L$ on this interval, we have
\[
        |\Gamma_n''(y)|\le C_{L,M}\bigl(|\Gamma_n'(y)|+|\Gamma_n(y)|\bigr)
        \qquad (y\in[-L,M+1])
\]
for all large $n$.  Setting $Z_n(y):=|\Gamma_n(y)|+|\Gamma_n'(y)|$ and using
$|\Gamma_n'|\le Z_n$ and $|\Gamma_n''|\le C_{L,M} Z_n$, we obtain the
differential inequality
\[
        Z_n'(y)\le (1+C_{L,M})Z_n(y)
\]
for almost every $y\in[-L,M+1]$ (note that $Z_n$ is Lipschitz).
Gronwall's inequality and the bounds at $y=-L$ then yield uniform bounds
for $\Gamma_n$ and $\Gamma_n'$ on $[-L,M+1]$, and hence on
$[-M-1,M+1]$.
For the scalar derivative, $0\le\Phi_n\le1$ and $\Phi_n'\ge0$ imply
\[
        \int_{-M-1}^{M+1}\Phi_n'(y)\,dy
        \le1.
\]
Hence there exists $y_n\in[-M-1,-M]$ with $\Phi_n'(y_n)\le1$.
Rewrite \eqref{eq:exactPhi} as an equation for
$p_n:=\Phi_n'$ in the linear form
\[
        p_n'+(R_n+y)^{-1}p_n=f_n,
\]
where $f_n(y)=\Gamma_n(y)^2\Phi_n(y)+\beta\Phi_n(y)(\Phi_n(y)^2-1)$.
On $[-M-1,M+1]$ we have $|f_n|\le C_M$ by the already established bounds on
$\Gamma_n$ and the fact that $0\le\Phi_n\le1$.

Multiplying by the integrating factor $R_n+y$ and integrating from $y_n$ to
$y$ gives
\[
        (R_n+y)p_n(y)
        =(R_n+y_n)p_n(y_n)+\int_{y_n}^y (R_n+s)f_n(s)\,ds.
\]
Since $R_n+s\simeq R_n$ on $[-M-1,M+1]$, this yields
\[
        |p_n(y)|\le C_M\Bigl(p_n(y_n)+\int_{-M-1}^{M+1}|f_n(s)|\,ds\Bigr)
        \le C_M,
\]
and therefore $\Phi_n'$ is uniformly bounded on $[-M-1,M+1]$.  The two
equations then give uniform second-derivative bounds,
and differentiating them yields uniform third-derivative bounds on
$[-M,M]$.  By Arzel\`a--Ascoli and a diagonal argument, every subsequence
has a further subsequence converging in $C^2(I)$ for all compact intervals~$I$, hence in $C^2_{\rm loc}(\RR)$, to a global solution
of \eqref{eq:interface}.  In what follows we work with this single diagonal
limit $(\Phi,\Gamma)$.

The left affine normalization of $(\Phi,\Gamma)$ is determined by the
\emph{magnetic radius} $R_n$ (equivalently by $b_n=2n/R_n^2$). Indeed,  by Lemma~\ref{lem:fixedWindowMagnetic}, for
each fixed large $L$,
\begin{align*}
        \Gamma_n(-L)&=\red{b_nL+O_J\left(\frac{L^2}{R_n}\right)+O_J\left(\frac{e^{-cL^2}}{1+L}\right)}\\
        \Gamma_n'(-L)&=\red{-b_n+O_J\left(\frac{L}{R_n}\right)+O_J(e^{-cL^2})}.
\end{align*}
Also, \eqref{eq:movingclear}, evaluated at
$\eta=L$, gives
\[
        \Phi_n(-L)+|\Phi_n'(-L)|\le C e^{-cL^2}+o_n(1)
\]
for fixed $L$ (after reducing $c$).  Passing first to the compact
subsequential limit (with $L$ fixed), using $b_n\to\sqrt\beta$, and only
then sending $L\to\infty$ yields
\[
        \Gamma(y)+\sqrt\beta\,y\to0,\qquad
        \Gamma'(y)+\sqrt\beta\to0,\qquad
        \Phi(y)\to0
        \qquad (y\to-\infty).
\] 
\red{Lemma~\ref{lem:rightEndpointCompact} readily implies the conditions at $\Phi(y)\to 1$ and $\Gamma(y)\to 0$ at the superconducting end. Given that the solutions to the interface equations are known to be unique, we infer that every limit point is equal to $(\Phi_\beta,\Gamma_\beta)$. This proves qualitative convergence for fixed $\beta$. In order to prove the uniformity statement, we assume for the sake of contradiction that \eqref{eq:uniformQualitativeBoundary} is false, which means that there exists $\varepsilon>0$ and sequences $n_j\to\infty$ and $\beta_j\in K$ for which the norm displayed there is at least $\varepsilon$. Upon passing to a subsequence, we may assume that $\beta_j\to\beta_\ast$. Proposition~\ref{prop:coarseloc}, together with Lemmas~\ref{lem:clearingout} and ~\ref{lem:fixedWindowMagnetic} imply the existence of a $C^1([-M,M])$ limit which is a solution of the interface problem at $\beta_\ast$ satisfying the normalized endpoint conditions. By CHMO uniqueness, it must coincide with $(\Phi_{\beta_\ast},\Gamma_{\beta_\ast})$.}  On the other hand,
Lemma~\ref{lem:interfaceUniformTails} (and its compact-interval continuity
conclusion) gives
\[
 (\Phi_{\beta_j},\Gamma_{\beta_j})
 \longrightarrow(\Phi_{\beta_*},\Gamma_{\beta_*})
 \quad\hbox{in }C^1([-M,M]).
\]
This contradicts the choice of the sequence and proves
\eqref{eq:uniformQualitativeBoundary}. \end{proof}  

\section{The Higgs equation in the core--boundary overlap}\label{sec:trueoverlap}

\red{Throughout this section, $J$ is going to be a fixed compact subinterval of $\displaystyle (0,4)$. Unless otherwise noted, all of the constants will also depend on $J$.} Set
\bel{eq:psidef}
       \psi_n(u)=u^{1/2}\phi_n(u).
\ee
Then the Higgs (or scalar) equation in \eqref{eq:vortex} becomes
\bel{eq:psi}
       \psi_n''(u)=\red{(}\mathcal Q_n(u)+N_n(u)\red{)}\psi_n(u),
\ee
where
\bel{eq:QN}
       \mathcal Q_n(u)=\frac{n^2a_n(u)^2}{u^2}-\beta-\frac{1}{4u^2},\qquad
       N_n(u)=\beta\phi_n(u)^2 .
\ee
We only use this scalar equation where $N_n$ is small.  Fix two exponents
\bel{eq:twoOverlapExponents}
       0<\alpha<\widehat\alpha<\red{1},\qquad
       M_n=R_n^\alpha,\qquad T_n=R_n^{\widehat\alpha}.
\ee
These two scales arise as follows: we construct a WKB basis on the larger interval $[L,T_n]$ and use regularity at $\eta=T_n$ to show that the coefficient of the growing solution is negligible. On the smaller window $[L,M_n]$, which we refer to as the~\emph{left overlap} region, the growing mode can then be ignored. 
In fact, we will see that for 
\bel{eq:leftoverlap}
       L\le \eta:=-y=R_n-u\le M_n,
\ee
 both descriptions of the vortex are simultaneously accurate: the core
estimate gives $\Phi_n^-(\eta)\ll1$ (so $N_n=\beta\phi_n^2$ is negligible in
\eqref{eq:psi}\red{; a definition of $\Phi_n^-$ is briefly recalled below}), while the boundary-layer variables lie in the left tail regime with
$y=-\eta\to-\infty$ and $\Gamma_n^-(\eta)=b_n\eta+O_J(\eta^2/R_n)\sim\sqrt\beta\,\eta$ on $[L,M_n]$.  Throughout this section $L$ is fixed first, then
$n\to\infty$, and only afterwards do we send $L\to\infty$. \red{We also note that in Proposition \ref{prop:coreWKB}, we shall need to impose the stronger condition $0<\alpha<\widehat{\alpha}<\frac{1}{3}$ in order to control a resulting exponential action, and that it is the only result in this section for which we have to do so.}

To analyze the Higgs equation \eqref{eq:psi} on this overlap we
construct a fundamental system of two linearly
independent solutions on the larger interval $\eta\in[L,T_n]$: a recessive
(decaying) branch on the one hand,  and a dominant (growing) branch on the other hand. These are obtained by a WKB 
construction.  The coefficient of the dominant branch is ruled out by
imposing regularity of the original vortex at the origin. This condition is
conveniently evaluated at the interior point $\eta=T_n$.  The property  $M_n/T_n\to0$ separates the dominant and
recessive branches since \red{they} are not of comparable size at $\eta=T_n$. This is important on the smaller interval
$[L,M_n]$ where our asymptotic formula will be used. 

\red{We recall the definitions}
\[
       \Phi_n^-(\eta)=\phi_n(R_n-\eta),\qquad
       \Gamma_n^-(\eta)= \frac{n}{R_n-\eta}a_n(R_n-\eta),
\]
and
\[
       \Psi_n(\eta):=(R_n-\eta)^{1/2}\Phi_n^-(\eta).
\]
Then
\bel{eq:trueOverlapPsi}
       \Psi_n''(\eta)
       =\red{(}p_n(\eta)^2+\beta\Phi_n^-(\eta)^2\red{)}\Psi_n(\eta),
\ee
where
\bel{eq:trueOverlapP}
       p_n(\eta)^2=(\Gamma_n^-(\eta))^2-\beta-\frac{1}{4(R_n-\eta)^2}\red{,}
\ee
\red{with $p_n(\eta)$ chosen so that it is nonnegative. As long as $L$ is sufficiently large, $\eta\geq L$ ensures that we may simply define $p_n(\eta)$ as the square root of the right-hand side of \eqref{eq:trueOverlapP}.}

The scalar equation in the left overlap region is controlled by the following symbol estimates.

\begin{Lemma}\label{lem:trueOverlapEstimates}
Fix $0<\widehat\alpha<\red{1}$.  \red{For $L$ sufficiently large depending on $J$ and all sufficiently large $n$,} uniformly for
$L\le\eta\le T_n=R_n^{\widehat\alpha}$,
\begin{equation}\label{eq:pnTrueOverlap}
    \begin{aligned}
       p_n(\eta)&=\red{b_n\eta-\frac{\beta}{2b_n\eta}
       +O_J(\eta^{-3})+O_J(\eta^2/R_n)
       }\\
\red{p'_n(\eta)}&\red{=b_n+\frac{\beta}{2b_n\eta^2}
       +O_J(\eta^{-4})+O_J(\eta/R_n)
       }\\
\red{p''_n(\eta)}&\red{=-\frac{\beta}{b_n\eta^3}
       +O_J(\eta^{-5})+O_J(1/R_n)
       },
\end{aligned}   
\end{equation}
and
\bel{eq:overlapintegral}
       \int_L^{T_n}
       \left(
       \frac{|p_n''\red{(\eta)}|}{p_n^2\red{(\eta)}}+\frac{|p_n'\red{(\eta)}|^2}{p_n^3\red{(\eta)}}
       +\frac{\beta(\Phi_n^-\red{(\eta)})^2}{p_n\red{(\eta)}}
       \right)d\eta
       \Le \red{C_{J}L^{-2}}.
\ee
In particular $p_n>0$ on the overlap for $L$ large and $n$ large.
\end{Lemma}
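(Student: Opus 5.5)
The plan is to expand $p_n^2$ directly from the moving expansions of Lemma~\ref{lem:clearingout}, take a square root, and then differentiate using \eqref{eq:movingGammaDer}--\eqref{eq:movingGammaSecond}. The one catch is that Lemma~\ref{lem:clearingout} is stated on $[L_0,R_n^\alpha]$ for a fixed $\alpha<1$. We apply it with $\alpha$ replaced by $\widehat\alpha$, which is allowed because that lemma holds for every exponent in $(0,1)$.

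By \eqref{eq:movingGamma},
\[
(\Gamma_n^-)^2=b_n^2\eta^2+O_J(\eta^3/R_n)+O_J(\eta e^{-c\eta^2}),
\]
and $\frac{1}{4(R_n-\eta)^2}=O(R_n^{-2})$. Hence
\[
p_n^2=b_n^2\eta^2\Bigl(1-\frac{\beta}{b_n^2\eta^2}+\epsilon_n(\eta)\Bigr),\qquad
\epsilon_n=O_J(\eta/R_n)+O_J(e^{-c\eta^2}/\eta)+O(R_n^{-2}\eta^{-2}).
\]
Since $b_n\to\sqrt\beta$, for $L$ large the bracket lies in $[1/2,2]$, so $p_n>0$ is well defined. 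Expanding $\sqrt{1-x}=1-x/2+O(x^2)$ gives
\[
p_n=b_n\eta-\frac{\beta}{2b_n\eta}+O(\eta^{-3})+O(\eta^2/R_n),
\]
where the Gaussian and $R_n^{-2}$ remainders are absorbed into $O(\eta^{-3})$.

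For the derivatives, write $2p_np_n'=2\Gamma_n^-\partial_\eta\Gamma_n^-+O(R_n^{-3})$. Inserting \eqref{eq:movingGamma} and \eqref{eq:movingGammaDer} gives
\[
\Gamma_n^-\partial_\eta\Gamma_n^-=b_n^2\eta+O(\eta^2/R_n)+O(e^{-c\eta^2}).
\]
Dividing by the expansion of $p_n$ yields
\[
p_n'=\frac{b_n^2\eta+O(\eta^2/R_n)}{b_n\eta\bigl(1-\frac{\beta}{2b_n^2\eta^2}+\dots\bigr)}=b_n+\frac{\beta}{2b_n\eta^2}+O(\eta^{-4})+O(\eta/R_n).
\]
For the second derivative, differentiate once more:
\[
p_np_n''=(\partial_\eta\Gamma_n^-)^2+\Gamma_n^-\partial_\eta^2\Gamma_n^--(p_n')^2+O(R_n^{-4}).
\]
Using \eqref{eq:movingGammaSecond}, we have $\Gamma_n^-\partial_\eta^2\Gamma_n^-=O(\eta/R_n)+O(\eta^2e^{-c\eta^2})$. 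The first-derivative expansions give $(\partial_\eta\Gamma_n^-)^2-(p_n')^2=-\frac{\beta}{\eta^2}+O(\eta^{-4})+O(\eta/R_n)$. Dividing by $p_n=b_n\eta(1+O(\eta^{-2}))$ gives the stated formula for $p_n''$.

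The main technical obstacle is the bookkeeping of the cross terms $O(\eta^2/R_n)$ against $O(\eta^{-k})$, and making sure no uncontrolled term like $\eta^3/R_n$ survives in $p_n'$ or $p_n''$. Such terms cancel once one divides by $p_n\simeq\eta$ rather than expanding $p_n^2$ and subtracting. The Gaussian remainders $e^{-c\eta^2}$ are always dominated by any negative power of $\eta$.

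For \eqref{eq:overlapintegral}, bound each of the three terms pointwise using the expansions just obtained and $p_n\ge c\eta$. For the first term,
\[
\frac{|p_n''|}{p_n^2}\lesssim\eta^{-5}+\frac{1}{R_n\eta^2},
\]
and its integral over $[L,T_n]$ is $O(L^{-4}+R_n^{-1}L^{-1})$. For the second term, $|p_n'|^2/p_n^3\lesssim\eta^{-3}$ as long as $\eta/R_n\le1$, and its integral is $O(L^{-2})$; this is the term that dictates the rate $L^{-2}$. For the third term, \eqref{eq:movingclear} gives $\beta(\Phi_n^-)^2/p_n\lesssim e^{-2c\eta^2}/\eta$, with integral $O(e^{-cL^2})$. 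Summing and taking $n$ large, so that $R_n^{-1}\le L^{-2}$, gives $C_JL^{-2}$ uniformly in $n$ and $\beta\in J$.
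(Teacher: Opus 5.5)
Your proposal is correct and follows essentially the same route as the paper. You apply Lemma~\ref{lem:clearingout} with exponent $\widehat\alpha$, substitute \eqref{eq:movingGamma} into \eqref{eq:trueOverlapP} and take the square root, differentiate the identity for $p_n^2$ once and twice using \eqref{eq:movingGammaDer} and \eqref{eq:movingGammaSecond}, and then bound the three integrands pointwise. The only cosmetic difference is that you bound $|p_n'|^2/p_n^3\lesssim\eta^{-3}$ directly using $\eta/R_n\le1$, whereas the paper tracks the $O(\eta/R_n)$ correction separately and absorbs the resulting $\log(T_n/L)/R_n^2$ into $L^{-2}$.
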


\begin{proof}  This is a direct consequence of Lemma~\ref{lem:clearingout}.
From \eqref{eq:movingGamma} we have, uniformly on $L\le\eta\le T_n$,
\[
 \Gamma_n^-(\eta)=\red{b_n\eta+O_J(\eta^2/R_n)
  +O_J\!\left(\frac{e^{-\red{c_J}\eta^2}}{1+\eta}\right)
  },
\]
and $b_n\to\sqrt\beta>0$.  Substituting this into \eqref{eq:trueOverlapP} and
expanding the square root yields \eqref{eq:pnTrueOverlap} once $L$ is large.
Differentiating \eqref{eq:trueOverlapP} we obtain that 
\[
        2p_n p_n'(\eta)
        =2\Gamma_n^-(\eta)\partial_\eta\Gamma_n^-(\eta)
          -\frac{1}{2}(R_n-\eta)^{-3},
\]
and hence
\[
        p_n'(\eta)
        =\frac{\Gamma_n^-\partial_\eta\Gamma_n^-(\eta)}{p_n(\eta)}
        +O\!\left(\frac{(R_n-\eta)^{-3}}{p_n(\eta)}\right).
\]
Using \eqref{eq:movingGammaDer} and $p_n\simeq b_n\eta$ gives the stated
\red{asymptotic expansion} for $p_n'$.  Differentiating once more gives
\[
        2p_n'(\eta)^2+2p_n p_n''(\eta)
        =2(\partial_\eta\Gamma_n^-(\eta))^2
          +2\Gamma_n^-\partial_\eta^2\Gamma_n^-(\eta)
          -\frac{3}{2}(R_n-\eta)^{-4}.
\]
The term $\partial_\eta^2\Gamma_n^-$ is controlled by the exact identity
\eqref{eq:GammaEtaEquation} and the bound \eqref{eq:movingGammaSecond},
which yields the stated \red{asymptotic expansion} for $p_n''$.

Finally, the leading terms $p_n(\eta)=b_n\eta-\beta/(2b_n\eta)+O(\eta^{-3})$
imply $|p_n''\red{(\eta)}|/p_n^2\red{(\eta)}\lesssim \eta^{-5}$ and $|p_n'|^2/p_n^3\lesssim
\eta^{-3}$, giving the contribution $O(L^{-2})$ to
\eqref{eq:overlapintegral}.  The  error term \red{$O_J(\eta^2/R_n)$ yields the
expression  $O_J(1/(\eta^2R_n))$ in the first fraction and $O_J(1/(\eta R_n^2))$ for the second. The resulting contributions are $\frac{1}{LR_n}$ and $\frac{\log\left(\frac{T_n}{L}\right)}{R_n^2}$, up to a constant $C_J$, which can both be absorbed into $L^{-2}$}.  The nonlinear term
$\beta(\Phi_n^-)^2\red{(\eta)}/p_n$ is controlled by \eqref{eq:movingclear}, giving the
contribution $\red{e^{-c_JL^2}}$.  \end{proof}  

These symbol bounds will lead to an exact WKB basis in the finite-overlap region. 
For further background on perturbative analysis by Liouville--Green, rigorous WKB,  and
Langer transforms, see \cite{CDST,CST,SchlagPointwise}. We now carry out the 
construction of the exact WKB basis with the precise normalizations and bounds needed for the later
projection and matching arguments. 

\begin{Lemma}\label{lem:finiteWeberFrame}
Fix $0<\widehat\alpha<\red{1}$ and $L$ \red{sufficiently large, depending on $J$}.  Let
\[
        S_n(\eta)=\int_L^\eta p_n(s)\,ds .
\]
There are exact solutions $d_{n,L}$ and $g_{n,L}$ of
\[
       F''\red{(\eta)}=\red{(}p_n(\eta)^2+\beta\Phi_n^-(\eta)^2\red{)}F\red{(\eta)},
       \qquad L\le\eta\le T_n,
\]
normalized by the Wronskian condition $\calW[d_{n,L},g_{n,L}]=1$, where
$\calW[f,g]=fg'-f'g$, such that
\[
       d_{n,L}(\eta)=p_n(\eta)^{-1/2}e^{-S_n(\eta)}(1+e_{n,L}^d(\eta)),
\]
\[
       g_{n,L}(\eta)=\frac{1}{2}p_n(\eta)^{-1/2}e^{S_n(\eta)}(1+e_{n,L}^g(\eta)),
\]
In fact,
\bel{eq:weberFrameError}
       \sup_{L\le\eta\le T_n}
       \left(|e_{n,L}^d(\eta)|+|e_{n,L}^g(\eta)|\right)
       \Le \red{C_{J}L^{-2}}.
\ee
The same relative estimates \red{involving $e_{n,L}^d$ and $e_{n,L}^g$} hold after applying $p_n^{-1}\partial_\eta$ to
these two exact solutions. 
Every solution of \eqref{eq:trueOverlapPsi} is uniquely expressible as
\[
       \Psi=A_{n,L}d_{n,L}+B_{n,L}g_{n,L}.
\]
\end{Lemma}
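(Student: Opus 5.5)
We construct the two solutions by the standard Liouville--Green (WKB) Volterra scheme, using the symbol bounds of Lemma~\ref{lem:trueOverlapEstimates} as the only input. Write the equation as $F''=(p_n^2+\beta(\Phi_n^-)^2)F$ on $[L,T_n]$. The Liouville--Green approximants $p_n^{-1/2}e^{\mp S_n}$ satisfy
\[
(p_n^{-1/2}e^{\mp S_n})''=\bigl(p_n^2+\rho_n\bigr)p_n^{-1/2}e^{\mp S_n},\qquad
\rho_n=\tfrac34\frac{(p_n')^2}{p_n^2}-\tfrac12\frac{p_n''}{p_n}.
\]
The total perturbation is therefore $V_n:=\beta(\Phi_n^-)^2-\rho_n$. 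By \eqref{eq:overlapintegral},
\[
\int_L^{T_n}\frac{|V_n|}{p_n}\,d\eta\le C_JL^{-2}.
\]
This is the key smallness quantity.

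\emph{The recessive solution.} We impose its normalization at the right endpoint $\eta=T_n$. We seek $d_{n,L}=p_n^{-1/2}e^{-S_n}(1+e^d)$ and pass to the variable $S=S_n(\eta)$, with $w=p_n^{1/2}F$, so that the equation becomes $w_{SS}=(1+V_n/p_n^2)w$. The error $e^d$ then solves the Volterra equation
\[
e^d(\eta)=\int_\eta^{T_n}\frac{1-e^{-2(S_n(s)-S_n(\eta))}}{2}\,\frac{V_n(s)}{p_n(s)}\,(1+e^d(s))\,ds .
\]
The kernel is bounded by $\tfrac12|V_n|/p_n$ because $S_n$ is increasing, since $p_n>0$. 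Iterating the Volterra equation gives $\sup|e^d|\le \exp(C_JL^{-2})-1\le C_JL^{-2}$. Differentiating the integral equation, the kernel becomes $e^{-2(S_n(s)-S_n(\eta))}p_n(\eta)$. Together with $p_n'/p_n^2=O(\eta^{-2})$, this gives the same relative bound after applying $p_n^{-1}\partial_\eta$.

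\emph{The dominant solution.} For $g$ we normalize at the left endpoint $\eta=L$ instead. We use the analogous integral equation
\[
e^g(\eta)=\int_L^{\eta}\frac{1-e^{-2(S_n(\eta)-S_n(s))}}{2}\,\frac{V_n(s)}{p_n(s)}\,(1+e^g(s))\,ds ,
\]
whose kernel is again bounded by $\tfrac12|V_n|/p_n$. This gives $|e^g|\le C_JL^{-2}$ and the derivative bound in the same way.

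\emph{Wronskian normalization.} The Wronskian of the two constructed functions is constant. Evaluating it with the asymptotic forms gives
\[
\calW=\tfrac12\cdot 2\,(1+O(L^{-2}))=1+O_J(L^{-2}),
\]
where the factor $\tfrac12$ is the one in the definition of $g$. The leading cross terms are $p_n^{-1/2}\cdot p_n^{1/2}$, and the corrections are relative errors of size $L^{-2}$ coming from the derivative bounds and from $p_n'/p_n^2$. We then rescale $g_{n,L}$ by the exact constant $1/\calW$, which is $1+O(L^{-2})$, to obtain $\calW[d_{n,L},g_{n,L}]=1$. This rescaling only changes $e^g$ by $O(L^{-2})$.

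\emph{Uniqueness of the representation.} Since the Wronskian is nonzero, $d_{n,L}$ and $g_{n,L}$ are linearly independent. Hence every solution $\Psi$ of \eqref{eq:trueOverlapPsi} is uniquely expressible as $\Psi=A_{n,L}d_{n,L}+B_{n,L}g_{n,L}$, with $A_{n,L}=\calW[\Psi,g_{n,L}]$ and $B_{n,L}=\calW[d_{n,L},\Psi]$.

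The main obstacle is making sure that the derivative error estimates, i.e.\ the bounds after applying $p_n^{-1}\partial_\eta$, carry no loss uniformly up to $T_n$. I would handle this by writing both the function and its derivative as the first-order system in the $S$-variable. That system has bounded diagonal exponential kernels and off-diagonal terms controlled by $|V_n|/p_n$, so $L^1$ smallness alone, with no pointwise control, closes the estimate.
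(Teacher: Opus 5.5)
Your proposal is correct and is essentially the paper's proof: the same Liouville--Green ansatz with perturbation $\beta(\Phi_n^-)^2-p_n^{1/2}(p_n^{-1/2})''$, the same two Volterra equations, and the same $L^1$ smallness from \eqref{eq:overlapintegral}. As in the paper, you normalize the recessive branch at $T_n$ and the dominant branch at $L$, use kernels bounded by $1/(2p_n(s))$, obtain the $p_n^{-1}\partial_\eta$ bounds from the differentiated integral equations, and finally rescale the dominant branch by the constant Wronskian $1+O_J(L^{-2})$. Your passage to the variable $S=S_n(\eta)$ yields exactly the paper's kernel $(1-e^{-2(S_n(s)-S_n(\eta))})/(2p_n(s))$, so the difference is only presentational; note also that in the Wronskian the $p_n'/p_n^2$ terms cancel exactly, so only the Volterra corrections contribute.
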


\begin{proof}   Put
\[
        E_n(\eta)=\beta(\Phi_n^-(\eta))^2-p_n(\eta)^{1/2}(p_n(\eta)^{-1/2})'' .
\]
A direct computation gives
\[
        p_n^{1/2}(p_n^{-1/2})''=
        \frac{3}{4}\frac{(p_n')^2}{p_n^2}-\frac{1}{2}\frac{p_n''}{p_n},
\]
so that
\[
        \frac{|E_n|}{p_n}
        \le \frac{\beta(\Phi_n^-)^2}{p_n}
        +C\left(\frac{|p_n''|}{p_n^2}+\frac{|p_n'|^2}{p_n^3}\right).
\]
Therefore the integral bound \eqref{eq:overlapintegral} in
Lemma~\ref{lem:trueOverlapEstimates} implies
\bel{eq:WeberFrameEnorm}
        \int_L^{T_n}\frac{|E_n(s)|}{p_n(s)}\,ds
        \le \delta_{n,L}:=
        \red{C_{J}L^{-2}}.
\ee
For the decaying branch put $D_n=p_n^{-1/2}e^{-S_n}$ and write $F=D_n(1+\omega)$\red{, where $F$ is a solution of 
\begin{align*}
 F''(\eta)=\red{(}p_n(\eta)^2+\beta\Phi_n^-(\eta)^2\red{)}F(\eta)   .
\end{align*}}
Then
\[
        D_n''=\red{(}p_n^2+p_n^{1/2}(p_n^{-1/2})''\red{)}D_n,
\]
so the difference between the true coefficient $p_n^2+\beta(\Phi_n^-)^2$ and the coefficient solved by $D_n$ is exactly
$E_n$.  Hence
\[
       (D_n^2\omega')'
       =D_n^2 E_n(1+\omega).
\]
We impose the  normalization
\[
        \omega(T_n)=0,\qquad D_n(T_n)^2\omega'(T_n)=0 .
\]
Thus
\bel{eq:omegaTerminalVolterra}
        \omega(\eta)=
        \int_\eta^{T_n}
        \left(\int_\eta^s \frac{D_n(s)^2}{D_n(t)^2}\,dt\right)
        E_n(s)(1+\omega(s))\,ds .
\ee
Since
 \begin{align*} 0\leq\int_\eta^s \frac{D_n(s)^2}{D_n(t)^2}\,dt=p_n(s)^{-1}e^{-2S_n(s)}\int_\eta^sp_n(t)e^{2S_n(t)}\,dt=\frac{e^{2S_n(s)}-e^{2S_n(\eta)}}{2p_n(s)e^{2S_n(s)}}\leq\frac{1}{2p_n(s)}, 
\end{align*}
\eqref{eq:WeberFrameEnorm} gives a contraction on $L^\infty[L,T_n]$ and yields the displayed bound for
$e_{n,L}^d=\omega$.  The first differentiated estimate can be read directly from the once-integrated equation:
\[
        \omega'(\eta)=-D_n(\eta)^{-2}
        \int_\eta^{T_n}D_n(s)^2E_n(s)(1+\omega(s))\,ds .
\]
The monotonicity of the action gives
\[
        \frac{D_n(s)^2}{D_n(\eta)^2}
        \le C\frac{p_n(\eta)}{p_n(s)}
        \exp\left(-2\int_\eta^sp_n(t)\,dt\right),
        \qquad s\ge\eta.
\]
Since the exponential factor is at most $1$, the identity for $\omega'$ yields
\[
        p_n(\eta)^{-1}|\omega'(\eta)|
        \le C\int_\eta^{T_n}\frac{|E_n(s)|}{p_n(s)}\,ds
        \le C\int_L^{T_n}\frac{|E_n(s)|}{p_n(s)}\,ds
        \le C\delta_{n,L}.
\]
This is the asserted estimate after applying $p_n^{-1}\partial_\eta$.  

For the growing branch put $G_n=(1/2)p_n^{-1/2}e^{S_n}$ and write $F=G_n(1+\omega_g)$.  This time the Volterra equation
is initialized at $\eta=L$ \red{ (with analogous initial conditions)}:
\[
        \omega_g(\eta)=
        \int_L^\eta
        \left(\int_s^\eta \frac{G_n(s)^2}{G_n(t)^2}\,dt\right)
        E_n(s)(1+\omega_g(s))\,ds ,
\]
and the kernel obeys the same $C/p_n(s)$ bound.  This gives $e_{n,L}^g$ and its differentiated estimate.  The factor
$p_n^{-1}\partial_\eta$ estimate follows here from the explicit identity
\[
        \omega_g'(\eta)=G_n(\eta)^{-2}
        \int_L^\eta G_n(s)^2E_n(s)(1+\omega_g(s))\,ds
\]
and the analogous action bound.  The factor
$1/2$ is the WKB normalization for which the unperturbed Wronskian of $D_n$ and $G_n$ is one.  The Volterra corrections
change the Wronskian to a constant $c_{n,L}=1+O(\delta_{n,L})$.  We keep the recessive branch fixed and replace the
preliminary growing branch by $c_{n,L}^{-1}$ times that branch, so that
$\calW[d_{n,L},g_{n,L}]=1$ exactly.  Since
$c_{n,L}^{-1}=1+O(\delta_{n,L})$, this does not change any of the estimates. \end{proof}  

When needed, a second normalized derivative follows from the differential equation for $\omega$ and the symbol bounds in
Lemma~\ref{lem:trueOverlapEstimates}.
The next lemma shows that regularity at the origin strongly suppresses the growing  component in this basis.

\begin{Lemma}\label{lem:recessiveSelection}
Let $M_n,T_n$ be as in \eqref{eq:twoOverlapExponents}, and write
\[
        \Psi_n=A_{n,L}d_{n,L}+B_{n,L}g_{n,L}
\]
with respect to the WKB basis of Lemma~\ref{lem:finiteWeberFrame} on
$[L,T_n]$.
Then, 
\bel{eq:growingAbs}
       |B_{n,L}|g_{n,L}(L)
       \Le \red{C_{J}}
       e^{\left(-\int_L^{T_n}p_n(s)\,ds\right)}
       \red{e^{-\red{c_J}T_n^2}}.
\ee
If, for this fixed $L$, the normalized recessive coefficient satisfies\footnote{\label{fn:AnLnorm}The factor $R_n^{-1/2}$ comes from the left-endpoint normalization: recall $\Psi_n\red{(\eta)}=(R_n-\eta)^{1/2}\Phi_n^-\red{(\eta)}$.  At $\eta=L$ we have $(R_n-L)^{1/2}\sim R_n^{1/2}$, while $d_{n,L}(L)$ is $\simeq_{\red{J,L}} 1$ in the fixed-$L$ WKB normalization.  Thus $R_n^{-1/2}A_{n,L}$ is the scaling that yields a finite, nontrivial limit for the recessive coefficient as $n\to\infty$. This hypothesis is verified below in
Lemma~\ref{lem:leftProjection}.}
$R_n^{-1/2}A_{n,L}\to A_{\beta,L}\ne0$, \red{uniformly for $\beta\in J$, with $\displaystyle \inf_{\substack{\beta\in J}}|A_{\beta,L}|>0$,} then
\bel{eq:growingRelative}
 \sup_{L\le\eta\le M_n}
 \frac{|B_{n,L}|g_{n,L}(\eta)}{|A_{n,L}|d_{n,L}(\eta)}
 \le \red{C_{J,L}}\red{e^{-c_JT_n^2+C_JM_n^2}}.
\ee
\end{Lemma}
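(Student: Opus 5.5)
We extract $B_{n,L}$ from the Wronskian identity. Since $\calW[d_{n,L},g_{n,L}]=1$, we have
\[
 B_{n,L}=\calW[d_{n,L},\Psi_n]=d_{n,L}(\eta)\Psi_n'(\eta)-d_{n,L}'(\eta)\Psi_n(\eta)
\]
for every $\eta\in[L,T_n]$. We evaluate this at $\eta=T_n$. By Lemma~\ref{lem:finiteWeberFrame}, including the estimate for $p_n^{-1}\partial_\eta d_{n,L}$, we get
\[
 |d_{n,L}(T_n)|+p_n(T_n)^{-1}|d_{n,L}'(T_n)|\le C\,p_n(T_n)^{-1/2}e^{-S_n(T_n)}.
\]
For $\Psi_n$ at $T_n$ we use the core bounds. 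Lemma~\ref{lem:coreGaussianBarrier} (with exponent $\widehat\alpha$ in place of $\alpha$, since $T_n=R_n^{\widehat\alpha}$) and \eqref{eq:movingclear} give
\[
 |\Psi_n(T_n)|\le CR_n^{1/2}e^{-cT_n^2},\qquad
 |\Psi_n'(T_n)|\le CR_n^{1/2}(1+T_n)e^{-cT_n^2}.
\]
The derivative of the prefactor $(R_n-\eta)^{1/2}$ is harmless here.

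Combining these with $p_n(T_n)\simeq T_n$, we obtain
\[
 |B_{n,L}|\le C R_n^{1/2}T_n^{1/2}e^{-S_n(T_n)}e^{-cT_n^2}.
\]
By Lemma~\ref{lem:finiteWeberFrame}, $g_{n,L}(L)\simeq p_n(L)^{-1/2}\simeq_J L^{-1/2}$. The polynomial factors $R_n^{1/2}T_n^{1/2}$ are powers of $R_n$, while $T_n^2=R_n^{2\widehat\alpha}$, so they are absorbed by shrinking $c$ to $c_J$. This proves \eqref{eq:growingAbs}.

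For \eqref{eq:growingRelative}, we use the WKB forms on $[L,M_n]$. The ratio is
\[
 \frac{|B_{n,L}|g_{n,L}(\eta)}{|A_{n,L}|d_{n,L}(\eta)}
 \le C\,\frac{|B_{n,L}|}{|A_{n,L}|}\,e^{2S_n(\eta)}.
\]
Substituting the bound on $|B_{n,L}|$ gives
\[
 \frac{|B_{n,L}|g_{n,L}(\eta)}{|A_{n,L}|d_{n,L}(\eta)}
 \le C\,\frac{R_n^{1/2}T_n^{1/2}}{|A_{n,L}|}\,e^{2S_n(\eta)-S_n(T_n)}\,e^{-cT_n^2}.
\]
The hypothesis $R_n^{-1/2}A_{n,L}\to A_{\beta,L}$, with $\inf_J|A_{\beta,L}|>0$, gives $|A_{n,L}|\ge c_{J,L}R_n^{1/2}$ for large $n$, which cancels the $R_n^{1/2}$. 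Since $p_n>0$, we have $S_n(T_n)\ge0$, and $S_n(\eta)\le S_n(M_n)\le C_JM_n^2$ by \eqref{eq:pnTrueOverlap}. The leftover $T_n^{1/2}$ is absorbed into the exponent because $T_n^2\gg\log T_n$.

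The main obstacle is the first step: making sure the bound on $\Psi_n$ and $\Psi_n'$ at $T_n$ really holds there with a Gaussian factor. Lemmas~\ref{lem:coreGaussianBarrier} and \ref{lem:clearingout} were stated for an arbitrary fixed exponent in $(0,1)$, so applying them with $\widehat\alpha$ is legitimate. One must also check that the error $O(\eta^2/R_n)$ in $p_n$ does not spoil $p_n(T_n)\simeq T_n$; this needs only $\widehat\alpha<1$.

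Optionally, the crude factor $e^{-S_n(T_n)}$ could be kept in the second estimate for extra decay. It is not needed, since $e^{-cT_n^2+CM_n^2}$ already decays because $M_n/T_n\to0$.
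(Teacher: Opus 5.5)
Your proof is correct and follows the paper's argument. You write $B_{n,L}=\mathcal W[d_{n,L},\Psi_n]$ evaluated at $\eta=T_n$, bound $\Psi_n$ there by the Gaussian core estimate \eqref{eq:movingclear} with exponent $\widehat\alpha$, bound $d_{n,L}$ by its WKB form, absorb the polynomial losses, and then combine $|A_{n,L}|\gtrsim R_n^{1/2}$, $g_{n,L}/d_{n,L}\lesssim e^{2S_n}$ and $S_n(M_n)\lesssim M_n^2$; the only differences are cosmetic, namely that you track the factors $R_n^{1/2}T_n^{1/2}$ explicitly and drop $e^{-S_n(T_n)}\le 1$, whereas the paper absorbs the polynomial factors at the outset and normalizes the ratio at $\eta=L$.
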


\begin{proof}  Since $\calW[d_{n,L},g_{n,L}]=1$,
\[
        B_{n,L}=\red{\calW[d_{n,L},\Psi_n]}
\]
for every $\eta$ in the overlap.  We evaluate this Wronskian at the right endpoint
$\eta=T_n$ of the interval $[L,T_n]$.  Here we use \eqref{eq:movingclear} with exponent
$\alpha=\widehat\alpha$ (so that it applies up to $\eta=T_n=R_n^{\widehat\alpha}$), and we \red{get that}
\bel{eq:PsistarBoundDetailed}
       |\Psi_n(T_n)|+p_n(T_n)^{-1}|\Psi_n'(T_n)|
       \Le \red{C_{J}e^{-\red{c_J}T_n^2}}.
\ee
The factor $R_n^C$ records only polynomial losses coming from $(R_n-\eta)^{1/2}$, from
$p_n(T_n)\simeq R_n^{\widehat\alpha}$, and from one differentiation.  The
recessive basis element satisfies
\[
       |d_{n,L}(T_n)|+p_n(T_n)^{-1}|d_{n,L}'(T_n)|
       \Le C_J p_n(T_n)^{-1/2}
       \exp\!\left(-\int_L^{T_n}p_n(s)\,ds\right),
\]
and $g_{n,L}(L)\simeq p_n(L)^{-1/2}$.  The powers of $p_n(L)$ and
$p_n(T_n)$ are polynomial in $R_n$, so the Wronskian identity gives
\eqref{eq:growingAbs}. 
Under the limit \blue{hypothesis} on $A_{n,L}$, for each fixed $L$ there is $c_L>0$ such that
\bel{eq:fixedLlowerRecessive}
        |A_{n,L}|d_{n,L}(L)\Ge c_L R_n^{1/2}
\ee
for all large $n$.     For $\eta\le M_n$, the basis estimates give
\[
 \frac{g_{n,L}(\eta)d_{n,L}(L)}{g_{n,L}(L)d_{n,L}(\eta)}
 \le C R_n^C\exp\left(2\int_L^\eta p_n(s)\,ds\right)
 \le \red{C_J}  \red{e^{c_JM_n^2}}.
\]
Combining this inequality, \eqref{eq:growingAbs}, and
\eqref{eq:fixedLlowerRecessive} yields
\[
 \sup_{L\le\eta\le M_n}
 \frac{|B_{n,L}|g_{n,L}(\eta)}{|A_{n,L}|d_{n,L}(\eta)}
 \le \red{C_{J,L}}
 e^{-\int_L^{T_n}p_n+2\int_L^{M_n}p_n}
 \red{e^{-\red{c_J}T_n^2}}.
\]
\blue{Since by Lemma~\ref{lem:trueOverlapEstimates} } $p_n(s)\ge \red{c_J}s$ for $s\ge L$, while $p_n(s)\le \red{C_J}s$ for $L\le s\le M_n$, this is bounded by the right side of
\eqref{eq:growingRelative}. \end{proof}  

The resulting left-overlap branch has the following WKB form (recall $\eta=-y=R_n-u$).

\begin{Proposition}\label{prop:coreWKB}
Fix $0<\alpha<\widehat\alpha<\frac{1}{3}$ and $L\gg1$\red{, depending on $J$}.  Assume\footnotemark[\getrefnumber{fn:AnLnorm}] that
$R_n^{-1/2}A_{n,L}\to A_{\beta,L}\ne0$ for the recessive coefficient in
the WKB basis, uniformly for $\beta\in J$, with
$\inf_{\beta\in J}|A_{\beta,L}|>0$.  
Then, uniformly for
\[
        L\le -y\le M_n=R_n^\alpha,
        \qquad u=R_n+y,
\]
the Higgs profile $\phi_n$ satisfies
\bel{eq:coreWKB}
\phi_n(R_n+y)
  =\wt C_{n,L}\,|y|^{(\beta/b_n-1)/2}
        \exp\left(-\frac{b_n}{2}y^2\right)
        \left(1+r_n^{\rm c}(y)\right),
\ee
where
\bel{eq:coreWKBcoefficient}
 \wt C_{n,L}
 =R_n^{-1/2}A_{n,L}b_n^{-1/2}
   e^{b_nL^2/2}L^{-\beta/(2b_n)} .
\ee
\red{We also have} 
\bel{eq:coreWKBerr}
        |r_n^{\rm c}(y)|
        \Le \red{C_{J}L^{-2}}+o_{n,L}(1)
\ee
as $n\to\infty$, for fixed $L$\red{, along with} 
 the differentiated bound
\[
 |r_n^{\rm c}(y)|+p_n(-y)^{-1}|\partial_y r_n^{\rm c}(y)|
 \le \red{C_{J}L^{-2}}+o_{n,L}(1).
\]
\end{Proposition}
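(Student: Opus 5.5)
The plan is to write $\phi_n(R_n+y)=(R_n-\eta)^{-1/2}\Psi_n(\eta)$ with $\eta=-y$, and to insert the decomposition $\Psi_n=A_{n,L}d_{n,L}+B_{n,L}g_{n,L}$ from Lemma~\ref{lem:finiteWeberFrame}. This holds on $[L,T_n]$ with $T_n=R_n^{\widehat\alpha}$, and we restrict to the window $L\le\eta\le M_n=R_n^\alpha$. The argument then has three ingredients: discarding the growing branch, evaluating the WKB phase and amplitude explicitly, and collecting the errors.

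\emph{Step 1: discard the growing branch.} By the hypothesis on $R_n^{-1/2}A_{n,L}$, Lemma~\ref{lem:recessiveSelection} applies. Estimate \eqref{eq:growingRelative} gives
\[
\sup_{L\le\eta\le M_n}\frac{|B_{n,L}|g_{n,L}(\eta)}{|A_{n,L}|d_{n,L}(\eta)}\le C_{J,L}e^{-c_JR_n^{2\widehat\alpha}+C_JR_n^{2\alpha}}=o_{n,L}(1),
\]
since $\alpha<\widehat\alpha$. Hence $\Psi_n=A_{n,L}d_{n,L}(1+o_{n,L}(1))$ on $[L,M_n]$. The same holds after applying $p_n^{-1}\partial_\eta$, using the normalized derivative bounds for $d_{n,L}$ and $g_{n,L}$ in Lemma~\ref{lem:finiteWeberFrame}. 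By the same lemma, $d_{n,L}=p_n^{-1/2}e^{-S_n}(1+O(C_JL^{-2}))$, again with the differentiated version.

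\emph{Step 2: evaluate the action and the amplitude.} Integrate the expansion \eqref{eq:pnTrueOverlap} from $L$ to $\eta$:
\[
S_n(\eta)=\frac{b_n}{2}(\eta^2-L^2)-\frac{\beta}{2b_n}\log\frac{\eta}{L}+O_J(L^{-2})+O_J\!\left(\frac{\eta^3}{R_n}\right).
\]
For $\eta\le M_n$ the last error is $O(R_n^{3\alpha-1})=o(1)$ precisely because $\alpha<\tfrac13$. I expect this to be the main obstacle: it is the only place where the restriction $\widehat\alpha<\tfrac13$ enters, and the $O(\eta^2/R_n)$ error in $p_n$ cannot be improved without a finer magnetic expansion.

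Next, $p_n(\eta)^{-1/2}=(b_n\eta)^{-1/2}\bigl(1+O(\eta^{-2})+O(\eta/R_n)\bigr)$ and $(R_n-\eta)^{-1/2}=R_n^{-1/2}(1+O(\eta/R_n))$. Multiplying these out, the prefactor becomes
\[
R_n^{-1/2}A_{n,L}\,b_n^{-1/2}e^{b_nL^2/2}L^{-\beta/(2b_n)}=\wt C_{n,L},
\]
multiplied by
\[
\eta^{-1/2}\eta^{\beta/(2b_n)}e^{-b_n\eta^2/2}=|y|^{(\beta/b_n-1)/2}e^{-b_ny^2/2}.
\]
This is exactly the form \eqref{eq:coreWKB} with coefficient \eqref{eq:coreWKBcoefficient}. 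The accumulated relative error is
\[
1+r_n^{\rm c}=\bigl(1+O(L^{-2})\bigr)\bigl(1+o_{n,L}(1)\bigr)e^{O(L^{-2})+O(R_n^{3\alpha-1})},
\]
which gives \eqref{eq:coreWKBerr}.

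\emph{Step 3: the derivative bound.} Differentiate the identity
\[
1+r_n^{\rm c}=\frac{\phi_n\,|y|^{-(\beta/b_n-1)/2}e^{b_ny^2/2}}{\wt C_{n,L}}
\]
in $\eta=-y$ and compare it with $p_n^{-1}\partial_\eta\log d_{n,L}=-1+O(L^{-2})$. The explicit model derivative is $-b_n\eta+\frac{\beta/b_n-1}{2\eta}$; by \eqref{eq:pnTrueOverlap} it differs from $-p_n-\tfrac12 p_n'/p_n$ by $O(\eta^{-3})+O(\eta^2/R_n)$, which is $O(L^{-2})+o(1)$ after division by $p_n$. The factor $(R_n-\eta)^{-1/2}$ contributes $O(1/R_n)$, and the growing-branch ratio has derivative $o_{n,L}(1)$ by Step 1. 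Together these give $p_n(-y)^{-1}|\partial_yr_n^{\rm c}|\le C_JL^{-2}+o_{n,L}(1)$. All constants are uniform for $\beta\in J$, since every input lemma is uniform and $b_n\to\sqrt\beta$ uniformly.
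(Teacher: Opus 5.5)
Your proof is correct and follows essentially the same route as the paper. You suppress the growing branch relative to the recessive one via Lemma~\ref{lem:recessiveSelection}, integrate \eqref{eq:pnTrueOverlap} to get the action with the $O(\eta^3/R_n)$ term made negligible by $\alpha<\frac13$, and read off $\wt C_{n,L}$ from the $p_n^{-1/2}$ and $(R_n-\eta)^{-1/2}$ prefactors. You then obtain the $p_n^{-1}\partial_y$ bound by cancelling the model derivative $-b_n\eta+\frac{\beta/b_n-1}{2\eta}$ against $-p_n-\tfrac12 p_n'/p_n$ and controlling the derivative of the growing-to-recessive ratio, which is what the paper does explicitly.
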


\begin{proof}
Lemma~\ref{lem:finiteWeberFrame} gives the exact two-mode representation
\red{
\begin{align*}
    \Psi_n(\eta)&=A_{n,L}d_{n,L}(\eta)+B_{n,L}g_{n,L}(\eta),
\end{align*}
where
\[
       d_{n,L}(\eta)=p_n(\eta)^{-1/2}e^{-S_n(\eta)}(1+e_{n,L}^d(\eta)),
\]
and
\[
       g_{n,L}(\eta)=\frac{1}{2}p_n(\eta)^{-1/2}e^{S_n(\eta)}(1+e_{n,L}^g(\eta)),
\]
The errors $e_{n,L}^d$ and $e_{n,L}^g$ satisfy the bounds
\begin{align*}
       \sup_{L\le\eta\le T_n}
       \left(|e_{n,L}^d(\eta)|+|p_n^{-1}(\eta)\partial_\eta e_{n,L}^d(\eta)|+|e_{n,L}^g(\eta)|+|p_n^{-1}(\eta)\partial_\eta e_{n,L}^g(\eta)|\right)
       \Le \red{C_{J}L^{-2}}.
\end{align*} 
}
\red{By} Lemma~\ref{lem:recessiveSelection} \red{, we have
\begin{align*}
    \sup_{L\le\eta\le M_n}
 \frac{|B_{n,L}|g_{n,L}(\eta)}{|A_{n,L}|d_{n,L}(\eta)}
 \le \red{C_{J,L}}\red{e^{-c_JT_n^2+C_JM_n^2}}\leq C_JL^{-2},
\end{align*}
after increasing $C_J>0$ if necessary. This removes the growing branch in relative size, in the sense that its contribution can be absorbed into $r_n^c$ after the change of variable $y=-\eta$.

}
The differentiated estimate is obtained by differentiating the Volterra
corrections and the prefactors in the reduction from the exact fundamental
system to \eqref{eq:coreWKB}.  Lemma~\ref{lem:trueOverlapEstimates} and
Lemma~\ref{lem:recessiveSelection} bound each resulting term by the same right-hand side.
\red{Indeed, we have 
\begin{align*}
p_n(\eta)^{-1}\partial_\eta\frac{B_{n,L}g_{n,L}(\eta)}{A_{n,L}d_{n,L}(\eta)}&=\frac{p_n(\eta)^{-1}B_{n,L}\partial_\eta g_{n,L}(\eta)}{A_{n,L}d_{n,L}(\eta)}-\frac{p_n(\eta)^{-1}\partial_\eta d_{n,L}(\eta) B_{n,L} g_{n,L}(\eta)}{A_{n,L}d^2_{n,L}(\eta)}
\end{align*}
We shall analyze the first fraction. We have
\begin{align*}
  \frac{p_n(\eta)^{-1}B_{n,L}\partial_\eta g_{n,L}(\eta)}{A_{n,L}d_{n,L}(\eta)}&=\frac{B_{n,L}g_{n,L}(\eta)\left(1-\frac{p'_n(\eta)}{2p^2_n(\eta)}+\frac{p_n(\eta)^{-1}\partial_\eta e^g_{n,L}(\eta)}{1+e^g_{n,L}(\eta)}\right)}{A_{n,L}d_{n,L}(\eta)}  
\end{align*}
We now immediately get that 
\begin{align*}
  & \sup_{L\le\eta\le M_n}\left|\frac{p_n(\eta)^{-1}B_{n,L}\partial_\eta g_{n,L}(\eta)}{A_{n,L}d_{n,L}(\eta)}\right| \\ 
 &\leq \sup_{L\le\eta\le M_n}\frac{|B_{n,L}|g_{n,L}(\eta)}{|A_{n,L}|d_{n,L}(\eta)}\sup_{L\le\eta\le M_n}\left|1-\frac{p'_n(\eta)}{2p^2_n(\eta)}+\frac{p_n(\eta)^{-1}\partial_\eta e^g_{n,L}(\eta)}{1+e^g_{n,L}(\eta)}\right|
 \le C_{J}L^{-2}. 
\end{align*}
The second fraction can be analyzed similarly, once we write
\begin{align*}
 \partial_\eta d_{n,L}(\eta)&=d_{n,L}(\eta)\left(-p_n(\eta)-\frac{p'_n(\eta)}{2p_n(\eta)}+\frac{\partial_\eta e^d_{n,L}(\eta)}{1+e^d_{n,L}(\eta)}\right) .  
\end{align*}
}
\red{From} \eqref{eq:pnTrueOverlap}\red{, we have}
\[
\begin{split}
       \int_L^\eta p_n(s)\,ds
       &=\red{\frac{b_n}{2}(\eta^2-L^2)-\frac{\beta}{2b_n}\log\frac{\eta}{L}
         +O_J(L^{-2})+O_J(\eta^3/R_n)}.
\end{split}
\]
\red{The last} term tends to zero since $\eta\leq R_n^\alpha$, and $\displaystyle \alpha<\frac{1}{3}$. \red{We now plug this expansion into the contribution $A_{n,L}d_{n,L}(\eta)$ which corresponds to the decaying branch, and has turned out to be the principal one. We get that
\begin{align*}
A_{n,L}d_{n,L}(\eta)&=A_{n,L}p_n(\eta)^{-1/2}e^{-\frac{b_n}{2}(\eta^2-L^2)+\frac{\beta}{2b_n}\log\frac{\eta}{L}
         } (1+O_J(L^{-2})+o_{n,L}(1))\\
         &=A_{n,L}p_n(\eta)^{-1/2}e^{-\frac{b_n}{2}(\eta^2-L^2)
         }\eta^{\frac{\beta}{2b_n}}L^{-\frac{\beta}{2b_n}} (1+r^c_n(\eta))
\end{align*}
}
The WKB prefactors contribute $p_n(\eta)^{-1/2}\simeq b_n^{-1/2}\eta^{-1/2}$ and
$(R_n-\eta)^{-1/2}=R_n^{-1/2}(1+O(R_n^{\alpha-1}))$ on the asserted interval.
\red{Since $\Phi_n^{-}(\eta)=(R_n-\eta)^{-1/2}\Psi_n(\eta)$, making the change of variable $y=-\eta$} (and absorbing the resulting $1+o(1)$ factors into
$r_n^{\rm c}$) yields \red{\eqref{eq:coreWKB} and }\eqref{eq:coreWKBcoefficient}.
\end{proof}

On any subregion of the window   for which
\[
       |b_n-\sqrt\beta|\,y^2\to0,
\]
this implies
\[
       \phi_n(R_n+y)
       =\wt C_{n,L}|y|^{(\sqrt\beta-1)/2}
       e^{-\sqrt\beta y^2/2}
       \left(1+O_J(L^{-2})+o_{n,L}(1)\right).
\]

\section{The Weber tail of the interface}\label{sec:webertail}

We now derive the  asymptotic behavior of the vortices to the left of the interface.  The leading order will be given by parabolic-cylinder functions followed by a perturbative analysis of the 
Weber equation (for the Weber equation and these special functions cf.~\cite[\S12.1--\S12.2]{DLMF}). This is related to the normal-form mechanism of~\cite{CPS}.  In order to keep the present proof
self-contained, we include a  Weber matching Lemma~\ref{lem:webermatching} and prove
it directly by a Volterra argument. We begin with the CHMO interface itself, which does not depend on~$n$.  In view of the asymptotics of the magnetic field on the left, we write for $y<0$, 
\[
        \Gamma_\beta(y)=-\sqrt\beta\,y+g_\beta(y).
\]
Then
\begin{align}
 \Phi_\beta''\red{(y)}&=(\beta y^2-\beta)\Phi_\beta\red{(y)}+\rho_\beta(y)\Phi_\beta\red{(y)},\label{eq:PhiLeft}\\
 g_\beta''\red{(y)}&=2\Gamma_\beta\red{(y)}\Phi_\beta^2\red{(y)}.\nonumber
\end{align}
where
\begin{equation}\label{eq:rho}
\rho_\beta(y)=-2\sqrt\beta\,y\,g_\beta(y)+g_\beta(y)^2+\beta\Phi_\beta(y)^2 .
\end{equation}
The magnetic correction satisfies the Volterra equation
\bel{eq:gVolterra}
        g_\beta(y)=2\int_{-\infty}^y (y-s)\Gamma_\beta(s)\Phi_\beta(s)^2\dd s.
\ee
We now pass to the standard parabolic-cylinder scaling  by setting
\[
        x=\sqrt2\,\beta^{1/4}y,\qquad
        U_\beta(x)=\Phi_\beta\left(\frac{x}{\sqrt2\,\beta^{1/4}}\right)
\]
Then
\begin{equation}\label{eq:UbetaODE}
        U_\beta''(x)=\left(\frac{x^2}{4}-\frac{\sqrt\beta}{2}\right)U_\beta(x)
                  +\wt\rho_\beta(x)U_\beta(x),
\end{equation}
with 
\begin{equation}\label{eq:wtrho}
 \wt\rho_\beta(x)
 =(2\sqrt\beta)^{-1}
 \rho_\beta\left(x/(\sqrt2\beta^{1/4})\right).
\end{equation}
We now turn to the Weber matching lemma used for the left interface tail.

\begin{Lemma}\label{lem:webermatching}
Let $\nu>0$ and set
\[
        V_\nu(x)=\frac{x^2}{4}-\frac{\nu}{2},\qquad
        a_\nu=\frac{\nu-1}{2},\qquad
        m_\nu(x)=\abs x^{a_\nu}e^{-x^2/4},\quad x<0 .
\]
There is a unique positive solution $D_\nu^-$ of
\bel{eq:modelWeberLemma}
        D''=V_\nu D
\ee
on $(-\infty,-X_\nu]$ with the normalization
\[
        \lim_{x\to-\infty}\frac{D_\nu^-(x)}{m_\nu(x)}=1.
\]
It satisfies
\bel{eq:modelWeberAsymp}
        D_\nu^-(x)=m_\nu(x)\left(1+O_\nu(\abs x^{-2})\right),\qquad
        \frac{D_\nu^-{}'(x)}{D_\nu^-(x)}
        =\frac{a_\nu}{x}-\frac{x}{2}+O_\nu(\abs x^{-3})
\ee
as $x\to-\infty$.
Next, suppose $q$ is continuous on $(-\infty,-X]$, with
$X\ge X_\nu$, and
\bel{eq:qWeberNorm}
        \int_{-\infty}^{-X}\la t\ra^{-1}\abs{q(t)}\,\dd t<\infty .
\ee
Then the perturbed equation
\bel{eq:pertWeberLemma}
        U''=(V_\nu+q)U
\ee
has a one-dimensional space of solutions recessive at $-\infty$.  Every
nonzero such solution can be written uniquely, with
$\omega(x)\to0$ as $x\to-\infty$, in the form
\[
        U(x)=A\,D_\nu^-(x)(1+\omega(x)),
\]
where
\bel{eq:omegaWeberBound}
        \abs{\omega(x)}
        \Le C_\nu
        \int_{-\infty}^x\la t\ra^{-1}\abs{q(t)}\,\dd t\,
        \exp\left(C_\nu\int_{-\infty}^x
        \la t\ra^{-1}\abs{q(t)}\,\dd t\right).
\ee
In particular, if $|q(t)|\le C_qe^{-ct^2}$ for some $c,C_q>0$, then
every nonzero recessive solution satisfies
\bel{eq:pertWeberGaussian}
        U(x)=A\,\abs x^{(\nu-1)/2}e^{-x^2/4}
        \left(1+O_{\nu,c,C_q}(\abs x^{-2})\right)
\ee
as $x\to-\infty$, and
\bel{eq:pertWeberLogDerivative}
        \frac{U'(x)}{U(x)}
        =\frac{a_\nu}{x}-\frac{x}{2}
         +O_{\nu,c,C_q}(\abs x^{-3}).
\ee
Here \emph{recessive} means that $U/D_\nu^-$ has a finite limit at
$-\infty$.  If $U_-$ denotes the normalized perturbed solution
$U_-/D_\nu^-\to1$, then, sufficiently far to the left, $U_->0$, and
reduction of order gives the perturbed dominant companion
\bel{eq:pertWeberDominant}
        U_+(x)=U_-(x)\int_{x_0}^xU_-(s)^{-2}\,\dd s,
        \qquad W(U_-,U_+)=1.
\ee
If $|q(t)|\le C_qe^{-ct^2}$, then
\bel{eq:pertWeberDominantAsymptotic}
        U_+(x)
        =-\abs x^{-(\nu+1)/2}e^{x^2/4}
         \left(1+O_{\nu,c,C_q}(\abs x^{-2})\right),
        \qquad x\to-\infty .
\ee
Thus $(U_-,U_+)$ is a recessive--dominant basis on a sufficiently
far-left half-line.
\end{Lemma}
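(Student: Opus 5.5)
The plan is to build the model solution first by a Volterra construction around the explicit WKB profile, and then treat the perturbation $q$ by a second Volterra equation whose kernel is expressed through $D_\nu^-$ and its dominant companion.

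\emph{Model solution.} Write $D=m_\nu(1+\omega_0)$. A direct computation gives $m_\nu''=(V_\nu+E_\nu)m_\nu$ with $E_\nu(x)=a_\nu(a_\nu-1)x^{-2}$. Indeed, $m_\nu'/m_\nu=a_\nu/x-x/2$, so $m_\nu''/m_\nu=(a_\nu/x-x/2)^2-a_\nu/x^2-1/2$. The constant term here is $-a_\nu-\tfrac12=-\nu/2$, which is exactly why the exponent $a_\nu=(\nu-1)/2$ is forced. The correction $\omega_0$ then satisfies
\[
(m_\nu^2\omega_0')'=-E_\nu m_\nu^2(1+\omega_0),
\]
and we impose $\omega_0,\,m_\nu^2\omega_0'\to0$ at $-\infty$. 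This gives
\[
\omega_0(x)=-\int_{-\infty}^x\Bigl(\int_s^x m_\nu(s)^2m_\nu(t)^{-2}\,dt\Bigr)E_\nu(s)(1+\omega_0(s))\,ds .
\]
The inner kernel is bounded by $C|s|^{-1}$, since $m_\nu^{-2}$ grows like $e^{t^2/2}$ and $\int_s^x e^{t^2/2}\,dt\lesssim e^{s^2/2}/|s|$ for $s<x<0$. Hence $|\omega_0(x)|\lesssim\int_{-\infty}^x|s|^{-3}\,ds\lesssim|x|^{-2}$, and we have a contraction for $X_\nu$ large. Differentiating the integral equation gives $\omega_0'=O(|x|^{-3})$, which yields the logarithmic derivative in \eqref{eq:modelWeberAsymp}. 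Positivity of $D_\nu^-$ follows from $|\omega_0|<1/2$. For uniqueness, any solution with $D/m_\nu\to1$ differs from ours by a multiple of the dominant solution. That solution is $\sim m_\nu\int m_\nu^{-2}$, so its ratio to $m_\nu$ blows up, and the multiple must vanish.

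\emph{Perturbed equation.} Let $D_+=D_\nu^-\int_{x_0}^x(D_\nu^-)^{-2}$ be the model dominant companion. Seek recessive solutions in the form $U=AD_\nu^-(1+\omega)$, which leads to
\[
\omega(x)=\int_{-\infty}^x\Bigl(\int_s^x D(s)^2D(t)^{-2}\,dt\Bigr)q(s)(1+\omega(s))\,ds .
\]
The same kernel bound $C\langle s\rangle^{-1}$ holds, now with $D=D_\nu^-$. Gronwall, or iteration of this Volterra equation, gives \eqref{eq:omegaWeberBound} directly, with existence coming from summing the Neumann series. To show the recessive space is one-dimensional, I would use variation of parameters: any solution is $\alpha D_\nu^-(1+\omega)+\beta\,\times$(dominant solution). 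A nonzero $\beta$ makes $U/D_\nu^-$ unbounded, because the dominant ratio grows like $e^{x^2/2}/|x|^{\nu}$ while the perturbation is integrable in the weighted sense. For Gaussian $q$, the bound \eqref{eq:omegaWeberBound} gives $\omega=O(e^{-cx^2})$, which is absorbed into the $O(|x|^{-2})$ from $\omega_0$; the differentiated Volterra equation gives \eqref{eq:pertWeberLogDerivative}.

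\emph{Dominant companion and main difficulty.} The companion $U_+$ is obtained by reduction of order, and the Wronskian identity $W(U_-,U_+)=1$ is immediate. For \eqref{eq:pertWeberDominantAsymptotic} we evaluate $\int_{x_0}^xU_-^{-2}$ as $x\to-\infty$. Write $U_-^{-2}=|s|^{-2a_\nu}e^{s^2/2}(1+O(s^{-2}))$. Integration by parts then gives
\[
\int_{x_0}^x U_-^{-2}\,ds=-|x|^{-2a_\nu-1}e^{x^2/2}\bigl(1+O(x^{-2})\bigr),
\]
with the sign negative because $x<x_0$. Multiplying by $U_-=|x|^{a_\nu}e^{-x^2/4}(1+O(x^{-2}))$ yields the exponent $-a_\nu-1=-(\nu+1)/2$, matching \eqref{eq:pertWeberDominantAsymptotic}. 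The step needing the most care is this Laplace-type integration by parts. One has to check that the $O(x^{-2})$ correction survives, since the next term in the expansion is of relative order $x^{-2}$ and not $x^{-1}$, and that the constant from $x_0$ is exponentially negligible. The kernel estimate $\int_s^x e^{t^2/2}|t|^{-2a}\,dt\lesssim e^{s^2/2}|s|^{-2a-1}$, uniformly for $s<x\le -X$, is the only genuinely analytic input, and I would prove it once and reuse it in every Volterra step.
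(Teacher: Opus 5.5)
Your proposal is correct and follows the paper's proof essentially step for step. It uses the same Volterra equation for the model correction with kernel $m_\nu(s)^2\int_s^x m_\nu(t)^{-2}\,dt$ bounded by $C|s|^{-1}$, the same perturbative Volterra equation with a Gronwall majorant, and the same reduction-of-order and Laplace-type asymptotic for $U_+$. Your explicit variation-of-parameters arguments spell out two points the paper treats more briefly: uniqueness of $D_\nu^-$, and the one-dimensionality of the recessive space under the general weighted hypothesis rather than only the Gaussian one.
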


\begin{proof}
We first construct the model solution.  The elementary comparison
function $m=m_\nu$ satisfies
\[
        \frac{m''\red{(x)}}{m\red{(x)}}
        =V_\nu(x)+\frac{a_\nu(a_\nu-1)}{x^2}.
\]
Writing $D\red{(x)}=m\red{(x)}(1+\theta\red{(x)})$, equation \eqref{eq:modelWeberLemma} is
equivalent to
\bel{eq:thetaModel}
        (m^2\red{(x)}\theta'\red{(x)})'
        =-\frac{a_\nu(a_\nu-1)}{x^2}m^2(1+\theta).
\ee
Imposing the recessive condition at $-\infty$ yields the Volterra equation
\bel{eq:thetaVolterra}
        \theta(x)=-a_\nu(a_\nu-1)
        \int_{-\infty}^xK_\nu^0(x,t)\,
        \frac{1+\theta(t)}{t^2}\,\dd t,
\ee
where
\[
        K_\nu^0(x,t)
        =m_\nu(t)^2\int_t^xm_\nu(s)^{-2}\,\dd s,
        \qquad t\le x<0.
\]
\red{We have
\begin{align*}
 \left(-e^{\frac{s^2}{2}}(-s)^{-\nu}\right)'&=e^{\frac{s^2}{2}}(-s)^{1-\nu}\left(1-\frac{\nu}{s^2}\right).
\end{align*}
Thus, if $X_\nu>0$ is sufficiently large, then for every $s\leq-X_\nu$,
\begin{align*}
 \left(-e^{\frac{s^2}{2}}(-s)^{-\nu}\right)'&\geq \frac{1}{2}e^{\frac{s^2}{2}}(-s)^{1-\nu}=\frac{m_\nu(s)^{-2}}{2}.
\end{align*}
In this case, for $t\leq x\leq-X_\nu$,
\begin{align*}
\int_t^xm_\nu(s)^{-2}\,ds\leq 2\int_t^x\left(-e^{\frac{s^2}{2}}(-s)^{-\nu}\right)'\,ds=2e^{\frac{t^2}{2}}(-t)^{-\nu}-2e^{\frac{x^2}{2}}(-x)^{-\nu}\leq 2e^{\frac{t^2}{2}}(-t)^{-\nu}
\end{align*}
This immediately implies that
}
\bel{eq:K0WeberBound}
        0\le K_\nu^0(x,t)\Le\red{2m_\nu(t)^2e^{\frac{t^2}{2}}(-t)^{-\nu}=2} |t|^{-1},
        \qquad t\le x\le-X_\nu .
\ee
Consequently, the Volterra operator in \eqref{eq:thetaVolterra} maps
\[
 \mathcal X
 :=\left\{\theta:
 \sup_{x\le-X_\nu}|x|^2|\theta(x)|<\infty\right\}
\]
into itself and is a contraction after increasing $X_\nu$ so that the
associated Lipschitz constant is less than one.  This proves existence
and uniqueness of the normalized model solution and gives
$\theta(x)=O_\nu(|x|^{-2})$, which is the first part of
\eqref{eq:modelWeberAsymp}.  Differentiating
\eqref{eq:thetaVolterra}, or equivalently integrating
\eqref{eq:thetaModel} once, gives
$\theta'(x)=O_\nu(|x|^{-3})$ and hence the logarithmic-derivative
estimate in \eqref{eq:modelWeberAsymp}.

We next perturb around $D\red{(x)}=D_\nu^-\red{(x)}$.  Let $U\red{(x)}=D\red{(x)}(1+\omega\red{(x)})$.  Since $D$
solves \eqref{eq:modelWeberLemma}, equation
\eqref{eq:pertWeberLemma} becomes
\bel{eq:omegaPertODE}
        (D^2\red{(x)}\omega'\red{(x)})'=q\red{(x)}D^2\red{(x)}(1+\omega\red{(x)}).
\ee
The recessive condition at $-\infty$ gives
\bel{eq:omegaPertVolterra}
        \omega(x)=\int_{-\infty}^x
        K_\nu(x,t)q(t)(1+\omega(t))\,\dd t,
\ee
with
\[
        K_\nu(x,t)
        =D_\nu^-(t)^2\int_t^xD_\nu^-(s)^{-2}\,\dd s,
        \qquad t\le x.
\]
By \eqref{eq:modelWeberAsymp}, the same endpoint estimate as above gives
\bel{eq:KWeberGeneralBound}
        \abs{K_\nu(x,t)}\Le C_\nu\la t\ra^{-1},
        \qquad t\le x\le-X .
\ee
The Volterra equation \eqref{eq:omegaPertVolterra} is solved by iteration
on every interval $(-\infty,-X]$ on which \eqref{eq:qWeberNorm} holds.
The usual Volterra majorant gives \eqref{eq:omegaWeberBound}.  If
$q(t)=O(e^{-ct^2})$, then the integral in
\eqref{eq:omegaWeberBound} is $O(x^{-2}e^{-cx^2})$, and
\eqref{eq:pertWeberGaussian} follows from
\eqref{eq:modelWeberAsymp}.  One integration of
\eqref{eq:omegaPertODE} gives
\[
 \omega'(x)=D_\nu^-(x)^{-2}
 \int_{-\infty}^xD_\nu^-(t)^2q(t)(1+\omega(t))\,\dd t .
\]
Together with \eqref{eq:modelWeberAsymp}, this proves
\eqref{eq:pertWeberLogDerivative}.

It remains to identify the dominant companion.  Under the Gaussian
hypothesis on $q$, the preceding construction gives
\[
 U_-(x)=|x|^{a_\nu}e^{-x^2/4}
        \left(1+O_{\nu,c,C_q}(|x|^{-2})\right).
\]
The elementary  estimate
\[
 \int^Tt^{-2a_\nu}e^{t^2/2}\,\dd t
 =T^{-2a_\nu-1}e^{T^2/2}
  \left(1+O_\nu(T^{-2})\right)
\]
therefore yields
\[
 \int_{x_0}^xU_-(s)^{-2}\,\dd s
 =-|x|^{-2a_\nu-1}e^{x^2/2}
  \left(1+O_{\nu,c,C_q}(|x|^{-2})\right).
\]
Multiplication by $U_-$ proves
\eqref{eq:pertWeberDominantAsymptotic} \red{(after  renormalizing by a $1+o(1)$ factor, if necessary)}.  Since
$W(U_-,U_+)=1$, these solutions form a basis, and the recessive
subspace is one-dimensional.
\end{proof}

In our applications of this lemma to the CHMO interface, we shall
exploit the following continuity property.

\begin{Remark}
The construction is uniform when $\nu$ ranges in a compact subset of
$(0,\infty)$ and the quantities in \eqref{eq:qWeberNorm} are uniformly
bounded.  If, in addition, $q_\nu$ depends continuously on $\nu$ in that
weighted integral norm, then the normalizations
$D_\nu^-/m_\nu\to1$ and $U_\nu/D_\nu^-\to1$ determine solutions whose
values and first derivatives at each fixed point depend continuously on
$\nu$.  Their logarithmic derivatives have the same continuity at every
fixed point where the normalized solution is nonzero.
\end{Remark}

\blue{We apply Lemma~\ref{lem:webermatching} to~\eqref{eq:UbetaODE} } with $\nu=\sqrt\beta$ and
$q=\wt\rho_\beta$, see~\eqref{eq:wtrho}.  It remains only to check that
the perturbation is integrable at $-\infty$.  By
Lemma~\ref{lem:interfaceUniformTails} (with $k=0,1,2$), for $y$
sufficiently negative,
\begin{equation}\label{eq:coreInterfaceGaussianBarrier}
        0<\Phi_\beta(y)\Le C_\beta e^{-c_\beta y^2},
        \qquad
        g_\beta^{(k)}(y)
        =\partial_y^k\bigl(\Gamma_\beta(y)+\sqrt\beta\,y\bigr)
        =O_\beta(e^{-c_\beta y^2}),\quad k=0,1,2.
\end{equation}
In particular, the definition of $\rho_\beta$ in \eqref{eq:rho} implies
$\rho_\beta(y)=O_\beta(e^{-c_\beta y^2})$ as $y\to-\infty$, and hence
by \eqref{eq:wtrho},
\[
        \wt\rho_\beta(x)=O_\beta(e^{-c_\beta x^2}),
        \qquad x\to-\infty.
\]
Let $U_{\beta,-}$ be the normalized perturbed recessive solution
supplied by Lemma~\ref{lem:webermatching}, and let $U_{\beta,+}$ be \blue{another solution that has 
Wronskian equal to~$1$ with $U_{\beta,-}$}.  Since these solutions form a basis,
\[
        U_\beta=A_\beta U_{\beta,-}+B_\beta U_{\beta,+}
\]
\blue{for large negative $x$.}  The preceding Gaussian bound for
$\Phi_\beta$ \eqref{eq:coreInterfaceGaussianBarrier}, after the change of variables
$x=\sqrt2\,\beta^{1/4}y$, gives
\[
        0<U_\beta(x)\le C_\beta e^{-c_\beta x^2}.
\]
On the other hand, \eqref{eq:pertWeberDominantAsymptotic} shows that
$|U_{\beta,+}(x)|\to\infty$ as $x\to-\infty$.  Hence $B_\beta=0$.
Since $U_\beta>0$ and $U_{\beta,-}>0$ sufficiently far to the left,
$A_\beta>0$.

Lemma~\ref{lem:webermatching} therefore yields
\bel{eq:lefttail}
        \Phi_\beta(y)=C_\beta\abs y^{(\sqrt\beta-1)/2}
        e^{-\sqrt\beta y^2/2}
        \left(1+O(\abs y^{-2})\right),
        \qquad y\to-\infty,
\ee
with $C_\beta>0$.  Returning once more to \eqref{eq:gVolterra} gives
$g_\beta^{(k)}(y)=O(e^{-c_\beta y^2})$ for every fixed $k$.

Combining the Weber normal form of Lemma~\ref{lem:webermatching} with the previous two sections allows us to compare vortex profiles $\Phi_n$ and the interface at a fixed large distance $L$ to the left of the interface. We first apply WKB to the interface equations on the left.

\begin{Definition}\label{def:leftWeberFrame}
Fix $L>0$.  Rewriting the CHMO scalar equation in \eqref{eq:interface} at $y=-\eta$ gives
\[
        F''(\eta)=Q_\beta(\eta)F(\eta),
        \qquad
        Q_\beta(\eta):=\Gamma_\beta(-\eta)^2-\beta+\beta\Phi_\beta(-\eta)^2,
        \qquad \eta\ge L.
\]
\red{For $L$ sufficiently large, $\displaystyle \Gamma_\beta(-\eta)^2-\beta\geq 0$, so we may d}efine the WKB momentum and action based at $L$ by
\[
        p_\beta(\eta):=\sqrt{\Gamma_\beta(-\eta)^2-\beta},
        \qquad
        S_\beta(\eta):=\int_L^\eta p_\beta(s)\,\dd s,
\]
and the corresponding comparison branches
\[
        D_{\beta,L}^{\rm WKB}(\eta):=p_\beta(\eta)^{-1/2}e^{-S_\beta(\eta)},
        \qquad
        G_{\beta,L}^{\rm WKB}(\eta):=\frac{1}{2}p_\beta(\eta)^{-1/2}e^{S_\beta(\eta)}.
\]
The \emph{recessive} branch $d_{\beta,L}$ is the unique solution of $F''=Q_\beta F$ on $[L,\infty)$ such that
\[
        \frac{d_{\beta,L}(\eta)}{D_{\beta,L}^{\rm WKB}(\eta)}\longrightarrow 1,
        \qquad \eta\to\infty.
\]
Define the (growing) companion branch by reduction of order,
\[
        g_{\beta,L}(\eta):=d_{\beta,L}(\eta)\int_{L}^{\eta} d_{\beta,L}(s)^{-2}\,\dd s,
\]
so that $W(d_{\beta,L},g_{\beta,L})=1$.
\end{Definition}

The following lemma proves that \blue{in the large  $n$ limit}, the Cauchy data of $R_n^{-\frac12}\Psi_n$ at $\eta=L$ are asymptotically aligned with the recessive (decaying) branch $\Phi_\beta^-=A_{\beta,L}d_{\beta,L}$ of the limiting equation.

\begin{Lemma}\label{lem:leftProjection}
Let $L>0$ be large\red{, depending on $J$}.  Let $d_{n,L},g_{n,L}$ be the finite Weber basis from
Lemma~\ref{lem:finiteWeberFrame}, and let $d_{\beta,L}$ be the recessive
branch from Definition~\ref{def:leftWeberFrame}.  For fixed $L$,
\[
 (d_{n,L}(L),d'_{n,L}(L))
 \longrightarrow(d_{\beta,L}(L),d'_{\beta,L}(L)),
\]
\red{uniformly for $\beta\in J$.}
Write
\[
        \Psi_n\red{(\eta)}=A_{n,L}d_{n,L}\red{(\eta)}+B_{n,L}g_{n,L}\red{(\eta)},
        \qquad
        \Phi_\beta^-\red{(\eta)}:=\Phi_\beta(-\eta)=A_{\beta,L}d_{\beta,L}\red{(\eta)}.
\]
Then\footnote{The factor $R_n^{-1/2}$ comes from
$\Psi_n\red{(\eta)}=(R_n-\eta)^{1/2}\Phi_n^-\red{(\eta)}$.}
\[
 R_n^{-1/2}A_{n,L}d_{n,L}(L)
 \longrightarrow A_{\beta,L}d_{\beta,L}(L)
 \qquad(n\to\infty),
\]
\red{uniformly for $\beta\in J$.}
Consequently
\[
        R_n^{-1/2}A_{n,L}\longrightarrow A_{\beta,L},
\]
\red{uniformly for $\beta\in J$.}
Also,
\[
 R_n^{-1/2}|B_{n,L}|\bigl(|g_{n,L}(L)|+|g'_{n,L}(L)|\bigr)\longrightarrow0,
\]
\red{uniformly for $\beta\in J$.}
In particular,
\[
 \bigl(R_n^{-1/2}\Psi_n(L),\,R_n^{-1/2}\partial_\eta\Psi_n(L)\bigr)
 \longrightarrow
 \bigl(\Phi_\beta^-(L),\,\partial_\eta\Phi_\beta^-(L)\bigr),
\]
\red{uniformly for $\beta\in J$.}
Finally,
\[
 A_{\beta,L}d_{\beta,L}(L)
 =C_\beta L^{(\sqrt\beta-1)/2}
 e^{-\sqrt\beta L^2/2}\big[1+\red{O_J}(L^{-2})\big],
\]
with the corresponding logarithmic-derivative formula.  In particular,
$A_{\beta,L}\ne0$ for all sufficiently large fixed $L$.
\end{Lemma}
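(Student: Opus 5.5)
The plan is to compare the two normalized WKB frames at the fixed point $\eta=L$ and then read off the coefficients with Wronskians, using the qualitative convergence of Lemma~\ref{lem:qualitativeBoundary} as the only input from the vortex.

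\emph{Convergence of the recessive basis data.} Both $d_{n,L}$ and $d_{\beta,L}$ are fixed points of Volterra equations of the same form; $d_{n,L}$ is the one in \eqref{eq:omegaTerminalVolterra}, initialized at $T_n$. I would write $d_{\beta,L}=D^{\rm WKB}_{\beta,L}(1+\omega_\beta)$ with terminal condition at $+\infty$ and compare:
\begin{itemize}
\item On a fixed window $[L,\Lambda]$, \eqref{eq:movingGamma} and \eqref{eq:movingclear} together with $b_n\to\sqrt\beta$ give $p_n\to p_\beta$ in $C^2([L,\Lambda])$. Indeed, Lemma~\ref{lem:qualitativeBoundary} gives $\Gamma_n\to\Gamma_\beta$, while \eqref{eq:exactGamma} supplies the second derivatives. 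Likewise $\beta(\Phi_n^-)^2\to\beta\Phi_\beta(-\cdot)^2$, so $E_n\to E_\beta$ uniformly there.
\item The tails $\int_\Lambda^{T_n}|E_n|/p_n$ and $\int_\Lambda^\infty |E_\beta|/p_\beta$ are $O_J(\Lambda^{-2})$, by \eqref{eq:overlapintegral} and its analogue for the interface. The latter follows from Lemma~\ref{lem:interfaceUniformTails}, since $p_\beta=\sqrt\beta\,\eta-\beta/(2\sqrt\beta\,\eta)+O(\eta^{-3})$.
\end{itemize}
Since the kernels are bounded by $1/(2p(s))$, the difference $\omega_n(L)-\omega_\beta(L)$ and its normalized derivative are bounded by $o_{n,\Lambda}(1)+C\Lambda^{-2}$. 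Sending $n\to\infty$ and then $\Lambda\to\infty$ gives the convergence of $(d_{n,L}(L),d_{n,L}'(L))$. Because $S_n(L)=S_\beta(L)=0$ and $p_n(L)\to p_\beta(L)$, the prefactors also converge. Uniformity in $\beta\in J$ follows from the uniform versions of these estimates together with the continuity in Lemma~\ref{lem:interfaceUniformTails}.

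\emph{Coefficients.} Next I would obtain the convergence of the Cauchy data, which is immediate from \eqref{eq:vortexlimit}: $R_n^{-1/2}\Psi_n(L)=(1-L/R_n)^{1/2}\Phi_n(-L)\to\Phi_\beta(-L)$, and similarly for the derivative. I would then identify the coefficients via the Wronskians
\[
A_{n,L}=-\calW[g_{n,L},\Psi_n](L),\qquad B_{n,L}=\calW[d_{n,L},\Psi_n](L).
\]
By \eqref{eq:growingAbs}, $|B_{n,L}|g_{n,L}(L)$ is superpolynomially small. Since $g_{n,L}(L)=\tfrac12 p_n(L)^{-1/2}(1+O(L^{-2}))$ and $g'_{n,L}(L)$ is comparable by Lemma~\ref{lem:finiteWeberFrame}, this gives $R_n^{-1/2}|B_{n,L}|(|g_{n,L}(L)|+|g'_{n,L}(L)|)\to0$. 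Hence $R_n^{-1/2}A_{n,L}(d_{n,L},d'_{n,L})(L)$ converges to $(\Phi^-_\beta,\partial_\eta\Phi^-_\beta)(L)$.

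On the interface side, $\Phi_\beta^-$ is recessive. Lemma~\ref{lem:webermatching} applied after the scaling $x=\sqrt2\beta^{1/4}y$, together with $B_\beta=0$, shows that $\Phi^-_\beta$ decays like $D^{\rm WKB}_{\beta,L}$, whereas $g_{\beta,L}$ grows. Therefore $\Phi^-_\beta=A_{\beta,L}d_{\beta,L}$ with $A_{\beta,L}=-\calW[g_{\beta,L},\Phi^-_\beta]$. Since $d_{\beta,L}(L)\ne0$ (it equals $p_\beta(L)^{-1/2}(1+O(L^{-2}))$), dividing the two limits yields $R_n^{-1/2}A_{n,L}\to A_{\beta,L}$ uniformly.

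\emph{Final formula.} This is simply \eqref{eq:lefttail} evaluated at $y=-L$, since $A_{\beta,L}d_{\beta,L}(L)=\Phi_\beta(-L)$. The logarithmic derivative comes from \eqref{eq:pertWeberLogDerivative}. The constant is uniform in $J$ by the Remark after Lemma~\ref{lem:webermatching}. As $C_\beta>0$ continuously, $A_{\beta,L}\ne0$ with a uniform lower bound.

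The main obstacle is the first step: controlling the unbounded interval $[L,T_n]$ (versus $[L,\infty)$) uniformly. I would handle it by the $\Lambda$-splitting argument, using that the Volterra kernels are uniformly $O(1/p)$ and that both perturbation integrals have $O(\Lambda^{-2})$ tails.
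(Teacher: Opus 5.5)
Your proposal is correct and follows essentially the same route as the paper. Like the paper, you split the terminal Volterra equation \eqref{eq:omegaTerminalVolterra} at a large point, let $n\to\infty$ before sending that point to infinity (using the $O(\Lambda^{-2})$ tail bounds on both sides), and take the Cauchy data at $\eta=L$ from qualitative convergence. You then remove the growing component via \eqref{eq:growingAbs}, divide by $d_{n,L}(L)\to d_{\beta,L}(L)>0$, and read off the final formula from \eqref{eq:lefttail}. Your explicit Wronskian formulas for $A_{n,L}$ and $B_{n,L}$ are a harmless addition that the paper does not need.
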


\begin{proof}
\red{By Lemma \ref{lem:qualitativeBoundary}, o}n every fixed interval $[L,M]$, the coefficients of the finite \blue{equation~\eqref{eq:exactPhi}, \eqref{eq:exactGamma} 
converge to those of the limiting equation~\eqref{eq:interface}.  }The localized form of
\eqref{eq:WeberFrameEnorm} gives
\[
 \int_M^{T_n}\frac{|E_n(s)|}{p_n(s)}\,ds
 \le \red{C_JM^{-2}}.
\]
The limiting tail satisfies the analogous bound
\begin{equation}\label{eq:limitingTailBound}
 \int_M^{\infty}\frac{|E_\beta(s)|}{p_\beta(s)}\,ds
 \le \red{C_JM^{-2}}\red{,}
\end{equation}
\red{where
\begin{align*}
    E_\beta(\eta)=\beta(\Phi_\beta(-\eta))^2-p_\beta(\eta)^{1/2}(p_\beta(\eta)^{-1/2})''.
\end{align*}}
Letting first $n\to\infty$ and then
$M\to\infty$ in the   Volterra equation~\eqref{eq:omegaTerminalVolterra} proves
\[
 (d_{n,L}(L),d'_{n,L}(L))
 \longrightarrow(d_{\beta,L}(L),d'_{\beta,L}(L)).
\]
Lemma~\ref{lem:qualitativeBoundary} gives
\[
 \bigl(R_n^{-1/2}\Psi_n(L),
       R_n^{-1/2}\partial_\eta\Psi_n(L)\bigr)
 \longrightarrow
 \bigl(\Phi_\beta^-(L),\partial_\eta\Phi_\beta^-(L)\bigr).
\]
On the other hand, \eqref{eq:growingAbs}, the differentiated WKB estimate,
and $\int_L^{T_n}p_n(s)\,ds\ge \red{c_J}T_n^2$ give
\[
 R_n^{-1/2}|B_{n,L}|
 \bigl(|g_{n,L}(L)|+|g'_{n,L}(L)|\bigr)\longrightarrow0.
\]
Since $\Psi_n(L)=A_{n,L}d_{n,L}(L)+B_{n,L}g_{n,L}(L)$ and the preceding estimate shows the
$g$--component is negligible at $\eta=L$, the claimed projection onto the
$d_{n,L}$ component follows.  Division by
$d_{n,L}(L)\to d_{\beta,L}(L)>0$ gives the coefficient limit.

Finally, $\Phi_\beta^-$ is recessive, so
$\Phi_\beta^-=A_{\beta,L}d_{\beta,L}$.  The last assertions follow from
\eqref{eq:lefttail} and Lemma~\ref{lem:webermatching}.
By Lemma~\ref{lem:interfaceUniformTails} and dominated
convergence, the Weber perturbations depend continuously on $\beta\in J$
in the weighted integral norm of \eqref{eq:qWeberNorm}.
The normalized Weber and WKB constructions therefore make $C_\beta$
and $A_{\beta,L}=\Phi_\beta(-L)/d_{\beta,L}(L)$ positive continuous
functions of $\beta$.  \green{It immediately follows that}
\[
 \inf_{\beta\in J}C_\beta>0,\qquad
 \inf_{\beta\in J}A_{\beta,L}>0
\]
for each fixed sufficiently large $L$.
\end{proof}

This finite-window matching identifies the left coefficient and controls the far-left remainder. 
\red{We extend the Higgs field $\phi_n$ across the origin as follows 
\bel{eq:leftZeroExtension}
 \widetilde\Phi_n^-(\eta)=
 \begin{cases}
   \phi_n(R_n-\eta),&0\le\eta<R_n,\\
   0,&\eta\ge R_n.
 \end{cases}
\ee
For $\displaystyle n\geq 2$, this is an $H^2_{\rm loc}$ extension since $\phi_n(0)=\phi_n'(0)=0$, due to the Frobenius asymptotics.  For a scalar function $U$ on $[M,\infty)$ define the graph norm 
\bel{eq:XminusNorm}
 \|U\|_{\mathcal X_-(M)}
 :=\|(1+\eta^2)U\|_{L_\eta^2(M,\infty)}
   +\|U''\|_{L_\eta^2(M,\infty)}.
\ee}
\begin{Corollary}\label{cor:leftCoefficientMatching}
Fix $L>0$ sufficiently large\red{, depending on $J$}.  Lemma~\ref{lem:leftProjection} verifies the convergence hypothesis in Proposition~\ref{prop:coreWKB}.  The coefficients in~\eqref{eq:coreWKBcoefficient} and~\eqref{eq:lefttail} then satisfy
\bel{eq:Cmatch}
        \lim_{L\to\infty}\limsup_{n\to\infty}
        \red{\sup_{\substack{\beta\in J}}}\left|\frac{\wt C_{n,L}}{C_\beta}-1\right|=0.
\ee
\red{We also have}
\bel{eq:leftTailPrebootstrap}
        \lim_{n\to\infty}\red{\sup_{\substack{\beta\in J}}}
        \norm{\widetilde\Phi_n^-(\eta)-\Phi_\beta(-\eta)}
        _{\mathcal X_-(L)}
        =0. 
\ee
\end{Corollary}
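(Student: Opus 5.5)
The first claim is immediate: Lemma~\ref{lem:leftProjection} gives $R_n^{-1/2}A_{n,L}\to A_{\beta,L}$ uniformly on $J$, and $\inf_J A_{\beta,L}>0$ for fixed large $L$. This is exactly the hypothesis of Proposition~\ref{prop:coreWKB}.

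For \eqref{eq:Cmatch}, I would compare the two Gaussian representations at the matching point $y=-L$ itself.

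\emph{Step 1.} By \eqref{eq:coreWKBcoefficient} and $b_n\to\sqrt\beta$ (uniformly on $J$, Proposition~\ref{prop:coarseloc}),
\[
\limsup_{n\to\infty}\sup_J\Bigl|\wt C_{n,L}-A_{\beta,L}\beta^{-1/4}e^{\sqrt\beta L^2/2}L^{-\sqrt\beta/2}\Bigr|=0 .
\]

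\emph{Step 2.} Compute $A_{\beta,L}$ from the definition $\Phi_\beta(-L)=A_{\beta,L}d_{\beta,L}(L)$. The WKB construction of $d_{\beta,L}$ (Definition~\ref{def:leftWeberFrame}, with the analogue of Lemma~\ref{lem:finiteWeberFrame} on $[L,\infty)$ and the bound \eqref{eq:limitingTailBound}) gives $d_{\beta,L}(L)=p_\beta(L)^{-1/2}(1+O_J(L^{-2}))$. Since $p_\beta(L)=\sqrt\beta L(1+O(L^{-2}))$ by Lemma~\ref{lem:interfaceUniformTails}, this yields
\[
A_{\beta,L}=\Phi_\beta(-L)\beta^{1/4}L^{1/2}(1+O_J(L^{-2})).
\]

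\emph{Step 3.} Insert \eqref{eq:lefttail}, $\Phi_\beta(-L)=C_\beta L^{(\sqrt\beta-1)/2}e^{-\sqrt\beta L^2/2}(1+O(L^{-2}))$. The powers of $L$ and the Gaussian factors cancel exactly, and we obtain $\wt C_{n,L}/C_\beta=1+O_J(L^{-2})+o_{n,L}(1)$. The $O(L^{-2})$ constant in \eqref{eq:lefttail} is uniform on $J$ by the Remark after Lemma~\ref{lem:webermatching} together with continuity of $C_\beta$ and $\inf_J C_\beta>0$. Taking $\limsup_n$ and then $L\to\infty$ proves \eqref{eq:Cmatch}.

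For \eqref{eq:leftTailPrebootstrap}, I would split $[L,\infty)$ into three ranges.

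\emph{Range $[L,M]$, $M$ fixed.} Lemma~\ref{lem:qualitativeBoundary} gives $C^1$ convergence. The equations \eqref{eq:exactPhi} and \eqref{eq:interface} then upgrade this to convergence of second derivatives, hence to convergence in the weighted $H^2$ norm on this compact range.

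\emph{Range $[M,R_n^\alpha]$.} Both functions are bounded, with their first derivatives, by $C_J(1+\eta)e^{-c\eta^2}$: for $\widetilde\Phi_n^-$ this is \eqref{eq:movingclear}, and for $\Phi_\beta$ it is \eqref{eq:uniformInterfaceLeft}. The second derivatives are controlled through the equations by $(1+\eta^2)e^{-c\eta^2}$. Indeed, for $\widetilde\Phi_n^-$ one uses $\Gamma_n^-\le C(1+\eta)$ from \eqref{eq:movingGamma}. These bounds make the contribution of this range to the $\mathcal X_-$ norm $O(e^{-cM^2/2})$, uniformly in $n$.

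\emph{Range $\eta\ge R_n^\alpha$.} By \eqref{eq:fixedclear}, $\widetilde\Phi_n^-\le Ce^{-cR_n^{2\alpha}}$. Here the main obstacle is the second-derivative term. Near the origin, $|\phi_n''|\lesssim n^2u^{-2}\phi_n$, so I would use the Frobenius bound \eqref{eq:normalizedOriginPhi}, $\phi_n\le\phi_n(1)u^{m}$ with $m\simeq n/3$, to make $\int_0^1 n^4u^{-4}\phi_n^2\,du$ finite and super-polynomially small. On $1\le u\le R_n-R_n^\alpha$ the bound $|\phi_n''|\lesssim n^2\phi_n$ combined with $\phi_n\le e^{-cR_n^{2\alpha}}$ absorbs the polynomial losses $n^4R_n^5$. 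The zero extension contributes nothing for $\eta\ge R_n$, and it is $H^2_{\rm loc}$ for $n\ge2$, so no boundary terms appear.

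Finally, let $n\to\infty$ and then $M\to\infty$. All constants are uniform on $J$, which gives \eqref{eq:leftTailPrebootstrap}.
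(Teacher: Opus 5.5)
Your proposal is correct and follows essentially the paper's proof. It uses the same comparison of $\wt C_{n,L}$ with $C_\beta$ at the matching point $\eta=L$, through $R_n^{-1/2}A_{n,L}\to A_{\beta,L}$, $b_n\to\sqrt\beta$, $d_{\beta,L}(L)=p_\beta(L)^{-1/2}(1+O_J(L^{-2}))$ and \eqref{eq:lefttail}, and the same splitting $[L,M]\cup[M,R_n^\alpha]\cup[R_n^\alpha,\infty)$ for \eqref{eq:leftTailPrebootstrap}.

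The one step you should justify is the pointwise bound $|\phi_n''|\lesssim n^2u^{-2}\phi_n$, because $\phi_n''=-u^{-1}\phi_n'+Q_n\phi_n$ also contains a first-order term. On $u\ge1$ the derivative part of \eqref{eq:fixedclear} handles that term. Near the origin the paper integrates $(u\phi_n')'=uQ_n\phi_n$ from $u=0$ and uses \eqref{eq:normalizedOriginPhi}, which gives $|\phi_n'(u)|\le Cn\phi_n(1)u^{m-1}$. Alternatively, $w=u\phi_n'/\phi_n$ satisfies $uw'\le n^2-w^2$ with $w(0^+)=n$, hence $u\phi_n'\le n\phi_n$.
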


\begin{proof}
\red{Fix $\displaystyle 0<\alpha<\widehat{\alpha}<\frac{1}{3}$.} Lemma~\ref{lem:recessiveSelection} implies that, when we match at $\eta=L$, the contribution of the growing mode is negligible.  Thus the matching coefficient is determined by the recessive projection, whose limit is computed in Lemma~\ref{lem:leftProjection}.
Recall from \eqref{eq:coreWKBcoefficient} that
\[
\wt C_{n,L}=\Bigl(R_n^{-1/2}A_{n,L}\Bigr)\, b_n^{-1/2}\,e^{b_nL^2/2}\,L^{-\beta/(2b_n)}.
\]
On the limiting CHMO side, the identity $A_{\beta,L}d_{\beta,L}(L)=\Phi_\beta(-L)$\red{,} the asymptotic expansion in Lemma~\ref{lem:leftProjection} (with $b=\sqrt\beta$)\red{, along with the fact that $\displaystyle \lim_{\substack{n\rightarrow\infty}}d_{n,L}(L)=d_{\beta,L}(L)$} give
\[
 A_{\beta,L}b^{-1/2}e^{bL^2/2}L^{-\beta/(2b)}
 =C_\beta\red{(}1+\red{O_J(L^{-2})}\red{)}.
\]
To compare $\wt C_{n,L}$ with $C_\beta$, we compute
\[
\frac{\wt C_{n,L}}{C_\beta}
=\underbrace{\frac{R_n^{-1/2}A_{n,L}}{A_{\beta,L}}}_{\to 1}
\underbrace{\left(\frac{b_n}{b}\right)^{-1/2}e^{(b_n-b)L^2/2}L^{-\frac\beta2\left(\frac1{b_n}-\frac1b\right)}}_{\to 1\ \text{since }b_n\to b}
\underbrace{\frac{A_{\beta,L}b^{-1/2}e^{bL^2/2}L^{-\beta/(2b)}}{C_\beta}}_{=\,1+\red{O_J(L^{-2})}}.
\]
Since $L$ is fixed, the middle factor converges to $1$ as $n\to\infty$, and the first factor converges to $1$ by $R_n^{-1/2}A_{n,L}\to A_{\beta,L}$.  Therefore
\[
 \limsup_{n\to\infty}\red{\sup_{\substack{\beta\in J}}}\left|\frac{\wt C_{n,L}}{C_\beta}-1\right|
 \le \red{C_JL^{-2}},
\]
whence \eqref{eq:Cmatch}. 
It remains to prove \eqref{eq:leftTailPrebootstrap}.  \red{We claim that for $\displaystyle \eta\in[R_n^\alpha,R_n)$,
\begin{align*}
  \Phi^{-}_n(\eta)+\frac{|\partial_\eta\Phi^{-}_n(\eta)|}{1+\eta}\leq C_Je^{-c_J\eta^2}.  
\end{align*}
We first prove it for $u=R_n-\eta\geq 1$, which corresponds to $\eta\in[R_n^\alpha,R_n-1]$.
From \eqref{eq:LangerPsiGaussian} and the uniformity argument succeeding it, we recall that 
\begin{align*}
        0\le \Psi_n(\eta)\le C_J R_n^{1/2}e^{-c_J\eta^2},
        \qquad R_n^\alpha\leq\eta<R_n,
\end{align*}
where $\displaystyle \Psi_n(\eta)=(R_n-\eta)^{\frac{1}{2}}\Phi_n^{-}(\eta)$.
This immediately leads to the bound
\begin{align*}
   \Phi^{-}_n(\eta)\leq C_J R_n^{\frac{1}{2}}u^{-\frac{1}{2}}e^{-c_J\eta^2}\leq C_J e^{-c_J\eta^2},
\end{align*}
upon shrinking $c_J$.

We now turn our attention to the bound for $\partial_\eta\Phi^{-}_n(\eta)$. In the equation $\Psi_n''=\mathcal V_n\Psi_n$, where
\begin{align*}
 \mathcal V_n(\eta)&:=\Gamma_n^-(\eta)^2-\beta-\frac{1}{4(R_n-\eta)^2}+\beta\Phi_n^-(\eta)^2,   
\end{align*}
the coefficient $\mathcal{V}_n$ is therefore bounded by
\begin{align*}
    |\mathcal{V}_n(\eta)|&\leq \Gamma^{-}_n(\eta)^2+2\beta\leq 2\beta+\frac{n^2}{u^2}\leq C_J\frac{n^2}{u^2} 
\end{align*}
for $\eta\in[R_n^\alpha,R_n-1]$ (here we have used the bounds $0<\Phi_n<1$ and $0<a_n<1$).  
Let
\[
        I_\eta=\left[\eta-\frac{c_0}{1+\eta},\eta+\frac{c_0}{1+\eta}\right]
        \cap [R_n^\alpha,R_n-1]
\]
with some $c_0>0$ fixed and small. Applying  the bound  \eqref{eq:LangerPsiGaussian} on this interval and the equation
$\Psi_n''=\mathcal V_n\Psi_n$ yields 
\[
        \red{\sup_{\red{\lambda\in}I_\eta}|\Psi_n\red{(\lambda)}|
        \le C_J R_n^{1/2}e^{-\red{c_J}\eta^2}},\qquad
        \red{\sup_{\red{\lambda\in}I_\eta}|\Psi_n''\red{(\lambda)}|
        \le C_J \frac{R_n^{1/2}n^2}{u^2}e^{-\red{c_J}\eta^2}}.
\]
We apply
\[
        |f'(x)|\le 2\rho^{-1}\sup_{|s-x|\le\rho}|f(s)|
              +\rho\sup_{|s-x|\le\rho}|f''(s)|
\]
with $\rho=c_1(J)u/n$, where $c_1(J)>0$ is chosen so that
$\rho\le c_0/(1+\eta)$.  Such a choice is uniform because
$u(1+\eta)/n\le C_J$\green{, which is a direct consequence of} \eqref{eq:RtwosidedCoarse}.
Using the corresponding one-sided version when $\eta$ is within $\rho$ of an endpoint
of $[R_n^\alpha,R_n-1]$, we obtain
\[
        |\Psi_n'(\eta)|\le
        \red{C_J \frac{nR_n^{1/2}}{u}e^{-\red{c_J}\eta^2}}\leq \red{C_J e^{-\red{c_J}\eta^2}},
\]
once we shrink $c_J>0$, since $\eta\geq R_n^\alpha$ and $u\geq 1$.
When differentiating
$\Phi_n^-\red{(\eta)}=(R_n-\eta)^{-1/2}\Psi^{-}_n\red{(\eta)}$, we get 
\begin{align*}
 \partial_\eta\Phi_n^{-}\red{(\eta)}=(R_n-\eta)^{-1/2}\partial_\eta\Psi_n\red{(\eta)}+\frac{1}{2} (R_n-\eta)^{-3/2}\Psi_n\red{(\eta)},  
\end{align*}
hence
\begin{align*}
  |\partial_\eta\Phi_n^{-}\red{(\eta)}|&\leq C_Je^{-c_J\eta^2},  
\end{align*}
which leads to the desired bound.
We now analyze the case $u=R_n-\eta\in(0,1]$, which corresponds to $\eta\in[R_n-1,R_n)$.
Plugging $\eta=R_n-1$ into \eqref{eq:LangerPsiGaussian} gives
\begin{align*}
    \phi_n(1)=\Phi_n^{-}(R_n-1)\leq C_JR_n^{\frac{1}{2}}e^{-c_J(R_n-1)^2}\leq C_Je^{-c_JR_n^2},
\end{align*}
after shrinking $c_J>0$ if necessary.
From relation \eqref{eq:normalizedOriginPhi}, for $m=\left\lfloor\frac{n}{3}\right\rfloor$, we have
\begin{align*}
    \Phi_n^{-}(\eta)=\phi_n(u)\leq\phi_n(1)u^m\leq C_Je^{-c_JR_n^2}\leq C_Je^{-c_J\eta^2},
\end{align*}
after shrinking $c_J>0$ if necessary (here we have also used the fact that $\eta\simeq R_n$ when $R_n-1\leq\eta<R_n$).

From the identity $\displaystyle(u\phi_n'(u))'=uQ_n(u)\phi_n(u)$, where $\displaystyle Q_n(u)=\frac{n^2a_n^2(u)}{u^2}+\beta(\phi_n^2(u)-1)$, we have
\begin{align*}
|u\phi_n'(u)|&=\left|\int_0^usQ_n(s)\phi_n(s)\,ds\right|\leq C_J\int_0^us\frac{n^2}{s^2}\phi_n(1)s^m\,ds\leq nC_J \phi_n(1)u^m\leq nC_Je^{-c_JR_n^2}u^m
\end{align*}
Thus,
\begin{align*}
 |\partial_\eta\Phi^{-}_n(\eta)|=|\phi_n'(u)|\leq C_Jne^{-c_JR_n^2}u^{m-1}\leq C_Je^{-c_J\eta^2},   
\end{align*}
once we further shrink $c_J>0$, using the fact that $\eta\simeq R_n$ in this regime (we have also used the weaker bounds $0<a_n(u)<1$ and $0<\phi_n(u)<1$). This settles the proof in this case. We note that we have actually obtained the stronger bound
\begin{align*}
  \Phi^{-}_n(\eta)+|\partial_\eta\Phi^{-}_n(\eta)|\leq C_Je^{-c_J\eta^2}.  
\end{align*}
}
 Analogously, the CHMO tail satisfies $\Phi_\beta(-\eta)=O(e^{-c\eta^2})$ with the same differentiated bounds.  Since $\mathcal X_-(\cdot)$ in \eqref{eq:XminusNorm} has only polynomial weights, the $L^2$ parts on $[R_n^\alpha,\infty)$ hold.

For the $\|\cdot\|_{L^2}$ bound on the second derivative, note that on $1\le u\le R_n-R_n^\alpha$ (equivalently $R_n^\alpha\le\eta\le R_n-1$) the exact equation
\[
        \partial_\eta^2\Phi_n^-
        =\frac{1}{R_n-\eta}\partial_\eta\Phi_n^-
        +\red{(}(\Gamma_n^-)^2-\beta+\beta(\Phi_n^-)^2\red{)}\Phi_n^-
\]
has coefficients with at most polynomial growth, which is absorbed by the \red{Gaussian} bound. On $0<u\le1$, \red{we have already seen that}
\[
        u|\phi_n'(u)|\le \red{C_J}n\phi_n(1)u^m,
\]
and therefore $|\phi_n'(u)|\le \red{C_J}n\phi_n(1)u^{m-1}$.  The ODE then gives
$|\phi_n''(u)|\le u^{-1}|\phi_n'(u)|+|Q_n(u)|\phi_n(u)\le \red{C_J}n^2\phi_n(1)u^{m-2}$.  Hence, for large $n$,
\[
        \int_0^1\left(
        |\phi_n''(u)|^2+u^{-2}|\phi_n'(u)|^2
        +\frac{n^4}{u^4}a_n(u)^4\phi_n(u)^2\right)\,du
        \Le \red{C_J} n^4\phi_n(1)^2.
\]
Since $\phi_n(1)\le \red{C_Je^{-cR_n^{2\alpha}}}$ and $n=O_J(R_n^2)$, \red{this shows} that the contribution of $\eta\ge R_n^\alpha$ to \eqref{eq:leftTailPrebootstrap} vanishes \red{as $n\rightarrow\infty$}.

Finally, to upgrade this to \eqref{eq:leftTailPrebootstrap}, fix $M>L$.  On $[L,M]$, Lemma~\ref{lem:qualitativeBoundary} gives $C^1$ convergence of the scalar profiles, and the exact scalar equations give $C^2$ convergence. Since the weights in \eqref{eq:XminusNorm} are bounded on $[L,M]$, this is convergence in the norm of~$\mathcal X_-(L)$ restricted to $[L,M]$. 
 On $[M,R_n^\alpha]$, the Gaussian barrier (and its differentiated bounds) yields a majorant of the form
\[
        \red{C_J(1+\eta)^N e^{-c\eta^2}},
\]
for some fixed $N$, in every component of the $\mathcal X_-(L)$ norm, while $\Phi_\beta(-\eta)$ contributes \red{a similar majorant}.  \red{These majorants have} squared integral $\red{O_J}(e^{-cM^2})$. 

Combining the three regions, taking $n\to\infty$ with $L$ and $M$
fixed, and then letting $M\to\infty$ proves
\eqref{eq:leftTailPrebootstrap}.
\end{proof}

In every subsequent whole-line graph-norm statement, the scalar radial
profile on $y\le-R_n$ is understood in the zero-extension sense of
\eqref{eq:leftZeroExtension}.  No such extension is made for the magnetic
variable $\Gamma_n$, which is singular at the origin and is used only on
its physical domain.

\section{Right tail of the interface}

Let
\bel{eq:etaInterface}
        \eta_\beta(y)=1-\Phi_\beta(y).
\ee
\red{The purpose of this section is to show that on the superconducting side, the interface admits a tail expansion into decaying exponentials. This is given by the following}

\begin{Proposition}\label{prop:righttail}
For every $0<\beta<4$ there are constants $G_\beta,H_\beta>0$ such that 
\bel{eq:rightSub}
       \Gamma_\beta(y)=G_\beta e^{-\sqrt2y}(1+o(1)),\qquad
       \eta_\beta(y)=H_\beta e^{-\sqrt{2\beta}y}(1+o(1))
\ee holds \red{as $\displaystyle y\rightarrow\infty$}.
The same asymptotic formulae hold after any fixed number of differentiations.
On every $J\Subset(0,4)$, the coefficients $G_\beta,H_\beta$ depend continuously on $\beta$, and there exists $\delta_J>0$ such that in \eqref{eq:rightSub} the $o(1)$ remainders, together with any fixed number of their derivatives, are uniform for $\beta\in J$ after replacing $o(1)$ by $O_J(e^{-\delta_Jy})$.
\end{Proposition}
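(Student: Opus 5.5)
The plan is to treat the two components successively by Levinson-type (Volterra) arguments on a half-line $[Y,\infty)$, using the qualitative exponential decay \eqref{eq:uniformInterfaceRight} from Lemma~\ref{lem:interfaceUniformTails} as the starting input. Writing $\Phi_\beta=1-\eta_\beta$, the interface system \eqref{eq:interface} becomes
\[
 \Gamma_\beta''=(2+q_\beta)\Gamma_\beta,\qquad q_\beta:=2\Phi_\beta^2-2=-4\eta_\beta+2\eta_\beta^2,
\]
\[
 \eta_\beta''-2\beta\eta_\beta=f_\beta,\qquad f_\beta:=-\Gamma_\beta^2(1-\eta_\beta)+\beta\eta_\beta^2(3-\eta_\beta).
\]
By \eqref{eq:uniformInterfaceRight}, $|q_\beta(y)|\le C_Je^{-c_Jy}$ uniformly for $\beta\in J$.

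\emph{Magnetic component.} I will set $\Gamma_\beta=e^{-\sqrt2y}(G+\omega)$ and convert the ODE into the recessive Volterra equation. Its kernel is $\sinh(\sqrt2(s-y))/\sqrt2$, as in the proof of Lemma~\ref{lem:rightstable}. Since $q_\beta$ is exponentially integrable, this yields a solution $e^{-\sqrt2y}(1+O(e^{-cy}))$, with the same bound for its derivatives. Because $\Gamma_\beta$ is bounded, its growing component vanishes, so $\Gamma_\beta=G_\beta e^{-\sqrt2y}(1+O(e^{-cy}))$. Then $G_\beta>0$ because $\Gamma_\beta>0$ and a nontrivial solution cannot be identically zero on a tail. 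Concretely, $G_\beta=\lim e^{\sqrt2y}\Gamma_\beta(y)$, which can be written as an absolutely convergent expression involving $\Gamma_\beta(Y)$, $\Gamma_\beta'(Y)$, and $\int_Y^\infty e^{\sqrt2 s}q_\beta\Gamma_\beta\,ds$. Continuity of this expression in $\beta$ follows from the weighted continuity in Lemma~\ref{lem:interfaceUniformTails} and dominated convergence.

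\emph{Scalar component.} Boundedness of $\eta_\beta$ removes the growing mode, and variation of constants gives
\[
 \eta_\beta(y)=H_\beta e^{-\sqrt{2\beta}y}-\int_y^\infty\frac{\sinh(\sqrt{2\beta}(s-y))}{\sqrt{2\beta}}f_\beta(s)\,ds .
\]
Here the constant is $H_\beta=\lim e^{\sqrt{2\beta}y}(\ldots)$, expressed as above. The forcing satisfies $f_\beta=O(e^{-2\sqrt2y})+O(e^{-2\sqrt{2\beta}y})$, after a first bootstrap of $\eta_\beta=O(e^{-c y})$ to the rate $e^{-\sqrt{2\beta}y}$, which is immediate from the same formula. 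Both rates $2\sqrt2$ and $2\sqrt{2\beta}$ strictly exceed $\sqrt{2\beta}$ exactly because $\beta<4$. Hence the integral is $O(e^{-\min(2\sqrt2,2\sqrt{2\beta})y})$, which gives $\eta_\beta=H_\beta e^{-\sqrt{2\beta}y}+O(e^{-(\sqrt{2\beta}+\delta_J)y})$, where $\delta_J=\min_{J}\min(2\sqrt2-\sqrt{2\beta},\sqrt{2\beta})>0$. Derivatives follow by differentiating the kernel, as done for $v_i'$ in Lemma~\ref{lem:rightstable}, and then using the ODE for higher orders.

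The main obstacle is positivity $H_\beta>0$, since $H_\beta\ge0$ is only what $\eta_\beta>0$ gives directly. I will argue by contradiction: if $H_\beta=0$, the representation together with the magnetic asymptotics gives
\[
 \eta_\beta(y)=-\frac{G_\beta^2}{8-2\beta}e^{-2\sqrt2y}(1+o(1)).
\]
The quadratic term $\beta\eta_\beta^2$ is then of even smaller order. This expression is negative because $G_\beta>0$ and $\beta<4$, contradicting $\Phi_\beta<1$. Uniformity on $J$ of the constants, of the contraction, and of the remainder bounds follows from the uniform tail bounds and the continuity assertions of Lemma~\ref{lem:interfaceUniformTails}. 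Continuity of $H_\beta$ follows from its integral formula, and a compactness argument keeps $\inf_J H_\beta>0$ and $\inf_J G_\beta>0$.
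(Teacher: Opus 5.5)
Your outline is correct in substance and follows the paper's skeleton: a recessive Volterra representation with kernel $\sinh(\mu(s-y))/\mu$, coefficients obtained as limits of weighted quantities, continuity by dominated convergence via Lemma~\ref{lem:interfaceUniformTails}, and compactness for the uniform lower bounds. The real difference is how you prove $G_\beta,H_\beta>0$.

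The paper uses the monotone quantities $\mathfrak g_\beta=\frac{e^{\sqrt2y}}{2\sqrt2}(\sqrt2\,\Gamma_\beta-\Gamma_\beta')$ and $\mathfrak h_\beta=\frac{e^{\mu y}}{2\mu}(\mu\eta_\beta-\eta_\beta')$, with $\mu=\sqrt{2\beta}$. Their derivatives are positive because the forcing has a sign, and their values at $Y$ are positive because $\Gamma_\beta'<0$ and $\eta_\beta'<0$. This gives $G_\beta\ge\mathfrak g_\beta(Y)>0$, $H_\beta\ge\mathfrak h_\beta(Y)>0$, and the integral formulas used for continuity, all at once.

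You argue differently. You get $G_\beta>0$ from the one-dimensionality of the recessive space of the linear equation $\Gamma''=(2+q_\beta)\Gamma$, so that $G_\beta=0$ would force $\Gamma_\beta\equiv0$. This is valid and does not use $\Gamma_\beta'<0$. You get $H_\beta>0$ by contradiction from the sign of the forced term $-\frac{G_\beta^2}{2(4-\beta)}e^{-2\sqrt2y}$. That uses $G_\beta>0$ and $\beta<4$ instead of $\eta_\beta'<0$.

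Two repairs are needed.

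\emph{Signs.} From $\Phi_\beta=1-\eta_\beta$ one gets $f_\beta=-\Gamma_\beta^2(1-\eta_\beta)-\beta\eta_\beta^2(3-\eta_\beta)$. The bounded solution is $\eta_\beta=H_\beta e^{-\mu y}+\int_y^\infty\frac{\sinh(\mu(s-y))}{\mu}f_\beta(s)\,ds$, with a plus sign. With your displayed minus sign, the case $H_\beta=0$ would give $+\frac{G_\beta^2}{8-2\beta}e^{-2\sqrt2y}>0$, and there would be no contradiction. So the conclusion you state holds only after this correction. Once the signs are fixed, $f_\beta<0$ pointwise, so $H_\beta=0$ gives $\eta_\beta<0$ immediately, with no asymptotics needed. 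This is the same mechanism the paper uses in the form $\mathfrak h_\beta'=-\frac{e^{\mu y}}{2\mu}f_\beta>0$.

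\emph{Bootstrap.} The preliminary bootstrap is not ``immediate from the same formula.'' The representation based at $+\infty$ requires $e^{\mu s}\eta_\beta(s)^2\in L^1$, and the unspecified rate $c_J$ of Lemma~\ref{lem:interfaceUniformTails} does not give this. Start instead from variation of constants on $[Y,\infty)$. There the coefficient of $e^{\mu y}$ is fixed by boundedness and involves only $\int^\infty e^{-\mu s}f_\beta\,ds$, which always converges. The coefficient of $e^{-\mu y}$ is integrated from $Y$. Alternatives are the paper's factorization $z_\pm=\eta'\pm\mu\eta$, or moving $-\beta\eta_\beta(3-\eta_\beta)$ into the potential as you did for $\Gamma_\beta$. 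Each step then doubles the decay rate until it exceeds $\mu/2$, after which your representation at $+\infty$ is legitimate.
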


\begin{proof}
The equations for $(\Gamma_\beta,\eta_\beta)$ are
\begin{align}
 \Gamma_\beta''-2\Gamma_\beta
       &=-4\Gamma_\beta\eta_\beta
          +2\Gamma_\beta\eta_\beta^2,
          \label{eq:GammaRight}\\
 \eta_\beta''-2\beta\eta_\beta
       &=-\Gamma_\beta^2+\Gamma_\beta^2\eta_\beta
          -3\beta\eta_\beta^2+\beta\eta_\beta^3.
          \label{eq:EtaRight}
\end{align}
We use the following elementary fact,  in the
form needed below.  Suppose that, for some $a>0$,
$x,x'=O(e^{-ay})$ and
\bel{eq:scalarAsymptoticIntegration}
        x''-\mu^2x=q(y)x+r(y),
\ee
where $\mu>0$,
\[
 |q^{(j)}(y)|\le C_j e^{-cy},\qquad j=0,1,
\]
and either $r=0$, or
\[
 r(y)=r_0e^{-\sigma y}
      +O(e^{-(\sigma+c_0)y}),\qquad \sigma>\mu,
\]
with the corresponding estimate after one derivative.  Then there are
$H\in\RR$ and $\delta>0$ such that
\bel{eq:scalarFactExpansion}
 x(y)=H e^{-\mu y}
      +\frac{r_0}{\sigma^2-\mu^2}e^{-\sigma y}
      +O(e^{-(\mu+\delta)y}).
\ee
When $r=0$, the second term is absent.  The same estimates hold after one
derivative.

\noindent For completeness, we include the proof. Set
\[
        z_+=x'+\mu x,\qquad z_-=x'-\mu x.
\]
Then
\[
        z_+'-\mu z_+=qx+r,\qquad
        z_-'+\mu z_-=qx+r,
\]
so that
\bel{eq:zplusTerminal}
        z_+(y)
        =-\int_y^\infty e^{\mu(y-s)}(qx+r)(s)\,\dd s.
\ee
Variation of constants first improves the preliminary exponential decay
from exponent $a$ to
\[
        \min\{\mu,a+c,\sigma\}.
\]
Repeating this finitely many times gives
$x,x'=O(e^{-(\mu-\varepsilon)y})$ for every sufficiently small
$\varepsilon>0$. The loss of $\varepsilon$ occurs only in this preliminary weighted
bootstrap.  Choose
\[
        0<\varepsilon<\min\{c,\sigma-\mu\},
\]
with the second restriction omitted when $r=0$.  Then
\[
        qx+r=O(e^{-(\mu+\delta_{\rm tail})y})
\]
for some $\delta_{\rm tail}>0$.  The first-order formula \eqref{eq:zplusTerminal} therefore
recovers the endpoint and yields
\[
        x(y)=H e^{-\mu y}+O(e^{-(\mu+\delta_{\rm tail})y}).
\] Hence $qx+r$ decays strictly faster than $e^{-\mu y}$.
The second first-order equation then shows that $e^{\mu y}z_-(y)$ has a
finite limit.  Substituting
$x=(z_+-z_-)/(2\mu)$ and integrating the leading term
$r_0e^{-\sigma y}$ explicitly gives \eqref{eq:scalarFactExpansion}, since
\[
        (\partial_y^2-\mu^2)e^{-\sigma y}
        =(\sigma^2-\mu^2)e^{-\sigma y}.
\]
The differentiated estimate follows from the two first-order equations. 
Lemma~\ref{lem:interfaceUniformTails} supplies the preliminary exponential
bounds required in this fact.  Apply it first to
\eqref{eq:GammaRight} with
\[
        \mu=\sqrt2,\qquad
        q=-4\eta_\beta+2\eta_\beta^2,\qquad r=0.
\]
We obtain
\[
 \Gamma_\beta(y)
 =G_\beta e^{-\sqrt2y}
  +O(e^{-(\sqrt2+\delta)y}).
\]
To prove that the coefficient is strictly positive, define
\[
 \mathfrak g_\beta(y)
 =\frac{e^{\sqrt2y}}{2\sqrt2}
       \red{(}\sqrt2\,\Gamma_\beta(y)-\Gamma_\beta'(y)\red{)}.
\]
Equation~\eqref{eq:GammaRight} gives
\[
 \mathfrak g_\beta'(y)
 =\frac{e^{\sqrt2y}}{\sqrt2}
       \Gamma_\beta(y)\eta_\beta(y)\red{(}2-\eta_\beta(y)\red{)}>0.
\]
The derivative is integrable by the preceding expansion and the
preliminary decay of $\eta_\beta$, and
\[
 G_\beta
 =\lim_{y\to\infty}\mathfrak g_\beta(y)
 =\mathfrak g_\beta(Y)
  +\int_Y^\infty\mathfrak g_\beta'(s)\,\dd s .
\]
Since $\Gamma_\beta>0$ and $\Gamma_\beta'<0$, one has
$\mathfrak g_\beta(Y)>0$, and therefore $G_\beta>0$.
Rewrite~\eqref{eq:EtaRight} as
\[
 \eta_\beta''-2\beta\eta_\beta
 =q_\eta(y)\eta_\beta-\Gamma_\beta^2,
 \qquad
 q_\eta=\Gamma_\beta^2-3\beta\eta_\beta+\beta\eta_\beta^2.
\]
The coefficient $q_\eta$ is exponentially small, while
\[
 -\Gamma_\beta^2
 =-G_\beta^2e^{-2\sqrt2y}
  +O(e^{-(2\sqrt2+\delta)y}).
\]
Put
\[
        \mu=\sqrt{2\beta},\qquad \sigma=2\sqrt2.
\]
Since $0<\beta<4$, one has $\sigma>\mu$.  The scalar
asymptotic-integration result therefore gives
\[
 \eta_\beta(y)
 =H_\beta e^{-\sqrt{2\beta}y}
 -\frac{G_\beta^2}{2(4-\beta)}e^{-2\sqrt2y}
 +O(e^{-(\sqrt{2\beta}+\delta_\beta)y}),
\]
where $\delta_\beta$ is chosen smaller than
$2\sqrt2-\sqrt{2\beta}$.
To prove that $H_\beta>0$, we define
\[
 \mathfrak h_\beta(y)
 =\frac{e^{\mu y}}{2\mu}
       \red{(}\mu\eta_\beta(y)-\eta_\beta'(y)\red{)}.
\]
The exact Higgs equation yields
\[
 \mathfrak h_\beta'(y)
 =\frac{e^{\mu y}}{2\mu}
 \left[
 \Gamma_\beta(y)^2\red{(}1-\eta_\beta(y)\red{)}
 +\beta\eta_\beta(y)^2\red{(}3-\eta_\beta(y)\red{)}
 \right]>0.
\]
The right side is integrable because
\[
        2\sqrt2-\mu>0,\qquad \mu>0.
\]
Consequently,
\[
 H_\beta
 =\lim_{y\to\infty}\mathfrak h_\beta(y)
 =\mathfrak h_\beta(Y)
  +\int_Y^\infty\mathfrak h_\beta'(s)\,\dd s .
\]
Since $\eta_\beta>0$ and
$\eta_\beta'=-\Phi_\beta'<0$, one has
$\mathfrak h_\beta(Y)>0$.  Hence $H_\beta>0$, and the two additive
expansions become
\[
 \Gamma_\beta(y)
 =G_\beta e^{-\sqrt2y}(1+o(1)),\qquad
 \eta_\beta(y)
 =H_\beta e^{-\sqrt{2\beta}y}(1+o(1)).
\]
Equivalently, the tails satisfy the  Volterra equations
\begin{align}
 \Gamma_\beta(y)&=G_\beta e^{-\sqrt2 y}
 +\int_y^\infty \calG_{\sqrt2}(y,s)
      [-4\Gamma_\beta\eta_\beta
       +2\Gamma_\beta\eta_\beta^2](s)\,\dd s,
      \label{eq:GammaVolterra}\\
 \eta_\beta(y)&=H_\beta e^{-\sqrt{2\beta}y}
 -\int_y^\infty \calG_{\sqrt{2\beta}}(y,s)
      [\Gamma_\beta^2-\Gamma_\beta^2\eta_\beta
       +3\beta\eta_\beta^2-\beta\eta_\beta^3](s)\,\dd s,
      \label{eq:EtaVolterra}
\end{align}
where
\[
    \calG_\mu(y,s)=\frac{\sinh(\mu(s-y))}{\mu}{\bf 1}_{s\ge y}.
\]
All sources in these equations decay strictly faster than the corresponding
homogeneous mode.  In particular, if
$|f(s)|\le M e^{-\sigma s}$ with $\sigma>\mu$, then, for $j=0,1$,
\bel{eq:rightGreenConvolution}
 \left|\partial_y^j
 \int_y^\infty \calG_\mu(y,s)f(s)\,\dd s\right|
 \le \frac{C_{j,\mu}M}{\sigma-\mu}e^{-\sigma y}.
\ee
The first two differentiated estimates follow from these formulae.
Higher derivatives follow inductively by differentiating
\eqref{eq:GammaRight}, \eqref{eq:EtaRight}.

Finally, let $J\Subset(0,4)$.  Lemma~\ref{lem:interfaceUniformTails}
makes all preliminary estimates uniform for $\beta\in J$, while
\[
 \inf_{\beta\in J}\sqrt{2\beta}>0,\qquad
 \inf_{\beta\in J}
       \{2\sqrt2-\sqrt{2\beta}\}>0
\]
give a common positive exponent $\delta_J$.  The integral formulae for
$G_\beta$ and $H_\beta$ above, together with dominated convergence, show
that both coefficients depend continuously on $\beta$.  Since they are
positive, compactness of $J$ gives
\[
        \inf_{\beta\in J}G_\beta>0,\qquad
        \inf_{\beta\in J}H_\beta>0.
\]
The additive remainder estimates can therefore be divided by the leading
coefficients, giving the asserted uniform relative
$O_J(e^{-\delta_Jy})$ bounds.  Differentiating the equations inductively
gives the same uniform conclusion after any fixed number of
differentiations.
\end{proof}

\section{Exterior modified Bessel problem}\label{sec:exterior}

In the exterior region (to the right of the interface) set $\eta_n(u):=1-\phi_n(u)$.  The exact equations are
\begin{align}
 a_n''-\frac{1}{u}a_n'-2a_n
     &=-4a_n\eta_n+2a_n\eta_n^2,\label{eq:aExt}\\
 \eta_n''+\frac{1}{u}\eta_n'-2\beta\eta_n
     &=-\frac{n^2}{u^2}a_n^2+\frac{n^2}{u^2}a_n^2\eta_n
       -3\beta\eta_n^2+\beta\eta_n^3.\label{eq:etaExt}
\end{align}
The goal of this section is to solve this system by perturbing the decaying modified Bessel functions which solve the associated homogeneous equations.  
Here, we only require $\beta$ to lie in a compact subinterval of $(0,4)$. \red{This will provide us with asymptotics for the right tail of $(a_n,\eta_n)$, using Bessel functions as a proxy.  
We introduce the following notations:
\bel{eq:mMuBetaDef}
 m:=\sqrt{2},\qquad \mu_\beta:=\sqrt{2\beta}.
\ee
}
The next lemma turns the  decay of $(\Gamma_n,\Hdef_n)$ \red{ where $\Hdef_n(y):=1-\Phi_n(y)=\eta_n(R_n+y)$ (with $u=R_n+y$)} into  exponential bounds (including two derivatives) that place the profiles in the weighted norm $\mathcal N_Y$ used for the Bessel--Volterra fixed point in Proposition~\ref{prop:exteriorBessel}. See  Definition~\ref{def:PhiGamma} for the centered variables. 

\begin{Lemma}\label{lem:exteriorEntry}
Let $J\Subset(0,4)$.  Choose $\rho>0$ so that
\[
 0<4\rho<\inf_{\beta\in J}
 \min\{\red{\mu_\beta},\,2\red{m-\mu_\beta}\},
\]
and put
\[
 \lambda_G:=\red{m}-\rho,\qquad
 \lambda_H:=\red{\mu_\beta}-\rho.
\]
Then $\lambda_G,\lambda_H>0$ and $2\lambda_G>\lambda_H$.
There are $K_0$ and $C$, uniform for $\beta\in J$, such that for
$K\ge K_0$ and all sufficiently large $n$,
\bel{eq:exteriorEntry}
 \sum_{j=0}^2\sup_{y\ge K}
 \left\{e^{\lambda_G(y-K)}|\partial_y^j\Gamma_n(y)|
       +e^{\lambda_H(y-K)}|\partial_y^j\Hdef_n(y)|\right\}\le C\red{.}
\ee
\end{Lemma}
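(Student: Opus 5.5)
The plan is a comparison (maximum principle) argument on the half-line $y\ge K$ in the centered variable. It uses three inputs: the smallness of the data at $y=K$, the monotonicity of $\Phi_n$, and the decay at infinity for each fixed $n$. First fix $\varepsilon>0$ small, depending only on $J$ and $\rho$. By Lemma~\ref{lem:interfaceUniformTails}, choose $K_0$ so that $1-\Phi_\beta(K)+\Gamma_\beta(K)\le\varepsilon/2$ for all $K\ge K_0$ and $\beta\in J$. By the uniform qualitative convergence of Lemma~\ref{lem:qualitativeBoundary} at the single point $y=K$, we then get $\Hdef_n(K)+\Gamma_n(K)\le\varepsilon$ for all $n\ge n_{K,J}$. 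Since $\Phi_n'\ge0$ (Proposition~\ref{prop:static}), this gives $\Phi_n\ge1-\varepsilon$ and $0\le\Hdef_n\le\varepsilon$ on all of $[K,\infty)$. Also $\Gamma_n>0$, and $\Gamma_n,\Hdef_n\to0$ as $y\to\infty$ by \eqref{eq:coarseRadialTail}.

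For the magnetic component we use the barrier $w_G(y)=\Gamma_n(K)e^{-\lambda_G(y-K)}$. Write $L_n\Gamma:=\Gamma''+(R_n+y)^{-1}\Gamma'-(R_n+y)^{-2}\Gamma-2\Phi_n^2\Gamma$. Since $w_G'<0$, the two lower-order terms have a favorable sign, and
\[
L_nw_G\le\bigl(\lambda_G^2-2(1-\varepsilon)^2\bigr)w_G<0
\]
once $\varepsilon$ is small, because $\lambda_G<\sqrt2$. The difference $\Gamma_n-w_G$ satisfies $L_n(\Gamma_n-w_G)\ge0$, the zeroth-order coefficient is negative, and the difference is $\le0$ at $y=K$ and tends to $0$ at infinity. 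Hence it has no positive maximum, and $0<\Gamma_n\le\Gamma_n(K)e^{-\lambda_G(y-K)}$.

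For $\Hdef_n$ we rewrite \eqref{eq:etaExt} as
\[
\Hdef_n''+(R_n+y)^{-1}\Hdef_n'-c_n\Hdef_n=-\Gamma_n^2(1-\Hdef_n),\qquad c_n=\beta(1-\Hdef_n)(2-\Hdef_n)\ge2\beta-3\beta\varepsilon .
\]
Try $w_H=Ae^{-\lambda_H(y-K)}$ with $A=\Hdef_n(K)+C_J\Gamma_n(K)^2$. Then
\[
w_H''+(R_n+y)^{-1}w_H'-c_nw_H\le-(c_n-\lambda_H^2)w_H\le-\gamma_Jw_H,
\]
with $\gamma_J>0$ since $\lambda_H<\sqrt{2\beta}$. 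By the magnetic bound the forcing satisfies $\Gamma_n^2\le\Gamma_n(K)^2e^{-2\lambda_G(y-K)}$. Since $2\lambda_G>\lambda_H$, this is $\le\gamma_Jw_H$ for $C_J\ge\gamma_J^{-1}$. The same maximum principle, together with $\Hdef_n\ge0$, gives $0\le\Hdef_n\le Ae^{-\lambda_H(y-K)}$.

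The derivative bounds come from the integrated first-order forms rather than from differentiating the barriers, as in the proof of Proposition~\ref{prop:static}. We have
\[
((R_n+y)\Gamma_n')'=(R_n+y)\bigl[2\Phi_n^2\Gamma_n+(R_n+y)^{-2}\Gamma_n\bigr],\qquad ((R_n+y)\Hdef_n')'=(R_n+y)\bigl[c_n\Hdef_n-\Gamma_n^2(1-\Hdef_n)\bigr].
\]
For fixed $n$, $(R_n+y)\Gamma_n'$ and $(R_n+y)\Hdef_n'$ vanish at infinity by \eqref{eq:coarseRadialTail}, so we integrate from $+\infty$. Using $(R_n+s)/(R_n+y)\le1+(s-y)$, each weighted integral is bounded by $C\,e^{-\lambda(y-K)}$ with no loss of exponent. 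Substituting these bounds back into the two equations gives the $j=2$ bounds in \eqref{eq:exteriorEntry}.

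I expect the only real care to be needed in the uniformity. The constants $\varepsilon$, $K_0$ and $\gamma_J$ must be fixed using only $J$ and $\rho$, so that $C$ in \eqref{eq:exteriorEntry} is independent of $K$ and $n$. The threshold in $n$ may depend on $K$, which is allowed by the statement. The endpoint at infinity in the maximum principle is harmless because of the fixed-$n$ decay from Proposition~\ref{prop:static}.
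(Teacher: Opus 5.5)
Your proof is correct and follows essentially the same route as the paper. In both, monotonicity of $\Phi_n$ and the uniform qualitative convergence at $y=K$ give $\Phi_n\ge 1-\varepsilon$ on $[K,\infty)$; exponential supersolutions and the maximum principle then bound first $\Gamma_n$ and then $\Hdef_n$, using $2\lambda_G>\lambda_H$ to absorb the forcing $\Gamma_n^2$. The differences are minor: you use the fixed-$n$ decay at infinity directly where the paper adds $\varepsilon$ and truncates at $T$, and you get the derivative bounds by integrating $((R_n+y)x')'$ from $+\infty$ where the paper uses interior estimates on unit intervals.
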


\begin{proof}  Uniform decay of the interface to the right and 
\eqref{eq:uniformQualitativeBoundary} of
Lemma~\ref{lem:qualitativeBoundary} allow $K_0$ to be chosen so that
$\Phi_n(K)\ge1-\epsilon_0$ for $K\ge K_0$ and large $n$, uniformly for
$\beta\in J$.  Monotonicity gives the same lower bound for every $y\ge K$.
Writing $u=R_n+y$, the exact equations become
\[
 \mathcal L_{G,n}\Gamma_n=0,
 \qquad
 \mathcal L_{H,n}\Hdef_n=\Gamma_n^2\Phi_n,
\]
where
\[
 \mathcal L_{G,n}=-\partial_y^2-u^{-1}\partial_y+u^{-2}+2\Phi_n^2\red{(y)},
 \qquad
 \mathcal L_{H,n}=-\partial_y^2-u^{-1}\partial_y
                    +\beta\Phi_n\red{(y)}(1+\Phi_n\red{(y)}).
\]
After decreasing $\epsilon_0$ \red{if necessary}, the functions
$e^{-\lambda_G(y-K)}$ and $e^{-\lambda_H(y-K)}$ are strict positive
supersolutions for the corresponding homogeneous operators, with a uniform positive margin:
\[
 \mathcal L_{G,n}e^{-\lambda_G(y-K)}
 =\red{(}2\Phi_n^2-\lambda_G^2+\lambda_G/u+u^{-2}\red{)}
      e^{-\lambda_G(y-K)},
\]
and similarly
$\mathcal L_{H,n}e^{-\lambda_H(y-K)}=\red{(}\beta\Phi_n(1+\Phi_n)-\lambda_H^2+\lambda_H/u\red{)}e^{-\lambda_H(y-K)}$.

Choose the multiplicative constant in the first supersolution larger than
$\Gamma_n(K)$.  On $[K,T]$ \green{we} add a positive constant $\varepsilon$ to the
 supersolution and choose $T$ so that $\Gamma_n(T)<\varepsilon$.  \green{We now apply the}
  maximum principle, \green{let $T$ tend to infinity} \blue{along a sequence of such endpoints}
  and then \green{let $\varepsilon$ tend to zero. We deduce that}
\[
        0<\Gamma_n(y)\le C e^{-\lambda_G(y-K)}.
\]
Since $2\lambda_G>\lambda_H$, the source $\Gamma_n^2\Phi_n$ in the second equation satisfies
\[
        0\le\Gamma_n^2\Phi_n\le C e^{-2\lambda_G(y-K)}
        \le C e^{-\lambda_H(y-K)}.
\]
Increasing the coefficient of the second exponential supersolution makes
its image under $\mathcal L_{H,n}$ dominate this \blue{forcing term}.  The same
comparison argument gives
\[
        0<\Hdef_n(y)\le C e^{-\lambda_H(y-K)}.
\]
Finally, on every unit interval contained in $[K,\infty)$, variation of
constants for the two scalar equations gives the elementary interior
estimate
\[
 |x'(y)|+|x''(y)|
 \le C\left(\sup_{|s-y|\le1}|x(s)|
       +\sup_{|s-y|\le1}|f(s)|\right)
\]
for $x''+p x'+q x=f$ with the present uniformly bounded coefficients.
Applying it first to $\Gamma_n$ and then to $\Hdef_n$ proves the derivative
bounds in \eqref{eq:exteriorEntry}. \end{proof}

Solutions to the radial exterior problem \eqref{eq:aExt}, \eqref{eq:etaExt} are well-approximated by modified Bessel functions. To quantify that, we work in a shifted, exponentially weighted $C^2$--type norm: for $Y>0$ and a pair $(a,\eta)$ on $[R_n+Y,\infty)$ set
\[
        \mathcal N_Y(a,\eta)
        :=\sum_{j=0}^2\sup_{y\ge Y}
        \left(e^{\red{m}y}\abs{\partial_y^j\!\left(\frac{n}{R_n+y}a(R_n+y)\right)}
        +e^{\red{\mu_\beta}y}\abs{\partial_y^j\eta(R_n+y)}\right),
\]
where $u=R_n+y$. Abusing notation a little, we denote the corresponding space by~$\mathcal N_Y$. \blue{ The following proposition reduces controlling the decay of $a_n,\eta_n$ to the following pair of Volterra equations associated to the equations \eqref{eq:aExt} and \eqref{eq:etaExt}:
\begin{align}
 a_n(u)&=A_n k_1(u)
 +\int_u^\infty \calG_1(u,s)[-4a_n\eta_n+2a_n\eta_n^2](s)\dd s,\label{eq:aVolterra}\\
 \eta_n(u)&=B_n k_{0,\beta}(u)
 -\int_u^\infty \calG_{0,\beta}(u,s)
  \left[\frac{n^2}{s^2}a_n^2-\frac{n^2}{s^2}a_n^2\eta_n
       +3\beta\eta_n^2-\beta\eta_n^3\right](s)\dd s
\label{eq:etaVolterra}
\end{align}
 in the space $\mathcal N_Y$ 
for $\beta\in J$. For the Green kernels see~\eqref{eq:G1}, \eqref{eq:G0}. We refer to $A_n, B_n$ as the coefficients of the leading Bessel functions by which we parameterize the space of $a_n,\eta_n$.  These are large quantities, which are renormalized as follows:
\bel{eq:Ahat}
         A_n:=\left(\frac{2\red{m}R_n}{\pi}\right)^{1/2}
        \frac{1}{n}e^{\red{m}R_n}\widehat A_n 
\ee
and
\bel{eq:Bhat}
         B_n:=\left(\frac{2\red{\mu_\beta}R_n}{\pi}\right)^{1/2}
        e^{\red{\mu_\beta}R_n}\widehat B_n .
\ee
It will turn out that $\widehat A_n, \widehat B_n$ are of unit size. }

\begin{Proposition}\label{prop:exteriorBessel}
Let $J\Subset(0,4)$.  Uniformly for $\beta\in J$ and for sufficiently large \red{$Y$ and $n$}, for prescribed   $\red{\widehat A_n},\red{\widehat B_n}$ in a fixed bounded set, 
the Volterra equations \eqref{eq:aVolterra}, \eqref{eq:etaVolterra} define a contraction on a ball in $\mathcal N_Y$,  and hence yield a unique exterior solution of \eqref{eq:aExt}, \eqref{eq:etaExt} on $u\ge R_n+Y$ with finite $\mathcal N_Y$ norm.
\blue{For the true vortex, the tail belongs to this two-parameter family, and its uniquely determined coefficients $A_n, B_n$ are positive. }
\end{Proposition}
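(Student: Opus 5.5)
The plan is to treat \eqref{eq:aVolterra}, \eqref{eq:etaVolterra} as a perturbation of the two scalar modified Bessel equations, in the same way as the planar Volterra equations \eqref{eq:GammaVolterra}, \eqref{eq:EtaVolterra} in the proof of Proposition~\ref{prop:righttail}.

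\emph{Homogeneous solutions and kernels.} The homogeneous operators are $a''-u^{-1}a'-2a$ and $\eta''+u^{-1}\eta'-2\beta\eta$. Their recessive solutions are $k_1(u)=uK_1(mu)$ and $k_{0,\beta}(u)=K_0(\mu_\beta u)$, with dominant companions $uI_1(mu)$ and $I_0(\mu_\beta u)$. The kernels $\calG_1,\calG_{0,\beta}$ in \eqref{eq:G1}, \eqref{eq:G0} are the corresponding terminal variation-of-constants kernels, normalized by the Wronskians with the appropriate weights $u^{-1}$ and $u$.

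From the standard asymptotics $K_\nu(z)=(\pi/2z)^{1/2}e^{-z}(1+O(z^{-1}))$, the normalizations \eqref{eq:Ahat}, \eqref{eq:Bhat} give, for $u=R_n+y$ with $y\ge Y$,
\[
\frac{n}{u}A_nk_1(u)=\widehat A_ne^{-my}\bigl(1+O(y/R_n)+O(R_n^{-1})\bigr),\qquad
B_nk_{0,\beta}(u)=\widehat B_ne^{-\mu_\beta y}\bigl(1+O(y/R_n)+O(R_n^{-1})\bigr).
\]
The same holds for two $y$-derivatives. The polynomial factors are harmless against the exponential weights after a slight loss, or one can simply keep the factor $(R_n/u)^{1/2}$ explicitly. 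The homogeneous terms therefore have $\mathcal N_Y$-norm $\lesssim|\widehat A_n|+|\widehat B_n|$.

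For the kernels I would prove the analogue of \eqref{eq:rightGreenConvolution}. Write $\calG_1(u,s)$ in terms of $\frac{n}{u}$-conjugated variables, and use $I_\nu K_\nu$ product asymptotics, which are uniform for $u,s\ge R_n$. These give $|\partial_y^j\calG(u,s)|\le C e^{\mu(s-u)}$ for $j=0,1$. Hence a source bounded by $Me^{-\sigma y}$ with $\sigma>\mu$ produces a contribution bounded by $C M(\sigma-\mu)^{-1}e^{-\sigma y}$, uniformly in $n$ and in $\beta\in J$. The second derivatives then follow from the equations themselves.

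\emph{Contraction.} On the ball of radius $2C(|\widehat A_n|+|\widehat B_n|)$ in $\mathcal N_Y$, set $\Gamma=\frac{n}{u}a$. The first source equals $\frac{u}{n}\cdot(-4\Gamma\eta+2\Gamma\eta^2)$, so in the $\Gamma$ variable it decays like $e^{-(m+\mu_\beta)y}$. The second source contains $\frac{n^2}{s^2}a^2=\Gamma^2$, which decays like $e^{-2my}$; this beats $e^{-\mu_\beta y}$ precisely because $\beta<4$ gives $2m>\mu_\beta$, uniformly on $J$. The terms $\eta^2$ and $\eta^3$ decay like $e^{-2\mu_\beta y}$. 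In each case the map gains a factor $e^{-\delta_J Y}$, with $\delta_J=\min\{\mu_\beta,\,2m-\mu_\beta\}$ bounded below on $J$. All sources are at least quadratic, with Lipschitz constants bounded by (radius)$\cdot e^{-\delta_J Y}$. So for $Y$ large the map is a self-map and a contraction, uniformly in $n$ and $\beta\in J$, which gives existence and uniqueness in the ball.

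\emph{The true vortex.} Lemma~\ref{lem:exteriorEntry} places $(\Gamma_n,\Hdef_n)$ in the weaker weighted space with rates $\lambda_G,\lambda_H$. I would apply variation of constants on $[R_n+Y,\infty)$ with the fundamental systems above. Decay excludes the dominant modes, exactly as in the proof of Lemma~\ref{lem:rightstable}, so the true tail satisfies \eqref{eq:aVolterra}, \eqref{eq:etaVolterra} for some coefficients, which are expressed by Wronskians at $u=R_n+Y$. These Wronskian formulas and the entry bounds show that $\widehat A_n,\widehat B_n$ are bounded. The scalar bootstrap of Proposition~\ref{prop:righttail} then upgrades the rates from $\lambda$ to $m,\mu_\beta$. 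This shows that the true tail has finite $\mathcal N_Y$ norm, hence coincides with the fixed point for its coefficients.

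\emph{Positivity.} I would mimic the functionals $\mathfrak g_\beta,\mathfrak h_\beta$ of Proposition~\ref{prop:righttail}. Take the normalized Wronskian of $a_n$ against the dominant solution $uI_1(mu)$; its limit at infinity is a positive multiple of $A_n$. Its derivative is the dominant mode times the weight times the source, and the source $-2a_n\eta_n(2-\eta_n)$ has a definite sign. Its value at the base point is positive because $a_n>0$ and $a_n'<0$ (Proposition~\ref{prop:static}). The same argument for $\eta_n$ uses the source $\frac{n^2}{u^2}a_n^2(1-\eta_n)+\beta\eta_n^2(3-\eta_n)>0$ and $\eta_n'<0$, and gives $B_n>0$. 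I expect the sign bookkeeping in these Wronskian identities to need care.

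The main obstacle I expect is the uniform kernel estimate: showing that the Bessel Green kernels, after conjugation by $n/u$ and the exponential weights, obey $e^{\mu(s-u)}$-type bounds up to two derivatives with constants independent of $n$, when $u$ is near $R_n\to\infty$ and the relevant ratios are only controlled to $O(1/R_n)$.
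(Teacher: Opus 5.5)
Your proposal is correct and follows essentially the same route as the paper. It uses the Bessel fundamental systems with their terminal Green kernels, and a contraction in $\mathcal N_Y$ driven by the gaps $\mu_\beta$ and $2m-\mu_\beta$ (the second is where $\beta<4$ enters). The vortex tail is then identified via variation of constants from the preliminary decay of Lemma~\ref{lem:exteriorEntry}, and $A_n,B_n>0$ follows from the monotone Wronskians $\calW[a_n,i_1]/u$ and $u\,\calW[\eta_n,i_{0,\beta}]$.

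Two small points to tidy. First, the conjugated kernels carry a factor $(s/u)^{1/2}$, so your uniform bound $Ce^{\mu(s-u)}$ should read $C(1+s-u)^{1/2}e^{\mu(s-u)}$ on $u\ge R_n+Y$; this is harmless given the spectral gap. Second, to pass from finite $\mathcal N_Y$ norm of the vortex tail to coincidence with the fixed point, you should either check that the tail lies in your ball, or do as the paper does: restrict to $[R_n+Y',\infty)$ with $Y'$ large, apply the contraction there, and extend back to $[R_n+Y,\infty)$ by ODE uniqueness.
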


\begin{proof} 
The fundamental systems of the linearized equations \red{\eqref{eq:aExt}, \eqref{eq:etaExt}}  are
\bel{eq:Besselmodes}
        k_1(u)=u\K_1(\red{m}u),\qquad i_1(u)=u\I_1(\red{m}u),
\ee
and, for all $\beta>0$,
\bel{eq:Bessel0modes}
        k_{0,\beta}(u)=\K_0(\red{\mu_\beta}u),\qquad
        i_{0,\beta}(u)=\I_0(\red{\mu_\beta}u).
\ee
The associated forward Green kernels are
\begin{align}
 \calG_1(u,s)&=\frac{k_1(u)i_1(s)-i_1(u)k_1(s)}{\calW[k_1,i_1](s)}
       {\bf 1}_{s\ge u},\label{eq:G1}\\
 \calG_{0,\beta}(u,s)&=\frac{k_{0,\beta}(u)i_{0,\beta}(s)-i_{0,\beta}(u)k_{0,\beta}(s)}{\calW[k_{0,\beta},i_{0,\beta}](s)}
       {\bf 1}_{s\ge u}.\label{eq:G0}
\end{align}
Recall the convention $\calW[f,g]=fg'-f'g$.  The standard identity
$\calW[\K_\nu(z),\I_\nu(z)]=z^{-1}$ gives exactly
\bel{eq:BesselWronskians}
        \calW[k_1,i_1](s)=s,\qquad
        \calW[k_{0,\beta},i_{0,\beta}](s)=s^{-1}.
\ee
 We need to solve the Volterra equations  
\eqref{eq:aVolterra} and \eqref{eq:etaVolterra}
 in the space $\mathcal N_Y$ 
for $\beta\in J$.
Standard global bounds for $I_\nu,K_\nu$ (see \cite[\S10.37 and \S10.40]{DLMF}) yield, for $0\le j\le1$ and $s\ge u$,
\[
\begin{split}
 |\partial_u^j\calG_1(u,s)|
 &\le C_J\left(\frac{s}{u}\right)^{1/2}
       e^{\red{m}(s-u)},\\
 |\partial_u^j\calG_{0,\beta}(u,s)|
 &\le C_J\left(\frac{s}{u}\right)^{1/2}
       e^{\red{\mu_\beta}(s-u)}.
\end{split}
\]
Retain the choice of $\rho$ from Lemma~\ref{lem:exteriorEntry}, namely
\bel{eq:rightSpectralGap}
 0<4\rho<\inf_{\beta\in J}
       \min\{\red{\mu_\beta},2\red{m}-\red{\mu_\beta}\}.
\ee
Substitution
of $u=R_n+y$, $s=R_n+t$, followed by integration against
$e^{-\rho t}$, implies that 
\begin{equation}\label{eq:exteriorGreenSchur}
\begin{aligned}
 &\sum_{j=0}^2\sup_{y\ge Y}e^{\red{m}y}
 \left|\partial_y^j\frac{n}{R_n+y}
 \int_{R_n+y}^\infty\calG_1(R_n+y,s)F(s)\,ds\right|\\
 & \le C_J
 \sup_{t\ge Y}e^{(\red{m}+\rho)t}
 \left|\frac{n}{R_n+t}F(R_n+t)\right|,
\end{aligned}
\end{equation}
and, analogously,
\bel{eq:exteriorGreenSchurHiggs}
\begin{aligned}
 &\sum_{j=0}^2\sup_{y\ge Y}e^{\red{\mu_\beta}y}
 \left|\partial_y^j
 \int_{R_n+y}^\infty\calG_{0,\beta}(R_n+y,s)F(s)\,ds\right|\\
 &\le C_J
 \sup_{t\ge Y}e^{(\red{\mu_\beta}+\rho)t}
 \left|F(R_n+t)\right|.
\end{aligned}
\ee
with the value of $\rho$ fixed in \eqref{eq:rightSpectralGap}.  For $j=2$ one uses the equation
after proving the first two bounds.   Fix a radius $\mathfrak R>0$ and consider the closed ball
\[
 \mathcal B_{\mathfrak R}
 :=
 \left\{(a_n,\eta_n)\in\mathcal N_Y:
        \mathcal N_Y(a_n,\eta_n)\le\mathfrak R\right\}.
\]
For a pair $(a_n,\eta_n)\in\mathcal B_{\mathfrak R}$, \red{set} $\displaystyle \nu_n:=\mathcal N_Y(a_n,\eta_n)$, and, for $s=R_n+t$, define
\[
 \Gamma_n(t):=\frac{n}{R_n+t}a_n(R_n+t).
\]
The definition of $\mathcal N_Y$ gives
\[
 |\Gamma_n(t)|\le \nu_n e^{-m t},
 \qquad
 |\eta_n(R_n+t)|\le \nu_n e^{-\mu_\beta t},
 \qquad t\ge Y.
\]
Let
\[
 Q_{a,n}(s):=-4a_n(s)\eta_n(s)+2a_n(s)\eta_n(s)^2
\]
denote the nonlinear source in \eqref{eq:aVolterra}.  Although this
source is written in terms of $a_n$, the normalization required by the
magnetic component of $\mathcal N_Y$ is supplied by
\eqref{eq:exteriorGreenSchur}, since
\[
 \frac{n}{R_n+t}Q_{a,n}(R_n+t)
 =
 -4\Gamma_n(t)\eta_n(R_n+t)
 +2\Gamma_n(t)\eta_n(R_n+t)^2.
\]
It follows that
\begin{align*}
 &\sup_{t\ge Y}e^{(m+\rho)t}
 \left|\frac{n}{R_n+t}Q_{a,n}(R_n+t)\right|\le
 4\nu_n^2 e^{-(\mu_\beta-\rho)Y}
 +2\nu_n^3 e^{-(2\mu_\beta-\rho)Y}.
\end{align*}
Likewise, let
\[
 Q_{\eta,n}(s)
 :=
 \frac{n^2}{s^2}a_n(s)^2
 -\frac{n^2}{s^2}a_n(s)^2\eta_n(s)
 +3\beta\eta_n(s)^2-\beta\eta_n(s)^3
\]
denote the expression inside the integral in
\eqref{eq:etaVolterra}.  As for the normalized magnetic field $\Gamma_n$,
\[
 Q_{\eta,n}(R_n+t)
 =
 \Gamma_n(t)^2\red{(}1-\eta_n(R_n+t)\red{)}
 +3\beta\eta_n(R_n+t)^2
 -\beta\eta_n(R_n+t)^3.
\]
Consequently,
\begin{align*}
 &\sup_{t\ge Y}e^{(\mu_\beta+\rho)t}
       |Q_{\eta,n}(R_n+t)|\\
 &\qquad\le
 \nu_n^2\red{(}
 e^{-(2m-\mu_\beta-\rho)Y}
 +3\beta e^{-(\mu_\beta-\rho)Y}
 \red{)}
 +\nu_n^3\red{(}
 e^{-(2m-\rho)Y}
 +\beta e^{-(2\mu_\beta-\rho)Y}
 \red{)}.
\end{align*}
By \eqref{eq:rightSpectralGap},
\[
 c_J:=
 \inf_{\beta\in J}
 \min\{\mu_\beta-\rho,\,2m-\mu_\beta-\rho\}>0.
\]
All the remaining exponents in the preceding estimates are bounded
below by $c_J$.  By Schur's test, 
\[
 \mathcal N_Y\bigl(\text{Volterra nonlinear part}\bigr)
 \le
 C_J e^{-c_JY}\bigl(\nu_n^2+\nu_n^3\bigr).
\]
In particular, on $\mathcal B_{\mathfrak R}$,
\[
 \mathcal N_Y\bigl(\text{Volterra nonlinear part}\bigr)
 \le
 C_J e^{-c_JY}
 \bigl(\mathfrak R^2+\mathfrak R^3\bigr).
\]
If $(\widetilde a_n,\widetilde\eta_n)\in\mathcal B_{\mathfrak R}$ is
a second pair, the corresponding polynomial difference estimates give
\begin{align*}
 &\mathcal N_Y\left(
 \mathcal T_{\mathrm{nl}}(a_n,\eta_n)
 -\mathcal T_{\mathrm{nl}}(\widetilde a_n,\widetilde\eta_n)
 \right)\leq 
 C_J e^{-c_JY}
 \bigl(\mathfrak R+\mathfrak R^2\bigr)
 \mathcal N_Y\left(
 a_n-\widetilde a_n,\eta_n-\widetilde\eta_n
 \right)\red{,}
\end{align*}
\red{where $\displaystyle \mathcal T_{\mathrm{nl}}(a_n,\eta_n)$ and $\displaystyle\mathcal T_{\mathrm{nl}}(\widetilde a_n,\widetilde\eta_n)$ are the Volterra nonlinear terms corresponding to the two pairs.}
\red{We c}hoose $\mathfrak R$ large enough to contain the linear Bessel term
associated with the coefficients $A_n, B_n$.  \red{Subsequently, we}
choose $Y$ sufficiently large so that
\[
 \red{C_J e^{-c_JY}
 \bigl(\mathfrak R+\mathfrak R^2\bigr)<\frac{1}{2}}.
\]
The Volterra map then sends $\mathcal B_{\mathfrak R}$ into itself and
is a contraction there.  The differentiated kernel estimates give the
bounds with one derivative, and the bounds for the second derivatives
then follow from \eqref{eq:aExt} and \eqref{eq:etaExt}.
The same argument implies uniqueness among all exterior solutions with the prescribed coefficients and finite $\mathcal N_Y$ norm, not only among solutions in the ball $\mathcal B_{\mathfrak R}$.  Indeed, given two such solutions, restrict them to $[R_n+Y',\infty)$ and choose a ball containing both  tails.  Since their weighted norms are finite, the
nonlinear Lipschitz constant on this ball is
$O(e^{-c_JY'})$ and can  therefore be made less than~$1$.  The  Volterra equations imply that the two solutions
coincide on this tail, and uniqueness for the regular ODE initial-value
problem extends equality to $[R_n+Y,\infty)$.  

We now verify that the physical radial tail belongs to this family without
assuming weighted bounds.  Put
\[
 F_{a,n}=-4a_n\eta_n+2a_n\eta_n^2,
 \qquad
 F_{\eta,n}=-\frac{n^2}{u^2}a_n^2
 +\frac{n^2}{u^2}a_n^2\eta_n-3\beta\eta_n^2+\beta\eta_n^3.
\]
Lemma~\ref{lem:exteriorEntry}\red{, the standard global bounds for $I_\nu,K_\nu$ from \cite[\S10.37 and \S10.40]{DLMF}, along with} the gaps in
\eqref{eq:rightSpectralGap} imply
\begin{equation}\label{Source Integral Convergence} 
 \frac{F_{a,n}(u)i_1(u)}{u}\in L^1(R_n+Y,\infty),
 \qquad
 uF_{\eta,n}(u)i_{0,\beta}(u)\in L^1(R_n+Y,\infty)\red{,}
\end{equation}
\red{for every $Y>0$.}
The point is that the first source has rate at least
$\lambda_G+\lambda_H>\red{m}$, while every term in the second has rate at
least $\min\{2\lambda_G,2\lambda_H\}>\red{\mu_\beta}$.  We have the Wronskian identities
\bel{eq:exteriorWronskianLimits}
 \left(\frac{\calW[a_n,i_1](u)}{u}\right)'
 =-\frac{F_{a,n}(u)i_1(u)}{u},
 \qquad
 \left(u\calW[\eta_n,i_{0,\beta}](u)\right)'
 =-uF_{\eta,n}(u)i_{0,\beta}(u).
\ee
Both sources are strictly negative:
\[
 F_{a,n}=-2a_n\eta_n(2-\eta_n)<0,\qquad
 F_{\eta,n}=-\frac{n^2}{u^2}a_n^2(1-\eta_n)
             -\beta\eta_n^2(3-\eta_n)<0.
\]
Consequently the two quantities in
\eqref{eq:exteriorWronskianLimits} are strictly increasing, and the limits
\bel{eq:ABWronskianDefinition}
 A_n=\lim_{u\to\infty}\frac{\calW[a_n,i_1](u)}{u},
 \qquad
 B_n=\lim_{u\to\infty}u\calW[\eta_n,i_{0,\beta}](u)
\ee
exist.  At every finite point they are
already positive because $a_n,\eta_n>0$, $a_n',\eta_n'<0$, and both
$i_1,i_{0,\beta}$ and their derivatives are positive.  Strict monotonicity
therefore gives $A_n,B_n>0$.

Fix $Y_0\ge K_0$, with $K_0$ as in
Lemma~\ref{lem:exteriorEntry}, and set
\[
        u_0:=R_n+Y_0.
\]
Integrating \eqref{eq:exteriorWronskianLimits} from $u_0$ to infinity
gives the exact relations
\bel{eq:exteriorAmplitudeIntegralFormulas}
\begin{split}
 A_n
 &=
 \frac{\calW[a_n,i_1](u_0)}{u_0}
 -\int_{u_0}^{\infty}\frac{F_{a,n}(s)i_1(s)}{s}\,\dd s,\\
 B_n
 &=
 u_0\calW[\eta_n,i_{0,\beta}](u_0)
 -\int_{u_0}^{\infty}sF_{\eta,n}(s)i_{0,\beta}(s)\,\dd s.
\end{split}
\ee
\red{By \eqref{Source Integral Convergence}, b}oth integrals converge absolutely.
The global Bessel bounds and \eqref{eq:exteriorEntry}, evaluated at the
fixed point $Y_0$, imply  (recalling that $m=\sqrt2$ and $\mu_\beta=\sqrt{2\beta}$)
\[
 \frac{n}{R_n^{1/2}}e^{-m R_n}
 \left|\frac{\calW[a_n,i_1](u_0)}{u_0}\right|
 +
 \frac{1}{R_n^{1/2}}e^{-\mu_\beta R_n}
 \left|u_0\calW[\eta_n,i_{0,\beta}](u_0)\right|
 \le C_{J,Y_0}.
\]
From the same bounds applied to the two integrals in
\eqref{eq:exteriorAmplitudeIntegralFormulas} we infer that
\[
\begin{split}
 &\frac{n}{R_n^{1/2}}e^{-m R_n}
 \int_{u_0}^{\infty}
       \frac{|F_{a,n}(s)|i_1(s)}{s}\,\dd s\\
 &\qquad+
 \frac{1}{R_n^{1/2}}e^{-\mu_\beta R_n}
 \int_{u_0}^{\infty}
       s|F_{\eta,n}(s)|i_{0,\beta}(s)\,\dd s
 \le C_{J,Y_0}.
\end{split}
\]
Consequently,
\bel{eq:physicalAmplitudeUniformBound}
 \red{|\widehat A_n|}+\red{|\widehat B_n|}\simeq_J\frac{n}{R_n^{1/2}}e^{-m R_n}|A_n|
 +
 \frac{1}{R_n^{1/2}}e^{-\mu_\beta R_n}|B_n|
 \le C_{J,Y_0}.
\ee
We next derive the  Volterra formulas without assuming that
$(a_n,\eta_n)$ already belongs to $\mathcal N_Y$.  For $u_0\le u<T$,
variation of constants leads to
\bel{eq:finiteExteriorVariation}
\begin{split}
 a_n(u)
 &=
 A_n(T)k_1(u)+C_n(T)i_1(u)
 +\int_u^T\calG_1(u,s)F_{a,n}(s)\,\dd s,\\
 \eta_n(u)
 &=
 B_n(T)k_{0,\beta}(u)+D_n(T)i_{0,\beta}(u)
 +\int_u^T\calG_{0,\beta}(u,s)F_{\eta,n}(s)\,\dd s,
\end{split}
\ee
where
\[
\begin{split}
 A_n(T)&=\frac{\calW[a_n,i_1](T)}{T},\qquad
 C_n(T)=\frac{\calW[k_1,a_n](T)}{T},\\
 B_n(T)&=T\calW[\eta_n,i_{0,\beta}](T),\qquad
 D_n(T)=T\calW[k_{0,\beta},\eta_n](T).
\end{split}
\]
By \eqref{eq:ABWronskianDefinition}, \red{as $\displaystyle T\rightarrow\infty$,}
\[
        A_n(T)\longrightarrow A_n,
        \qquad
        B_n(T)\longrightarrow B_n.
\]
The derivative bounds in Lemma~\ref{lem:exteriorEntry}, together with
the decaying Bessel estimates for $k_1$ and $k_{0,\beta}$, give
\[
        C_n(T)\longrightarrow0,
        \qquad
        D_n(T)\longrightarrow0\red{,}
\]
\red{as $\displaystyle T\rightarrow\infty$.}
Since the source integrals converge absolutely, we may let
$T\to\infty$ in \eqref{eq:finiteExteriorVariation}.  We obtain
\bel{eq:physicalTerminalVolterra}
\begin{split}
 a_n(u)
 &=
 A_nk_1(u)
 +\int_u^\infty\calG_1(u,s)F_{a,n}(s)\,\dd s,\\
 \eta_n(u)
 &=
 B_nk_{0,\beta}(u)
 +\int_u^\infty\calG_{0,\beta}(u,s)F_{\eta,n}(s)\,\dd s.
\end{split}
\ee
These are precisely \eqref{eq:aVolterra} and
\eqref{eq:etaVolterra}, since
\[
 F_{\eta,n}
 =
 -\red{\left(
 \frac{n^2}{u^2}a_n^2
 -\frac{n^2}{u^2}a_n^2\eta_n
 +3\beta\eta_n^2-\beta\eta_n^3
 \right)}.
\]
It remains to improve the preliminary decay from
Lemma~\ref{lem:exteriorEntry} to the sharp weights in $\mathcal N_Y$.
For $s=R_n+t$, write
\[
        \Gamma_n(t)=\frac{n}{R_n+t}a_n(R_n+t).
\]
The forcing terms satisfy the exact identities
\[
\begin{split}
 \frac{n}{R_n+t}F_{a,n}(R_n+t)
 &=-2\Gamma_n(t)\eta_n(R_n+t)
       \red{(}2-\eta_n(R_n+t)\red{)},\\
 F_{\eta,n}(R_n+t)
 &=-\Gamma_n(t)^2\red{(}1-\eta_n(R_n+t)\red{)}\\
 &\quad
   -\beta\eta_n(R_n+t)^2
       \red{(}3-\eta_n(R_n+t)\red{)}.
\end{split}
\]
By Lemma~\ref{lem:exteriorEntry},
\[
 |\Gamma_n(t)|\le C e^{-\lambda_Gt},
 \qquad
 |\eta_n(R_n+t)|\le C e^{-\lambda_Ht},
\]
after changing the constant to absorb the fixed shift \red{$Y_0$}.  Note that
\[
\begin{split}
 \lambda_G+\lambda_H-(m+2\rho)
 &=\mu_\beta-4\rho>0,\\
 2\lambda_G-(\mu_\beta+2\rho)
 &=2m-\mu_\beta-4\rho>0,\\
 2\lambda_H-(\mu_\beta+2\rho)
 &=\mu_\beta-4\rho>0
\end{split}
\]
by \eqref{eq:rightSpectralGap}.  Therefore
\bel{eq:physicalExteriorSourceImprovement}
 \left|\frac{n}{R_n+t}F_{a,n}(R_n+t)\right|
 \le C_J e^{-(m+2\rho)t},
 \qquad
 |F_{\eta,n}(R_n+t)|
 \le C_J e^{-(\mu_\beta+2\rho)t}.
\ee
Apply the weighted  estimates \eqref{eq:exteriorGreenSchur} and \eqref{eq:exteriorGreenSchurHiggs} to
\eqref{eq:physicalTerminalVolterra}.  The additional factor
$e^{-2\rho t}$ in
\eqref{eq:physicalExteriorSourceImprovement} yields
\bel{eq:physicalExteriorVolterraCorrection}
 \mathcal N_Y\left(
 a_n-A_nk_1,\,
 \eta_n-B_nk_{0,\beta}
 \right)
 \le \red{C_{J,Y_0}} e^{-\rho \red{Y_0}}.
\ee
The linear Bessel pair
\[
        (A_nk_1,B_nk_{0,\beta})
\]
has finite $\mathcal N_Y$ norm, with its shifted coefficients controlled
by \eqref{eq:physicalAmplitudeUniformBound}.  Hence
\[
        \mathcal N_Y(a_n,\eta_n)<\infty.
\]
Thus the physical radial tail is a fixed point of the Volterra map in
the ball $\mathcal B_{\mathfrak R}$, once $\mathfrak R$ is chosen to
contain the linear term and the bound in
\eqref{eq:physicalExteriorVolterraCorrection}.  Uniqueness of the
contraction identifies it with the corresponding member of the
two-parameter stable family.  The coefficients equal the  uniquely defined
Wronskian limits in \eqref{eq:ABWronskianDefinition}.
\end{proof}

\red{For the true vortex, we can identify the coefficients $\red{\widehat A_n}$ and $\red{\widehat B_n}$ in the limit $n\to\infty$.}  
\red{We e}valuate \eqref{eq:aVolterra} at $u=R_n+Y$, multiply by
$n/(R_n+Y)$, and use \eqref{eq:exteriorGreenSchur}.  The Bessel
asymptotic at this fixed shifted point gives, uniformly in large $n$,
\bel{eq:fixedYGaugeAmplitude}
 \left|\widehat A_n-e^{\red{m}Y}\Gamma_n(Y)\right|
 \le \red{C_{J}} e^{-\red{c_J}Y}+\frac{\red{C_{J,Y}}}{R_n}.
\ee
\red{for some $c_J>0$.}

The constant $\red{C_{J}} e^{-\red{c_J}Y}$ is the nonlinear Volterra tail, and $\frac{\red{C_{J,Y}}}{R_n}$
is the radial Bessel correction. \red{We also note that the first constant $C_J$ is independent of $Y$, due to the nonincreasing nature of the $\mathcal{N}_Y$ norm with respect to $Y$.}  Then
\eqref{eq:uniformQualitativeBoundary} allows $n\to\infty$ at fixed $Y$,
uniformly for $\beta\in J$, while  
Proposition~\ref{prop:righttail} further permits the limit $Y\to\infty$.  Hence
\bel{eq:AhatLimit}
        \widehat A_n\longrightarrow G_\beta.
\ee
\red{as $n\rightarrow \infty$.}
The same evaluation of \eqref{eq:etaVolterra} gives
\[
 \left|\widehat B_n-e^{\red{\mu_\beta}Y}\eta_n(R_n+Y)\right|
 \le \red{C_{J}} e^{-\red{c_J}Y}+\frac{\red{C_{J,Y}}}{R_n}.
\]
Taking the same iterated limits and using \eqref{eq:rightSub} yields
$\widehat B_n\to H_\beta$.
In the overlap region
\bel{eq:rightOverlap}
       1\ll y\ll R_n,\qquad u=R_n+y,
\ee
the standard Bessel asymptotics \cite{DLMF,Olver} give
\begin{align}
 k_1(R_n+y)
 &=\left(\frac{\pi R_n}{2\red{m}}\right)^{1/2}
        e^{-\red{m}R_n}e^{-\red{m}y}(1+O(R_n^{-1}+\abs y/R_n)),\label{eq:k1Overlap}\\
 k_{0,\beta}(R_n+y)
 &=\left(\frac{\pi}{2\red{\mu_\beta}R_n}\right)^{1/2}
        e^{-\red{\mu_\beta}R_n}e^{-\red{\mu_\beta}y}(1+O(R_n^{-1}+\abs y/R_n)).
\label{eq:k0Overlap}
\end{align}
Combining \eqref{eq:AhatLimit} with \eqref{eq:Ahat} yields
\bel{eq:An}
        A_n=
        \left(\frac{2\red{m}}{\pi}\right)^{1/2}\frac{R_n^{1/2}}{n}\,
        G_\beta e^{\red{m}R_n}(1+o(1)).
\ee
Using  Proposition~\ref{prop:coarseloc}, this may be written as
\bel{eq:An2}
        A_n=
        2\pi^{-1/2}\beta^{-1/8}G_\beta\,
        n^{-3/4}e^{\red{m}R_n}(1+o(1)).
\ee
Similarly, \eqref{eq:Bhat} and $\widehat B_n\to H_\beta$ give
\bel{eq:Bn}
        B_n=
        \left(\frac{2\red{\mu_\beta}R_n}{\pi}\right)^{1/2}
        H_\beta e^{\red{\mu_\beta}R_n}(1+o(1)),
\ee
as desired.
The powers of $R_n$ in \eqref{eq:An} come from two sources: the asymptotic equality 
\[
 uK_1(\red{m}u)\sim
 \left(\frac{\pi u}{2\red{m}}\right)^{1/2}e^{-\red{m}u},
\]
and the conversion $\Gamma_n(u)=na_n(u)/u$.

\medskip
The following technical lemma will be used to compare suitably normalized modified Bessel functions on the one hand with pure exponentials on the other hand. We need to be careful about the interplay between large $R=R_n$ and $y\in [K,\infty)$\red{, where $K$ is a fixed parameter}. 
The lemma gives a global comparison that retains
the required $O_J(R^{-1})$ accuracy at $y=K$. \red{We begin with  the following}
\red{
\begin{Definition}
    Fix $K\geq 1$. For $R>0$ and $y\ge K$, define the weight
\bel{eq:weightRK}
        \omega_{R,K}(y)
        :=
        \min\left\{\frac{1+y-K}{R+K},1\right\}.
\ee
For either of the two pairs
\[
        (q,\nu)=(m,1),
        \qquad
        (q,\nu)=(\mu_\beta,0),
\]
define the normalized recessive Bessel function
\bel{eq:normalizedRadialBesselMode}
 b_{q,\nu,R}(y)
 :=
 \left(\frac{\pi}{2q}\right)^{-1/2}
 R^{1/2}e^{qR}K_\nu(q(R+y)).
\ee
The first pair corresponds to the gauge variable $\Gamma$, and the
second to the Higgs defect $\eta$.
We refer to the operators 
\[
\begin{split}
 L_{q,\nu,R}
 &:=
 \partial_y^2+\frac{1}{R+y}\partial_y
 -\frac{\nu^2}{(R+y)^2}-q^2,\\
 L_{q,\infty}
 &:=
 \partial_y^2-q^2
\end{split}
\]
as radial and planar\footnote{It is gotten from the radial operator by sending $R\to\infty$ for fixed $y$.} operators, respectively. 
For $K\le y\le s$, define their forward Green kernels by
\bel{eq:radialPlanarTailKernels}
\begin{split}
 \mathcal G_{q,\nu,R}(y,s)
 &:=
 (R+s)\Bigl[
 K_\nu(q(R+y))I_\nu(q(R+s))\\
 &\hspace{37mm}
 -I_\nu(q(R+y))K_\nu(q(R+s))
 \Bigr],\\
 \mathcal G_{q,\infty}(y,s)
 &:=
 \frac{\sinh(q(s-y))}{q},
\end{split}
\ee
and set
\bel{eq:radialPlanarTailOperators}
\begin{split}
 (\mathcal V_{q,\nu,R}F)(y)
 &:=
 \int_y^\infty\mathcal G_{q,\nu,R}(y,s)F(s)\,\dd s,\\
 (\mathcal V_{q,\infty}F)(y)
 &:=
 \int_y^\infty\mathcal G_{q,\infty}(y,s)F(s)\,\dd s\red{,}
\end{split}
\ee
\red{where $\displaystyle F\in C([K,\infty))$.}
\end{Definition}
Our technical result is the following
}

\begin{Lemma}\label{lem:saturatingBesselComparison}
Let $J\Subset(0,4)$ and fix $K\ge1$.  
There is $R_0=R_0(K,J)$ such that, for $R\ge R_0$,
$\beta\in J$, and $0\le j\le2$,
\bel{eq:saturatingModeComparison}
 e^{qy}
 \left|
 \partial_y^j\red{(}b_{q,\nu,R}(y)-e^{-qy}\red{)}
 \right|
 \le C_{K,J}\omega_{R,K}(y).
\ee
Choose $\delta>0$ so that
\bel{eq:radialPlanarTailGap}
 0<4\delta<
 \inf_{\beta\in J}
 \min\{\mu_\beta,\,2m-\mu_\beta\}.
\ee
\red{Let $F\in C([K,\infty))$. }If \red{for some constant $\displaystyle M>0$}
\[
        |F(s)|\le M e^{-(q+4\delta)s},
        \qquad s\ge K,
\]
then, for $0\le j\le2$,
\bel{eq:saturatingGreenComparison}
 \left|
 \partial_y^j
 \{(\mathcal V_{q,\nu,R}-\mathcal V_{q,\infty})F\}(y)
 \right|
 \le
 C_{K,J,\delta}M
 \omega_{R,K}(y)e^{-(q+2\delta)y}.
\ee
In particular, since
\[
        \omega_{R,K}(K)=\frac{1}{R+K},
\]
both the normalized radial recessive mode $b_{q,\nu,R}$ and the Volterra operator $\mathcal V_{q,\nu,R}$ differ from their planar limits by $O_{K,J}(R^{-1})$ \red{and $\displaystyle O_{K,J,\delta}(MR^{-1})$, respectively} at $y=K$, uniformly through second $y$-derivatives.
\end{Lemma}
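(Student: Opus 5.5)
The plan is to reduce everything to the large-argument Hankel asymptotics of $K_\nu$ and $I_\nu$ for $\nu\in\{0,1\}$, with remainders controlled by \cite[\S10.40]{DLMF}, uniformly for $q$ in the compact set $\{m\}\cup\{\mu_\beta:\beta\in J\}$. Write
\[
K_\nu(z)=\Bigl(\frac{\pi}{2z}\Bigr)^{1/2}e^{-z}\bigl(1+r_\nu(z)\bigr),\qquad
I_\nu(z)=\frac{e^{z}}{(2\pi z)^{1/2}}\bigl(1+s_\nu(z)\bigr)+O\bigl(z^{-1/2}e^{-z}\bigr).
\]
We need $|\partial_z^jr_\nu(z)|+|\partial_z^js_\nu(z)|\le C z^{-1-j}$ for $z\ge z_0$ and $0\le j\le2$. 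Since $q(R+y)\ge q R$, choosing $R_0$ large puts every argument in this range.

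\emph{Mode comparison.} From the definition \eqref{eq:normalizedRadialBesselMode} one gets exactly
\[
e^{qy}b_{q,\nu,R}(y)=\Bigl(\frac{R}{R+y}\Bigr)^{1/2}\bigl(1+r_\nu(q(R+y))\bigr).
\]
Hence $e^{qy}(b-e^{-qy})=\bigl[(R/(R+y))^{1/2}-1\bigr]+(R/(R+y))^{1/2}r_\nu$. The first bracket is bounded by $\min\{y/R,1\}$ and the second term by $C/R$. For $y\ge K$ both are at most $C_K\,\omega_{R,K}(y)$, because $(1+y)/R\le C_K(1+y-K)/(R+K)$ and the bound $1$ handles the saturated regime.

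For the derivatives, write $\partial_y^j(b-e^{-qy})=e^{-qy}(\partial_y-q)^j\bigl[e^{qy}(b-e^{-qy})\bigr]$. Each $\partial_y$ falling on $(R/(R+y))^{1/2}$ or on $r_\nu(q(R+y))$ produces a factor $O((R+y)^{-1})$. Since $(R+y)^{-1}\le(R+K)^{-1}\le\omega_{R,K}(y)$, this yields \eqref{eq:saturatingModeComparison}.

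\emph{Green comparison.} Insert the asymptotics into \eqref{eq:radialPlanarTailKernels} to get
\[
\mathcal G_{q,\nu,R}(y,s)=\frac{1}{2q}\Bigl(\frac{R+s}{R+y}\Bigr)^{1/2}\Bigl[e^{q(s-y)}(1+\epsilon_1)-e^{-q(s-y)}(1+\epsilon_2)\Bigr]+O\bigl(e^{-q(2R+y+s)}\cdots\bigr),
\]
with $\epsilon_i=O((R+y)^{-1})$ together with their derivatives. The cross terms coming from the $e^{-z}$ part of $I_\nu$ are exponentially small in $R$ relative to $e^{q(s-y)}$. Using $(1+x)^{1/2}-1\le x$ and, when $y\gtrsim R$, the cruder bound $((R+s)/(R+y))^{1/2}\le(1+s-y)^{1/2}$, we arrive at the kernel estimate
\[
|\partial_y^j(\mathcal G_{q,\nu,R}-\mathcal G_{q,\infty})(y,s)|\le C_{K,J}(1+s-y)\,\omega_{R,K}(y)\,e^{q(s-y)},\qquad j=0,1.
\]
Integrating this against $Me^{-(q+4\delta)s}$ over $s\ge y$ gives $C_\delta M\,\omega_{R,K}(y)\,e^{-(q+4\delta)y}$, which is better than the claimed $e^{-(q+2\delta)y}$. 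For $j=1$, no boundary term arises because both kernels vanish at $s=y$.

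For $j=2$ we avoid differentiating the kernels twice and use the equations instead: $L_{q,\nu,R}\mathcal V_{q,\nu,R}F=L_{q,\infty}\mathcal V_{q,\infty}F=-F$, with the same $F$ in both. Subtracting gives $\partial_y^2$ of the difference as $q^2$ times the $j=0$ difference plus $-(R+y)^{-1}\partial_y\mathcal V_{q,\nu,R}F+\nu^2(R+y)^{-2}\mathcal V_{q,\nu,R}F$. Each of these is at most $\omega_{R,K}(y)$ times the standard bound $C_\delta Me^{-(q+4\delta)y}$ for $\mathcal V_{q,\nu,R}F$, using again $(R+y)^{-1}\le\omega_{R,K}(y)$. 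The final statement at $y=K$ is then immediate, since $\omega_{R,K}(K)=(R+K)^{-1}$.

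The main obstacle is the kernel comparison: keeping the Hankel remainders for $I_\nu$ and $K_\nu$ (including derivatives) uniform in $q$, and handling the regime $s-y\gg R$. In that regime the prefactor $((R+s)/(R+y))^{1/2}$ is not close to $1$, so the estimate must be absorbed by the spare decay $e^{-4\delta s}$ rather than by smallness of the difference. This is exactly why the conclusion is stated with the saturating weight $\omega_{R,K}$ and the reduced rate $q+2\delta$.
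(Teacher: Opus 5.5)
Your proposal is correct and follows essentially the paper's route. You use the Hankel asymptotics to get $e^{qy}b_{q,\nu,R}=p_R(1+r_\nu)$ and the leading kernel form $\bigl(\tfrac{R+s}{R+y}\bigr)^{1/2}\sinh(q(s-y))/q$ plus a remainder, integrate against $F$, and use the equations $L_{q,\nu,R}\mathcal V_{q,\nu,R}F=L_{q,\infty}\mathcal V_{q,\infty}F$ for $j=2$; both in fact equal $+F$, not $-F$, though the sign is irrelevant after subtraction. The only deviations are cosmetic: you bound the kernel difference directly by $(1+s-y)\,\omega_{R,K}(y)\,e^{q(s-y)}$ via $(R+y)^{-1}\le\omega_{R,K}(y)$, whereas the paper bounds it by $(1+s-y)^2\omega_{R,K}(s)\,e^{q(s-y)}$ and then transports the weight, and you get the second mode derivative from the Hankel remainder rather than from the ODE.
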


\begin{proof}
 Uniformly for
$\nu\in\{0,1\}$ and for $q$ in the compact range determined by $J$,
the standard large-argument expansion gives for large $z>0$
\[
 K_\nu(z)
 =
 \left(\frac{\pi}{2z}\right)^{1/2}
 e^{-z}\red{(}1+r_\nu(z)\red{)},
 \qquad
 |r_\nu(z)|+z|r_\nu'(z)|\le \frac{C_J}{z}.
\]
Substituting $z=q(R+y)$ into
\eqref{eq:normalizedRadialBesselMode} yields
\bel{eq:normalizedModeExpansion}
 b_{q,\nu,R}(y)
 =
 e^{-qy}p_R(y)\red{(}1+\rho_{q,\nu,R}(y)\red{)},
 \qquad
 p_R(y):=\left(\frac{R}{R+y}\right)^{1/2},
\ee
where
\bel{eq:normalizedModeRemainder}
 |\rho_{q,\nu,R}(y)|
 +(R+y)|\partial_y\rho_{q,\nu,R}(y)|
 \le \frac{C_J}{R+y}.
\ee
For fixed $y$ and $R\to\infty$,
\[
 p_R(y)
 =
 \left(1+\frac{y}{R}\right)^{-1/2}
 =
 1-\frac{y}{2R}+O_y(R^{-2}),
\]
while
\[
        \frac{1}{R+y}=\frac{1}{R}+O_y(R^{-2}).
\]
Thus \eqref{eq:normalizedModeExpansion} gives, for fixed $y$,
\[
 b_{q,\nu,R}(y)-e^{-qy}=O_{y,J}(R^{-1}),
\]
and the same conclusion holds after one derivative.
This fixed-$y$ estimate cannot be uniform on $[K,\infty)$.  Indeed,
for fixed $R$,
\[
        \lim_{y\to\infty}p_R(y)=0,
\]
and therefore
\[
        \lim_{y\to\infty}
        e^{qy}\red{(}b_{q,\nu,R}(y)-e^{-qy}\red{)}=-1.
\]
The function $\omega_{R,K}$ records this loss of uniform relative
closeness.  Directly from its definition and from the formula for $p_R$,
\bel{eq:modeWeightElementary}
 |p_R(y)-1|+|p_R'(y)|+\frac{1}{R+y}
 \le C_K\omega_{R,K}(y),
 \qquad y\ge K.
\ee
For $y-K\le R$, this follows by expanding $p_R$ and using
\[
        \omega_{R,K}(y)=\frac{1+y-K}{R+K}.
\]
For $y-K\ge R$, the right side is bounded below by a positive constant
depending only on $K$, whereas $0<p_R\le1$ and
$|p_R'|\le C(R+y)^{-1}$.  
Combining
\eqref{eq:normalizedModeExpansion}--\eqref{eq:modeWeightElementary}
proves \eqref{eq:saturatingModeComparison} for $j=0,1$.
For the second derivative, use
\[
 L_{q,\nu,R}b_{q,\nu,R}=0,
 \qquad
 L_{q,\infty}e^{-qy}=0.
\]
Writing $d_R=b_{q,\nu,R}-e^{-qy}$ gives
\[
 d_R''
 =
 q^2d_R
 -\frac{1}{R+y}b_{q,\nu,R}'
 +\frac{\nu^2}{(R+y)^2}b_{q,\nu,R}.
\]
The already proved estimates and
\eqref{eq:modeWeightElementary} give
\eqref{eq:saturatingModeComparison} for $j=2$.

We next compare the Volterra operators.  The exact Wronskian
identity
\[
 \mathcal W\!\left[
 K_\nu(q(R+\cdot)),I_\nu(q(R+\cdot))
 \right](s)
 =\frac{1}{R+s}
\]
shows that \eqref{eq:radialPlanarTailKernels} is the Green kernel for
$L_{q,\nu,R}$.  Similarly,
\[
 \mathcal W[e^{-q\cdot},e^{q\cdot}]=2q
\]
gives the planar kernel
$\mathcal G_{q,\infty}(y,s)=q^{-1}\sinh(q(s-y))$.
Set
\[
        u:=R+y,\qquad v:=R+s,\qquad d:=s-y\ge0.
\]
The large-argument expansions of $I_\nu$ and $K_\nu$, together with
their first derivatives, imply
\bel{eq:radialKernelLeadingForm}
 \mathcal G_{q,\nu,R}(y,s)
 =
 \left(\frac{v}{u}\right)^{1/2}
 \frac{\sinh(qd)}{q}
 +\mathcal E_{q,\nu,R}(y,s),
\ee
where, for $j=0,1$,
\bel{eq:radialKernelRemainder}
 |\partial_y^j\mathcal E_{q,\nu,R}(y,s)|
 \le
 C_J e^{qd}
 \left(\frac{v}{u}\right)^{1/2}
 \left(\frac{1}{u}+\frac{1}{v}\right).
\ee
An elementary division into the two cases
$\omega_{R,K}(s)<1$ and $\omega_{R,K}(s)=1$ shows that 
\[
\begin{split}
 &\left|
 \left(\frac{v}{u}\right)^{1/2}-1
 \right|
 +
 \left(\frac{v}{u}\right)^{1/2}
 \left(\frac{1}{u}+\frac{1}{v}\right)\le
 C_K(1+d)^2\omega_{R,K}(s).
\end{split}
\]
Using this estimate in
\eqref{eq:radialKernelLeadingForm}, \eqref{eq:radialKernelRemainder}
yields
\bel{eq:radialPlanarKernelDifference}
 \left|
 \partial_y^j
 \{\mathcal G_{q,\nu,R}
       -\mathcal G_{q,\infty}\}(y,s)
 \right|
 \le
 C_{K,J}(1+s-y)^2e^{q(s-y)}
 \omega_{R,K}(s),
 \qquad j=0,1.
\ee
The comparison function satisfies
\bel{eq:comparisonWeightTransport}
 \omega_{R,K}(s)
 \le
 (1+s-y)\omega_{R,K}(y),
 \qquad K\le y\le s.
\ee
Indeed, if $x=1+y-K\ge1$ and $d=s-y$, then
\[
 \min\left\{\frac{x+d}{R+K},1\right\}
 \le
 (1+d)\min\left\{\frac{x}{R+K},1\right\}.
\]
Combining \eqref{eq:radialPlanarKernelDifference},
\eqref{eq:comparisonWeightTransport}, and the assumed bound on $F$
implies that, for $j=0,1$,
\[
\begin{split}
 &\left|
 \partial_y^j
 \{(\mathcal V_{q,\nu,R}-\mathcal V_{q,\infty})F\}(y)
 \right|\\
 &\quad\le
 C_{K,J}M\omega_{R,K}(y)
 \int_y^\infty
 (1+s-y)^3e^{q(s-y)}e^{-(q+4\delta)s}\,\dd s\\
 &\quad=
 C_{K,J}M\omega_{R,K}(y)e^{-(q+4\delta)y}
 \int_0^\infty(1+t)^3e^{-4\delta t}\,\dd t\\
 &\quad\le
 C_{K,J,\delta}M
 \omega_{R,K}(y)e^{-(q+2\delta)y}.
\end{split}
\]
This proves \eqref{eq:saturatingGreenComparison} for $j=0,1$.
Finally, let
\[
        w_R:=\mathcal V_{q,\nu,R}F,
        \qquad
        w_\infty:=\mathcal V_{q,\infty}F.
\]
Then
\[
        L_{q,\nu,R}w_R=F,
        \qquad
        L_{q,\infty}w_\infty=F,
\]
and hence
\[
 (w_R-w_\infty)''
 =
 q^2(w_R-w_\infty)
 -\frac{1}{R+y}w_R'
 +\frac{\nu^2}{(R+y)^2}w_R.
\]
The individual Green-kernel bounds give
\[
        |w_R(y)|+|w_R'(y)|
        \le C_{J,\delta}M e^{-(q+4\delta)y}.
\]
Since
\[
        \frac{1}{R+y}\le\omega_{R,K}(y),
\]
the preceding equation and the estimates already proved for
$j=0,1$ yield \eqref{eq:saturatingGreenComparison} for~$j=2$.
\end{proof} 

At the right endpoint of the fixed interval $[-K,K]$, not every vector in
$\RR^4$ can occur as the Cauchy data of a solution which decays as
$y\to+\infty$.  Such a solution is determined by its two recessive
coefficients, one for the magnetic potential  and one for the Higgs field.  We
therefore parameterize the admissible Cauchy data at $y=K$ by these two
coefficients and compare the resulting radial and planar trace maps.
Put $ U=(\Gamma,\eta)$ where $\eta=1-\Phi$, with $m$ and $\mu_\beta$ as in~\eqref{eq:mMuBetaDef}.  On $[K,\infty)$ let
\[
\begin{split}
 \norm{U}_{\mathcal X_{\beta,K}}
 &:=
 \sum_{j=0}^2\sup_{y\ge K}
 \left(
 e^{my}\abs{\partial_y^j\Gamma(y)}
 +
 e^{\mu_\beta y}\abs{\partial_y^j\eta(y)}
 \right),\\
 \mathcal X_{\beta,K}
 &:=
 \left\{
 U\in C^2([K,\infty);\RR^2):
 \norm{U}_{\mathcal X_{\beta,K}}<\infty
 \right\}.
\end{split}
\]
The nonlinear terms in the gauge and Higgs equations are
\bel{eq:rightTailNonlinearity}
 \mathcal Q_\beta(U)
 :=
 \begin{pmatrix}
 -4\Gamma\eta+2\Gamma\eta^2\\
 -\Gamma^2+\Gamma^2\eta-3\beta\eta^2+\beta\eta^3
 \end{pmatrix}.
\ee
Using the Volterra operators of
Lemma~\ref{lem:saturatingBesselComparison}, define
\[
\begin{split}
 \mathbf V_{\beta,\infty}(F_G,F_H)
 &:=
 \left(
 \mathcal V_{m,\infty}F_G,\,
 \mathcal V_{\mu_\beta,\infty}F_H
 \right),\\
 \mathbf V_{\beta,R}(F_G,F_H)
 &:=
 \left(
 \mathcal V_{m,1,R}F_G,\,
 \mathcal V_{\mu_\beta,0,R}F_H
 \right).
\end{split}
\]
Thus $\mathbf V_{\beta,\infty}$ is the Volterra operator for the
two planar equations, whereas $\mathbf V_{\beta,R}$ is its radial
counterpart.
For $z=(z_G,z_H)\in\RR^2$, define
\bel{eq:rightTailHomogeneousTerms}
\begin{split}
 H_{\beta,z}^{\infty}(y)
 &:=
 \begin{pmatrix}
 (G_\beta+z_G)e^{-my}\\
 (H_\beta+z_H)e^{-\mu_\beta y}
 \end{pmatrix},\\
 H_{\beta,z}^{R}(y)
 &:=
 \begin{pmatrix}
 (G_\beta+z_G)b_{m,1,R}(y)\\
 (H_\beta+z_H)b_{\mu_\beta,0,R}(y)
 \end{pmatrix},
\end{split}
\ee
where the normalized radial modes $b_{q,\nu,R}$ are defined in
\eqref{eq:normalizedRadialBesselMode}.  Thus $z_G$ and $z_H$ measure
the deviations of the two recessive coefficients from the CHMO coefficients
$G_\beta$ and $H_\beta$.

For $U=(\Gamma,\eta)$, define its Cauchy trace at $y=K$, written in the
original variables $(\Phi,\Gamma)$, by
\bel{eq:rightTailTrace}
 \operatorname{Tr}_K U
 :=
 \bigl(
 1-\eta(K),\,
 \Gamma(K),\,
 -\eta'(K),\,
 \Gamma'(K)
 \bigr)\in\RR^4.
\ee
\red{We also define
\bel{eq:rightTailTraceMaps}
 \Theta_{\beta,K}(z):=\operatorname{Tr}_K U_{\beta,z},
 \qquad
 \Theta_{n,K}(z):=\operatorname{Tr}_K U_{n,z}.
\ee}

\begin{Lemma}\label{lem:rightTailManifold}
Let $J\Subset(0,4)$.  There are $K_0<\infty$, $\rho>0$, and
$\mathfrak R<\infty$ with the following property.  Fix
$\beta\in J$ and $K\ge K_0$.  Then, for every
$z\in B_\rho(0)$, the planar  equation
\bel{eq:planarTailFixedPoint}
 U_{\beta,z}
 =
 H_{\beta,z}^{\infty}
 +\mathbf V_{\beta,\infty}
       \mathcal Q_\beta(U_{\beta,z})
\ee
has a unique fixed point in
\[
 \left\{
 U\in\mathcal X_{\beta,K}:
 \norm{U}_{\mathcal X_{\beta,K}}\le\mathfrak R
 \right\}.
\]
For all sufficiently large $n$, the radial equation
\bel{eq:radialTailFixedPoint}
 U_{n,z}
 =
 H_{\beta,z}^{R_n}
 +\mathbf V_{\beta,R_n}
       \mathcal Q_\beta(U_{n,z})
\ee
has a unique fixed point in the same ball.  The maps
\[
        z\longmapsto U_{\beta,z},
        \qquad
        z\longmapsto U_{n,z}
\]
are of class $C^2$ with values in $\mathcal X_{\beta,K}$.
Then, \red{for the traces $\Theta_{n,K}$ and $\Theta_{\beta,K}$, we have}
\bel{eq:rightTailManifoldC2}
 \norm{\Theta_{n,K}-\Theta_{\beta,K}}_{C^2(B_\rho(0))}
 \le \frac{C_{K,J}}{R_n}.
\ee
At $z=0$, the planar fixed point is the CHMO tail:
\bel{eq:rightTailBaseTrace}
 U_{\beta,0}=(\Gamma_\beta,\eta_\beta),
 \qquad
 \Theta_{\beta,K}(0)
 =
 \bigl(
 \Phi_\beta(K),\Gamma_\beta(K),
 \Phi_\beta'(K),\Gamma_\beta'(K)
 \bigr),
\ee
where $\eta_\beta=1-\Phi_\beta$.  Also,
\bel{eq:rightTailTangentPlane}
 D\Theta_{\beta,K}(0)\RR^2=E_\beta^+(K),
\ee
where $E_\beta^+(K)$ is the plane of Cauchy data at $y=K$ of the
solutions of the interface linearization which decay at $+\infty$, as
constructed in Lemma~\ref{lem:rightstable}. After decreasing $\rho$, if necessary, one has
\bel{eq:rightTailParameterControl}
 |z-\widetilde z|
 \le
 C_{K,J}
 \left|
 \Theta_{\#,K}(z)-\Theta_{\#,K}(\widetilde z)
 \right|,
 \qquad z,\widetilde z\in B_\rho(0),
\ee
both for $\Theta_{\#,K}=\Theta_{\beta,K}$ and, for all sufficiently
large $n$, for $\Theta_{\#,K}=\Theta_{n,K}$.  In particular,
\[
 \mathcal S_{\beta,K}^+
 :=
 \Theta_{\beta,K}(B_\rho(0)),
 \qquad
 \mathcal S_{n,K}^+
 :=
 \Theta_{n,K}(B_\rho(0))
\]
are $C^2$ embedded two-dimensional submanifolds of $\RR^4$.  They are
precisely the local sets of traces at $y=K$ obtained from the two
families of decaying tails constructed above.
Finally, set
\[
 z_n:=
 \bigl(
 \widehat A_n-G_\beta,\,
 \widehat B_n-H_\beta
 \bigr),
\]
where $\widehat A_n$ and $\widehat B_n$ are as in~\eqref{eq:Ahat}, \eqref{eq:Bhat}.
Then $z_n\to0$, and the physical radial solution satisfies
\bel{eq:physicalRightTailTrace}
 \bigl(
 \Phi_n(K),\Gamma_n(K),
 \Phi_n'(K),\Gamma_n'(K)
 \bigr)
 =
 \Theta_{n,K}(z_n).
\ee
\end{Lemma}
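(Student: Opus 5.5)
We will treat both fixed-point problems \eqref{eq:planarTailFixedPoint} and \eqref{eq:radialTailFixedPoint} in one framework, as perturbations of the Volterra contraction already used in Proposition~\ref{prop:exteriorBessel}. First we fix $\delta$ as in \eqref{eq:radialPlanarTailGap} and check that the quadratic map $\mathcal Q_\beta$ sends the ball of radius $\mathfrak R$ in $\mathcal X_{\beta,K}$ into pairs $(F_G,F_H)$ with $|F_G|\le C\mathfrak R^2e^{-(m+4\delta)s}e^{-c K}$ and $|F_H|\le C\mathfrak R^2e^{-(\mu_\beta+4\delta)s}e^{-cK}$. For $F_G$ this uses $\Gamma\eta=O(e^{-(m+\mu_\beta)s})$. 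For $F_H$ it uses $\Gamma^2=O(e^{-2ms})$ together with $2m>\mu_\beta$, and $\eta^2=O(e^{-2\mu_\beta s})$.

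The planar and radial Green-kernel bounds (the planar analogue of \eqref{eq:exteriorGreenSchur}, and Lemma~\ref{lem:saturatingBesselComparison} for $\mathbf V_{\beta,R}$) then show that $\mathbf V_{\#}\mathcal Q_\beta$ is Lipschitz on the ball with constant $O(e^{-cK_0})$. The homogeneous terms $H^\infty_{\beta,z}$ and $H^{R}_{\beta,z}$ have norm at most $C(G_\beta+H_\beta+\rho)$, using \eqref{eq:saturatingModeComparison}. Choosing $\mathfrak R$ and then $K_0$ large gives a uniform contraction, and hence unique fixed points.

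Since $\mathcal Q_\beta$ is polynomial, the fixed-point map is smooth in $(U,z)$. The implicit function theorem, applied with $I-\mathbf V\,D\mathcal Q_\beta$ invertible by a Neumann series, therefore gives $C^2$ dependence on $z$, with derivatives solving the differentiated Volterra equations.

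For \eqref{eq:rightTailManifoldC2} we subtract the two fixed-point equations:
\[
U_{n,z}-U_{\beta,z}=(H^{R_n}_{\beta,z}-H^\infty_{\beta,z})+(\mathbf V_{\beta,R_n}-\mathbf V_{\beta,\infty})\mathcal Q_\beta(U_{n,z})+\mathbf V_{\beta,\infty}\bigl(\mathcal Q_\beta(U_{n,z})-\mathcal Q_\beta(U_{\beta,z})\bigr).
\]
The last term is absorbed by the contraction. The first two are controlled by Lemma~\ref{lem:saturatingBesselComparison}, but only in the weaker norm weighted by $\omega_{R,K}(y)^{-1}$ and with rate $q+2\delta$; at $y=K$ this weight is $1/(R_n+K)$.

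This is the main obstacle. The comparison is not uniformly $O(R^{-1})$ in $\mathcal X_{\beta,K}$ itself, so the contraction has to be rerun in a saturating norm of the form $\sup_y\omega_{R,K}(y)^{-1}e^{(q+2\delta)y}|\partial^j\cdot|$. For that we need to know that $\mathbf V_{\beta,\infty}$ is bounded in such a norm with small constant. We expect this to follow from \eqref{eq:comparisonWeightTransport}, by absorbing the factor $(1+s-y)$ into the $4\delta$ gap exactly as in that proof. Evaluating the result at $y=K$ then gives $O(R_n^{-1})$ for $\Theta_{n,K}-\Theta_{\beta,K}$. Differentiating the equations once and twice in $z$ and repeating the argument gives the $C^2$ bound, because the $z$-derivatives satisfy linear Volterra equations with the same structure.

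For $z=0$, the identity \eqref{eq:rightTailBaseTrace} follows from uniqueness, since the CHMO tail satisfies \eqref{eq:GammaVolterra} and \eqref{eq:EtaVolterra} and lies in the ball by Proposition~\ref{prop:righttail}. For \eqref{eq:rightTailTangentPlane}, $D_zU_{\beta,0}$ solves the linearization \eqref{eq:linode}, written in $(\eta,\Gamma)$ with the sign change $\xi=-\dot\eta$, and decays at $+\infty$. Its two columns have leading terms $(0,e^{-my})$ and $(e^{-\mu_\beta y},0)$, so by Lemma~\ref{lem:rightstable} they equal $Z_{G,\beta}$ and, up to sign, $Z_{H,\beta}$. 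The traces therefore span $E^+_\beta(K)$.

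The injectivity of $D\Theta_{\beta,K}(0)$, together with $C^2$ continuity, gives \eqref{eq:rightTailParameterControl} on a small ball for the planar map. By \eqref{eq:rightTailManifoldC2}, the same holds for $\Theta_{n,K}$ once $n$ is large. The embedded-submanifold statement follows.

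Finally, $z_n\to0$ is \eqref{eq:AhatLimit} together with $\widehat B_n\to H_\beta$. We then rewrite the physical Volterra equations \eqref{eq:physicalTerminalVolterra} in the centered variables. Multiplying the $a_n$ equation by $n/(R_n+y)$ turns the Bessel term into $\widehat A_n b_{m,1,R_n}$, by \eqref{eq:Ahat} and \eqref{eq:normalizedRadialBesselMode}, and the magnetic Green operator into $\mathcal V_{m,1,R_n}$ acting on the first component of $\mathcal Q_\beta$. This uses $k_1=uK_1$ and the Wronskian \eqref{eq:BesselWronskians}; the same check is needed for the Higgs component.

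The physical tail then solves \eqref{eq:radialTailFixedPoint} with $z=z_n$. It lies in the ball by \eqref{eq:physicalExteriorVolterraCorrection} and Lemma~\ref{lem:exteriorEntry}, after enlarging $K_0$ so that its $\mathcal X_{\beta,K}$ norm is close to that of the CHMO tail. Uniqueness then gives \eqref{eq:physicalRightTailTrace}.
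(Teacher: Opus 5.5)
Your architecture is the paper's: a contraction in $\mathcal X_{\beta,K}$ for both tail problems, and $C^2$ dependence on $z$ via a Neumann series. The radial and planar fixed points are compared in an $\omega_{R_n,K}^{-1}$-weighted norm using Lemma~\ref{lem:saturatingBesselComparison} and \eqref{eq:comparisonWeightTransport}. Then come uniqueness at $z=0$, the tangent plane from the normalized modes of Lemma~\ref{lem:rightstable}, and the physical tail via the Bessel--Volterra equations rewritten in centered variables. Your split of the difference puts $\mathbf V_{\beta,\infty}$ on the Lipschitz term; this mirrors the paper's choice and works equally well.

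The step you call the main obstacle, however, is posed in a norm in which it cannot close. The rate $q+2\delta$ in \eqref{eq:saturatingGreenComparison} belongs only to the Volterra-operator difference. The homogeneous difference $H^{R_n}_{\beta,z}-H^\infty_{\beta,z}$ satisfies only \eqref{eq:saturatingModeComparison}, with rate $q$, and this is sharp: for fixed $R$ one has $p_R(y)\to0$, so $e^{qy}(b_{q,\nu,R}(y)-e^{-qy})\to-1$ as $y\to\infty$. Thus in your norm $\sup_y\omega_{R,K}(y)^{-1}e^{(q+2\delta)y}|\partial_y^j\cdot|$ the forcing term has infinite norm. Indeed $U_{n,z}-U_{\beta,z}$ itself is not in that space, since its magnetic component is asymptotic to $-(G_\beta+z_G)e^{-my}$ for $y\gg R_n$. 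The same happens for the $z$-derivatives, whose forcing $\partial_{z_\ell}(H^{R_n}_{\beta,z}-H^\infty_{\beta,z})$ is again a pure mode difference.

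The fix is to keep the rate $q$, i.e.\ divide the $\mathcal X_{\beta,K}$ weights by $\omega_n$, as in \eqref{eq:rightTailComparisonNorm}. In that norm the mode difference is $O_{K,J}(1)$, and the Volterra difference is a fortiori $O_{K,J}(1)$. The Lipschitz term still contracts: the quadratic sources decay strictly faster than $e^{-qs}$, so the factor $(1+s-y)$ from \eqref{eq:comparisonWeightTransport} is absorbed by that gap. Evaluating at $y=K$, where $\omega_n(K)=(R_n+K)^{-1}$, then gives \eqref{eq:rightTailManifoldC2}.

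Two smaller points concern uniformity.
\begin{itemize}
\item The constant in \eqref{eq:rightTailParameterControl} is $C_{K,J}$, uniform in $\beta\in J$. Your soft argument (injective derivative plus continuity) gives a $\beta$-dependent constant. Unscaled, the two columns of $D\Theta_{\beta,K}$ have sizes $e^{-mK}$ and $e^{-\mu_\beta K}$, so the admissible radius may also shrink with $K$. The paper instead makes your leading-term observation quantitative. With $P(x)=(x_2,-x_1)$ and $S_{\beta,K}=\operatorname{diag}(e^{mK},e^{\mu_\beta K})$, the Neumann series gives $\|S_{\beta,K}PD\Theta_{\beta,K}(z)-I\|\le 1/8$ for all $z\in B_\rho(0)$, $\beta\in J$ and $K\ge K_0$. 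The inverse Lipschitz bound then follows from the fundamental theorem of calculus.
\item The statement requires $\mathfrak R$ independent of $K$. To get this, bound $\|H^{R_n}_{\beta,z}\|_{\mathcal X_{\beta,K}}$ directly from \eqref{eq:normalizedModeExpansion}--\eqref{eq:normalizedModeRemainder}, using $0<p_R\le1$, rather than from \eqref{eq:saturatingModeComparison}, whose constant grows with $K$. Likewise, take the radial contraction constant from the kernel bounds \eqref{eq:radialKernelLeadingForm}--\eqref{eq:radialKernelRemainder}, not from the $K$-dependent difference estimate.
\end{itemize}
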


\begin{proof}
\red{We initially fix $\rho>0$ and c}hoose
\[
 \sigma_J:=
 \frac{1}{\red{8}}\inf_{\beta\in J}
 \min\{\mu_\beta,\,2m-\mu_\beta\}>0.
\]
The positivity of $\sigma_J$ is where  
$J\Subset(0,4)$ is used.
We first construct the fixed points.  If
\[
 \norm{U}_{\mathcal X_{\beta,K}},
 \norm{\widetilde U}_{\mathcal X_{\beta,K}}
 \le\mathfrak R,
\]
then the polynomial form of $\mathcal Q_\beta$ gives
\[
\begin{split}
 \abs{
 \mathcal Q_{\beta,G}(U)(y)
 -
 \mathcal Q_{\beta,G}(\widetilde U)(y)}
 &\le
 C_{J,\mathfrak R}
 e^{-(m+4\sigma_J)y}
 \norm{U-\widetilde U}_{\mathcal X_{\beta,K}},\\
 \abs{
 \mathcal Q_{\beta,H}(U)(y)
 -
 \mathcal Q_{\beta,H}(\widetilde U)(y)}
 &\le
 C_{J,\mathfrak R}
 e^{-(\mu_\beta+4\sigma_J)y}
 \norm{U-\widetilde U}_{\mathcal X_{\beta,K}}.
\end{split}
\]
Indeed, every term in the first difference contains the factor
$\Gamma\eta$, while the slowest terms in the second difference have
the rates $2m$ and $2\mu_\beta$.  The definitions of $\sigma_J$ and
$\mathcal X_{\beta,K}$ therefore give the desired bounds.
The \red{same} planar and radial Volterra estimates \red{used in the proof of Lemma \ref{lem:saturatingBesselComparison}} now imply
\bel{eq:rightTailContractionEstimate}
 \red{\norm{
 \mathbf V_{\beta,\#}
 \left(
 \mathcal Q_\beta(U)-\mathcal Q_\beta(\widetilde U)
 \right)
 }_{\mathcal X_{\beta,K}}}
 \le
 \kappa_K
 \norm{U-\widetilde U}_{\mathcal X_{\beta,K}},
 \qquad
 \kappa_K\le
 C_{J,\mathfrak R}e^{-2\sigma_JK},
\ee
where $\mathbf V_{\beta,\#}$ denotes either
$\mathbf V_{\beta,\infty}$ or $\mathbf V_{\beta,R_n}$.
The normalized homogeneous terms satisfy
\[
 \sup_{\beta\in J}
 \sup_{z\in B_\rho(0)}
 \left(
 \norm{H_{\beta,z}^{\infty}}_{\mathcal X_{\beta,K}}
 +
 \norm{H_{\beta,z}^{R_n}}_{\mathcal X_{\beta,K}}
 \right)
 \le C_J
\]
for all sufficiently large $n$.  Choose $\mathfrak R>2C_J$ and then
increase $K_0$ so that $\kappa_K\le1/2$ and the two fixed-point maps
send the closed ball of radius $\mathfrak R$ into itself.  The
contraction mapping theorem proves existence and uniqueness in this ball.
These maps are affine in $z$ and polynomial in $U$.
Since their derivatives with respect to $U$ have norm at most
$\kappa_K<1$, we obtain the following fixed points depending on parameters 
\[
        U_{\beta,\cdot},U_{n,\cdot}
        \in C^2\bigl(B_\rho(0);\mathcal X_{\beta,K}\bigr).
\]
We next compare the radial and planar fixed points.  Write
\[
        \omega_n(y):=\omega_{R_n,K}(y)
\]
and, for $W=(W_G,W_H)$, define
\bel{eq:rightTailComparisonNorm}
 \norm{W}_{\mathcal X_{\beta,K}^{(\omega_n)}}
 :=
 \sum_{j=0}^2\sup_{y\ge K}
 \frac{1}{\omega_n(y)}
 \left(
 e^{my}\abs{\partial_y^jW_G(y)}
 +
 e^{\mu_\beta y}\abs{\partial_y^jW_H(y)}
 \right).
\ee
Subtracting \eqref{eq:planarTailFixedPoint} from
\eqref{eq:radialTailFixedPoint} gives
\bel{eq:rightTailFixedPointDifference}
\begin{split}
 U_{n,z}-U_{\beta,z}
 &=
 H_{\beta,z}^{R_n}-H_{\beta,z}^{\infty}\\
 &\quad+
 \left(
 \mathbf V_{\beta,R_n}-\mathbf V_{\beta,\infty}
 \right)
 \mathcal Q_\beta(U_{\beta,z})\\
 &\quad+
 \mathbf V_{\beta,R_n}
 \red{\left(
 \mathcal Q_\beta(U_{n,z})
 -
 \mathcal Q_\beta(U_{\beta,z})
 \right)}.
\end{split}
\ee
Lemma~\ref{lem:saturatingBesselComparison} implies that
\bel{eq:rightTailLinearComparison}
 \sup_{z\in B_\rho(0)}
 \left[
 \norm{
 H_{\beta,z}^{R_n}-H_{\beta,z}^{\infty}
 }_{\mathcal X_{\beta,K}^{(\omega_n)}}
 +
 \norm{
 \left(
 \mathbf V_{\beta,R_n}-\mathbf V_{\beta,\infty}
 \right)
 \mathcal Q_\beta(U_{\beta,z})
 }_{\mathcal X_{\beta,K}^{(\omega_n)}}
 \right]
 \le C_{K,J}.
\ee
The same bounds on the forcing terms used above, together with
\[
        \omega_n(s)
        \le (1+s-y)\omega_n(y),
        \qquad K\le y\le s,
\]
give
\bel{eq:rightTailWeightedLipschitz}
 \norm{
 \mathbf V_{\beta,R_n}
 \red{\left(
 \mathcal Q_\beta(U)-\mathcal Q_\beta(\widetilde U)
 \right)}
 }_{\mathcal X_{\beta,K}^{(\omega_n)}}
 \le
 \kappa_K
 \norm{
 U-\widetilde U
 }_{\mathcal X_{\beta,K}^{(\omega_n)}}.
\ee
Consequently, \eqref{eq:rightTailFixedPointDifference} and
$\kappa_K\le1/2$ imply
\bel{eq:rightTailWeightedComparison}
 \sup_{z\in B_\rho(0)}
 \norm{
 U_{n,z}-U_{\beta,z}
 }_{\mathcal X_{\beta,K}^{(\omega_n)}}
 \le C_{K,J}.
\ee
It remains to compare the first two parameter derivatives.  Set
\[
 A_{\#,z}
 :=
 \mathbf V_{\beta,\#}
 D\mathcal Q_\beta(U_{\#,z}),
 \qquad
 \#\in\{\infty,R_n\}.
\]
By \eqref{eq:rightTailContractionEstimate},
\[
        \norm{A_{\#,z}}\le\kappa_K\le\frac{1}{2},
        \qquad
        \norm{(I-A_{\#,z})^{-1}}\le2.
\]
Because the homogeneous terms are affine in $z$, differentiation of
the fixed-point equations gives, for $k,\ell\in\{G,H\}$,
\bel{eq:rightTailFirstParameterDerivative}
 (I-A_{\#,z})\partial_{z_\ell}U_{\#,z}
 =
 \partial_{z_\ell}H_{\beta,z}^{\#},
\ee
and
\bel{eq:rightTailSecondParameterDerivative}
\begin{split}
 (I-A_{\#,z})
 \partial_{z_k}\partial_{z_\ell}U_{\#,z}
 &=
 \mathbf V_{\beta,\#}
 D^2\mathcal Q_\beta(U_{\#,z})
 \left[
 \partial_{z_k}U_{\#,z},
 \partial_{z_\ell}U_{\#,z}
 \right].
\end{split}
\ee
\red{Here, by a slight abuse of notation, we have made the convention that $U_{\infty,z}=U_{\beta,z}$.}

\red{We s}ubtract the radial and planar versions of these identities.
\red{We first focus on the first order estimates. Thus, 
\begin{align*}
  \partial_{z_l}(U_{n,z}-U_{\beta,z})&= \partial_{z_l}(H_{\beta,z}^{R_n}-H_{\beta,z}^{\infty})\\
 &\quad+
 \partial_{z_l}\left(\left(
 \mathbf V_{\beta,R_n}-\mathbf V_{\beta,\infty}
 \right)
 \mathcal Q_\beta(U_{\beta,z})\right)\\
 &\quad+
 \partial_{z_l}(\mathbf V_{\beta,R_n}
 \red{\left(
 \mathcal Q_\beta(U_{n,z})
 -
 \mathcal Q_\beta(U_{\beta,z})
 \right))} 
\end{align*}
Equivalently,
\begin{equation}\label{eq:FirstOrderExpansion}
\begin{aligned}
  (I-A_{R_n,z})(\partial_{z_l}(U_{n,z}-U_{\beta,z}))&= \partial_{z_l}(H_{\beta,z}^{R_n}-H_{\beta,z}^{\infty})\\
 &\quad+
 \left(
 \mathbf V_{\beta,R_n}-\mathbf V_{\beta,\infty}
 \right)
 D\mathcal Q_\beta(U_{\beta,z})(\partial_{z_l}U_{\beta,z})\\
 &\quad+
 \mathbf V_{\beta,R_n}
 \red{\left(
 D\mathcal Q_\beta(U_{n,z})
 -
 D\mathcal Q_\beta(U_{\beta,z})
 \right)(\partial_{z_l}U_{\beta,z})} 
 \end{aligned}
\end{equation}
Repeating the proof of \eqref{eq:rightTailWeightedLipschitz} for the linearized version of the equations gives
\begin{align*}
    \|A_{R_n,z}\|_{\mathcal{X}^{(\omega_n)}_{\beta,K}\rightarrow \mathcal{X}^{(\omega_n)}_{\beta,K}}\leq \kappa_K,
\end{align*}
which for $K$ sufficiently large, leads to
\begin{equation}\label{eq:CrudeResolvent}
    \|(I-A_{R_n,z})^{-1}\|_{\mathcal{X}^{(\omega_n)}_{\beta,K}\rightarrow \mathcal{X}^{(\omega_n)}_{\beta,K}}\leq 2.
\end{equation}
Together with equations \eqref{eq:rightTailFirstParameterDerivative} and \eqref{eq:rightTailSecondParameterDerivative}, these lead to
\begin{equation}\label{eq:CrudeDerivativeBounds}
  \sup_{\substack{z\in B_\rho(0)}}\sum_{\substack{\#\in\{\infty,R_n\}}}\sum_{\substack{1\leq|\alpha|\leq 2}}\|\partial_z^\alpha U_{\#,z}\|_{\mathcal{X}_{\beta,K}}\leq C_{K,J}.  
\end{equation}
From \eqref{eq:saturatingModeComparison}, we have
\begin{align*}
  \|\partial_{z_l}(H_{\beta,z}^{R_n}-H_{\beta,z}^{\infty})\|_{\mathcal{X}^{(\omega_n)}_{\beta,K}} &\leq C_{K,J}, 
\end{align*}
which deals with the first term in \eqref{eq:FirstOrderExpansion}.

For the second term, we use the polynomial structure of $\mathcal{Q}_\beta$ and \eqref{eq:CrudeDerivativeBounds}, combined with Lemma~\ref{lem:saturatingBesselComparison}. For the third term, we use a similar argument to the one from the proof of \eqref{eq:rightTailWeightedLipschitz}, along with \eqref{eq:rightTailWeightedComparison} and \eqref{eq:CrudeDerivativeBounds}.

Finally, the previous resolvent bound \eqref{eq:CrudeResolvent} leads to 
\begin{align*}
 \sup_{z\in B_\rho(0)}
 \sum_{|\alpha|=1}
 \norm{
 \partial_z^\alpha
 \red{(}U_{n,z}-U_{\beta,z}\red{)}
 }_{\mathcal X_{\beta,K}^{(\omega_n)}}
 \le C_{K,J}.
\end{align*}
The second order bounds follow from the equations, along with the previous first order estimate and \eqref{eq:rightTailWeightedComparison}. They all lead to
}
\bel{eq:rightTailWeightedC2Comparison}
 \sup_{z\in B_\rho(0)}
 \sum_{1\le|\alpha|\le2}
 \norm{
 \partial_z^\alpha
 \red{(}U_{n,z}-U_{\beta,z}\red{)}
 }_{\mathcal X_{\beta,K}^{(\omega_n)}}
 \le C_{K,J}.
\ee
Together with \eqref{eq:rightTailWeightedComparison}, this proves the
same bound for all $|\alpha|\le2$.
Since
\[
        \omega_n(K)=\frac{1}{R_n+K},
\]
evaluation at $y=K$ in
\eqref{eq:rightTailWeightedComparison} and
\eqref{eq:rightTailWeightedC2Comparison} gives
\[
 \norm{
 \Theta_{n,K}-\Theta_{\beta,K}
 }_{C^2(B_\rho(0))}
 \le \frac{C_{K,J}}{R_n+K}
 \le \frac{C_{K,J}}{R_n}.
\]
This proves \eqref{eq:rightTailManifoldC2}.
For $z=0$, equation \eqref{eq:planarTailFixedPoint} is the system
\eqref{eq:GammaVolterra}--\eqref{eq:EtaVolterra}.  The CHMO tail belongs
to the  ball containing the fixed point when $K$ is sufficiently large.  Uniqueness
therefore gives
\[
        U_{\beta,0}=(\Gamma_\beta,\eta_\beta),
\]
and hence \eqref{eq:rightTailBaseTrace}.
Let
\[
 W_G:=
 \left.\partial_{z_G}U_{\beta,z}\right|_{z=0},
 \qquad
 W_H:=
 \left.\partial_{z_H}U_{\beta,z}\right|_{z=0}.
\]
Differentiating \eqref{eq:planarTailFixedPoint} shows that $W_G$ and
$W_H$ solve the interface equations linearized around
$(\Gamma_\beta,\eta_\beta)$.
Their respective coefficient vectors in these variables are
$(1,0)$ and $(0,1)$.  Passing to the variations
$(\xi,\psi)=(-W_\eta,W_\Gamma)$ \green{immediately yields}
\[
 D\Theta_{\beta,K}(0)(1,0)^t=Z_{G,\beta}(K),\qquad
 D\Theta_{\beta,K}(0)(0,1)^t=-Z_{H,\beta}(K).
\]
The two columns are independent and span $E_\beta^+(K)$ by
Lemma~\ref{lem:rightstable}.  This proves
\eqref{eq:rightTailTangentPlane}.

\red{
We define the coordinate
projection $P:\RR^4\to\RR^2$ given by $\displaystyle P(x_1,x_2,x_3,x_4)=(x_2,-x_1)$. We also know that $D\Theta_{\beta,K}(0)$ has rank two, and we claim that 
\[
        M_{\beta,K}:=S_{\beta,K}P D\Theta_{\beta,K}(0)
\]
is invertible, where $\displaystyle S_{\beta,K}=\begin{pmatrix}e^{mK} & 0\\0 & e^{\mu_\beta K}\end{pmatrix}$.
To see that, let
\[
 W_{G,z}:=
 \partial_{z_G}U_{\beta,z},
 \qquad
 W_{H,z}:=
 \partial_{z_H}U_{\beta,z}.
\]
From \eqref{eq:rightTailFirstParameterDerivative} we get that
\begin{align*}
 (I-A_{\infty,z})\partial_{z_G}U_{\beta,z}
 &=
 \partial_{z_G}H_{\beta,z}^{\infty}=(e^{-my},0)\\
 (I-A_{\infty,z})\partial_{z_H}U_{\infty,z}
 &=
 \partial_{z_H}H_{\beta,z}^{\infty}=(0,e^{-\mu_\beta y}).
 \end{align*}
 Thus,
 \begin{align*}
 (I-A_{\infty,z})W_{G,z}
 &=
 (e^{-my},0)\\
 (I-A_{\infty,z})W_{H,z}
 &=
 (0,e^{-\mu_\beta y}).
 \end{align*}
 From \eqref{eq:rightTailContractionEstimate}, we have
 \begin{align*}
\|A_{\infty,z}\|_{\mathcal X_{\beta,K}\rightarrow \mathcal X_{\beta,K}}\leq\kappa_K\leq \frac{1}{2},
 \end{align*}
 for every $z\in B_\rho(0)$. In particular, it follows that
 \begin{align*}
     \|W_{G,z}-(e^{-my},0)\|_{\mathcal X_{\beta,K}}+\|W_{H,z}-(0,e^{-\mu_\beta y})\|_{\mathcal X_{\beta,K}}\leq \frac{C_J\kappa_K}{1-\kappa_K},
 \end{align*}
 where $\kappa_K\leq C_{J}e^{-2\sigma_JK}$,
 hence for sufficiently large $K_0$,
 \begin{align*}
     \sup_{\substack{z\in B_\rho(0)}}\|S_{\beta,K}PD\Theta_{\beta,K}(z)-I\|\leq\frac{1}{8}.
 \end{align*}
 These imply that
 \begin{align*}
     \|M_{\beta,K}-I\|&\leq\frac{1}{8}\\
     \|M^{-1}_{\beta,K}\|&\leq\frac{8}{7}\\
     \sup_{\substack{z\in B_\rho(0)}}\|S_{\beta,K}PD\Theta_{\beta,K}(z)-M_{\beta,K}\|&\leq\frac{1}{4}.
 \end{align*}
 For $z,\widetilde z\in B_\rho(0)$, the fundamental theorem of calculus
then gives
\[
\begin{split}
 S_{\beta,K}P\Theta_{\beta,K}(z)-S_{\beta,K}P\Theta_{\beta,K}(\widetilde z)
 &=
 M_{\beta,K}(z-\widetilde z)\\
 \quad+&
 \int_0^1
 \red{\left(S_{\beta,K}
 P D\Theta_{\beta,K}
 \bigl(\widetilde z+t(z-\widetilde z)\bigr)
 -
 M_{\beta,K}
 \right)}
 (z-\widetilde z)\,\dd t .
\end{split}
\]
It follows that
\[
 |z-\widetilde z|
 \le
 2\norm{M_{\beta,K}^{-1}}
 \left|
 S_{\beta,K}P(\Theta_{\beta,K}(z)-\Theta_{\beta,K}(\widetilde z))
 \right|\leq C_{K,J}|\Theta_{\beta,K}(z)-\Theta_{\beta,K}(\widetilde z)|.
\]
The $C^1$ estimate \eqref{eq:rightTailManifoldC2} gives the same
inequality, with a possibly larger constant, for $\Theta_{n,K}$ when
$n$ is sufficiently large.}
This proves
\eqref{eq:rightTailParameterControl}.  It also shows that the two
trace maps are injective immersions with continuous local inverses, and 
their images are therefore $C^2$ embedded two-dimensional
submanifolds of $\RR^4$.
It remains to identify the trace of the vortex.  From
\eqref{eq:Ahat} and \eqref{eq:Bhat}, together with the definition of
$b_{q,\nu,R}$, one has the exact identities
\[
\begin{split}
 \frac{n}{R_n+y}A_n k_1(R_n+y)
 &=
 \widehat A_n\,b_{m,1,R_n}(y),\\
 B_n k_{0,\beta}(R_n+y)
 &=
 \widehat B_n\,b_{\mu_\beta,0,R_n}(y).
\end{split}
\]
Thus the normalized recessive coefficients of the tail of the actual vortex
are $\widehat A_n$ and $\widehat B_n$.  Define
\[
        z_n=
        \bigl(
        \widehat A_n-G_\beta,\,
        \widehat B_n-H_\beta
        \bigr).
\]
By \eqref{eq:AhatLimit} \red{and the similar statement $\displaystyle \widehat{B}_n\rightarrow H_\beta$ succeeding it},
\[
        z_n\longrightarrow0.
\]
Proposition~\ref{prop:exteriorBessel} shows that the vortex tail
belongs to the ball from above and satisfies
\eqref{eq:radialTailFixedPoint} with parameter $z_n$.  By uniqueness,
\[
        (\Gamma_n,\eta_n)=U_{n,z_n}
        \qquad\text{on }[K,\infty).
\]
Applying \eqref{eq:rightTailTrace} proves
\eqref{eq:physicalRightTailTrace}.
\end{proof}

\section{The virial identity and preliminary localization}
\label{sec:preliminaryLocalization}

The quantitative comparison of the radial and planar profiles requires the
absolute localization
\bel{eq:SLhyp}
        R_n=R_n^{(0)}+O_\beta(1),
        \qquad
        b_n=\sqrt\beta+O_\beta(R_n^{-1}).
\ee
We prove this estimate before beginning the quantitative comparison.  The approach we follow in this section is natural because $R_n$ describes the scale of the vortex, and the virial identity is precisely the stationarity condition associated with changing that scale. The
argument is the first of two uses of the exact virial identity.  In this
first pass the splitting point $M$ is fixed.  The core estimates from
Section~\ref{sec:locRn}, qualitative convergence on fixed intervals, and
uniform decay on the right determine the reduced radius equation up to an
error which vanishes after first letting $n\to\infty$ and then
$M\to\infty$.  

After \eqref{eq:SLhyp} has been established,
Sections~\ref{sec:quantitativeBoundary} and~\ref{sec:leftMatching}
produce $O(R_n^{-1})$ comparisons on fixed interface intervals and in
the two adjacent tails.  We then return to the virial identity in
Section~\ref{sec:refinedEnergy}.  Fixing $C_{\log}$ sufficiently large,
we divide the radial energy and virial integrals at
\[
        u=R_n-M_n,
        \qquad
        M_n=C_{\log}\sqrt{\log R_n}.
\]
Since $M_n\to\infty$ while $M_n/R_n\to0$, this cutoff lies in the
overlap between the core and the interface.  \green{The terms involving this artificial cutoff cancel out when the integrals over the corresponding regions are recombined, and in this way we end up with an error that is uniformly bounded in $n$. This second use of the virial identity provides us with the desired constant correction to the radius, whereas the energy identity allows us to infer the asymptotic expansion of the tension.}  

Throughout this section, $\beta$ ranges in a fixed compact interval
$J\Subset(0,4)$.  The same parameter range will be used in the
quantitative argument, by Proposition~\ref{prop:nearBPS}.

\subsection{Exact scaling and reduced densities}

The following virial, or Pohozaev, identity is obtained from the stationarity
of the radial energy under dilations.  We include its short derivation because
it is the starting point for both localization arguments.

\begin{Lemma}\label{lem:scalingVirial}
For every radial minimizer, we have $\calM_n=\calV_n$, 
\[
        \red{\text{ where\ \  }}
        \calM_n=\int_0^\infty \frac{n^2}{2u}(a_n')^2\,du
        \quad \red{\text{and\ \  }}
        \calV_n=\int_0^\infty \frac{\beta}{2}u(1-\phi_n^2)^2\,du .
\]
\end{Lemma}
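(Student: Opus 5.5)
The plan is to use the admissible dilation family $\phi_{n,\lambda}(u)=\phi_n(u/\lambda)$, $a_{n,\lambda}(u)=a_n(u/\lambda)$, $\lambda>0$, introduced in the overview. We check that it stays in $\mathscr A_n$, compute $\calT_n$ along it, and differentiate at $\lambda=1$. First, admissibility. Dilation preserves nonnegativity, $H^1_{\rm loc}((0,\infty))$ regularity, and the four limits in \eqref{eq:bc}. Integrability of each term follows from the substitution below, since each term changes only by a positive power of $\lambda$.

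Next, substitute $u=\lambda s$ in each term of \eqref{eq:energy}. For the magnetic term, $a_{n,\lambda}'(u)=\lambda^{-1}a_n'(s)$ and $\dd u/u=\dd s/s$, so
\[
\int_0^\infty\frac{n^2}{2u}(a_{n,\lambda}')^2\dd u=\lambda^{-2}\calM_n .
\]
For the scalar kinetic term, $u(\phi_{n,\lambda}')^2\dd u=\lambda s\cdot\lambda^{-2}(\phi_n')^2\cdot\lambda\dd s$, which is invariant. The coupling term $\frac{n^2}{u}a^2\phi^2\dd u$ is invariant in the same way. For the potential term, $u\dd u=\lambda^2 s\dd s$, so it becomes $\lambda^2\calV_n$. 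Hence
\[
f(\lambda):=\calT_n[\phi_{n,\lambda},a_{n,\lambda}]=\lambda^{-2}\calM_n+\calK_n+\lambda^2\calV_n ,
\]
where $\calK_n$ denotes the sum of the two invariant terms. This is a smooth function of $\lambda>0$, and all its coefficients are finite because $(\phi_n,a_n)\in\mathscr A_n$.

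Finally, since $(\phi_n,a_n)$ minimizes $\calT_n$ over $\mathscr A_n$ and $f(1)=\calT_n[\phi_n,a_n]$, the function $f$ has an interior minimum at $\lambda=1$. Therefore
\[
0=f'(1)=-2\calM_n+2\calV_n ,
\]
which gives $\calM_n=\calV_n$. There is no real obstacle in this argument. The only care needed is the admissibility of the dilated pair and the finiteness of $\calM_n,\calV_n$, and both are immediate from the definition of $\mathscr A_n$. Unlike a Pohozaev argument based only on the Euler--Lagrange equations, this route needs no boundary terms at $0$ or $\infty$.
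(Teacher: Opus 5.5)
Your proposal is correct and follows essentially the same route as the paper's proof: the paper also uses the dilation $(\phi_n(u/\lambda),a_n(u/\lambda))$, notes that it stays in $\mathscr A_n$, and computes that the magnetic term scales as $\lambda^{-2}\calM_n$, the potential term as $\lambda^2\calV_n$, and the other two terms are invariant. It then differentiates at the minimum $\lambda=1$ to get $-2\calM_n+2\calV_n=0$.
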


\begin{proof}
For $\lambda>0$ set
\[
        \phi_{n,\lambda}(u)=\phi_n(u/\lambda),
        \qquad
        a_{n,\lambda}(u)=a_n(u/\lambda).
\]
The boundary conditions are unchanged, so
$(\phi_{n,\lambda},a_{n,\lambda})\in\mathscr A_n$.  After the change of
variables $u=\lambda v$, the four terms in \eqref{eq:energy} become,
respectively,
\[
 \lambda^{-2}\calM_n,\qquad
 \int_0^\infty v(\phi_n'(v))^2\,dv,\qquad
 \int_0^\infty \frac{n^2}{v}a_n(v)^2\phi_n(v)^2\,dv,\qquad
 \lambda^2\calV_n.
\]
The function
$\lambda\mapsto\calT_n[\phi_{n,\lambda},a_{n,\lambda}]$ has a minimum at
$\lambda=1$.  Differentiation at that point gives
$-2\calM_n+2\calV_n=0$.
\end{proof}

For the translated profiles define
\bel{eq:hnjn}
\begin{split}
 h_n(y)&=\Phi_n'(y)^2+\frac{\beta}{2}(1-\Phi_n(y)^2)^2,\\
 j_n(y)&=\Gamma_n(y)^2\Phi_n(y)^2
             +\frac{\beta}{2}(1-\Phi_n(y)^2)^2.
\end{split}
\ee
and let $h_\beta,j_\beta$ denote the same expressions with
$(\Phi_n,\Gamma_n)$ replaced by $(\Phi_\beta,\Gamma_\beta)$.
\red{By using} \eqref{eq:magneticCouplingIdentity}, we may remove the
terms involving $a_n'$ and $a_n\phi_n$ \red{from the energy}, and we denote the resulting density by $h_n$.
If \red{we use} \eqref{eq:magneticCouplingIdentity} \red{together} with the virial identity \red{instead},
\red{we are naturally led to consider} a second density function, \red{which we shall denote by} $j_n$.  

\begin{Lemma}\label{lem:exactReducedDensities}
The exact energy and scaling identities take the forms
\bel{eq:exactReducedEnergy}
        \calT_n=\frac{n^2}{R_n^2}
        +\int_{-R_n}^{\infty}(R_n+y)h_n(y)\,dy
\ee
and
\bel{eq:exactReducedVirialDensity}
        0=-\frac{n^2}{R_n^2}
        +\int_{-R_n}^{\infty}(R_n+y)j_n(y)\,dy .
\ee
In addition,
\bel{eq:hjSurface}
 \int_{\mathbb R}\left(h_\beta-\frac{\beta}{2}{\bf1}_{(-\infty,0)}\right)dy
 =s_\beta,
 \qquad
 2\int_{\mathbb R}\left(j_\beta-\frac{\beta}{2}{\bf1}_{(-\infty,0)}\right)dy
 =s_\beta .
\ee
Each  integrand in \eqref{eq:hjSurface}, as well as its product
with $y$, belongs to $L^1(\mathbb R)$.
\end{Lemma}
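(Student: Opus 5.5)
The plan is to start from the dimensionless energy \eqref{eq:energy} and remove the two terms involving $a_n'$ and $a_n\phi_n$ with the magnetic identity \eqref{eq:magneticCouplingIdentity}. Writing $\frac{n^2}{2u}(a_n')^2=\frac12\mathfrak B_n^2u$, that identity says the magnetic energy plus the coupling term equals $n^2/R_n^2$. Hence
\[
\calT_n=\frac{n^2}{R_n^2}+\int_0^\infty u\Bigl[(\phi_n')^2+\frac\beta2(1-\phi_n^2)^2\Bigr]du .
\]
The substitution $u=R_n+y$ then yields \eqref{eq:exactReducedEnergy}, since $\Phi_n'(y)=\phi_n'(u)$.

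For the virial form I would use Lemma~\ref{lem:scalingVirial} to replace $\calM_n$ by $\calV_n$ in \eqref{eq:magneticCouplingIdentity}. This gives
\[
\calV_n+\int_0^\infty \frac{n^2}{u}a_n^2\phi_n^2\,du=\frac{n^2}{R_n^2}.
\]
Next I would note $\frac{n^2}{u}a_n^2\phi_n^2=u\,\Gamma_n^2\Phi_n^2$ with $\Gamma_n=na_n/u$. The remaining integral is therefore $\int (R_n+y)j_n\,dy$, and \eqref{eq:exactReducedVirialDensity} follows after moving $n^2/R_n^2$ across. All integrals converge absolutely by finite energy and \eqref{eq:coarseRadialTail}.

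For \eqref{eq:hjSurface}, integrability of the integrands and of $y$ times them comes from Lemma~\ref{lem:interfaceUniformTails}. At $+\infty$ every term decays exponentially. At $-\infty$, $\Phi_\beta$, $\Phi_\beta'$ and $\Gamma_\beta\Phi_\beta$ are Gaussian small, and $\frac\beta2(1-\Phi_\beta^2)^2-\frac\beta2=O(\Phi_\beta^2)$.

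The identity for $h_\beta$ should follow from the Hamiltonian identity \eqref{eq:hamzero}. It gives $\frac12(\Gamma_\beta')^2+\Gamma_\beta^2\Phi_\beta^2=h_\beta-\Phi_\beta'^2+\Gamma_\beta^2\Phi_\beta^2+\dots$. The cleaner route is to write $e_\beta=h_\beta+\bigl(\frac12\Gamma_\beta'^2+\Gamma_\beta^2\Phi_\beta^2\bigr)$ and show that the bracket, renormalized by $\frac\beta2$ on $(-\infty,0)$, integrates to zero. This is the one-dimensional analogue of the magnetic identity. I would use $\Gamma_\beta''=2\Gamma_\beta\Phi_\beta^2$ to get $\Gamma_\beta^2\Phi_\beta^2=\frac12\Gamma_\beta\Gamma_\beta''$. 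Integrating $\frac12(\Gamma_\beta'^2+\Gamma_\beta\Gamma_\beta'')=\frac12(\Gamma_\beta\Gamma_\beta')'$ over $[-L,\infty)$ gives $-\frac12\Gamma_\beta(-L)\Gamma_\beta'(-L)=\frac{\beta}{2}L+o(1)$, using $\Gamma_\beta(-L)=\sqrt\beta L+o(1)$ and $\Gamma_\beta'(-L)=-\sqrt\beta+o(1)$. Since $\frac\beta2L$ is the renormalization of $\frac\beta2{\bf1}_{(-\infty,0)}$ on $[-L,0]$, comparison with \eqref{eq:sbeta} (where $\beta L=\frac\beta2L+\frac\beta2L$) yields the first identity in \eqref{eq:hjSurface}. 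The affine normalization is used here: the constant $b_\beta=0$ kills any $O(1)$ boundary term. This boundary bookkeeping is the step I expect to need the most care.

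For $j_\beta$, \eqref{eq:hamzero} gives $e_\beta=2j_\beta+\bigl(\Phi_\beta'^2+\frac12\Gamma_\beta'^2-\Gamma_\beta^2\Phi_\beta^2-\frac\beta2(1-\Phi_\beta^2)^2\bigr)=2j_\beta$ pointwise, so $e_\beta=2j_\beta$. Since $2\cdot\frac\beta2=\beta$, the renormalizations agree, and $2\int(j_\beta-\frac\beta2{\bf1}_{(-\infty,0)})=s_\beta$ follows directly from \eqref{eq:sbeta}, with the $o(1)$ tail terms vanishing by the Gaussian bounds.
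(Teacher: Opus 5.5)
Your proposal is correct and follows essentially the same route as the paper. Like the paper, you use the magnetic identity \eqref{eq:magneticCouplingIdentity} for the energy, Lemma~\ref{lem:scalingVirial} for the virial form, the decomposition $h_\beta=e_\beta-\frac12(\Gamma_\beta\Gamma_\beta')'$ with the affine normalization fixing the boundary term, and $e_\beta=2j_\beta$ from \eqref{eq:hamzero}. The one point to tighten is the boundary term. Bare $o(1)$ errors in $\Gamma_\beta(-L)$ and $\Gamma_\beta'(-L)$ leave an uncontrolled $L\cdot o(1)$ term in $\Gamma_\beta(-L)\Gamma_\beta'(-L)$, so you should invoke the Gaussian bound $\Gamma_\beta'(-L)+\sqrt\beta=O\bigl((1+L)^{N}e^{-cL^2}\bigr)$ from Lemma~\ref{lem:interfaceUniformTails}, as the paper does when it writes $\Gamma_\beta\Gamma_\beta'=\beta y+O(e^{-cy^2})$.
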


\begin{proof}
The magnetic equation and the boundary conditions give
\eqref{eq:magneticCouplingIdentity}, namely
\[
 \int_0^\infty
 \frac{n^2}{2u}(a_n')^2+\frac{n^2}{u}a_n^2\phi_n^2 du
 =\frac{n^2}{R_n^2}.
\]
After the change of variables $u=R_n+y$, the two remaining terms in
\eqref{eq:energy} give \eqref{eq:exactReducedEnergy}.  The same change of
variables gives
\[
 \int_0^\infty \frac{n^2}{u}a_n^2\phi_n^2\,du
 =\int_{-R_n}^{\infty}
   (R_n+y)\Gamma_n(y)^2\Phi_n(y)^2\,dy.
\]
It follows from the preceding identity and Lemma~\ref{lem:scalingVirial}
that
\[
 \calV_n=\frac{n^2}{R_n^2}
 -\int_{-R_n}^{\infty}
   (R_n+y)\Gamma_n(y)^2\Phi_n(y)^2\,dy.
\]
Substituting the definition of $\calV_n$  proves
\eqref{eq:exactReducedVirialDensity}.
For the interface orbit, the Hamiltonian identity \eqref{eq:hamzero} gives
\[
        j_\beta=\Phi_\beta'^2+\frac{1}{2}\Gamma_\beta'^2,
        \qquad
        e_\beta=2j_\beta .
\]
The magnetic interface equation \eqref{eq:interface} gives
\[
 \frac{1}{2}(\Gamma_\beta\Gamma_\beta')'
 =\frac{1}{2}\Gamma_\beta'^2+\Gamma_\beta^2\Phi_\beta^2,
\]
and hence
\[
        h_\beta=e_\beta
        -\frac{1}{2}(\Gamma_\beta\Gamma_\beta')'.
\]
By Lemma~\ref{lem:interfaceUniformTails},
\[
 \Gamma_\beta(y)\Gamma_\beta'(y)
 =\beta y+O_\beta(e^{-c_\beta y^2})
 \qquad (y\to-\infty),
\]
whereas $\Gamma_\beta\Gamma_\beta'\to0$ at $+\infty$.  Therefore
\[
 \lim_{L\to\infty}
 \left(
 \frac{1}{2}\Gamma_\beta(-L)\Gamma_\beta'(-L)
 +\frac{\beta}{2}L
 \right)=0.
\]
Integrating $h_\beta=e_\beta-\tfrac12(\Gamma_\beta\Gamma_\beta')'$ on $(-L,\infty)$ and subtracting $\tfrac\beta2L$ gives
\[
 \int_{-L}^{\infty}h_\beta\,dy-\tfrac\beta2L
 =\int_{-L}^{\infty}e_\beta\,dy-\tfrac\beta2L
 -\tfrac12\lim_{y\to+\infty}\Gamma_\beta(y)\Gamma_\beta'(y)+\tfrac12\Gamma_\beta(-L)\Gamma_\beta'(-L).
\]
Since $\Gamma_\beta\Gamma_\beta'\to0$ at $+\infty$ and
$$
\lim_{L\to\infty}\Big(\tfrac12\Gamma_\beta(-L)\Gamma_\beta'(-L)+\tfrac\beta2L\Big)=0,
$$
taking $L\to\infty$ yields
$$
\int_{\RR}\Big(h_\beta-\tfrac\beta2\mathbf 1_{(-\infty,0)}\Big)dy
=\lim_{L\to\infty}\Big[\int_{-L}^{\infty}e_\beta\,dy-\beta L\Big]=s_\beta
$$
by \eqref{eq:sbeta}.  The second identity in \eqref{eq:hjSurface} follows from $e_\beta=2j_\beta$.  The Gaussian estimates at
$-\infty$ and the exponential estimates at $+\infty$ in
Lemma~\ref{lem:interfaceUniformTails} also prove the two first-moment
statements.
\end{proof}

\subsection{The core contribution}

We next estimate the part of the reduced densities on
$0\le u\le R_n-M$.  To avoid confusing the radial and translated
variables, write
\[
        \gamma_n(u):=\frac{n}{u}a_n(u),\qquad u>0.
\]
Thus $\gamma_n(R_n+y)=\Gamma_n(y)$. The following lemma establishes a Caccioppoli estimate on the scalar field. For the preliminary localization argument below, the parameter $M$ will
eventually be kept fixed.  We nevertheless prove the following core
estimate uniformly up to
\[
        M=C_{\log}\sqrt{\log R_n},
\]
because this larger range is needed in the refined calculation of
Section~\ref{sec:refinedEnergy}.  At that scale a Gaussian remainder,
after multiplication by the radial factor $R_n$, satisfies
\[
        R_ne^{-cM^2}
        \le R_n^{1-cC_{\log}^2}.
\]
Thus, by choosing $C_{\log}$ sufficiently large, the Gaussian error becomes
bounded.  Sections~\ref{sec:quantitativeBoundary}
and~\ref{sec:leftMatching} provide the corresponding quantitative
 estimates on the boundary layer up to the same scale.

\begin{Lemma}\label{lem:coreCaccioppoli}
\red{Let $\displaystyle\alpha\in(0,1)$.} There is $\red{c_J}>0$ \red{ and $\displaystyle L_0(J)<\infty$}, independent of $C_{\log}$, such that for every fixed
$C_{\log}<\infty$ there \red{are} $\red{C(C_{\log})=C_J}(C_{\log})<\infty$ for which, \red{for every $\beta\in J$, } uniformly for
\[
        1\le M\le C_{\log}\sqrt{\log R_n},
\]
one has
\bel{eq:coreCaccioppoli}
 \int_0^{R_n-M}
 u\left((\phi_n')^2+\gamma_n^2\phi_n^2+\phi_n^2\right)du
 \le C(C_{\log})+C(C_{\log})R_n e^{-cM^2},
\ee
\red{for all $n$ sufficiently large.}
\end{Lemma}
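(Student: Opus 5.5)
The plan is to split the core $0\le u\le R_n-M$ into the deep core $0\le u\le R_n-R_n^\alpha$ and the overlap strip $R_n-R_n^\alpha\le u\le R_n-M$. On the strip we use the pointwise Gaussian bounds already proved, and on the deep core we use an energy (Caccioppoli) identity derived from the Higgs equation. The constant $c$ is taken to be the Gaussian rate $c_J$ of Lemma~\ref{lem:clearingout} and \eqref{eq:fixedclear}. Since that rate comes from Lemma~\ref{lem:coreGaussianBarrier} and does not involve $C_{\log}$, it is independent of $C_{\log}$ as required.

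\emph{Overlap strip.} Write $u=R_n-\eta$ with $M\le\eta\le R_n^\alpha$, so that $u\le R_n$. By \eqref{eq:movingclear} we have $\phi_n^2+(\phi_n')^2\le C_J(1+\eta)^2e^{-2c_J\eta^2}$. By \eqref{eq:movingGamma} we have $\gamma_n=\Gamma_n^-(\eta)\le C_J(1+\eta)$ on this range. Hence the integrand is bounded by $C_JR_n(1+\eta)^4e^{-2c_J\eta^2}$. Integrating in $\eta$ from $M$ and using the Gaussian moment bound \eqref{eq:gaussianMomentBound} gives a contribution of at most $C_JR_n(1+M)^3e^{-2c_JM^2}\le C_JR_ne^{-c_JM^2}$, after absorbing the polynomial factor into half the exponent. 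This step is uniform in $M\ge1$, so it does not see $C_{\log}$ at all. For $M<L_0$ the window $[M,L_0]$ is a fixed bounded interval. On it we only have $\phi_n\le1$ and bounded derivatives, which follow from the $C^1$ convergence in Lemma~\ref{lem:qualitativeBoundary}. This piece contributes $O(R_n)$, which is bounded by $C_JR_ne^{-cM^2}$ since $e^{-cM^2}\ge e^{-cL_0^2}$ there.

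\emph{Deep core.} On $0\le u\le R_n-R_n^\alpha$, the Corollary following Lemma~\ref{lem:clearingout} gives $\phi_n\le C_Je^{-cR_n^{2\alpha}}$, and monotonicity of $\phi_n$ keeps this bound valid up to the origin. Rather than bounding $\phi_n'$ and $\gamma_n\phi_n$ pointwise, where $\gamma_n\sim n/u$ is singular, I would multiply the Higgs equation $(u\phi_n')'=u(\gamma_n^2+\beta(\phi_n^2-1))\phi_n$ by $\phi_n$ and integrate over $[0,u_*]$ with $u_*=R_n-R_n^\alpha$. This gives
\[
\int_0^{u_*}u\bigl((\phi_n')^2+\gamma_n^2\phi_n^2\bigr)\,du=u_*\phi_n(u_*)\phi_n'(u_*)+\beta\int_0^{u_*}u(1-\phi_n^2)\phi_n^2\,du .
\]
The boundary term at $0$ vanishes by the Frobenius behaviour \eqref{eq:frob}. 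The term at $u_*$ is bounded through \eqref{eq:movingclear} at $\eta=R_n^\alpha$, and the last integral is at most $C R_n^2e^{-2cR_n^{2\alpha}}$. Every term is therefore $O(R_n^{-A})$, which is certainly bounded by $C(C_{\log})$. The same estimate controls the term $\int u\phi_n^2$.

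\emph{Main obstacle and bookkeeping.} The main issue is the singular weight $\gamma_n^2\sim n^2/u^2$ near the origin, and the energy identity above is what avoids needing pointwise control there. A secondary point is that on the strip the factor $R_n$ cannot be removed. At the scale $M\sim C_{\log}\sqrt{\log R_n}$ this factor is exactly the $R_ne^{-cM^2}$ term allowed in the statement, and the constant $C(C_{\log})$ is needed only to absorb the bounded residual pieces. For $n$ large, which requires $R_n^\alpha\ge C_{\log}\sqrt{\log R_n}$, the two regions cover $[0,R_n-M]$, and adding the estimates gives \eqref{eq:coreCaccioppoli}.
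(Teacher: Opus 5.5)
Your proof is correct, but in the main case it takes a different route from the paper.

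\textbf{The paper's route.} The paper does not split the core at $R_n-R_n^\alpha$. It multiplies the Higgs equation by $u\zeta^2\phi_n$, where the cutoff $\zeta$ equals $1$ on $[0,R_n-M]$, vanishes on $[R_n-M/2,\infty)$, and satisfies $|\zeta'|\le C/M$. It then combines the resulting Caccioppoli identity with the coercivity bound $\gamma_n^2-\beta(1-\phi_n^2)\ge\frac{1}{2}(1+\gamma_n^2)$ on $0<u\le R_n-L_0$, which follows from $a_n\ge 1-u^2/R_n^2$. After the cross term is absorbed, only $\int u|\zeta'|^2\phi_n^2$ over the thin strip $M/2\le R_n-u\le M$ remains. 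For that term the scalar Gaussian bound on $\Phi_n^-$ suffices. So the whole core, including the singular region near the origin, is handled in one step. Neither the derivative bound in \eqref{eq:movingclear} nor the magnetic asymptotics \eqref{eq:movingGamma} is used.

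\textbf{Your route.} You replace coercivity by pointwise information. On the strip $M\le\eta\le R_n^\alpha$ you use the interpolated derivative bound and the linear upper bound on $\Gamma_n^-$ from Lemma~\ref{lem:clearingout}. In the deep core, the energy identity without a cutoff is enough, because $\phi_n\le Ce^{-cR_n^{2\alpha}}$ makes every term super-polynomially small.

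\textbf{Comparison.} Your argument is more direct but relies on stronger inputs. The paper's argument is more robust, since it needs smallness of $\phi_n$ only on a thin transition strip.

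Your treatment of small $M$ coincides with the paper's case $1\le M<2L_0$. There you also need boundedness of $\Gamma_n$ on the fixed window, since the integrand contains $\gamma_n^2\phi_n^2$; this likewise follows from Lemma~\ref{lem:qualitativeBoundary}.
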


\begin{proof}
\red{We first note that if we pick} $L_0$ large, \red{ we have the following coercivity estimate for the coefficient of $\phi_n$ in the Higgs equation of \eqref{eq:vortex}}
\bel{eq:coreCoercivity}
 \gamma_n(u)^2-\beta(1-\phi_n(u)^2)
 \ge \frac12(1+\gamma_n(u)^2),
 \qquad 0<u\le R_n-L_0. 
\ee
Indeed, \eqref{eq:anLowerBound} yields a pointwise lower bound $a_n(u)\ge 1-u^2/R_n^2$ for
$0<u\le R_n-L_0$, hence
\[\gamma_n(u)=(n/u)a_n(u)\ge (n/u)(1-u^2/R_n^2)\ge (2n/R_n^2)(R_n-u)\ge c_0(R_n-u)\] after renaming constants (using $u\le R_n$ and $b_n=2n/R_n^2\to\sqrt\beta$). In other words, 
\bel{eq:coreMagneticLowerBound}
        \gamma_n(u)\ge c_0(R_n-u)\red{\geq c_0L_0}
        \qquad (0<u\le R_n-L_0).
\ee
\red{We now have 
\begin{align*}
    \gamma_n(u)^2-\beta(1-\phi_n(u)^2)\geq \frac{1+\gamma_n(u)^2}{2}+\frac{c_0^2L_0^2-1}{2}-\beta.
\end{align*}}
After increasing $L_0$, this implies \eqref{eq:coreCoercivity}.
Assume first that $M\ge2L_0$.  Choose
$\zeta\in C^\infty([0,\infty))$ such that
\[
\begin{gathered}
 \zeta=1\quad\hbox{on }[0,R_n-M],\qquad
 \zeta=0\quad\hbox{on }[R_n-M/2,\infty),\\
 |\zeta'|\le \frac{C}{M}.
\end{gathered}
\]
Write the $\phi_n$ equation \eqref{eq:vortex} in the form
\[
 -\phi_n''-\frac{1}{u}\phi_n'
 +(\gamma_n^2-\beta(1-\phi_n^2))\phi_n=0.
\]
Multiply this equation by $u\zeta^2\phi_n$ and integrate.  The boundary
term at the origin vanishes because
$\phi_n(u)=O(u^n)$ and $\phi_n'(u)=O(u^{n-1})$.  The other boundary term
vanishes because $\zeta$ has compact support.  Thus we obtain the Caccioppoli identity
\bel{eq:coreCaccioppoliIdentity}
\begin{split}
 &\int_0^\infty u\zeta^2(\phi_n')^2\,du
 +\int_0^\infty
   u\zeta^2(\gamma_n^2-\beta(1-\phi_n^2))\phi_n^2\,du\\
 &\qquad
 =-2\int_0^\infty u\zeta\zeta'\phi_n\phi_n'\,du\\
 &\qquad
 \le \frac{1}{2}\int_0^\infty u\zeta^2(\phi_n')^2\,du
 +2\int_0^\infty u|\zeta'|^2\phi_n^2\,du .
\end{split}
\ee
 On the support of $\zeta'$ one has
\[
        \frac{M}{2}\le \eta:=R_n-u\le M.
\]
Since $M\ge2L_0$ and $C_{\log}$ is fixed,
\[
        L_0\le\eta\le M
        \le C_{\log}\sqrt{\log R_n}
        \le R_n^\alpha
\]
for all sufficiently large $n$.  Lemma~\ref{lem:clearingout} therefore
applies throughout the support of $\zeta'$ and gives, after decreasing
$c>0$ if necessary,
\[
        \phi_n(u)^2
        =\bigl(\Phi_n^-(\eta)\bigr)^2
        \le C e^{-cM^2}.
\]
Using $u\le R_n$, $|\zeta'|\le C/M$, and the fact that the support of
$\zeta'$ has length at most $M$, we obtain
\[
\begin{split}
 \int_0^\infty u|\zeta'|^2\phi_n^2\,du
 &\le
 \frac{C}{M^2}
 \int_{R_n-M}^{R_n-M/2}u\phi_n(u)^2\,du\\
 &\le
 \frac{CR_n}{M}e^{-cM^2}
 \le CR_n e^{-cM^2},
\end{split}
\]
where the last inequality uses $M\ge1$.  Moreover,
$\operatorname{supp}\zeta\subset[0,R_n-M/2]$ and
$M/2\ge L_0$, so \eqref{eq:coreCoercivity} applies on the support of
$\zeta$.  Substitution of that inequality into
\eqref{eq:coreCaccioppoliIdentity} and absorption of the first term on its right-hand side prove
\eqref{eq:coreCaccioppoli} when $M\ge2L_0$.

If $1\le M<2L_0$, \red{we} apply the estimate already proved with
$M=2L_0$.  It remains to bound
\[
 \mathcal I_{n,M}
 :=
 \int_{R_n-2L_0}^{R_n-M}
 u\left(
 (\phi_n')^2+\gamma_n^2\phi_n^2+\phi_n^2
 \right)\,du .
\]
Set $y=u-R_n$.  The interval of integration then becomes
\[
        -2L_0\le y\le-M\le-1,
\]
which is contained in the fixed compact interval $[-2L_0,-1]$.
By the qualitative boundary convergence
\eqref{eq:vortexlimit},
\[
 \sup_{-2L_0\le y\le-1}
 \left\{
 |\Phi_n(y)|+|\Phi_n'(y)|+|\Gamma_n(y)|
 \right\}
 \le C_{L_0,\beta}
\]
for all sufficiently large $n$.  Since
\[
 \phi_n(R_n+y)=\Phi_n(y),\qquad
 \phi_n'(R_n+y)=\Phi_n'(y),\qquad
 \gamma_n(R_n+y)=\Gamma_n(y),
\]
we obtain
\[
\begin{split}
 \mathcal I_{n,M}
 &=
 \int_{-2L_0}^{-M}
 (R_n+y)
 \left\{
 |\Phi_n'(y)|^2
 +|\Gamma_n(y)|^2|\Phi_n(y)|^2
 +|\Phi_n(y)|^2
 \right\}\,dy\\
 &\le C_{L_0,\beta}R_n.
\end{split}
\]
Finally, because $1\le M<2L_0$, we have $
        e^{-cM^2}\ge e^{-4cL_0^2}.$ 
\red{Once we enlarge} the constant by $e^{4cL_0^2}$, \red{we infer that}
\[
        \mathcal I_{n,M}
        \le \red{C_{L_0,J}}R_ne^{-cM^2}.
\]
Combining this estimate with the result for $M=2L_0$ proves
\eqref{eq:coreCaccioppoli} throughout the range
$1\le M<2L_0$ and completes the proof.
\end{proof}

We now return to the exact reduced energy and virial identities
\eqref{eq:exactReducedEnergy} and
\eqref{eq:exactReducedVirialDensity}.  Their integrals will be divided at
$y=-M$.  In the core region $-R_n\le y\le-M$, the Higgs field is small,
so both reduced densities have the same leading term, namely the constant
$\beta/2$.  Indeed,
\[
\begin{split}
 h_n(y)-\frac{\beta}{2}
 &=
 \Phi_n'(y)^2
 +\frac{\beta}{2}
 \left((1-\Phi_n(y)^2)^2-1\right),\\
 j_n(y)-\frac{\beta}{2}
 &=
 \Gamma_n(y)^2\Phi_n(y)^2
 +\frac{\beta}{2}
 \left((1-\Phi_n(y)^2)^2-1\right).
\end{split}
\]
Since $0\le\Phi_n\le1$,
\[
 \left|(1-\Phi_n^2)^2-1\right|
 \le 2\Phi_n^2.
\]
\red{This shows that upon applying}  Lemma~\ref{lem:coreCaccioppoli}\red{, we obtain uniform control in $M$ for} each term in these two differences after multipl\red{ying them} by the radial weight $R_n+y$.  The contribution \red{arising from} the common constant
term is explicit:
\[
 \frac{\beta}{2}
 \int_{-R_n}^{-M}(R_n+y)\,dy
 =
 \frac{\beta}{4}(R_n-M)^2.
\]
The next lemma records the resulting expansion of the core contributions
to both exact identities.

\begin{Corollary}\label{cor:bulkExpansion}
The exponent $c>0$ may be chosen independently of $C_{\log}$.  For every
fixed $C_{\log}<\infty$, uniformly for
$1\le M\le C_{\log}\sqrt{\log R_n}$,
\bel{eq:hcoreExpansion}
 \int_{-R_n}^{-M}(R_n+y)h_n(y)\,dy
 =\frac{\beta}{4}(R_n-M)^2+O(1)+O(R_ne^{-cM^2}),
\ee
and
\bel{eq:jcoreExpansion}
 \int_{-R_n}^{-M}(R_n+y)j_n(y)\,dy
 =\frac{\beta}{4}(R_n-M)^2+O(1)+O(R_ne^{-cM^2}).
\ee
\end{Corollary}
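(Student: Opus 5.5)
We substitute $u=R_n+y$ and subtract the explicit constant term, exactly as prepared in the paragraph preceding the statement. For $-R_n\le y\le -M$ one has
\[
 (R_n+y)\Bigl(h_n(y)-\frac{\beta}{2}\Bigr)
 =(R_n+y)\Phi_n'(y)^2+\frac{\beta}{2}(R_n+y)\bigl((1-\Phi_n^2)^2-1\bigr),
\]
and the analogous identity for $j_n$, with $\Gamma_n^2\Phi_n^2$ replacing $\Phi_n'^2$. Integrating the constant $\beta/2$ against the weight $R_n+y$ over $[-R_n,-M]$ gives exactly $\frac{\beta}{4}(R_n-M)^2$. It therefore suffices to show that each remainder integral is $O(1)+O(R_ne^{-cM^2})$, uniformly for $1\le M\le C_{\log}\sqrt{\log R_n}$.

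For the remainders we use $0\le\Phi_n\le1$, which gives $|(1-\Phi_n^2)^2-1|\le2\Phi_n^2$. In the $u$-variable, each remainder is then dominated by a constant (depending only on $J$) times
\[
 \int_0^{R_n-M}u\bigl((\phi_n')^2+\gamma_n^2\phi_n^2+\phi_n^2\bigr)\,du .
\]
Here we use $\Phi_n'(y)=\phi_n'(R_n+y)$ and $\Gamma_n(y)=\gamma_n(R_n+y)$. The $h_n$ remainder involves the first and third terms of this integrand, and the $j_n$ remainder involves the second and third. Lemma~\ref{lem:coreCaccioppoli} bounds this integral by $C(C_{\log})+C(C_{\log})R_ne^{-cM^2}$, with $c$ independent of $C_{\log}$. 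This is precisely the claimed error in both \eqref{eq:hcoreExpansion} and \eqref{eq:jcoreExpansion}. The independence of the exponent $c$ from $C_{\log}$, asserted in the statement, is inherited directly from that lemma.

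The only step requiring care is confirming that the Caccioppoli bound of Lemma~\ref{lem:coreCaccioppoli} already covers the full range $1\le M\le C_{\log}\sqrt{\log R_n}$ with uniform constants, including the small-$M$ case. Since the lemma was proved in exactly that range, with the case $1\le M<2L_0$ handled separately there, we can apply it directly. No further obstacle arises: the corollary is a bookkeeping consequence of the pointwise inequality above and that lemma, with constants depending on $J$ and $C_{\log}$.
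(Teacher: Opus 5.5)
Your proposal is correct and follows the paper's own proof essentially verbatim. Both subtract the constant $\beta/2$ (whose weighted integral gives $\frac{\beta}{4}(R_n-M)^2$), use $|(1-\phi_n^2)^2-1|\le 2\phi_n^2$, and bound the kinetic, coupling, and $u\phi_n^2$ terms by Lemma~\ref{lem:coreCaccioppoli}, whose exponent is independent of $C_{\log}$.
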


\begin{proof}
Set $u=R_n+y$.  Lemma~\ref{lem:coreCaccioppoli} controls the scalar
kinetic term and the magnetic coupling term:
\[
 \int_0^{R_n-M}u(\phi_n')^2\,du
 +\int_0^{R_n-M}u\gamma_n^2\phi_n^2\,du
 \le C+C R_ne^{-cM^2}.
\]
Also,
\[
 \left|(1-\phi_n^2)^2-1\right|
 =|{-2\phi_n^2+\phi_n^4}|
 \le2\phi_n^2.
\]
Consequently
\[
\begin{split}
 \int_0^{R_n-M}\frac{\beta}{2}u(1-\phi_n^2)^2\,du
 &=\frac{\beta}{2}\int_0^{R_n-M}u\,du
 +O\left(\int_0^{R_n-M}u\phi_n^2\,du\right)\\
 &=\frac{\beta}{4}(R_n-M)^2
 +O(1)+O(R_ne^{-cM^2}).
\end{split}
\]
The definitions of $h_n$ and $j_n$ now give
\eqref{eq:hcoreExpansion} and \eqref{eq:jcoreExpansion}.
\end{proof}

\subsection{Boundary and exterior contributions at qualitative accuracy}

The next comparison uses only qualitative convergence on fixed intervals
and the uniform exponential decay already proved on the right.  In
particular, it does not use the quantitative fixed-window or tail estimates
proved in the subsequent sections.

\begin{Lemma}\label{lem:preliminaryBoundaryExterior}
\red{Let $J$ be} a compact subinterval of $(0,4)$.  \red{Uniformly for $\beta \in J$, f}or each fixed $M\ge1$
there are explicitly defined numbers
$r_{n,M}^h,r_{n,M}^j,E_{n,M}^h,E_{n,M}^j$ and numbers
$\varepsilon_M>0$, with $\varepsilon_M\to0$ as $M\to\infty$, such that
\[
\begin{split}
 \int_{-M}^{\infty}(R_n+y)h_n(y)\,dy
 &=R_n\int_{-M}^{\infty}h_\beta(y)\,dy
   +r_{n,M}^h+E_{n,M}^h,\\
 \int_{-M}^{\infty}(R_n+y)j_n(y)\,dy
 &=R_n\int_{-M}^{\infty}j_\beta(y)\,dy
   +r_{n,M}^j+E_{n,M}^j,
\end{split}
\]
where, for each fixed $M$,
\[
 \frac{r_{n,M}^h}{R_n}\longrightarrow0,\qquad
 \frac{r_{n,M}^j}{R_n}\longrightarrow0,
\]
and, for all sufficiently large $n$,
\[
        |E_{n,M}^h|+|E_{n,M}^j|
        \le R_n\varepsilon_M .
\]
\end{Lemma}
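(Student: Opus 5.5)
We will split each integral at a large fixed point $y=K$, with $K\ge K_0$ from Lemma~\ref{lem:exteriorEntry}, and treat the fixed window $[-M,K]$ and the right tail $[K,\infty)$ separately. We write $(R_n+y)=R_n+y$ and put the $y$-weighted pieces into $r$. The error $E$ will record what cannot be made small as $n\to\infty$ at fixed $M$, namely the mismatch beyond $K$. Concretely, we will set
\[
 r_{n,M}^h:=R_n\int_{-M}^{K}(h_n-h_\beta)\,dy+\int_{-M}^{K}y\,h_n\,dy
 +\int_K^\infty y\,h_n\,dy ,
\]
\[
 E_{n,M}^h:=R_n\int_K^\infty(h_n-h_\beta)\,dy ,
\]
with $K=K(M)$ chosen as a function of $M$ tending to infinity. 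The same definitions apply to $j$. These sum to the required identity, since $\int_{-M}^\infty(R_n+y)h_n\,dy=R_n\int_{-M}^\infty h_\beta\,dy+r+E$ by construction.

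\emph{Window term.} By Lemma~\ref{lem:qualitativeBoundary}, $(\Phi_n,\Gamma_n)\to(\Phi_\beta,\Gamma_\beta)$ in $C^1([-M,K])$, uniformly for $\beta\in J$ in the sequential sense of \eqref{eq:uniformQualitativeBoundary}. The densities $h_n,j_n$ are polynomials in $\Phi_n,\Phi_n',\Gamma_n$. Hence $\int_{-M}^K|h_n-h_\beta|\,dy\to0$ and $\int_{-M}^K|j_n-j_\beta|\,dy\to0$, which makes the first piece $o(R_n)$. The second piece is bounded by $(M+K)^2\sup|h_n|=O_{M}(1)$, hence also $o(R_n)$.

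\emph{Tail terms.} Lemma~\ref{lem:exteriorEntry} gives, for $y\ge K$ and $n$ large, uniformly in $\beta\in J$,
\[
 |\Gamma_n|+|\Hdef_n|+|\Phi_n'|\le Ce^{-c(y-K)} .
\]
The densities are bounded by $C(\Phi_n'^2+\Hdef_n^2+\Gamma_n^2)$, since $(1-\Phi_n^2)^2\le4\Hdef_n^2$. It follows that $\int_K^\infty(1+|y|)h_n\,dy\le C_K$, so the weighted tail piece of $r$ is $O_K(1)=o(R_n)$. For $E$ we need decay that is uniform in $K$, not just in $y-K$. To get it we apply Lemma~\ref{lem:exteriorEntry} once with the fixed base point $K_0$: this gives $|h_n(y)|\le Ce^{-cy}$ for $y\ge K_0$, with $C$ independent of $K$. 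Lemma~\ref{lem:interfaceUniformTails} gives the same bound for $h_\beta$. Therefore $|E^h_{n,M}|\le R_n\cdot 2Ce^{-cK}/c$. Choosing $K=M$ (for $M\ge K_0$) then yields $\varepsilon_M:=C'e^{-cM}\to0$, and the $j$ case is identical.

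The only delicate point is the uniformity. The constants in Lemma~\ref{lem:exteriorEntry} must not depend on the base point, which holds because they are fixed at $K_0$ once and for all. The window convergence must be uniform in $\beta$, which is supplied by the contradiction-compactness statement \eqref{eq:uniformQualitativeBoundary}. For $M<K_0$ we simply take $K=K_0$ and $\varepsilon_M$ equal to a fixed constant.
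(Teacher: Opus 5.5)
Your proof is correct and follows essentially the same route as the paper. The paper also splits at a point $K_M=\max\{K_0,M+1\}$ (you use $\max\{K_0,M\}$). It puts the window $[-M,K_M]$, controlled by the qualitative convergence of Lemma~\ref{lem:qualitativeBoundary}, into $r$. It bounds the tail using the uniform exponential decay based at the fixed point $K_0$ (Lemma~\ref{lem:exteriorEntry} together with the interface tail estimates), which gives $\varepsilon_M\lesssim(1+K_M)e^{-cK_M}$.

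The only difference is bookkeeping. The paper places the entire tail $\int_{K_M}^\infty\bigl((R_n+y)d_n-R_nd_\beta\bigr)\,dy$ into $E$, whereas you move the bounded piece $\int_K^\infty y\,d_n\,dy$ into $r$. Both choices are valid.
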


\begin{proof}
There are $K_0,c,C>0$ such that
Lemma~\ref{lem:exteriorEntry} and Proposition~\ref{prop:righttail} imply
\bel{eq:preliminaryRightDensityDecay}
 |h_n(y)|+|j_n(y)|+|h_\beta(y)|+|j_\beta(y)|
 \le Ce^{-cy},\qquad y\ge K_0,
\ee
uniformly for all sufficiently large $n$.
For fixed $M$, set
\[
        K_M\red{:}=\max\{K_0,M+1\}.
\]
For $d=h,j$, define
\bel{eq:preliminaryRemainders}
\begin{split}
 r_{n,M}^d
 &:=\int_{-M}^{K_M}
 \left((R_n+y)d_n(y)-R_nd_\beta(y)\right)dy,\\
 E_{n,M}^d
 &:=\int_{K_M}^{\infty}
 \left((R_n+y)d_n(y)-R_nd_\beta(y)\right)dy .
\end{split}
\ee
These definitions give the two asserted decompositions identically.
On the fixed interval $[-M,K_M]$,
Lemma~\ref{lem:qualitativeBoundary} implies
$d_n\to d_\beta$ uniformly.  Hence
\[
 \frac{r_{n,M}^d}{R_n}
 =\int_{-M}^{K_M}(d_n-d_\beta)\,dy
 +\frac{1}{R_n}\int_{-M}^{K_M}y\,d_n(y)\,dy
 \longrightarrow0 .
\]
Using \eqref{eq:preliminaryRightDensityDecay} in the second line of
\eqref{eq:preliminaryRemainders}, we obtain
\[
 |E_{n,M}^h|+|E_{n,M}^j|
 \le C R_n(1+K_M)e^{-cK_M}.
\]
Thus the conclusion holds with
$\varepsilon_M=C(1+K_M)e^{-cK_M}$, which tends to zero.
\end{proof}

\subsection{Conclusion of the first pass}

For later use, define
\bel{eq:reducedRadiusFunction}
 F_n(R)=-\frac{2n^2}{R^3}+\frac{\beta}{2}R+s_\beta,
 \qquad R>0,
\ee
and denote by $\widehat R_n$ its unique positive zero.  Indeed,
\[
 F_n'(R)=\frac{6n^2}{R^4}+\frac{\beta}{2}\ge\frac{\beta}{2}.
\]
Since $F_n(R_n^{(0)})=s_\beta$, the mean-value theorem gives
\bel{eq:reducedRootCoarse}
 |\widehat R_n-R_n^{(0)}|\le \frac{2|s_\beta|}{\beta}.
\ee

\begin{Lemma}\label{lem:preliminaryLocalization}
\red{Uniformly f}or \red{$\beta\in J\Subset (0,4)$} ,
\bel{eq:firstpassSL}
 R_n=R_n^{(0)}+O_J(1),
 \qquad
 b_n=\frac{2n}{R_n^2}
 =\sqrt\beta+O_J(R_n^{-1}).
\ee
In particular, the estimate \eqref{eq:SLhyp}  holds for the radial minimizers considered here.
\end{Lemma}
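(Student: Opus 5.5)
We plan to evaluate the exact reduced virial identity \eqref{eq:exactReducedVirialDensity} at qualitative accuracy, splitting the integral at the fixed point $y=-M$. By Corollary~\ref{cor:bulkExpansion}, applied with $M$ fixed (so that $R_ne^{-cM^2}$ is merely $O(R_n e^{-cM^2})$), the core part equals $\frac{\beta}{4}(R_n-M)^2+O(1)+O(R_ne^{-cM^2})$. Lemma~\ref{lem:preliminaryBoundaryExterior} then gives the boundary and exterior part as $R_n\int_{-M}^\infty j_\beta\,dy+r^j_{n,M}+E^j_{n,M}$. We next use \eqref{eq:hjSurface}: since $\int_{-M}^\infty j_\beta\,dy=\frac{\beta}{2}M+\frac{s_\beta}{2}-\int_{-\infty}^{-M}(j_\beta-\frac\beta2)\,dy$, and the last integral is $o_M(1)$ by the Gaussian tails of Lemma~\ref{lem:interfaceUniformTails}, the $M$-dependence cancels. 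Indeed, $\frac\beta4(R_n-M)^2+\frac\beta2MR_n=\frac\beta4R_n^2+\frac\beta4M^2$.

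Putting the pieces together, dividing by $R_n/2$, and using $n^2/R_n^2=\frac{R_n^2}{2}\cdot\frac{2n^2}{R_n^3}\cdot\frac1{R_n}\cdot R_n$, we obtain
\[
0=\frac{R_n}{2}F_n(R_n)+O_M(1)+o_n(R_n)+R_n\bigl(\varepsilon_M+o_M(1)+O(e^{-cM^2})\bigr),
\]
where $o_n(R_n)$ stands for $r^j_{n,M}$ at fixed $M$. The $\frac\beta4M^2$ term is absorbed in $O_M(1)$. Dividing by $R_n$, sending $n\to\infty$ at fixed $M$, and then letting $M\to\infty$ yields $F_n(R_n)\to0$, uniformly for $\beta\in J$, since every ingredient is uniform on $J$.

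Since $F_n'\ge\beta/2\ge\beta_-/2$, the mean value theorem gives $|R_n-\widehat R_n|\le 2|F_n(R_n)|/\beta=o(1)$. Combined with \eqref{eq:reducedRootCoarse}, this yields $R_n=R_n^{(0)}+O_J(1)$, using that $s_\beta$ is bounded on $J$ by the continuity in Lemma~\ref{lem:interfaceUniformTails}. The second claim follows algebraically: $b_n=2n/R_n^2=\sqrt\beta\,(R_n^{(0)}/R_n)^2$, and $R_n^{(0)}/R_n=1+O(R_n^{-1})$.

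The main obstacle we anticipate is the bookkeeping of the cutoff terms. One must check that the $O(1)$ and $\frac\beta4M^2$ errors from the core carry no factor of $R_n$, and that the left tail of $j_\beta-\beta/2$ integrated against the weight $R_n$ is only $R_n\,o_M(1)$, which is harmless after the iterated limits. It is also necessary that Corollary~\ref{cor:bulkExpansion} holds for fixed $M\ge1$ without any use of \eqref{eq:SLhyp}. This is the case, because Lemma~\ref{lem:coreCaccioppoli} relies only on coarse localization and on qualitative convergence.
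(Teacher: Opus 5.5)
Your proposal is correct and follows the paper's proof essentially step for step. You split the reduced virial identity at a fixed $y=-M$ and combine Corollary~\ref{cor:bulkExpansion}, Lemma~\ref{lem:preliminaryBoundaryExterior} and \eqref{eq:hjSurface} so that the $R_nM$ terms cancel. You then let $n\to\infty$ and afterwards $M\to\infty$ to get $F_n(R_n)\to0$, and conclude with $F_n'\ge\beta/2$ and \eqref{eq:reducedRootCoarse}.

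The only blemish is cosmetic. Your auxiliary identity is wrong as written, since its right side equals $n^2/R_n$; what you need is $n^2/R_n^2=\tfrac{R_n}{2}\cdot\tfrac{2n^2}{R_n^3}$. Also, the display is not actually divided by $R_n/2$. The relation $0=\tfrac{R_n}{2}F_n(R_n)+\dots$ you display is nevertheless correct.
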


\begin{proof}
Fix $M\ge1$.  Split the exact identity
\eqref{eq:exactReducedVirialDensity} at $y=-M$.
From Corollary~\ref{cor:bulkExpansion},
\[
 2\int_{-R_n}^{-M}(R_n+y)j_n(y)\,dy
 =\frac{\beta}{2}(R_n-M)^2
 +O(1)+O(R_ne^{-cM^2}).
\]
Lemma~\ref{lem:preliminaryBoundaryExterior} gives
\[
 2\int_{-M}^{\infty}(R_n+y)j_n(y)\,dy
 =2R_n\int_{-M}^{\infty}j_\beta(y)\,dy
 +2r_{n,M}^j+2E_{n,M}^j .
\]
By Lemma~\ref{lem:interfaceUniformTails} and
\eqref{eq:hjSurface},
\[
 2\int_{-M}^{\infty}j_\beta(y)\,dy
 =\beta M+s_\beta+O(e^{-cM^2}).
\]
Substitution in \eqref{eq:exactReducedVirialDensity} cancels the two
terms of size $R_nM$ and gives
\bel{eq:firstpassvirial}
 -\frac{2n^2}{R_n^2}+\frac{\beta}{2}R_n^2+s_\beta R_n
 =-2r_{n,M}^j-2E_{n,M}^j
 +O_M(1)+O(R_ne^{-cM^2}).
\ee
Divide by $R_n$.  Coarse localization,
Proposition~\ref{prop:coarseloc}, gives $R_n\to\infty$.  Hence, for each
fixed $M$,
\[
 \limsup_{n\to\infty}|F_n(R_n)|
 \le2\varepsilon_M+Ce^{-cM^2}.
\]
Letting $M\to\infty$ yields
\bel{eq:preliminaryReducedEquation}
        F_n(R_n)\longrightarrow0.
\ee
Since $F_n'\ge\beta/2$,
\[
 |R_n-\widehat R_n|
 \le \frac{2}{\beta}|F_n(R_n)|
 \longrightarrow0.
\]
Together with \eqref{eq:reducedRootCoarse}, this proves the first estimate
in \eqref{eq:firstpassSL}.  Finally,
\[
 \left|\frac{2n}{R_n^2}-\frac{2n}{(R_n^{(0)})^2}\right|
 =\frac{2n|R_n-R_n^{(0)}|(R_n+R_n^{(0)})}{R_n^2(R_n^{(0)})^2}
 \le \frac{C(\beta)}{R_n},
\]
and $2n/(R_n^{(0)})^2=\sqrt\beta$.  This proves the second estimate.
\end{proof}

We have therefore established \eqref{eq:SLhyp}.  It will now be used to compare the finite radial
Cauchy data with the corresponding limiting data at order $R_n^{-1}$.

\section{Quantitative convergence on fixed intervals}
\label{sec:quantitativeBoundary}

Throughout this section $\beta$ ranges in a fixed compact interval
$J_0\Subset(0,4)$.  Constants written with a $\beta$ subscript may be
chosen uniformly on $J_0$.  All thresholds on $n$ have the same
uniformity. 
Lemma~\ref{lem:qualitativeBoundary} proves convergence of the translated
profiles on every fixed interval, but does not give a rate.  The preliminary
localization in Lemma~\ref{lem:preliminaryLocalization} has now established
\eqref{eq:SLhyp}.  We use it here to prove \red{that} for every fixed $M$,
\[
 \|(\Phi_n,\Gamma_n)-(\Phi_\beta,\Gamma_\beta)\|_{H^2([-M,M])}
 \le C_{M,\beta}R_n^{-1}.
\]
The proof has four steps.  First, we write the equation for the difference
of the two profiles on $[-K,K]$.  Second, Proposition~\ref{prop:nearBPS}
is converted into a linear estimate on that interval.  Third, we compare
the Cauchy data selected by regularity at the origin with the limiting line
$E_\beta^-(-K)$, and the Cauchy data selected by decay at infinity with the
limiting plane $E_\beta^+(K)$.  Finally, the linear estimate and these two
endpoint comparisons give the $O(R_n^{-1})$ rate after a quadratic term is
absorbed.  The long part of the section is the third step on the left \red{overlap}: it
transports the regularity (and thus, vanishing) condition at the origin through the normal core to $y=-K$.

\subsection{The difference equation and the finite-interval estimate}

Define the nonlinear interface operator
\bel{eq:nonlinearInterfaceMap}
 \calF_\beta(\Phi,\Gamma)=
 \begin{pmatrix}
 \Phi''-\Gamma^2\Phi-\beta\Phi(\Phi^2-1)\\
 \Gamma''-2\Gamma\Phi^2
 \end{pmatrix}.
\ee
With the operator and sign convention of \eqref{eq:Alinearization}, 
\[
        D\calF_\beta(\Phi_\beta,\Gamma_\beta)=\calA_\beta .
\]
We first record the equation for the difference on a fixed interval.

\begin{Lemma}\label{lem:boundaryResidual}
Let
\[
        V_n=(\xi_n,\psi_n)
        :=(\Phi_n-\Phi_\beta,\Gamma_n-\Gamma_\beta).
\]
On every fixed interval $[-L,L]$,
\bel{eq:boundaryDifferenceEquation}
        \calA_\beta V_n=\calR_n+\calQ_\beta^{\rm int}(V_n),
        \qquad
        \norm{\calR_n}_{H^1([-L,L])}
        \Le C_{L,\beta}R_n^{-1},
\ee
where
\bel{eq:boundaryResidualExact}
 \calR_n(y)=
 \begin{pmatrix}
 -(R_n+y)^{-1}\Phi_n'(y)\\
 -(R_n+y)^{-1}\Gamma_n'(y)
 +(R_n+y)^{-2}\Gamma_n(y)
 \end{pmatrix}
\ee
and, for $V=(\xi,\psi)$,
\bel{eq:boundaryNonlinearRemainder}
 \calQ_\beta^{\rm int}(V)=
 \begin{pmatrix}
 2\Gamma_\beta\xi\psi+\Phi_\beta\psi^2+\xi\psi^2
       +3\beta\Phi_\beta\xi^2+\beta\xi^3\\
 2\Gamma_\beta\xi^2+4\Phi_\beta\xi\psi+2\psi\xi^2
 \end{pmatrix}.
\ee
If $\norm{V}_{H^2([-L,L])},\norm{W}_{H^2([-L,L])}\le1$, then
\bel{eq:boundaryNonlinearLipschitz}
\begin{split}
 \norm{\calQ_\beta^{\rm int}(V)
       -\calQ_\beta^{\rm int}(W)}_{L^2([-L,L])}
 &\Le C_{L,\beta}
 \bigl(\norm{V}_{H^2([-L,L])}
       +\norm{W}_{H^2([-L,L])}\bigr)\\
 &\hspace{22mm}\times
 \norm{V-W}_{\red{L}^2([-L,L])}.
\end{split}
\ee
\end{Lemma}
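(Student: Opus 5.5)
The plan is to subtract the limiting interface system \eqref{eq:interface}, satisfied by $(\Phi_\beta,\Gamma_\beta)$, from the exact centered equations \eqref{eq:exactPhi}--\eqref{eq:exactGamma}. We then expand the cubic nonlinearities about the interface orbit.

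First, we move the lower-order radial terms of \eqref{eq:exactPhi}--\eqref{eq:exactGamma} to the right-hand side. This gives
\[
\Phi_n''-\Gamma_n^2\Phi_n-\beta\Phi_n(\Phi_n^2-1)=-(R_n+y)^{-1}\Phi_n',
\]
and
\[
\Gamma_n''-2\Gamma_n\Phi_n^2=-(R_n+y)^{-1}\Gamma_n'+(R_n+y)^{-2}\Gamma_n .
\]
So $\calF_\beta(\Phi_n,\Gamma_n)=\calR_n$ exactly, with $\calR_n$ as in \eqref{eq:boundaryResidualExact}. Since $\calF_\beta(\Phi_\beta,\Gamma_\beta)=0$, we write $\Phi_n=\Phi_\beta+\xi$ and $\Gamma_n=\Gamma_\beta+\psi$ and expand the polynomials:
\[
(\Gamma_\beta+\psi)^2(\Phi_\beta+\xi)-\Gamma_\beta^2\Phi_\beta
=\Gamma_\beta^2\xi+2\Gamma_\beta\Phi_\beta\psi+2\Gamma_\beta\xi\psi+\Phi_\beta\psi^2+\xi\psi^2,
\]
\[
\beta\bigl[(\Phi_\beta+\xi)^3-\Phi_\beta^3\bigr]-\beta\xi
=\beta(3\Phi_\beta^2-1)\xi+3\beta\Phi_\beta\xi^2+\beta\xi^3,
\]
\[
2(\Gamma_\beta+\psi)(\Phi_\beta+\xi)^2-2\Gamma_\beta\Phi_\beta^2
=4\Gamma_\beta\Phi_\beta\xi+2\Phi_\beta^2\psi+2\Gamma_\beta\xi^2+4\Phi_\beta\xi\psi+2\psi\xi^2 .
\]
The linear parts are exactly $-\calA_\beta V_n$ minus the second derivatives, in the sign convention of \eqref{eq:Alinearization}. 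Rearranging then gives $\calA_\beta V_n=\calR_n+\calQ_\beta^{\rm int}(V_n)$, with $\calQ_\beta^{\rm int}$ as in \eqref{eq:boundaryNonlinearRemainder}. The sign bookkeeping is routine and only has to be checked against the stated formula.

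Second, the $H^1$ bound on $\calR_n$. By \eqref{eq:coarseloc}, $R_n+y\ge R_n/2$ on $[-L,L]$ for large $n$. The proof of Lemma~\ref{lem:qualitativeBoundary} gives uniform bounds (independent of $n$) on $\Phi_n,\Gamma_n$ and their first three derivatives on $[-L,L]$; we enlarge the window there to $[-L-1,L+1]$. Each component of $\calR_n$ and of its derivative is then $O_{L,\beta}(R_n^{-1})$ pointwise. Integrating over the interval of length $2L$ gives the $H^1$ bound. Uniformity in $\beta\in J$ follows from the sequential uniformity in Lemma~\ref{lem:qualitativeBoundary}.

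Third, the Lipschitz bound \eqref{eq:boundaryNonlinearLipschitz}. Each component of $\calQ^{\rm int}_\beta$ is a sum of monomials of degree two or three in $(\xi,\psi)$, with coefficients $\Gamma_\beta,\Phi_\beta$ that are bounded on $[-L,L]$. A difference of monomials factors, for example
\[
\xi_1\psi_1-\xi_2\psi_2=(\xi_1-\xi_2)\psi_1+\xi_2(\psi_1-\psi_2),
\]
and cubic terms factor similarly, with two factors from $\{V,W\}$. We take the factor $V-W$ in $L^2$ and the other factors in $L^\infty$. The one-dimensional embedding $H^2([-L,L])\subset L^\infty$ with constant $C_L$, together with $\|V\|_{H^2},\|W\|_{H^2}\le1$, gives the bound. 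In the cubic terms the extra factor is at most $C_L$ in sup norm, so it is absorbed into $C_{L,\beta}$.

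No step is a real obstacle. The only care needed is in the second step: the derivative bounds must be uniform in $n$ on a slightly larger window, so that the $H^1$ norm of $\calR_n$, which involves $\Phi_n''$ and $\Gamma_n''$, is controlled. These bounds come from the equations themselves, as in the proof of Lemma~\ref{lem:qualitativeBoundary}.
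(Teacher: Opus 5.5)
Your proposal is correct and follows the paper's proof. You derive $\calF_\beta(\Phi_n,\Gamma_n)=\calR_n$ from the exact centered equations, expand the polynomials about the interface orbit, bound $\calR_n$ in $H^1$ using the $n$-uniform derivative bounds from the proof of Lemma~\ref{lem:qualitativeBoundary}, and obtain \eqref{eq:boundaryNonlinearLipschitz} by putting the factor $V-W$ in $L^2$ and the remaining factors in $L^\infty$ via $H^2([-L,L])\hookrightarrow L^\infty$, which is exactly what the $L^2$ norm of $V-W$ on the right-hand side requires.
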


\begin{proof}
The translated radial equations \eqref{eq:exactPhi}–\eqref{eq:exactGamma} give
\[
        \calF_\beta(\Phi_n,\Gamma_n)=\calR_n.
\]
Subtracting the interface equations~\eqref{eq:interface}, viz.\  $\calF_\beta(\Phi_\beta,\Gamma_\beta)=0$ and expanding the
polynomials gives
\[
 \calF_\beta(\Phi_\beta+\xi,\Gamma_\beta+\psi)
 =\calA_\beta(\xi,\psi)-\calQ_\beta^{\rm int}(\xi,\psi),
\]
which proves \eqref{eq:boundaryDifferenceEquation} and
\eqref{eq:boundaryNonlinearRemainder}.  On $[-L,L]$ the profiles and their
first three derivatives are bounded uniformly for large $n$, by
Lemma~\ref{lem:qualitativeBoundary} and the differential equations.
Consequently \eqref{eq:boundaryResidualExact} is
$O_{L,\beta}(R_n^{-1})$ in $H^1$.  Finally,
$H^2([-L,L])$ is an algebra and embeds continuously in $C^1([-L,L])$.
Applying these facts to the nonlinearities proves
\eqref{eq:boundaryNonlinearLipschitz}.
\end{proof}

For $V=(\xi,\psi)\in H^2([-K,K];\RR^2)$, write
\bel{eq:fixedWindowTraces}
        \mathsf C_\pm V
        =(\xi,\psi,\xi',\psi')(\pm K)\in\RR^4.
\ee
The spaces $E_\beta^-(-K)$ and $E_\beta^+(K)$ have dimensions one and two,
respectively.  Choose orthonormal coordinates on their Euclidean orthogonal
complements, and let
\[
 \mathcal R_{\beta,K}^-:\RR^4\longrightarrow\RR^3,
 \qquad
 \mathcal R_{\beta,K}^+:\RR^4\longrightarrow\RR^2
\]
be the resulting orthogonal projections.  Thus
\bel{eq:fixedWindowProjectionKernels}
 \ker\mathcal R_{\beta,K}^-=E_\beta^-(-K),
 \qquad
 \ker\mathcal R_{\beta,K}^+=E_\beta^+(K).
\ee
With this choice,
\[
 |\mathcal R_{\beta,K}^-z|
 =\operatorname{dist}(z,E_\beta^-(-K)),
 \qquad
 |\mathcal R_{\beta,K}^+z|
 =\operatorname{dist}(z,E_\beta^+(K)).
\]
The two terms in the following estimate therefore measure exactly how far
the endpoint data are from the Cauchy data allowed by the two half-line
conditions.

\begin{Lemma}\label{lem:finiteWindowLinearEstimate}
Let $\beta\in J\Subset(0,4)$, with $J$ compact, and take $K$ sufficiently large that
Lemmas~\ref{lem:leftline} and~\ref{lem:rightstable} apply.  Then
\bel{eq:finiteWindowOverdet}
\begin{split}
 \|V\|_{H^2([-K,K])}
 \le C_{K,\beta}\bigl(
 &\|\calA_\beta V\|_{L^2([-K,K])}\\
 &+|\mathcal R_{\beta,K}^-\mathsf C_-V|
 +|\mathcal R_{\beta,K}^+\mathsf C_+V|
 \bigr)
\end{split}
\ee
for every $V\in H^2([-K,K];\RR^2)$.  For fixed $J$ and $K$,
the constant can be chosen independently of $\beta\in J$.
\end{Lemma}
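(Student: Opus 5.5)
The estimate is a Fredholm/compactness statement for a regular two-point problem on a fixed interval. I would prove it by reducing to the Cauchy data at $-K$ via variation of constants, and then using the rank-three property of Proposition~\ref{prop:nearBPS}. The coefficients of $\calA_\beta$ are smooth and bounded on $[-K,K]$, uniformly for $\beta\in J$ by Lemma~\ref{lem:interfaceUniformTails}. So for $V\in H^2$ with $f:=\calA_\beta V\in L^2$ we can write
\[
 V=V_{\rm hom}+V_f,\qquad V_f(y)=\int_{-K}^y \Pi_1 T_\beta(y,s)\,(0,0,f(s))^t\,ds ,
\]
where $T_\beta$ is the transfer matrix, $\Pi_1$ projects onto the first two components, and $V_{\rm hom}$ is the homogeneous solution with $\mathsf C_-V_{\rm hom}=\mathsf C_-V$. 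Gronwall gives $\|V_f\|_{H^2}+|\mathsf C_\pm V_f|\le C_{K,J}\|f\|_{L^2}$. Likewise $\|V_{\rm hom}\|_{H^2([-K,K])}\le C_{K,J}|\mathsf C_-V|$, and $\mathsf C_+V_{\rm hom}=T_\beta(K,-K)\mathsf C_-V$. It therefore suffices to prove, for homogeneous data $w:=\mathsf C_-V\in\RR^4$,
\[
 |w|\le C_{K,J}\bigl(|\mathcal R^-_{\beta,K}w|+|\mathcal R^+_{\beta,K}T_\beta(K,-K)w|\bigr).
\]
The contributions of $V_f$ to the two projections are absorbed into the $\|f\|_{L^2}$ term.

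The linear inequality follows from injectivity of the map
\[
 w\mapsto\bigl(\mathcal R^-_{\beta,K}w,\ \mathcal R^+_{\beta,K}T_\beta(K,-K)w\bigr)\in\RR^3\times\RR^2 .
\]
If both components vanish, then $w\in E^-_\beta(-K)$, so $w=a\,\ell^-_\beta(-K)$ by Lemma~\ref{lem:leftline}. In addition, $T_\beta(K,-K)w\in E^+_\beta(K)=\operatorname{span}\{Z_{H,\beta}(K),Z_{G,\beta}(K)\}$ by Lemma~\ref{lem:rightstable}. Thus $\mathcal M_{\beta,K}(a,b_1,b_2)=0$ for suitable $b_1,b_2$, and the rank-three statement \eqref{eq:uniformMatchingRank} forces $a=0$, hence $w=0$. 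A linear injective map on $\RR^4$ is bounded below. To make the bound quantitative and uniform I would argue as follows: $E^\pm_\beta$ and $T_\beta(K,-K)$ depend continuously on $\beta$ (Lemmas~\ref{lem:rightstable}, \ref{lem:leftline}, and continuous dependence of the flow), so the projections can be chosen continuously, at least locally in $\beta$. The minimum of $|\cdot|$ of the image over $J\times S^3$ is then attained and is positive. Alternatively, one can bound the inequality directly by $c_{J,K}$ from \eqref{eq:uniformMatchingRank}: decompose $w=a\ell^-+w^\perp$ with $|w^\perp|\simeq|\mathcal R^-w|$, and note that $T_\beta w-T_\beta a\ell^-$ is controlled by $|\mathcal R^-w|$.

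The main obstacle is this uniformity in $\beta$. Distances to subspaces are continuous in the subspace, so the quantities $\operatorname{dist}(w,E^-_\beta(-K))$ and $\operatorname{dist}(T_\beta w,E^+_\beta(K))$ are jointly continuous in $(\beta,w)$ even if the orthonormal coordinates are not chosen continuously. Their sum is positive on the compact set $J\times S^3$ by the injectivity argument above, so it has a positive minimum. Combining this with the variation-of-constants bounds yields \eqref{eq:finiteWindowOverdet} with a constant depending only on $K$ and $J$.
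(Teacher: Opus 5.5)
Your argument is correct, but it is organized differently from the paper's proof.

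The paper argues by contradiction. It normalizes $\|V_j\|_{H^2([-K,K])}=1$ with all three right-hand terms tending to zero. Compactness of $H^2\hookrightarrow C^1$ then gives a limit, which is a homogeneous solution whose traces lie in $E^-_\beta(-K)$ and $E^+_\beta(K)$. Proposition~\ref{prop:nearBPS} forces this limit to vanish. The paper then upgrades $H^1$ convergence to $H^2$ convergence through the elementary estimate $\|W\|_{H^2}\le C(\|\calA_\beta W\|_{L^2}+\|W\|_{H^1})$. Uniformity in $\beta$ comes from a second contradiction argument along $\beta_j\to\beta_*$.

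You instead use variation of constants to split off the particular solution with zero Cauchy data at $-K$. This reduces everything to the finite-dimensional inequality $|w|\le C\bigl(\operatorname{dist}(w,E^-_\beta(-K))+\operatorname{dist}(T_\beta(K,-K)w,E^+_\beta(K))\bigr)$ on $\RR^4$. That inequality is the quantitative rank-three bound \eqref{eq:uniformMatchingRank} in another form. Your decomposition $w=a\ell^-_\beta(-K)+w^\perp$ makes this explicit, since $\operatorname{dist}(aT_\beta(K,-K)\ell^-_\beta(-K),E^+_\beta(K))=\min_{b}|\mathcal M_{\beta,K}(a,b_1,b_2)|\ge c_{J,K}|a|$.

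What your route buys is an explicit constant, expressed through $c_{J,K}$, $\|T_\beta(K,-K)\|$, $|\ell^-_\beta(-K)|$ and a Gronwall bound. Uniformity on $J$ is inherited directly from \eqref{eq:uniformMatchingRank}, so no second compactness argument is needed. The paper's route is shorter and needs neither the transfer matrix nor the explicit bases of $E^\pm_\beta$. It uses only triviality of the kernel in the endpoint class, so it would also work in settings without an explicit flow.
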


\begin{proof}
For every $W\in H^2([-K,K];\RR^2)$, the differential expression
\eqref{eq:Alinearization} gives
\bel{eq:fixedWindowElementaryODE}
 \|W\|_{H^2([-K,K])}
 \le C_{K,\beta}
 (\|\calA_\beta W\|_{L^2([-K,K])}
   +\|W\|_{H^1([-K,K])}).
\ee
Indeed, the two second derivatives are the components of
$\calA_\beta W$ plus bounded zeroth-order coefficients times the
components of $W$.
Suppose that \eqref{eq:finiteWindowOverdet} is false.  There would then
be a sequence $V_j$ with
\[
 \|V_j\|_{H^2([-K,K])}=1
\]
for which all three terms on the right side of
\eqref{eq:finiteWindowOverdet} tend to zero.  After passing to a
subsequence, compactness of the embedding
$H^2([-K,K])\hookrightarrow C^1([-K,K])$ gives
\[
 V_j\rightharpoonup V\quad\hbox{in }H^2([-K,K]),
 \qquad
 V_j\longrightarrow V\quad\hbox{in }C^1([-K,K]).
\]
It follows that
\[
 \calA_\beta V=0,
 \qquad
 \mathsf C_-V\in E_\beta^-(-K),
 \qquad
 \mathsf C_+V\in E_\beta^+(K).
\]
The condition at $y=-K$ extends $V$ to the normalized recessive solution
on the left half-line.  The condition at $y=K$ extends it to a decaying
solution on the right half-line.  Uniqueness for the initial-value problem
makes these extensions agree with $V$ at the two endpoints.  They therefore
form a global solution in the endpoint class of
Proposition~\ref{prop:nearBPS}, which gives $V=0$.

Consequently $V_j\to0$ in $H^1([-K,K])$.  Applying
\eqref{eq:fixedWindowElementaryODE} to $V_j$ and using
$\calA_\beta V_j\to0$ in $L^2$ now gives $V_j\to0$ in $H^2$, contradicting 
its normalization.  This proves \eqref{eq:finiteWindowOverdet}.
To obtain a uniform constant, suppose instead that the estimate
fails along \green{a sequence} $\beta_j\in J$.  \green{We now pass} to a subsequence with
$\beta_j\to\beta_*\in J$ and \green{
\[
 V_j\rightharpoonup V\quad\hbox{in }H^2([-K,K]),
 \qquad
 V_j\longrightarrow V\quad\hbox{in }C^1([-K,K]).
\]
} 
The coefficient continuity in Lemma~\ref{lem:interfaceUniformTails}
implies
\[
 \|(\calA_{\beta_j}-\calA_{\beta_*})V_j\|_{L^2([-K,K])}\longrightarrow0.
\]
The endpoint subspaces also depend continuously on $\beta$, so the
limiting traces belong to $E_{\beta_*}^-(-K)$ and $E_{\beta_*}^+(K)$.
\green{By repeating the previous} argument at $\beta_*$, \green{we reach} the same contradiction.
\green{We also note that t}he constants in \eqref{eq:fixedWindowElementaryODE} are uniform because
the coefficients are bounded on \green{the compact set} $J\times[-K,K]$.
\end{proof}

Let
\[
        V_n=(\Phi_n-\Phi_\beta,\Gamma_n-\Gamma_\beta).
\]
Lemma~\ref{lem:boundaryResidual} and
Lemma~\ref{lem:finiteWindowLinearEstimate} reduce the proof of the
quantitative estimate to controlling the two projected endpoint traces
\[
 \mathcal R_{\beta,K}^-\mathsf C_-V_n,
 \qquad
 \mathcal R_{\beta,K}^+\mathsf C_+V_n.
\]
The right endpoint comparison follows from the construction in
Section~\ref{sec:exterior}.  We next construct the corresponding finite
family at the left endpoint.

\subsection{The Cauchy data selected by regularity at the origin}\label{subsec:CauchyData}

At $y=-K$, use the variable
\[
        \eta=-y,\qquad u_n(\eta)=R_n-\eta.
\]
Thus $y\le-K$ corresponds to $\eta\ge K$.  We shall also use
\[
        \Lambda_n=C_{\log}\sqrt{\log R_n},
        \qquad
        \Lambda_n^+=2\Lambda_n.
\]
For fixed $K$ and $C_{\log}$, all sufficiently large $n$ satisfy
\[
        K<\Lambda_n<\Lambda_n^+<R_n-1.
\]
The larger point $\Lambda_n^+$ is used only in the comparison of
logarithmic derivatives.  The nonlinear problem which constructs the
left Cauchy data ends at $\Lambda_n$.

The construction has four stages.  First, we compare two distinguished
solutions: (i) the solution of the Higgs  equation in the core selected by
regularity at the vortex origin, and (ii) the recessive (decaying) solution of
the limiting Weber equation.  Second, we construct and compare the Green
operators for the finite and limiting equations on the logarithmic
interval.  Third, a contraction argument produces a one-parameter family
of nonlinear left profiles, parameterized by the scalar value at $\eta=K$. The 
tangent to its limiting Cauchy-data curve is $E_\beta^-(-K)$.  Finally,
we use regularity at $u=0$ and the decay estimates in the core to show that the
physical radial profile belongs to this family up to an error
$O_{K,\beta}(R_n^{-2})$ at $\eta=K$.

The vortex origin is $u=0$, or equivalently $\eta=R_n$, whereas the
left endpoint of the fixed interval is $\eta=K$.  Regularity at the origin
must therefore be transported through the core to $\eta=K$.

The exact gauge profile obtained by setting the scalar \red{Higgs} field equal to zero
and retaining the magnetic-radius normalization is
\bel{eq:emptyCoreGamma}
 \Gamma_n^0(\eta)
 =\frac{n}{u_n(\eta)}-\frac{n u_n(\eta)}{R_n^2}
 =b_n\eta+\frac{b_n\eta^2}{2(R_n-\eta)},
\ee
using that $b_n=2nR_n^{-2}$. 
It satisfies
\[
 (\Gamma_n^0)''
 -\frac{1}{u_n}(\Gamma_n^0)'
 -\frac{1}{u_n^2}\Gamma_n^0=0.
\]
Its difference from the affine limiting profile is
\bel{eq:exactAffineDefect}
 P_n(\eta):=\Gamma_n^0(\eta)-\sqrt\beta\,\eta
 =(b_n-\sqrt\beta)\eta
   +\frac{b_n\eta^2}{2(R_n-\eta)}.
\ee
Put
\bel{eq:emptyCoreScalarPotential}
 V_n^0(\eta)
 :=(\Gamma_n^0(\eta))^2-\beta-\frac{1}{4u_n(\eta)^2}.
\ee
Let $D_n^0$ be the solution of
\bel{eq:emptyCoreScalar}
        (D_n^0)''=V_n^0D_n^0
\ee
which has positive Frobenius coefficient in
\[
        D_n^0(\eta)\sim u_n(\eta)^{n+1/2}
        \quad\hbox{as }\eta\uparrow R_n,
\]
and normalize it by
\bel{eq:regularOriginNormalization}
        D_n^0(K)=1.
\ee
For the limiting equation, let $d_\beta$ be the positive solution of
\[
        d_\beta''(\eta)=(\beta\eta^2-\beta)d_\beta(\eta)
\]
which decays as $\eta\to\infty$, and normalize it by $d_\beta(K)=1$.

\subsubsection{The regular scalar comparison solution}

The next lemma proves, before any Green function is introduced, the
positivity and decay estimates needed below.

\begin{Lemma}\label{lem:regularOriginLogDerivative}
  Fix $C_{\log}\ge C_{\log,0}$, where
$C_{\log,0}$ is a sufficiently large constant, and take $K$ sufficiently
large.  Then $D_n^0$ is positive on $[K,R_n)$ for all sufficiently large
$n$.  There are constants $c,C>0$, independent of $n$, such that
\bel{eq:regularOriginCoefficientBounds}
\begin{split}
 c(1+\eta)^2&\le V_n^0(\eta)\le C(1+\eta)^2,\\
 |\partial_\eta^jV_n^0(\eta)|
 &\le C_j(1+\eta)^{2-j},\qquad j=1,2,
\end{split}
\ee
on $K\le\eta\le \Lambda_n^+$.
Define
\[
 \lambda_n=-\frac{(D_n^0)'}{D_n^0},
 \qquad
 \pi_n=(V_n^0)^{1/2},
 \qquad
 \lambda_\beta=-\frac{d_\beta'}{d_\beta}.
\]
Then
\bel{eq:regularOriginLogDerivativeBounds}
 \lambda_n(\eta)\ge c(1+\eta),\qquad
 |\lambda_n(\eta)-\pi_n(\eta)|
 \le \frac{C\pi_n(\eta)}{1+\eta},
\ee
and
\bel{eq:regularOriginDecayRatio}
 \frac{D_n^0(s)}{D_n^0(\eta)}
 \le C e^{-c(s^2-\eta^2)},
 \qquad K\le\eta\le s<R_n.
\ee
Also,
\bel{eq:leftChartCoefficientComparison}
 \left|V_n^0(\eta)-(\beta\eta^2-\beta)\right|
 +\frac{1}{1+\eta}
 \left|\partial_\eta(V_n^0(\eta)-(\beta\eta^2-\beta))\right|
 \le \frac{C(1+\eta)^3}{R_n}
\ee
on $[K,\Lambda_n^+]$, and
\bel{eq:regularOriginSlopeComparison}
 |\lambda_n(\eta)-\lambda_\beta(\eta)|
 \le \frac{C(1+\eta)^2}{R_n}
 +C(1+\Lambda_n^+)^C e^{-c((\Lambda_n^+)^2-\eta^2)}.
\ee
Consequently, for some integer $p_1$,
\bel{eq:regularOriginWeberComparison}
\begin{split}
 &|D_n^0(\eta)-d_\beta(\eta)|
 +(1+\eta)^{-1}|(D_n^0)'(\eta)-d_\beta'(\eta)|\\
 &\qquad\le
 C(K,\beta,C_{\log})R_n^{-1}
 (1+\eta)^{p_1}e^{-c(\eta^2-K^2)},
 \qquad K\le\eta\le \Lambda_n.
\end{split}
\ee
\end{Lemma}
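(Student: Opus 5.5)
We begin with the coefficient bounds \eqref{eq:regularOriginCoefficientBounds} and \eqref{eq:leftChartCoefficientComparison}, which only require the explicit formula \eqref{eq:emptyCoreGamma}. On $K\le\eta\le\Lambda_n^+\ll R_n$ we expand $\Gamma_n^0=b_n\eta+O(\eta^2/R_n)$. By Lemma~\ref{lem:preliminaryLocalization}, $b_n=\sqrt\beta+O(R_n^{-1})$, so $P_n=O((1+\eta)^2/R_n)$ with one derivative $O((1+\eta)/R_n)$. Then
\[
V_n^0-(\beta\eta^2-\beta)=2\sqrt\beta\,\eta P_n+P_n^2-\tfrac14u_n^{-2}=O((1+\eta)^3/R_n),
\]
and the differentiated bound follows the same way. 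The two-sided bound and the symbol bounds on $V_n^0$ are immediate from these expansions once $K$ is large. Over the whole range $K\le\eta<R_n$ we also have the global lower bound $V_n^0\ge c(1+\eta)^2$. This comes from $\Gamma_n^0\ge b_n\eta$ together with the dominance of $n^2/u^2$ over $\tfrac14u^{-2}+\beta$ near $u=0$, exactly as in the proof of Lemma~\ref{lem:coreGaussianBarrier}.

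Next come positivity and the Riccati estimates. Near $\eta=R_n$ the Frobenius solution is positive and $\lambda_n=(n+\tfrac12)/u_n+\dots>0$ is large. The function $\lambda_n$ satisfies the Riccati equation $\lambda_n'=\lambda_n^2-V_n^0$, which we integrate backwards from $\eta=R_n$. Since $V_n^0>0$ on $[K,R_n)$, a backward comparison argument shows that $\lambda_n$ stays positive and never blows up. Indeed, as long as $\lambda_n>0$ the function $D_n^0$ is monotone and has no zero, so $D_n^0>0$. For the quantitative statement \eqref{eq:regularOriginLogDerivativeBounds} we use $\pi_n\pm C\pi_n/(1+\eta)$ as barriers. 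Because $\pi_n'/\pi_n=O((1+\eta)^{-1})$, a direct computation shows that $\pi_n(1+\theta/(1+\eta))$ is a supersolution and $\pi_n(1-\theta/(1+\eta))$ is a subsolution for the backward Riccati flow when $\theta$ is large. The Riccati flow is repelling backwards toward the stable branch, so any solution starting above the subsolution near the origin end remains trapped between the barriers on $[K,R_n-1]$. On the deep part we use the crude bound $V_n^0\gtrsim(1+\eta)^2$ instead. Integrating $\lambda_n\ge c(1+\eta)$ then gives \eqref{eq:regularOriginDecayRatio}.

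The slope comparison \eqref{eq:regularOriginSlopeComparison} comes from subtracting the two Riccati equations. With $\delta=\lambda_n-\lambda_\beta$ we get $\delta'=(\lambda_n+\lambda_\beta)\delta-(V_n^0-\beta\eta^2+\beta)$. Integrating backwards from $\Lambda_n^+$ with the integrating factor $\exp(-\int(\lambda_n+\lambda_\beta))\approx e^{-c(s^2-\eta^2)}$ yields two contributions. The first is the source term, which by \eqref{eq:leftChartCoefficientComparison} contributes $\int_\eta^{\Lambda_n^+}e^{-c(s^2-\eta^2)}(1+s)^3R_n^{-1}\,ds\lesssim(1+\eta)^2/R_n$. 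The second is the boundary term at $\Lambda_n^+$, where $|\delta|\le C(1+\Lambda_n^+)$ by the preceding step and Lemma~\ref{lem:webermatching}; this produces the exponentially small term $C(1+\Lambda_n^+)^Ce^{-c((\Lambda_n^+)^2-\eta^2)}$.

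Finally, we write
\[
D_n^0(\eta)/d_\beta(\eta)=\exp\Bigl(-\int_K^\eta\delta\Bigr),
\]
using the normalizations at $K$. For $\eta\le\Lambda_n$, choosing $C_{\log}$ large makes $(1+\Lambda_n^+)^Ce^{-c((\Lambda_n^+)^2-\Lambda_n^2)}\le R_n^{-1}$, since $(\Lambda_n^+)^2-\Lambda_n^2=3C_{\log}^2\log R_n$. Hence $\int_K^\eta|\delta|\lesssim(1+\eta)^3/R_n\le1$ on this range. Multiplying by $d_\beta(\eta)\lesssim(1+\eta)^Ce^{-c(\eta^2-K^2)}$ gives the value part of \eqref{eq:regularOriginWeberComparison}. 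The derivative part follows from $(D_n^0)'=-\lambda_nD_n^0$ together with the slope bound. The main obstacle is the barrier argument for $\lambda_n$ over the full range up to the origin: one needs the trapping region to persist uniformly even where $\pi_n$ is no longer slowly varying, and I would handle the region $\eta>\Lambda_n^+$ only through the crude positive lower bound rather than sharp barriers.
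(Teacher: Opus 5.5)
Your argument is sound, and apart from one step it follows the paper. The coefficient bounds from \eqref{eq:emptyCoreGamma} and \eqref{eq:SLhyp}, the slope comparison (subtract the two Riccati equations and integrate backward from $\Lambda_n^+$), and the final comparison via $D_n^0/d_\beta=\exp(-\int_K^\eta(\lambda_n-\lambda_\beta))$ are exactly what is done there.

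\textbf{Where the routes differ.} The difference is how you obtain \eqref{eq:regularOriginLogDerivativeBounds}.
\begin{itemize}
\item The paper sets $\rho_n=\lambda_n-\pi_n$ and writes the Volterra representation $\rho_n(\eta)=\int_\eta^{R_n}\mathcal K_n(\eta,s)(\pi_n'-\rho_n^2)\,ds$, with the endpoint term at $\eta=R_n$ removed via the Frobenius asymptotics. It bounds $I_1$ by an integration by parts and proves $\rho_n\ge0$ by a zero-crossing argument using $\pi_n'>0$. This gives $0\le\rho_n\le C\pi_n(1+\eta)^{-2}$ on all of $[K,R_n)$.
\item Your two-sided barriers for the backward flow avoid the integral representation and its endpoint limit entirely. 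The paper's sign argument is just your subsolution with $\theta=0$, since $\pi_n'>0=\pi_n^2-V_n^0$. Taking $\pi_n(1+\theta(1+\eta)^{-2})$ as supersolution even recovers the paper's sharper bound.
\end{itemize}

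\textbf{Two corrections.} Fixing these removes the obstacle you flag.
\begin{itemize}
\item The supersolution margin does not need $\pi_n'/\pi_n=O((1+\eta)^{-1})$. That claim is false as $u_n\to0$, where $\pi_n'/\pi_n\simeq u_n^{-1}$. What is needed is $\pi_n'/\pi_n^2\le C(1+\eta)^{-2}$, and this holds on all of $[K,R_n)$: where $u_n\le R_n/2$ one has $\pi_n'/\pi_n^2\lesssim n^{-1}\simeq R_n^{-2}$. This is \eqref{eq:piDerivativeRatio}. So $\pi_n(1\pm\theta(1+\eta)^{-1})$ are barriers all the way to the origin, and no separate deep-core treatment is needed. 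The lower barrier then gives $\lambda_n\ge\pi_n/2\ge c(1+\eta)$ up to $R_n$, hence \eqref{eq:regularOriginDecayRatio}.
\item Trapping requires starting below the supersolution as well as above the subsolution. This holds near $u_n=0$, since $\lambda_n-\pi_n=(n+\frac12-\sqrt{n^2-\frac14})u_n^{-1}+O(u_n)\approx(2u_n)^{-1}$. That is far below $\theta\pi_n/(1+\eta)\approx\theta n/(u_nR_n)$.
\end{itemize}
Also, in the backward direction the branch $+\pi_n$ is attracting, not repelling. Your fallback of starting the sharp barriers only at $\Lambda_n^+$ would have to use this attraction explicitly, because the crude bounds give no ordering at that starting point.
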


\begin{proof}
Write the equation in the variable $u=R_n-\eta$.  \blue{The point $u=0$ is a regular singular point, and the condition
$D_n^0(\eta)\sim u^{n+\frac12}$ as $u\downarrow0$ selects the unique (up to a scalar multiple) solution which is regular at the origin.
In particular, this recessive solution vanishes at $u=0$ and has $\partial_uD_n^0>0$ for $u$ small.  This endpoint behavior is what later allows us
to integrate the Riccati equation from $u=0$ (equivalently $\eta=R_n$) without an uncontrolled boundary term. In fact, for}
 $K$ large,
\eqref{eq:emptyCoreGamma} gives $V_n^0>0$ on $[K,R_n)$.  As long as
$D_n^0$ is positive, its $u$-derivative is increasing.  Hence neither
$\partial_uD_n^0$ nor $D_n^0$ can have a first zero.  This proves
positivity.

The explicit formula \eqref{eq:emptyCoreGamma}, the relation
$n/R_n^2=b_n/2$, and \eqref{eq:SLhyp} give
\eqref{eq:regularOriginCoefficientBounds} and
\eqref{eq:leftChartCoefficientComparison} by direct differentiation.
Since $D_n^0>0$, its negative logarithmic derivative satisfies \blue{the Riccati equation}
\[
        \lambda_n'=\lambda_n^2-V_n^0.
\]
Put
$\rho_n=\lambda_n-\pi_n$.  Then
\bel{eq:logDerivativeErrorEquation}
        \rho_n'
        =2\pi_n\rho_n+\rho_n^2-\pi_n'.
\ee
For
\[
 \mathcal K_n(\eta,s)
 =\exp\left(-2\int_\eta^s\pi_n(t)\,dt\right),
\]
integration from $R_n-\varepsilon$, followed by
$\varepsilon\downarrow0$, gives
\bel{eq:logDerivativeIntegralEquation}
 \rho_n(\eta)=
 \int_\eta^{R_n}
 \mathcal K_n(\eta,s)(\pi_n'(s)-\rho_n(s)^2)\,ds .
\ee
We \red{briefly} justify the endpoint limit used in
this formula.  The Frobenius expansion at $u=R_n-\eta=0$ gives
\bel{eq:regularOriginEndpointAsymptotics}
\begin{split}
 \lambda_n(R_n-u)
 &= \frac{n+\frac12}{u}+O(u),\\
 \pi_n(R_n-u)
 &= \frac{\sqrt{n^2-\frac14}}{u}+O(u).
\end{split}
\ee
Consequently
\[
 \mathcal K_n(\eta,R_n-\varepsilon)
 \rho_n(R_n-\varepsilon)\longrightarrow0
 \qquad(\varepsilon\downarrow0),
\]
which proves \eqref{eq:logDerivativeIntegralEquation}.

\red{We are left with bounding the kernel. We begin with the following} estimates that are uniform across the two possible
locations of $\eta$.  On $u_n(s)\ge R_n/2$,
\[
 \pi_n(s)\simeq 1+s,\qquad 0<\pi_n'(s)\le C,
\]
whereas on $u_n(s)\le R_n/2$,
\[
 \pi_n(s)\simeq  \frac{n}{u_n(s)},\qquad
 0<\pi_n'(s)\le \frac{Cn}{u_n(s)^2}.
\]
The bounds agree at $u_n=R_n/2$.  In particular,
\bel{eq:piDerivativeRatio}
 0\le \frac{\pi_n'(s)}{\pi_n(s)^2}
 \le \frac{C}{(1+s)^2},
 \qquad K\le s<R_n.
\ee
The positivity follows from
\[
 (V_n^0)'=2\Gamma_n^0(\Gamma_n^0)'
          -\frac{1}{2u_n^3}>0
\]
after increasing $K$.
\red{We now set}
\[
 I_1(\eta)=\int_\eta^{R_n}
       \mathcal K_n(\eta,s)\pi_n'(s)\,ds,\qquad
 I_2(\eta)=\int_\eta^{R_n}
       \mathcal K_n(\eta,s)\frac{\pi_n(s)^2}{(1+s)^2}\,ds .
\]
Since
$\partial_s\mathcal K_n=-2\pi_n\mathcal K_n$,
\eqref{eq:regularOriginEndpointAsymptotics} also gives
$\mathcal K_n(\eta,s)\pi_n(s)\to0$ as $s\uparrow R_n$.
Integration by parts therefore yields
\[
 \int_\eta^{R_n}\mathcal K_n(\eta,s)\pi_n(s)^2\,ds
 =\frac{1}{2}\pi_n(\eta)+\frac{1}{2}I_1(\eta).
\]
Using \eqref{eq:piDerivativeRatio} implies, for $K$
large enough, that
\[
\begin{split}
 I_1(\eta)
 &\le \frac{C}{(1+\eta)^2}
 \left(\frac{1}{2}\pi_n(\eta)+\frac{1}{2}I_1(\eta)\right),\\
 I_2(\eta)
 &\le \frac{1}{(1+\eta)^2}
 \left(\frac{1}{2}\pi_n(\eta)+\frac{1}{2}I_1(\eta)\right).
\end{split}
\]
After absorbing the occurrence of $I_1$ on the right, we obtain
\bel{eq:logDerivativeKernelBounds}
\begin{split}
 I_1(\eta)&\le \frac{C\pi_n(\eta)}{(1+\eta)^2},\\
 I_2(\eta)&\le \frac{C\pi_n(\eta)}{(1+\eta)^2},
 \qquad K\le\eta<R_n.
\end{split}
\ee
\red{These immediately imply \eqref{eq:regularOriginLogDerivativeBounds}.}

We now verify that these estimates apply to the logarithmic derivative
of the regular solution itself.  By
\eqref{eq:regularOriginEndpointAsymptotics},
\[
 \rho_n(R_n-u)
 =\frac{n+\frac12-\sqrt{n^2-\frac14}}{u}+O(u)>0
\]
for all sufficiently small $u>0$.  At any zero of $\rho_n$, 
\eqref{eq:logDerivativeErrorEquation} and $\pi_n'>0$ give
\[
        \rho_n'=-\pi_n'<0.
\]
If $\rho_n$ were negative at some point, then, moving from that point
toward $R_n$, it would have a first zero at which it crosses from
negative to positive, and its derivative there would be nonnegative.  This
contradicts the preceding inequality.  Hence $\rho_n\ge0$ on
$[K,R_n)$.  The integral equation now yields
\[
\begin{split}
 0\le\rho_n(\eta)
 &=
 I_1(\eta)
 -\int_\eta^{R_n}
   \mathcal K_n(\eta,s)\rho_n(s)^2\,ds\\
 &\le I_1(\eta)
 \le \frac{C\pi_n(\eta)}{(1+\eta)^2}.
\end{split}
\]
This proves \eqref{eq:regularOriginLogDerivativeBounds}, with a slightly
stronger error estimate than stated.  Integrating
$$-(\log D_n^0)'=\lambda_n$$ gives
\eqref{eq:regularOriginDecayRatio}, while on $0<u_n\le1$ the same estimate
follows directly from the Frobenius power. 
It remains to compare $D_n^0$ with $d_\beta$.  Set
\[
        \Delta_n=\lambda_n-\lambda_\beta,
        \qquad
        E_n=V_n^0-(\beta\eta^2-\beta).
\]
Subtracting the two logarithmic-derivative equations and integrating
backward from $\Lambda_n^+$ gives the exact identity
\bel{eq:exactRiccatiComparison}
\begin{split}
 \Delta_n(\eta)
 &=
 e^{-\int_\eta^{\Lambda_n^+}(\lambda_n+\lambda_\beta)\,dt}
       \Delta_n(\Lambda_n^+)\\
 &\quad+
 \int_\eta^{\Lambda_n^+}
 e^{-\int_\eta^s(\lambda_n+\lambda_\beta)\,dt}
       E_n(s)\,ds .
\end{split}
\ee
The estimates already proved, together with the corresponding elementary
comparison for $d_\beta$, imply
\[
 \lambda_n(s)+\lambda_\beta(s)\ge c(1+s),
 \qquad
 |\Delta_n(\Lambda_n^+)|\le C(1+\Lambda_n^+)^C.
\]
Combining these bounds with
\eqref{eq:leftChartCoefficientComparison} proves
\eqref{eq:regularOriginSlopeComparison}.  Integrating the difference of
the logarithmic derivatives from $K$ to $\eta$, and then differentiating
once, proves \eqref{eq:regularOriginWeberComparison}.
\red{To see that, we note that the normalization conditions $\displaystyle D_n^0(K)=d_\beta(K)=1$ imply that
\begin{align*}
   \frac{D_n^0(\eta)}{d_\beta(\eta)}&=e^{-\int_K^\eta(\lambda_n(s)-\lambda_\beta(s))\,ds} .
\end{align*}
When $\eta\in[K,\Lambda_n]$, upon absorbing the Gaussian term into the first one on the right hand side, \eqref{eq:regularOriginSlopeComparison} implies that
\begin{align*}
    \left|\int_K^\eta (\lambda_n(s)-\lambda_\beta(s))\,ds\right|&\leq \frac{C(1+\eta)^3}{R_n},
\end{align*}
which can be made uniformly small for $n$ sufficiently large.

Now, this estimate along with the inequality $\displaystyle e^{-x}-1\leq C|x|$ for $x$ with $|x|$ small and the asymptotics for the Weber recessive solution $d_\beta$ imply the desired estimate for $D_n^0(\eta)-d_\beta(\eta)$.

For the derivative bound, we write 
\begin{align*}
\partial_\eta(D_n^0(\eta)-d_\beta(\eta))=-\lambda_n(\eta)(D_n^0(\eta)-d_\beta(\eta))-(\lambda_n(\eta)-\lambda_\beta(\eta))d_\beta(\eta).
\end{align*}
The desired derivative estimate follows from \eqref{eq:regularOriginLogDerivativeBounds} and \eqref{eq:regularOriginSlopeComparison}.
}

\red{Once again, we point out that we make} the comparison
on $[K,\Lambda_n]$, while the uncontrolled endpoint term originates at
$\Lambda_n^+=2\Lambda_n$, \red{and this allows us to bound its contribution as}
\[
 C(1+\Lambda_n^+)^C e^{-c((\Lambda_n^+)^2-\Lambda_n^2)}
 \le Ce^{-c\Lambda_n^2}.
\]
This term is absorbed into the displayed Gaussian weight after decreasing
its exponent.
\end{proof}

\subsubsection{Linear solution operators on the logarithmic interval}

We now construct the Green operators used for the left families.  Let
$J\Subset(0,4)$ be a fixed compact interval.  Fix the Gaussian exponent $c>0$ so that it is smaller
than the exponent in Lemma~\ref{lem:regularOriginLogDerivative} and
\[
        0<c<\frac{1}{2}\inf_{\beta\in J}\sqrt\beta,
\]
and set
\[
        w_{K,p}(\eta)
        =(1+\eta)^p e^{-c(\eta^2-K^2)}.
\]
For $I=[K,T]$, define
\bel{eq:leftWeightedSpaces}
\begin{split}
 \|f\|_{X_p(I)}
 &:=%
 \sum_{j=0}^1\sup_{\eta\in I}
 \frac{(1+\eta)^{-j}|\partial_\eta^jf(\eta)|}{w_{K,p}(\eta)},\\
 \|F\|_{Y_p(I)}
 &:=%
 \sup_{\eta\in I}
 \frac{|F(\eta)|}{(1+\eta)w_{K,p}(\eta)}.
\end{split}
\ee
For pairs, we use the sum of the two component norms.  The same notation
on $[K,\infty)$ has the evident meaning.
\red{The purpose of our next result is to study some properties of the relevant Green functions for our finite-$n$ equations and for the limiting Weber equations. We shall need the following
\begin{Definition}
 Let
\[
 d_n=D_n^0,\qquad
 \ell_n(\eta)=d_n(\eta)\int_K^\eta d_n(s)^{-2}\,ds,
\]
and define
 \bel{eq:leftScalarGreenKernel}
 G_{S,n}(\eta,s)=
 \begin{cases}
  -\ell_n(\eta)d_n(s),&\eta\le s,\\
  -d_n(\eta)\ell_n(s),&s\le\eta .
 \end{cases}
\ee   
Let $\mathcal G_{G,n}$ solve
\[
 H''-\frac{1}{u_n}H'-\frac{1}{u_n^2}H=F,
 \qquad H(T)=H'(T)=0.
\]
and $\mathcal G_{\beta,K}$ be the Green operator for
\[
 v''-(\beta\eta^2-\beta)v=F
\]
with $v(K)=0$ and decay at infinity. Then define 
\[
 (\mathcal G_{G,\infty}F)(\eta)
 =\int_\eta^\infty(s-\eta)F(s)\,ds.
\]
If $R_T$ denotes restriction to $[K,T]$ and $E_T$ extension by zero to
$[K,\infty)$, set
\[
 \widehat{\mathcal G}_{S,n}F
 :=u_n^{-1/2}\mathcal G_{S,n}(u_n^{1/2}F).
\]
\end{Definition}
}
We now identify the Green kernel associated with the mixed boundary conditions at $K$ and $T$. In addition, we record the Gaussian off-diagonal bounds for this kernel that will be used repeatedly in the remainder of our analysis of the left-hand tail of the interface.

\begin{Lemma}\label{lem:regularOriginMixedGreen}
Take $K$ and
$C_{\log}\ge C_{\log,0}$ sufficiently large, and let
$K<T\le \Lambda_n=C_{\log}\sqrt{\log R_n}$. 
Then $W(\ell_n,d_n)=-1$.  The Green function for
\[
 v''-V_n^0v=F,\qquad
 v(K)=0,\qquad W(v,\red{d_n})(T)=0
\]
is \red{$G_{S,n}$ defined in \eqref{eq:leftScalarGreenKernel}}.
\blue{Fix any real number $p_0\ge0$.  Then there are constants $c,C>0$ (depending on $p_0$ but independent of $n$) such that}
\bel{eq:leftScalarGreenKernelBound}
\begin{split}
 |G_{S,n}(\eta,s)|
 &\le \frac{C}{1+\min\{\eta,s\}}\,
       e^{-2c|\eta^2-s^2|},\\
 (1+\eta)^{-1}|\partial_\eta G_{S,n}(\eta,s)|
 &\le \frac{C}{1+\min\{\eta,s\}}\,
       e^{-2c|\eta^2-s^2|},
\end{split}
\ee
and the associated operator satisfies
\bel{eq:leftChartScalarGreen}
        \|\mathcal G_{S,n}F\|_{X_{p_0}([K,T])}
        \le \frac{C}{K}\|F\|_{Y_{p_0}([K,T])}.
\ee
\red{For $G_{G,n}$ defined above, we have}
\bel{eq:leftGaugeGreenKernel}
 (\mathcal G_{G,n}F)(\eta)
 =\int_\eta^T K_{G,n}(\eta,s)F(s)\,ds,
 \qquad
 K_{G,n}(\eta,s)
 =(s-\eta)-\frac{(s-\eta)^2}{2u_n(\eta)},
\ee
and
\bel{eq:leftChartGaugeGreen}
        \|\mathcal G_{G,n}F\|_{X_{p_0}([K,T])}
        \le \frac{C}{K}\|F\|_{Y_{p_0}([K,T])}.
\ee
\red{If the Green functions $\widehat{\mathcal G}_{S,n}$, $\mathcal G_{\beta,K}$, and $\mathcal G_{G,\infty}$, along with the operators $R_T$ and $E_T$, are as defined above, then for some $p_*\ge p_0+2$,}
\bel{eq:leftGreenOperatorComparison}
\begin{split}
 \|\widehat{\mathcal G}_{S,n}
       -R_T\mathcal G_{\beta,K}E_T\|_{Y_{p_0}\to X_{p_*}}
 &\le \frac{C(K,\beta,C_{\log})}{R_n},\\
 \|\mathcal G_{G,n}
       -R_T\mathcal G_{G,\infty}E_T\|_{Y_{p_0}\to X_{p_*}}
 &\le \frac{C(K,\beta,C_{\log})}{R_n}.
\end{split}
\ee
For each fixed $p\in\{p_0,p_*\}$, the finite and limiting scalar and
magnetic Green operators also satisfy
\bel{eq:leftGreenSameWeight}
\begin{split}
 \|\mathcal G_{S,n}F\|_{X_p([K,T])}
 +\|\widehat{\mathcal G}_{S,n}F\|_{X_p([K,T])}
 +\|R_T\mathcal G_{\beta,K}E_TF\|_{X_p([K,T])}
 &\le \frac{C_p}{K}\|F\|_{Y_p([K,T])},\\
 \|\mathcal G_{G,n}F\|_{X_p([K,T])}
 +\|R_T\mathcal G_{G,\infty}E_TF\|_{X_p([K,T])}
 &\le \frac{C_p}{K}\|F\|_{Y_p([K,T])}.
\end{split}
\ee
\end{Lemma}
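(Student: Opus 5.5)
We verify the Green-function representations directly, derive the kernel bounds from Lemma~\ref{lem:regularOriginLogDerivative}, and then prove the operator comparisons by a resolvent-type identity combined with the coefficient comparison \eqref{eq:leftChartCoefficientComparison}.

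\emph{Scalar Green function.} Since $\ell_n=d_n\int_K^\eta d_n^{-2}$, a direct differentiation gives $\ell_n'=d_n'\ell_n/d_n+d_n^{-1}$. Hence $W(\ell_n,d_n)=\ell_nd_n'-\ell_n'd_n=-1$. The kernel \eqref{eq:leftScalarGreenKernel} therefore inverts $\partial_\eta^2-V_n^0$. It satisfies $v(K)=0$ because $\ell_n(K)=0$. For $\eta$ near $T$ one has $v=-d_n(\eta)\int\ell_nF$, so $W(v,d_n)(T)=0$.

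\emph{Scalar kernel bounds.} We use the decay ratio \eqref{eq:regularOriginDecayRatio} and the slope bound $\lambda_n\ge c(1+\eta)$. For $\eta\le s$ we write
\[
\ell_n(\eta)d_n(s)=\frac{d_n(s)}{d_n(\eta)}\int_K^\eta\frac{d_n(\eta)^2}{d_n(t)^2}\,dt .
\]
The ratio $d_n(\eta)^2/d_n(t)^2$ is controlled through $\exp(-2\int_t^\eta\lambda_n)$. Since $\lambda_n\simeq\pi_n\simeq1+t$, a Laplace-type estimate bounds the integral by $C/(1+\eta)$, and the prefactor is bounded by $Ce^{-c(s^2-\eta^2)}$. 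The case $s\le\eta$ is symmetric, using the same integral bound at $s$. For the derivative, $\partial_\eta\ell_n=-\lambda_n\ell_n+d_n^{-1}$ and $\partial_\eta d_n=-\lambda_nd_n$, so each term picks up at most a factor $1+\eta$. Halving the Gaussian exponent then gives \eqref{eq:leftScalarGreenKernelBound}.

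\emph{Weighted operator bound.} To get \eqref{eq:leftChartScalarGreen} we integrate the kernel against $(1+s)w_{K,p_0}(s)$. This requires
\[
\int \frac{(1+s)^{1+p_0}}{1+\min\{\eta,s\}}\,e^{-2c|\eta^2-s^2|-c s^2}\,ds\lesssim \frac{w_{K,p_0}(\eta)\,e^{cK^2}}{1+\eta},
\]
which follows by splitting at $s=\eta$. For $s\ge\eta$ the Gaussian gives an effective length $1/(1+\eta)$. For $s\le\eta$ the factor $e^{-2c(\eta^2-s^2)}$ dominates the growth $e^{c(\eta^2-s^2)}$ of the reciprocal weight. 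Since $1/(1+\eta)\le 1/K$, this produces the gain $C/K$.

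\emph{Magnetic Green function.} The homogeneous equation $H''-u_n^{-1}H'-u_n^{-2}H=0$ with $u_n=R_n-\eta$ has solutions $u_n$ and $u_n^{-1}$. Variation of constants with terminal data at $T$ gives \eqref{eq:leftGaugeGreenKernel}, which can be checked directly by differentiating under the integral. The kernel is bounded by $C(s-\eta)$ and its derivative by $C$, because $(s-\eta)/u_n\le 2\Lambda_n/R_n\ll1$. Integrating against a Gaussian-weighted $F$ produces the Gaussian moment bound \eqref{eq:gaussianMomentBound}. This yields \eqref{eq:leftChartGaugeGreen}, and together with the scalar argument it yields \eqref{eq:leftGreenSameWeight} for $p\in\{p_0,p_*\}$, including the limiting operators whose kernels are built from $d_\beta$ and $\eta\mapsto(s-\eta)$. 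The factor $u_n^{\pm1/2}$ in $\widehat{\mathcal G}_{S,n}$ equals $1+O(\Lambda_n/R_n)$ on the interval and is harmless.

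\emph{Operator comparisons.} This is the main obstacle, because the operators carry different boundary conditions: at $T$ for the finite operator and at infinity for the limiting one. For the magnetic operator, $K_{G,n}-(s-\eta)=-(s-\eta)^2/(2u_n)$. The operator $\mathcal G_{G,\infty}E_T$ integrates only up to $T$, so the two agree except for this term, which is $O((1+s-\eta)^2/R_n)$. Its contribution is $R_n^{-1}$ times a polynomial loss, absorbed by taking $p_*\ge p_0+2$.

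For the scalar operator, we write $\mathcal G_{\beta,K}E_T$ in the $(\ell_\beta,d_\beta)$ kernel form. On $[K,T]$ with $F$ supported in $[K,T]$, it coincides with the mixed problem for the Weber potential, because the terminal condition $W(v,d_\beta)(T)=0$ is exactly the condition that $v$ continues as a multiple of $d_\beta$. The kernel difference is then controlled by $d_n-d_\beta$ and $\ell_n-\ell_\beta$. By \eqref{eq:regularOriginWeberComparison}, both are $O(R_n^{-1}(1+\eta)^{p_1})$ times the natural Gaussian, the $\ell$ difference following from the formula for $\ell_n$ by integrating the comparison of $d_n^{-2}$ against $d_\beta^{-2}$. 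Repeating the kernel estimates with this polynomial loss gives the first line of \eqref{eq:leftGreenOperatorComparison}, with $p_*\ge p_0+\max(2,p_1)$. One could instead use the resolvent identity $\mathcal G_n-\mathcal G_\beta=\mathcal G_n(V_n^0-V_\beta)\mathcal G_\beta$ with \eqref{eq:leftChartCoefficientComparison}, which gives the same loss of $(1+\eta)^3/R_n$. Constants depend on $C_{\log}$ only through $T\le\Lambda_n$.
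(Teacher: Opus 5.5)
Your verification of the Green representations, the kernel bounds, the weighted estimates and the magnetic comparison follows the paper's proof essentially step by step. The genuine difference is the scalar part of \eqref{eq:leftGreenOperatorComparison}.

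\textbf{Your route versus the paper's.} You compare the two explicit kernels pointwise. Suppose $d_n/d_\beta$ and $\ell_n/\ell_\beta$ are both $1+O((1+\eta)^3/R_n)$ on all of $[K,T]$. Then the difference between $G_{S,n}$ and the $(\ell_\beta,d_\beta)$ kernel of $R_T\mathcal G_{\beta,K}E_T$, and its normalized $\eta$-derivative, is $R_n^{-1}(1+\max\{\eta,s\})^3$ times the kernel bound. A polynomial shift of the weight absorbs this. Since $T\le\Lambda_n$, this relative closeness already encodes the mismatch between the conditions $W(v,d_n)(T)=0$ and $W(v,d_\beta)(T)=0$, so no separate boundary correction is needed.

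The paper instead first matches the condition at $T$. It replaces $d_\beta$ by the Weber solution $r_{n,T}=d_\beta+\varepsilon_{n,T}\ell_\beta$, whose logarithmic derivative at $T$ is $-\lambda_n(T)$. It then estimates the explicit rank-one kernel $-\varepsilon_{n,T}\ell_\beta\otimes\ell_\beta$ through $\sigma_{n,T}=\lambda_n(T)-\lambda_\beta(T)$. Only after that does it apply the resolvent identity with \eqref{eq:leftChartCoefficientComparison} and the chain $Y_{p_0}\to X_{p_0}\to R_n^{-1}Y_{p_*}\to X_{p_*}$. Your route is shorter and avoids operator composition. The paper's route needs only the coefficient difference and the same-weight Green bounds, and it isolates the endpoint mismatch in one explicit term.

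\textbf{Points that need repair.}

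First, the relative closeness does not follow from \eqref{eq:regularOriginWeberComparison} as stated. That is an absolute bound with Gaussian $e^{-c(\eta^2-K^2)}$, whose exponent is in general smaller than $\sqrt\beta/2$. You must divide by $d_\beta$ or $d_\beta^2$, both for $\ell_n-\ell_\beta$ and for the term $\ell_\beta(\eta)(d_n(s)-d_\beta(s))$. That costs a factor $e^{(\sqrt\beta/2-c)\eta^2}$, which is a positive power of $R_n$ at $\eta\sim\Lambda_n$. Derive the relative bound instead from $d_n/d_\beta=\exp(-\int_K^\eta(\lambda_n-\lambda_\beta))$ and the slope bound \eqref{eq:regularOriginSlopeComparison}. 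Its endpoint term is $O(R_n^{-1})$ on $[K,\Lambda_n]$ once $C_{\log}$ is large. For the derivatives, use $\ell_n'=-\lambda_n\ell_n+d_n^{-1}$.

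Second, your closing aside is false as written. Because the conditions at $T$ differ, the operator $\mathcal G_{S,n}-R_T\mathcal G_{\beta,K}E_T-\mathcal G_{S,n}(V_n^0-(\beta\eta^2-\beta))R_T\mathcal G_{\beta,K}E_T$ is not zero. It equals the rank-one operator $F\mapsto\sigma_{n,T}\,d_\beta(T)d_n(T)\,\ell_n(\eta)\int_K^T\ell_\beta F\,ds$, which must be estimated separately. This is exactly the paper's $r_{n,T}$ step.

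Third, the comparison for $\widehat{\mathcal G}_{S,n}$ needs a sharper bound on the conjugation factor than the same-weight estimates do. It must be bounded by $C|s-\eta|/R_n$ rather than $O(\Lambda_n/R_n)$. Otherwise you lose a factor $\sqrt{\log R_n}$ against the claimed $C(K,\beta,C_{\log})/R_n$.
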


\begin{proof}
Lemma~\ref{lem:regularOriginLogDerivative} gives
\[
 \frac{d_n(s)}{d_n(\eta)}
 \le Ce^{-c(s^2-\eta^2)},\qquad \eta\le s.
\]
\blue{We first note that $\ell_n$ satisfies $\ell_n(K)=0$ and $\ell_n'(K)=1$, and a direct differentiation of
$\ell_n(\eta)=d_n(\eta)\int_K^\eta d_n(s)^{-2}\,ds$ gives $W(\ell_n,d_n)=-1$.  Consequently, for each fixed $s$,
$G_{S,n}(\cdot,s)$ defined in \eqref{eq:leftScalarGreenKernel} solves
$v''-V_n^0v=0$ on $(K,s)\cup(s,T)$, satisfies $v(K)=0$ and $W(v,d_n)(T)=0$, and has the jump condition
$\partial_\eta G_{S,n}(s^+,s)-\partial_\eta G_{S,n}(s^-,s)=1$.  Thus $G_{S,n}$ is the Green kernel for the stated boundary-value problem.}

\blue{Using the ratio estimate from Lemma~\ref{lem:regularOriginLogDerivative}, we bound $\ell_n$ directly. Indeed, since}
\[
 d_n(\eta)\ell_n(\eta)=d_n(\eta)^2\int_K^\eta d_n(s)^{-2}\,ds
 =\int_K^\eta\left(\frac{d_n(\eta)}{d_n(s)}\right)^{\!2}\,ds,
\]
\blue{and for $K\le s\le\eta$ we have $(d_n(\eta)/d_n(s))\le Ce^{-c(\eta^2-s^2)}$, it follows that}
\[
 d_n(\eta)\ell_n(\eta)
 \le C\int_K^\eta e^{-2c(\eta^2-s^2)}\,ds
 \le C\int_0^\infty e^{-2c\eta\tau}\,d\tau
 \le \frac{C}{1+\eta}.
\]
In the second inequality we use
$\eta^2-s^2=(\eta+s)(\eta-s)\ge\eta(\eta-s)$ for $K\le s\le\eta$
and \green{we also} set $\tau=\eta-s$.  The \green{desired estimate follows since the }resulting integral is $(2c\eta)^{-1}$.
\blue{For the derivative, \green{we} differentiate the definition \green{and get that}}
\[
 \ell_n'(\eta)=d_n'(\eta)\int_K^\eta d_n(s)^{-2}\,ds+d_n(\eta)^{-1}.
\]
\blue{Multiplying by $d_n(\eta)$ and writing $d_n'(\eta)=-\lambda_n(\eta)d_n(\eta)$ gives}
\[
 d_n(\eta)\ell_n'(\eta)=-\lambda_n(\eta)\,d_n(\eta)\ell_n(\eta)+1.
\]
\blue{Using $|\lambda_n(\eta)|\le C(1+\eta)$ (from \eqref{eq:regularOriginLogDerivativeBounds}) together with $d_n(\eta)\ell_n(\eta)\le C(1+\eta)^{-1}$ yields $d_n(\eta)|\ell_n'(\eta)|\le C$. Hence}
\[
 \blue{\ell_n(\eta)+(1+\eta)^{-1}|\ell_n'(\eta)|
 \le \frac{C}{(1+\eta)d_n(\eta)}.}
\]
Substitution in \eqref{eq:leftScalarGreenKernel} proves
\eqref{eq:leftScalarGreenKernelBound}.  Integrating separately over
$s\ge\eta$ and $s\le\eta$, and using
\[
 \int_\eta^\infty
 e^{-c(s^2-\eta^2)}(1+s)^a\,ds
 \le C_a(1+\eta)^{a-1},
\]
proves \eqref{eq:leftChartScalarGreen}.  The same calculation is
unchanged when $p_0$ is replaced by any fixed exponent $p$.  On
$[K,\Lambda_n]$ one has
\[
 \frac{u_n(s)}{u_n(\eta)}
 =1+O\left(\frac{|s-\eta|}{R_n}\right),
 \qquad K\le \eta,s\le \Lambda_n,
\]
and the same estimate holds after one $\eta$-derivative.  Inserting the
factor $(u_n(s)/u_n(\eta))^{1/2}$ in the scalar kernel therefore preserves
the preceding weighted integral bounds.  This proves the estimate for
$\widehat{\mathcal G}_{S,n}$ as well.

The homogeneous magnetic equation has the basis $u_n,u_n^{-1}$.
Variation of constants gives \eqref{eq:leftGaugeGreenKernel}.  In
particular,
\[
 |K_{G,n}(\eta,s)|\le C(s-\eta),
 \qquad
 |\partial_\eta K_{G,n}(\eta,s)|\le C
\]
on $K\le\eta\le s\le \Lambda_n$.  \red{We also have}
\[
\begin{split}
 \int_\eta^{\Lambda_n}(s-\eta)(1+s)w_{K,p_0}(s)\,ds
 &\le \frac{Cw_{K,p_0}(\eta)}{1+\eta},\\
 \frac{1}{1+\eta}\int_\eta^{\Lambda_n}
       (1+s)w_{K,p_0}(s)\,ds
 &\le \frac{Cw_{K,p_0}(\eta)}{1+\eta}.
\end{split}
\]
Since $(1+\eta)^{-1}\le K^{-1}$, these inequalities prove
\eqref{eq:leftChartGaugeGreen}.  Replacing $p_0$ by $p_*$ in the same
two integrals proves the finite magnetic estimate at exponent $p_*$.
The limiting scalar and magnetic kernels satisfy the same estimates,
with the factors involving $u_n$ absent.  This proves
\eqref{eq:leftGreenSameWeight}.

We next compare the conditions imposed at the right endpoint of
$[K,T]$.  Let $\ell_\beta$ solve
\[
 \ell_\beta''=(\beta\eta^2-\beta)\ell_\beta,
 \qquad
 \ell_\beta(K)=0,\qquad \ell_\beta'(K)=1.
\]
Then $W(\ell_\beta,d_\beta)=-1$ and
\[
        \ell_\beta(\eta)
        =d_\beta(\eta)\int_K^\eta d_\beta(s)^{-2}\,ds .
\]
Set
\[
        \sigma_{n,T}
        =\lambda_n(T)-\lambda_\beta(T).
\]
By reduction of order and the Weber estimates
\[
        0<d_\beta(T)\ell_\beta(T)\le \frac{C}{1+T}.
\]
Together with \eqref{eq:regularOriginSlopeComparison}, this implies
\[
\begin{split}
 |\sigma_{n,T}d_\beta(T)\ell_\beta(T)|
 &\le C\left(
 \frac{1+T}{R_n}
 +\frac{(1+\Lambda_n^+)^{C}}{1+T}
 e^{-c((\Lambda_n^+)^2-T^2)}
 \right)\le \frac{1}{2}
\end{split}
\]
for all large $n$, uniformly for $K<T\le \Lambda_n$. 
 Hence there is a unique \red{solution $r_{n,T}$ of the limiting Weber equation 
 \begin{align*}
 \partial_\eta^2r_{n,T}(\eta)=(\beta\eta^2-\beta)r_{n,T}(\eta),
 \end{align*} which has the form}
\[
        r_{n,T}=d_\beta+\varepsilon_{n,T}\ell_\beta
\]
such that
\[
        -\frac{r_{n,T}'(T)}{r_{n,T}(T)}=\lambda_n(T).
\]
The two exact identities
\bel{eq:rightEndpointRankOneIdentities}
 \varepsilon_{n,T}
 =-\sigma_{n,T}d_\beta(T)r_{n,T}(T),
 \qquad
 \frac{r_{n,T}(T)}{d_\beta(T)}
 =\frac{1}{1+\sigma_{n,T}d_\beta(T)\ell_\beta(T)}
\ee
follow by solving this scalar equation.  In particular,
\bel{eq:rightEndpointComparisonSolution}
 \frac{1}{2}d_\beta(T)\le r_{n,T}(T)\le2d_\beta(T).
\ee
The solution $r_{n,T}$ has no zero on $[K,T]$.  Indeed,
\[
 \frac{r_{n,T}(\eta)}{d_\beta(\eta)}
 =1+\varepsilon_{n,T}\int_K^\eta d_\beta(s)^{-2}\,ds .
\]
If $\varepsilon_{n,T}\ge0$, the right side is at least one.  If
$\varepsilon_{n,T}<0$, it is decreasing in $\eta$ and is therefore
bounded below by its positive value at $\eta=T$, as given by
\eqref{eq:rightEndpointComparisonSolution}.

If
$\mathcal G_{\beta,K,T}^{(n)}$ is the Green operator for the limiting
Weber equation with $v(K)=0$ and $W(v,r_{n,T})(T)=0$, then its kernel
differs from that of
$R_T\mathcal G_{\beta,K}E_T$\red{\blue{, whose kernel we denote by $\displaystyle K_{\beta,T}^{(0)}(\eta,s)$,}} by the exact rank-one expression
\bel{eq:rightEndpointRankOneKernel}
 K_{\beta,T}^{(n)}(\eta,s)
 -K_{\beta,T}^{(0)}(\eta,s)
 =-\varepsilon_{n,T}\ell_\beta(\eta)\ell_\beta(s).
\ee
\red{\blue{Indeed, $K_{\beta,T}^{(n)}$ is the Green kernel for the boundary-value problem with right endpoint condition $W(v,r_{n,T})(T)=0$, so it is built from the pair $(\ell_\beta,r_{n,T})$, whereas $K_{\beta,T}^{(0)}$ corresponds to the kernel of $R_T\mathcal G_{\beta,K}E_T$, i.e., the problem with the recessive solution $d_\beta$ at $+\infty$ transported back to $T$, and is built from $(\ell_\beta,d_\beta)$. Explicitly,}}

\red{\blue{$\displaystyle K_{\beta,T}^{(n)}(\eta,s)=
 \begin{cases}
  -\ell_\beta(\eta)r_{n,T}(s),&\eta\le s,\\
  -r_{n,T}(\eta)\ell_\beta(s),&s\le\eta,
 \end{cases}
 \qquad
 K^{(0)}_{\beta,T}(\eta,s)=
 \begin{cases}
  -\ell_\beta(\eta)d_\beta(s),&\eta\le s,\\
  -d_\beta(\eta)\ell_\beta(s),&s\le\eta,
 \end{cases}$}}
\blue{and inserting $r_{n,T}=d_\beta+\varepsilon_{n,T}\ell_\beta$ yields
$K_{\beta,T}^{(n)}-K_{\beta,T}^{(0)}=-\varepsilon_{n,T}\ell_\beta\otimes\ell_\beta$.}
The Weber estimates obtained from Lemma~\ref{lem:webermatching} after
the scaling used in Section~\ref{sec:webertail} are
\[
 d_\beta(t)\simeq 
 t^{(\sqrt\beta-1)/2}e^{-\sqrt\beta t^2/2},
 \qquad
 |\ell_\beta(t)|\le C_{K,J}
 t^{-(\sqrt\beta+1)/2}e^{\sqrt\beta t^2/2}
\]
with the corresponding differentiated bounds.  They imply
\[
\begin{split}
 \|\varepsilon_{n,T}\ell_\beta\otimes\ell_\beta\|
       _{Y_{p_0}\to X_{p_*}}
 &\le
 |\sigma_{n,T}|d_\beta(T)|r_{n,T}(T)|\\
 &\quad\times
 \sup_{K\le\eta\le T}
 \frac{|\ell_\beta(\eta)|
 +(1+\eta)^{-1}|\ell_\beta'(\eta)|}{w_{K,p_*}(\eta)}\\
 &\quad\times
 \int_K^T|\ell_\beta(s)|(1+s)w_{K,p_0}(s)\,ds .
\end{split}
\]
To evaluate the last two factors, write
$a=(\sqrt\beta-1)/2$.  The supremum is bounded by
\[
 C(1+T)^{-a-1-p_*}
 e^{(\sqrt\beta/2+c)T^2},
\]
whereas the integral is bounded by
\[
 C(1+T)^{p_0-a-1}
 e^{(\sqrt\beta/2-c)T^2}.
\]
By \eqref{eq:rightEndpointComparisonSolution},
\[
 d_\beta(T)|r_{n,T}(T)|
 \le C(1+T)^{2a}e^{-\sqrt\beta T^2}.
\]
Since $p_*\ge p_0+2$, the product of these three quantities is at most
$C(1+T)^{-2}$.  The slope estimate
\eqref{eq:regularOriginSlopeComparison}, and the choice of
$C_{\log,0}$ which makes the remaining endpoint term
$O(R_n^{-1})$, now give
\[
 \|\varepsilon_{n,T}\ell_\beta\otimes\ell_\beta\|
       _{Y_{p_0}\to X_{p_*}}
 \le \frac{C(K,\beta,C_{\log})}{R_n}.
\]
Thus \eqref{eq:rightEndpointRankOneKernel} has the required operator
bound.
The same calculation with  $p_0$ in place of $p_*$
gives
\[
 \|\varepsilon_{n,T}\ell_\beta\otimes\ell_\beta\|
       _{Y_{p_0}\to X_{p_0}}
 \le \frac{C|\sigma_{n,T}|}{(1+T)^2}\le C_{K,\beta}.
\]
Together with the estimate for
$R_T\mathcal G_{\beta,K}E_T$, this proves
\bel{eq:adjustedLimitingGreenStrong}
 \|\mathcal G_{\beta,K,T}^{(n)}F\|_{X_{p_0}([K,T])}
 \le C_{K,\beta}\|F\|_{Y_{p_0}([K,T])}.
\ee
The finite scalar operator \red{$\displaystyle \mathcal G_{S,n}$} and
$\mathcal G_{\beta,K,T}^{(n)}$ have the same conditions at $K$ and $T$.
The Green-function identity
\[
 \mathcal G_{S,n}-\mathcal G_{\beta,K,T}^{(n)}
 =
 \mathcal G_{S,n}
 \bigl(V_n^0-(\beta\eta^2-\beta)\bigr)
 \mathcal G_{\beta,K,T}^{(n)}
\]
is understood as an identity on a source, with multiplication by the
coefficient difference between the two Green operators.  Its mapping
properties are \red{determined by}
\[
\begin{split}
 \mathcal G_{\beta,K,T}^{(n)}
 &:Y_{p_0}\longrightarrow X_{p_0},\\
 V_n^0-(\beta\eta^2-\beta)
 &:X_{p_0}\longrightarrow R_n^{-1}Y_{p_*},\\
 \mathcal G_{S,n}
 &:Y_{p_*}\longrightarrow X_{p_*}.
\end{split}
\]
Indeed, the middle line follows from
\eqref{eq:leftChartCoefficientComparison} because
$p_*\ge p_0+2$, the first line is
\eqref{eq:adjustedLimitingGreenStrong}, and the last follows from
\eqref{eq:leftGreenSameWeight}.  Therefore
\[
 \|\mathcal G_{S,n}
       -\mathcal G_{\beta,K,T}^{(n)}\|_{Y_{p_0}\to X_{p_*}}
 \le \frac{C(K,\beta,C_{\log})}{R_n}.
\]
The conjugation contributes the factor
\[
 \left(\frac{u_n(s)}{u_n(\eta)}\right)^{1/2},
 \qquad
 \left|
 \left(\frac{u_n(s)}{u_n(\eta)}\right)^{1/2}-1
 \right|
 \le \frac{C|s-\eta|}{R_n}.
\]
The kernel bounds remain integrable after multiplication by
$|s-\eta|$, and differentiation of the factor is also
$O(R_n^{-1})$.  This proves the scalar estimate in
\eqref{eq:leftGreenOperatorComparison}.
Finally, the exact identity
\[
        K_{G,n}(\eta,s)-(s-\eta)
        =-\frac{(s-\eta)^2}{2u_n(\eta)}
\]
and $u_n(\eta)\simeq  R_n$ on $[K,\Lambda_n]$ prove the magnetic estimate in
\eqref{eq:leftGreenOperatorComparison} by the same weighted integral
bounds used above.
\end{proof}

\subsubsection{The one-parameter family of left Cauchy data}

We now construct the nonlinear solutions whose Cauchy data furnish the
left boundary condition at $y=-K$.  For $C$ near the physical limiting
value
\[
        C_{\beta,K}^{\rm ph}:=\Phi_\beta(-K),
\]
the parameter $C$ is the prescribed scalar value at $\eta=K$.

\begin{Lemma}\label{lem:leftNonlinearChart}
 \red{Fix $p\geq 0$.} There are finite $K_0>0$,
$C_{\log,0}>0$, and $0<c_0<1$, $c>0$, uniform
for $\beta$ in any fixed compact interval $J\Subset(0,4)$, with the
following property.  Fix $K\ge K_0$, $C_{\log}\ge C_{\log,0}$, and
put $\Lambda_n=C_{\log}\sqrt{\log R_n}$.  Let
\[
        I_{\beta,K}
        =\left\{C:
        |C-C_{\beta,K}^{\rm ph}|
        \le c_0C_{\beta,K}^{\rm ph}\right\}.
\]
For every $C\in I_{\beta,K}$ there is a unique pair
\[
        (\Phi_{\beta,C}^-,\Gamma_{\beta,C}^-)
\]
in the weighted ball specified in the proof which solves the interface
equations on $[K,\infty)$, satisfies
\[
\begin{split}
 \Phi_{\beta,C}^-(K)&=C,\\
 \Gamma_{\beta,C}^-(\eta)-\sqrt\beta\,\eta&\longrightarrow0,\\
 (\Gamma_{\beta,C}^-)'(\eta)-\sqrt\beta&\longrightarrow0
\end{split}
\qquad(\eta\to\infty),
\]
and has the decaying scalar behavior at infinity.
For every sufficiently large $n$ and every $C\in I_{\beta,K}$ there is
a unique pair
\[
        (\Phi_{n,C}^-,\Gamma_{n,C}^-)
\]
in the corresponding weighted ball which solves the exact finite radial
equations on $[K,\Lambda_n]$ and satisfies
\bel{eq:finiteChartBC}
\begin{split}
 \Phi_{n,C}^-(K)&=C,\\
 W(u_n^{1/2}\Phi_{n,C}^-,D_n^0)(\Lambda_n)&=0,\\
 \Gamma_{n,C}^-(\Lambda_n)&=\Gamma_n^0(\Lambda_n),\\
 (\Gamma_{n,C}^-)'(\Lambda_n)&=(\Gamma_n^0)'(\Lambda_n).
\end{split}
\ee
Both solution maps are $C^1$ in $C$.
For the limiting family, the Cauchy-data map and its $C$ derivative
are jointly continuous in $(\beta,C)$, for $\beta\in J$ and
$C\in I_{\beta,K}$. With
\[
 m_{\beta,K}(C)
 =\frac{(\Phi_{\beta,C}^-)'(K)}{C},
 \qquad
 m_{n,K}(C)
 =\frac{(\Phi_{n,C}^-)'(K)}{C},
\]
one has
\[
        |m_{n,K}(C)-m_{\beta,K}(C)|
        \le C(K,\beta,C_{\log})R_n^{-1}.
\]
Uniformly for $C\in I_{\beta,K}$ and $K\le\eta\le \Lambda_n$,
\bel{eq:leftChartScalar}
\begin{split}
 &|\Phi_{n,C}^-(\eta)-\Phi_{\beta,C}^-(\eta)|
 +(1+\eta)^{-1}
 |(\Phi_{n,C}^-)'(\eta)-(\Phi_{\beta,C}^-)'(\eta)|\\
 &\qquad\le
 C(K,\beta,C_{\log})R_n^{-1}
 (1+\eta)^p e^{-c(\eta^2-K^2)},
\end{split}
\ee
and
\bel{eq:leftChartGauge}
\begin{split}
 &|\Gamma_{n,C}^-(\eta)-\Gamma_{\beta,C}^-(\eta)-P_n(\eta)|\\
 &\quad
 +(1+\eta)^{-1}
 |(\Gamma_{n,C}^-)'(\eta)
  -(\Gamma_{\beta,C}^-)'(\eta)-P_n'(\eta)|\\
 &\qquad\le
 C(K,\beta,C_{\log})R_n^{-1}
 (1+\eta)^p e^{-c(\eta^2-K^2)}.
\end{split}
\ee
The limiting family also satisfies
\bel{eq:leftLimitingParameterDecay}
\begin{split}
 \sum_{j=0}^1(1+\eta)^{-j}
 \bigl(
 |\partial_\eta^j\partial_C\Phi_{\beta,C}^-(\eta)|
 +|\partial_\eta^j\partial_C\Gamma_{\beta,C}^-(\eta)|
 \bigr)
 \le \red{C(K,\beta,C_{\log})}(1+\eta)^p e^{-c(\eta^2-K^2)} .
\end{split}
\ee
Define the Cauchy-data maps, in the order
$(\Phi,\Gamma,\partial_y\Phi,\partial_y\Gamma)$, by
\bel{eq:leftTraceMaps}
\begin{split}
 \Theta_{n,K}^-(C)
 &:=
 \bigl(
 \Phi_{n,C}^-(K),\Gamma_{n,C}^-(K),
 -(\Phi_{n,C}^-)'(K),-(\Gamma_{n,C}^-)'(K)
 \bigr),\\
 \Theta_{\beta,K}^-(C)
 &:=
 \bigl(
 \Phi_{\beta,C}^-(K),\Gamma_{\beta,C}^-(K),
 -(\Phi_{\beta,C}^-)'(K),-(\Gamma_{\beta,C}^-)'(K)
 \bigr).
\end{split}
\ee
Then
\bel{eq:leftTraceComparison}
 \sup_{C\in I_{\beta,K}}
 |\Theta_{n,K}^-(C)-\Theta_{\beta,K}^-(C)|
 \le C(K,\beta,C_{\log})R_n^{-1},
\ee
and
\bel{eq:leftTraceTangentLine}
 D\Theta_{\beta,K}^-(C_{\beta,K}^{\rm ph})\RR
 =E_\beta^-(-K).
\ee
\end{Lemma}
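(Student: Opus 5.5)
We will construct both families by a single fixed-point scheme written as perturbations of the empty-core/Weber data, using the Green operators of Lemma~\ref{lem:regularOriginMixedGreen}. For the limiting family we write
\[
 \Phi_{\beta,C}^-=C\,d_\beta+v,\qquad \Gamma_{\beta,C}^-=\sqrt\beta\,\eta+g,
\]
and reformulate the interface equations (in the variable $\eta=-y$) as
\[
 v=\mathcal G_{\beta,K}\bigl[N_S(v,g)\bigr],\qquad g=\mathcal G_{G,\infty}\bigl[2(\sqrt\beta\eta+g)(Cd_\beta+v)^2\bigr],
\]
where
\[
 N_S(v,g)=(2\sqrt\beta\eta g+g^2+\beta(Cd_\beta+v)^2)(Cd_\beta+v).
\]
The boundary conditions $v(K)=0$, decay at infinity, and $g,g'\to0$ at infinity are then built into the Green operators. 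The finite family is treated the same way on $[K,\Lambda_n]$, with
\[
 u_n^{1/2}\Phi_{n,C}^-=C\,u_n^{1/2}(K)\,d_n+\tilde v,\qquad \Gamma_{n,C}^-=\Gamma_n^0+\tilde g.
\]
Here we use $\widehat{\mathcal G}_{S,n}$ (mixed conditions $v(K)=0$, $W(\cdot,D_n^0)(\Lambda_n)=0$) and $\mathcal G_{G,n}$ (terminal conditions at $\Lambda_n$). These conditions reproduce exactly \eqref{eq:finiteChartBC}.

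\textbf{Contraction.} We work in $X_p$ for the scalar correction and in a norm for $g$ weighted by $w_{K,p}$ with an extra factor $(1+\eta)^{-1}$. The sources are at least quadratic in the Gaussian-small quantity $Cd_\beta+v$. The bound $d_\beta\lesssim e^{-\sqrt\beta(\eta^2-K^2)/2}$ together with $c<\sqrt\beta/2$ gives the following: every source term carries an extra Gaussian factor relative to $w_{K,p}$, or a factor $\eta g$ times a small scalar. Estimates \eqref{eq:leftChartScalarGreen}, \eqref{eq:leftChartGaugeGreen} and \eqref{eq:leftGreenSameWeight}, with the gain $C/K$, then give a contraction on a ball once $K\ge K_0$. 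The threshold $K_0$ is uniform for $C\in I_{\beta,K}$ and $\beta\in J$, because $C_{\beta,K}^{\rm ph}$ is comparable to $d_\beta$-size data by Lemma~\ref{lem:leftProjection}. Smooth (polynomial) dependence on $C$ gives $C^1$ dependence through the implicit function theorem, and it also gives \eqref{eq:leftLimitingParameterDecay}. Joint continuity in $(\beta,C)$ follows from the continuity of $d_\beta$ and of the Green operators.

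\textbf{$R_n^{-1}$ comparison.} We subtract the two fixed-point equations. The linear data differ by $d_n-d_\beta$, which is controlled by \eqref{eq:regularOriginWeberComparison}. The operators differ by the bound \eqref{eq:leftGreenOperatorComparison} in $Y_{p_0}\to X_{p_*}$. The coefficients differ by $P_n$, which by \eqref{eq:exactAffineDefect} and \eqref{eq:SLhyp} is $O((1+\eta)^2/R_n)$. Because the nonlinear map is a contraction in the weaker space $X_{p_*}$ as well, the difference is $O(R_n^{-1})$ in $X_{p_*}$. This yields \eqref{eq:leftChartScalar}, \eqref{eq:leftChartGauge} with $p=p_*$. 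The gauge difference is measured after removing $P_n$, since $\Gamma_n^0-\sqrt\beta\eta=P_n$ exactly. Evaluating at $\eta=K$ gives \eqref{eq:leftTraceComparison} and the bound on $m_{n,K}-m_{\beta,K}$; there we use $P_n(K)=O(R_n^{-1})$ and $C\gtrsim C^{\rm ph}_{\beta,K}>0$.

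\textbf{Main obstacle: the tangent line.} The delicate step is \eqref{eq:leftTraceTangentLine}. First, uniqueness in the ball identifies the member of the limiting family with $C=C_{\beta,K}^{\rm ph}$ as the CHMO orbit $(\Phi_\beta(-\eta),\Gamma_\beta(-\eta))$. This requires checking that $\Phi_\beta(-\cdot)-C^{\rm ph}d_\beta$ and $\Gamma_\beta(-\cdot)-\sqrt\beta\eta$ lie in the ball, which follows from \eqref{eq:lefttail} and Lemma~\ref{lem:interfaceUniformTails}. Next we differentiate in $C$. The result $(\partial_C\Phi,\partial_C\Gamma)$ solves the linearization \eqref{eq:linode} after $y=-\eta$. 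Its scalar part is $d_\beta$ plus a Gaussian-small correction, so it is recessive. Its magnetic part decays together with its derivative, which is exactly the normalization \eqref{eq:normalized}. Its scalar value at $K$ equals $1$, so this nonzero trace lies in the one-dimensional space $E_\beta^-(-K)$ of Lemma~\ref{lem:leftline}. The sign flips of the derivative entries in \eqref{eq:leftTraceMaps} match the change $y=-\eta$, and this gives the equality of lines.
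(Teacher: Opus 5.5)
Your construction matches the paper's in its main lines. You use the same Green-operator fixed points; you split off $Cd_\beta$ where the paper keeps $\Phi$ as the unknown, which is equivalent. The contraction again comes from the $C/K$ gain and the Gaussian smallness of $C^{\rm ph}_{\beta,K}$, the comparison is again made in the weaker $X_{p_*}$ norm via \eqref{eq:leftFixedPointComparisonLipschitz}, and the tangent-line argument is the same. Your check that the CHMO orbit lies in the uniqueness ball is a detail the paper leaves implicit. However, the $R_n^{-1}$ comparison does not close as you describe it.

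You subtract the finite fixed-point equation on $[K,\Lambda_n]$ from the limiting one and control three differences: the homogeneous data, the Green operators via \eqref{eq:leftGreenOperatorComparison}, and the coefficients via $P_n$. But \eqref{eq:leftGreenOperatorComparison} compares $\widehat{\mathcal G}_{S,n}$ and $\mathcal G_{G,n}$ with $R_T\mathcal G_{\beta,K}E_T$ and $R_T\mathcal G_{G,\infty}E_T$. These are the limiting operators acting on sources \emph{extended by zero} beyond $T=\Lambda_n$.

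Your limiting solution $Z_{\beta,C}$ is instead the fixed point of the half-line map, whose nonlinear source does not vanish on $(\Lambda_n,\infty)$. Accordingly, its restriction to $[K,\Lambda_n]$ has $W(\cdot,d_\beta)(\Lambda_n)\ne0$ and $g(\Lambda_n),g'(\Lambda_n)\ne0$. It therefore does not satisfy the right-end conditions built into the truncated operators. This leaves a fourth error term, $R_{\Lambda_n}\mathbf G_\beta\bigl(\mathbf 1_{(\Lambda_n,\infty)}\mathcal N_\beta(Z_{\beta,C})\bigr)$, which is missing from your list.

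The paper handles this term in two steps:
\begin{itemize}
\item It introduces the truncated limiting fixed point $Z^{\Lambda_n}_{\beta,C}$ and proves \eqref{eq:leftFixedPointTruncation}. This relies on the actual sources being quadratic or cubic in $\Phi\simeq Cd_\beta$, so that they decay much faster than the weight. A source merely of the size allowed by the $Y_p$ norm would not give an $O(R_n^{-1})$ tail near $\eta=\Lambda_n$, because $\ell_\beta/w_{K,p_*}$ grows like $e^{(\sqrt\beta/2+c)\eta^2}$.
\item Only then does it compare $Z_{n,C}$ with $Z^{\Lambda_n}_{\beta,C}$ through \eqref{eq:leftFixedPointMapComparison}.
\end{itemize}
The tail is $O(e^{-c\Lambda_n^2})=O(R_n^{-cC_{\log}^2})$. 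This is $O(R_n^{-1})$ only after $C_{\log,0}$ is enlarged, which is an additional use of the largeness of $C_{\log}$ beyond what you inherit from Lemmas~\ref{lem:regularOriginLogDerivative} and~\ref{lem:regularOriginMixedGreen}.

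Finally, your comparison produces the weight $(1+\eta)^{p_*}e^{-c(\eta^2-K^2)}$ with $p_*\ge p+2$. Since $p$ is fixed in the statement, you cannot simply conclude ``with $p=p_*$''. You must absorb the factor $(1+\eta)^{p_*-p}$ by slightly decreasing the Gaussian exponent $c$, as the paper does.
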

The tangent identity \eqref{eq:leftTraceTangentLine} uses the
one-dimensional endpoint space from Lemma~\ref{lem:leftline}\green{, and} it does
not require the rank assertion \green{from} Proposition~\ref{prop:nearBPS}.
\begin{proof}
We give the fixed-point equations and the estimates defining the
uniqueness class.  Write
\[
 g=\Gamma-\sqrt\beta\,\eta
\]
for the limiting problem.  Then
\bel{eq:limitTailFixedPoint}
\begin{split}
 \Phi(\eta)
 &=Cd_\beta(\eta)
 +\mathcal G_{\beta,K}
 \left(
 (2\sqrt\beta\,\eta g+g^2+\beta\Phi^2)\Phi
 \right)(\eta),\\
 g(\eta)
 &=2\int_\eta^\infty(s-\eta)
       (\sqrt\beta\,s+g(s))\Phi(s)^2\,ds .
\end{split}
\ee
For the finite problem, write
\[
        H=\Gamma-\Gamma_n^0.
\]
After conjugating the scalar equation by
$\Psi=u_n^{1/2}\Phi$, the equations are
\bel{eq:finiteTailFixedPoint}
\begin{split}
 \Phi(\eta)
 &=C\,u_n(K)^{1/2}u_n(\eta)^{-1/2}D_n^0(\eta)\\
 &\quad+
 \widehat{\mathcal G}_{S,n}
 \left(
 (2\Gamma_n^0H+H^2+\beta\Phi^2)\Phi
 \right)(\eta),\\
 H(\eta)
 &=\mathcal G_{G,n}
 \left(2(\Gamma_n^0+H)\Phi^2\right)(\eta).
\end{split}
\ee
Differentiating these equations verifies the two differential systems.
The definitions of the Green operators give precisely the boundary
conditions in the statement. \red{In what follows, we shall apply Lemma \ref{lem:regularOriginMixedGreen} with $\displaystyle p_0=p$, and let $\displaystyle p_\ast\geq p_0+2$ be a comparison exponent for which \eqref{eq:leftGreenOperatorComparison} holds.} For the limiting problem\red{, we} use
\[
 \mathfrak X_{\beta,K}
 =X_{p}([K,\infty))\times X_{p}([K,\infty)),
\]
and for the finite problem use
\[
 \mathfrak X_{n,K}
 =X_{p}([K,\Lambda_n])\times X_{p}([K,\Lambda_n]).
\]
For comparisons, use the weaker norms
\[
 \mathfrak X_{\beta,K}^*
 =X_{p_*}([K,\infty))\times X_{p_*}([K,\infty)),
 \qquad
 \mathfrak X_{n,K}^*
 =X_{p_*}([K,\Lambda_n])\times X_{p_*}([K,\Lambda_n]),
\]
where $p_*$ is the exponent in
Lemma~\ref{lem:regularOriginMixedGreen}.
Let $\mathscr P_{\beta,C}$ and $\mathscr P_{n,C}$ denote the right sides
of \eqref{eq:limitTailFixedPoint} and
\eqref{eq:finiteTailFixedPoint}, respectively.  The Green estimates of
Lemma~\ref{lem:regularOriginMixedGreen}, the bounds
\[
 |\Gamma_n^0(\eta)|+\sqrt\beta\,\eta\le C(1+\eta),
\]
and the fact that
$C_{\beta,K}^{\rm ph}$ is Gaussian-small as $K\to\infty$ give numbers
$r_K>0$ and $K_0$ such that
\bel{eq:leftFixedPointBall}
\begin{split}
 \mathscr P_{\star,C}
 \bigl(\overline B_{\mathfrak X_{\star,K}}(0,r_K)\bigr)
 &\subset
 \overline B_{\mathfrak X_{\star,K}}(0,r_K),\\
 \|\mathscr P_{\star,C}(Z)
       -\mathscr P_{\star,C}(\widetilde Z)\|_{\mathfrak X_{\star,K}}
 &\le \frac{1}{3}
 \|Z-\widetilde Z\|_{\mathfrak X_{\star,K}},
\end{split}
\ee
for $\star\in\{\beta,n\}$, all $C\in I_{\beta,K}$, and all
$Z,\widetilde Z$ in
the displayed ball.  The same calculation, with the extra fixed
polynomial factor allowed by $p_*$, also gives
\bel{eq:leftFixedPointComparisonLipschitz}
 \|\mathscr P_{\star,C}(Z)-\mathscr P_{\star,C}(\widetilde Z)\|
       _{\mathfrak X_{\star,K}^*}
 \le \frac{1}{3}
 \|Z-\widetilde Z\|_{\mathfrak X_{\star,K}^*}.
\ee
To see the first estimate directly, every scalar
source in \eqref{eq:limitTailFixedPoint} and
\eqref{eq:finiteTailFixedPoint} contains the decaying scalar factor,
whereas every magnetic source contains two scalar factors. \red{Upon applying the Green operators and also accounting for the homogeneous terms, the right hand sides of the fixed point equations turn out to be bounded by}
\red{
\begin{align*}
    C_0C^{\text{ph}}_{\beta,K}(1+c_0)(1+K)^{-p}+C_{p}\left(\frac{(1+K)^{p}r_K}{K}+\frac{(1+K)^{2p}r^2_K}{K^2}\right)r_K
\end{align*}
Indeed, for instance, the quadratic term from the scalar Higgs source in \eqref{eq:limitTailFixedPoint} satisfies, via the Green estimates in Lemma~\ref{lem:regularOriginMixedGreen}
\begin{align*}
  \|\mathcal{G}_{\beta,K}(2\sqrt{\beta}\eta g\Phi)\|_{X_{p}}&\leq \frac{C_{p}}{K}\|2\sqrt{\beta}\eta g\Phi\|_{Y_{p}} 
\end{align*}
On the other hand,
\begin{align*}
    \frac{|\eta g(\eta)\Phi(\eta)|}{(1+\eta)w_{K,p}(\eta)}&\leq C_{p}\|g\|_{X_{p}}\|\Phi\|_{X_{p}}w_{K,p}(\eta)\leq C_{p}(1+K)^{p}r_K^2,
\end{align*}
hence
\begin{align*}
     \|\mathcal{G}_{\beta,K}(2\sqrt{\beta}\eta g\Phi)\|_{X_{p}}&\leq \frac{C_{p}(1+K)^{p}r_K^2}{K}.
\end{align*}
Similar considerations for the other terms show that
\begin{align*}
    \left\|G_{\beta,K}
 \left(
 (2\sqrt\beta\,\eta g+g^2+\beta\Phi^2)\Phi
 \right)\right\|_{X_{p}}\leq C_{p}\left(\frac{(1+K)^{p}r_K}{K}+\frac{(1+K)^{2p}r^2_K}{K^2}\right)r_K.
\end{align*}
For the linear term we have
\begin{align*}
    \|Cd_\beta\|_{X_{p}}\leq C^{\text{ph}}_{\beta,K}(1+c_0)\|d_\beta\|_{X_{p}}
\end{align*}
Given the known asymptotic behavior for $d_\beta$, there exists some $c_1>0$ such that
\begin{align*}
    |d_\beta(\eta)|+\frac{|d_\beta'(\eta)}{1+\eta}\leq C_0e^{-c_1(\eta^2-K^2)}.
\end{align*}
Here, we may assume that $c_1>c$. Then,
\begin{align*}
    \|d_\beta\|_{X_{p}}\leq C_0\sup_{\substack{\eta\geq\kappa}}(1+\eta)^{-p}e^{-(c_1-c)(\eta^2-K^2)}\leq C_0(1+K)^{-p},
\end{align*}
hence
\begin{align*}
    \|Cd_\beta\|_{X_{p}}\leq c_0C^{\text{ph}}_{\beta,K}(1+c_0)(1+K)^{-p}. 
\end{align*}
Thus,
\begin{align*}
    \|\mathcal{P}_{p,C}(Z)\|_{\mathfrak{X}_{p,K}}\leq  C_0C^{\text{ph}}_{\beta,K}(1+c_0)(1+K)^{-p}+C_{p}\left(\frac{(1+K)^{p}r_K}{K}+\frac{(1+K)^{2p}r^2_K}{K^2}\right)r_K.
\end{align*}
for every $Z\in \overline{B}_{\mathfrak X_{p,K}}(0,r_K)$.
Similarly,
\begin{align*}
    \|\mathcal{P}_{\beta,C}(Z)-\mathcal{P}_{p,C}(\tilde{Z})\|_{\mathfrak{X}_{\beta,K}}\leq  C_{p}\left(\frac{(1+K)^{p}r_K}{K}+\frac{(1+K)^{2p}r^2_K}{K^2}\right)\|Z-\tilde{Z}\|_{\mathfrak{X}_{\beta,K}}.
\end{align*}
We now choose $r_K$ of the form $r_k=\epsilon(1+K)^{-p}$, with $\epsilon>0$ small enough that 
\begin{align*}
  C_{p}\left(\frac{\epsilon}{K}+\frac{\epsilon^2}{K^2}\right)\leq\frac{1}{3}\,\qquad C_0C^{ph}_{\beta,K}(1+c_0)\leq\frac{\epsilon}{3}. 
\end{align*}
Due to the asymptotic behavior of $\Phi_\beta$, this can be done for large enough $K\geq 1$.
}  The contraction theorem gives
                existence and uniqueness in the stated balls. \red{\eqref{eq:finiteTailFixedPoint} can be analyzed similarly}.

Define $\mathscr P_{\beta,C}^{\Lambda_n}$ on
$\mathfrak X_{n,K}$ by the right side of
\eqref{eq:limitTailFixedPoint}, with
$\mathcal G_{\beta,K}$ and $\mathcal G_{G,\infty}$ replaced by
$R_{\Lambda_n}\mathcal G_{\beta,K}E_{\Lambda_n}$ and
$R_{\Lambda_n}\mathcal G_{G,\infty}E_{\Lambda_n}$, respectively.  Thus only the
nonlinear sources are extended by zero; the homogeneous scalar term is
$Cd_\beta$ restricted to $[K,\Lambda_n]$.  The same estimates as in
\eqref{eq:leftFixedPointBall} give a unique fixed point
$Z_{\beta,C}^{\Lambda_n}$ in the finite-interval ball.
\red{We claim that
\[
 \left\|
 u_n(K)^{1/2}u_n(\eta)^{-1/2}D_n^0(\eta)-d_\beta(\eta)
 \right\|_{X_{p_*}([K,\Lambda_n])}
 \le \frac{C(K,\beta,C_{\log})}{R_n}
\]
Indeed, estimate \eqref{eq:regularOriginWeberComparison} from Lemma ~\ref{lem:regularOriginLogDerivative} readily gives the bound
\begin{align*}
\|D_n^0(\eta)-d_\beta(\eta)\|_{X_{p_\ast}([K,\Lambda_n])}
 &\le
 \frac{C(K,\beta,C_{\log})}{R_n}.    
\end{align*}
Given that $\displaystyle u_n(K)^{1/2}u_n(\eta)^{-1/2}=1+O\left(\frac{\eta-K}{R_n-\eta}\right)$, together with the fact that $\eta\leq \Lambda_n^+$, we can see that the latter also implies
\begin{align*}
\|u_n(K)^{1/2}u_n(\eta)^{-1/2}(D_n^0(\eta)-d_\beta(\eta))\|_{X_{p_\ast}([K,\Lambda_n])}
 &\le
 \frac{C(K,\beta,C_{\log})}{R_n}.    
\end{align*}
Finally, the asymptotics for the Weber solution $d_\beta$ also imply the bound 
\begin{align*}
\|(u_n(K)^{1/2}u_n(\eta)^{-1/2}-1)d_\beta(\eta)\|_{X_{p_\ast}([K,\Lambda_n])}
 &\le
 \frac{C(K,\beta,C_{\log})}{R_n}.    
\end{align*}
Combined, they give the previously claimed bound.
We also note that \eqref{eq:SLhyp} and \eqref{eq:exactAffineDefect} imply that
\begin{align*}
    P_n(\eta)&=\Gamma_n^0(\eta)-\sqrt{\beta}\eta=O(R_n^{-1}(1+\eta)^2).
\end{align*}
}
\red{Together with t}he coefficient estimate
\eqref{eq:leftChartCoefficientComparison}, $p_*\geq \red{p}+2$, \red{and} the Green-operator comparison\red{, this implies that}
\bel{eq:leftFixedPointMapComparison}
 \sup_{\|Z\|_{\mathfrak X_{n,K}}\le r_K}
 \|\mathscr P_{n,C}(Z)
       -\mathscr P_{\beta,C}^{\Lambda_n}(Z)\|_{\mathfrak X_{n,K}^*}
 \le \frac{C(K,\beta,C_{\log})}{R_n}
 .
\ee
The limiting nonlinear sources and their first $C$-derivatives
satisfy
\[
        |F^{(\ell)}(s)|
        \le C_{\ell,K}(1+s)^{C_1} e^{-cs^2},
        \qquad 0\le\ell\le1.
\]
\red{We also have
\begin{align*}
  Z_{\beta,C}^{\Lambda_n}
       -R_{\Lambda_n}Z_{\beta,C}&=\mathscr{P}^{\Lambda_n}_{\beta,C}(Z_{\beta,C}^{\Lambda_n})- R_{\Lambda_n}\mathscr{P}_{\beta,C}(Z_{\beta,C})\\
       &=(\mathscr{P}^{\Lambda_n}_{\beta,C}(Z_{\beta,C}^{\Lambda_n})- \mathscr{P}^{\Lambda_n}_{\beta,C}(R_{\Lambda_n}Z_{\beta,C}))+(\mathscr{P}^{\Lambda_n}_{\beta,C}(R_{\Lambda_n}Z_{\beta,C})- R_{\Lambda_n}\mathscr{P}_{\beta,C}(Z_{\beta,C}))
\end{align*}
For the first line, we have
\begin{align*}
   \|\mathscr{P}^{\Lambda_n}_{\beta,C}(Z_{\beta,C}^{\Lambda_n})- \mathscr{P}^{\Lambda_n}_{\beta,C}(R_{\Lambda_n}Z_{\beta,C})\|_{\mathfrak{X}^\ast_{n,K}}&\leq\frac{1}{3}\|Z_{\beta,C}^{\Lambda_n}-R_{\Lambda_n}Z_{\beta,C}\|_{\mathfrak{X}^\ast_{n,K} }
\end{align*}
For the second line, we can write
\begin{align*}
    \mathscr{P}^{\Lambda_n}_{\beta,C}(R_{\Lambda_n}Z_{\beta,C})- R_{\Lambda_n}\mathscr{P}_{\beta,C}(Z_{\beta,C})&=-R_{\Lambda_n}\mathbf{G}_{\beta}(\mathbf{1}_{(\Lambda_n,\infty)})(\mathcal{N}_\beta(Z_{\beta,C}))
\end{align*}
Here, $\mathcal{N}_\beta(Z_{\beta,C})$ denotes the source term in the joint differential equation solved by $Z_{\beta,C}$, constructed by adjoining the two forcing terms in the equations for the components of $Z_{\beta,C}$, and $\mathbf{G}_{\beta}$ is the joint Green function constructed likewise. 
}
The half-line Green estimates therefore give
\red{
\begin{align*}
    \|\mathscr{P}^{\Lambda_n}_{\beta,C}(R_{\Lambda_n}Z_{\beta,C})- R_{\Lambda_n}\mathscr{P}_{\beta,C}(Z_{\beta,C})\|_{\mathfrak X_{n,K}^*}
 \le C_{K,\beta}e^{-c\Lambda_n^2}.
\end{align*}
Thus,
\begin{align*}
    \|Z_{\beta,C}^{\Lambda_n}-R_{\Lambda_n}Z_{\beta,C}\|_{\mathfrak{X}^\ast_{n,K} }&\leq\frac{1}{3}\|Z_{\beta,C}^{\Lambda_n}-R_{\Lambda_n}Z_{\beta,C}\|_{\mathfrak{X}^\ast_{n,K} }+ C_{K,\beta}e^{-c\Lambda_n^2}.
\end{align*}
Upon absorbing the first term on the right hand side, this gives
}
\bel{eq:leftFixedPointTruncation}
 \|Z_{\beta,C}^{\Lambda_n}
       -R_{\Lambda_n}Z_{\beta,C}\|_{\mathfrak X_{n,K}^*}
 \le C_{K,\beta}e^{-c\Lambda_n^2}.
\ee
Increase $C_{\log,0}$ so that $e^{-c\Lambda_n^2}\le R_n^{-2}$.
\red{We have
\begin{align*}
  Z_{n,C}-Z_{\beta,C}^{\Lambda_n}&=\mathscr{P}_{n,C}(Z_{n,C})-\mathscr{P}^{\Lambda_n}_{n,C}(Z^{\Lambda_n}_{\beta,C})\\
  &=(\mathscr{P}_{n,C}(Z_{n,C})-\mathscr{P}_{n,C}(Z^{\Lambda_n}_{\beta,C}))+(\mathscr{P}_{n,C}(Z^{\Lambda_n}_{\beta,C})-\mathscr{P}^{\Lambda_n}_{n,C}(Z^{\Lambda_n}_{\beta,C}))
\end{align*}
}
Using
\eqref{eq:leftFixedPointComparisonLipschitz} \red{for the first term on the right} and
\eqref{eq:leftFixedPointMapComparison} \red{for the second one on the right},
\[
\begin{split}
 \|Z_{n,C}-Z_{\beta,C}^{\Lambda_n}\|_{\mathfrak X_{n,K}^*}
 &\le \frac{1}{3}
 \|Z_{n,C}-Z_{\beta,C}^{\Lambda_n}\|_{\mathfrak X_{n,K}^*}
 +\frac{C(K,\beta,C_{\log})}{R_n},
\end{split}
\]
\red{which, upon absorbing the first term on the right, becomes
\begin{align*}
   \|Z_{n,C}-Z_{\beta,C}^{\Lambda_n}\|_{\mathfrak X_{n,K}^*}
 &\le\frac{C(K,\beta,C_{\log})}{R_n}. 
\end{align*}
}
Combining this estimate with
\eqref{eq:leftFixedPointTruncation}\red{, using the fact that $p_\ast\geq p+2$, as well as shrinking the Gaussian exponent when necessary,} proves
\eqref{eq:leftChartScalar} and \eqref{eq:leftChartGauge}.
At $\eta=K$, the sharp localization \eqref{eq:SLhyp} and
\eqref{eq:exactAffineDefect} give
$|P_n(K)|+|P_n'(K)|\le C_{K,\beta}R_n^{-1}$. \red{The inequality $C\geq (1-c_0)C^{ph}_{\beta,K}>0$, along with \eqref{eq:leftChartScalar} evaluated at $\eta=K$ implies the bound 
\begin{align*}
     |m_{n,K}(C)-m_{\beta,K}(C)|
        \le C(K,\beta,C_{\log})R_n^{-1}.
\end{align*}
}
Evaluati\red{ng} the two
profile estimates at $K$ therefore proves
\eqref{eq:leftTraceComparison}.

The maps in \eqref{eq:limitTailFixedPoint} and
\eqref{eq:finiteTailFixedPoint} are polynomial in the profiles and affine
in $C$.  A contraction mapping argument shows that their
fixed points are $C^1$ in $C$.  Equivalently, differentiating once gives
\[
 (I-D_Z\mathscr P_{\star,C})\partial_CZ_{\star,C}
 =\partial_C\mathscr P_{\star,C}.
\]
The inverse is the Neumann series and has norm at most $3/2$.
\green{We note that these bounds are uniform f}or the limiting family.
\green{We f}ix a common Gaussian exponent $0<c'<c$.  \green{Once we increase $K_0$, t}he Green estimates in
\eqref{eq:leftGreenSameWeight} also \green{provide us with} a contraction constant
$q<1$, and an inverse bound $(1-q)^{-1}$ in this weaker norm on
the original balls.  The stronger Gaussian
bounds \green{ensure that} the tails \green{are} uniformly small in the weaker norm. \green{We also note that t}he Weber solutions and the Green kernels are continuous in $\beta$ on
bounded intervals\green{, and that t}heir uniform Gaussian bounds \green{guarantee the} continuity of
the fixed-point maps and their derivatives in a common, slightly weaker
Gaussian norm. \green{In order to see that, we can} split each integral at a fixed large point, use continuity
on the bounded part, and then \green{allow} that point \green{to} tend to infinity in the
uniform tail bound.  \green{Upon s}ubtracting the fixed-point equations, and then
the displayed differentiated equations, \green{we infer that }
$Z_{\beta,C}$ and $\partial_C Z_{\beta,C}$ \green{are jointly continuous}.  \green{Upon evaluating $Z_{\beta,C}$} 
and \green{its} first derivative at $K$\green{, we deduce that the aforementioned Cauchy-data maps are continuous}.  In particular, their $C$ derivatives have a common
modulus of continuity on \green{the compact set}
$\{(\beta,C):\beta\in J,\ C\in I_{\beta,K}\}$.
Here\green{, the positivity and continuity of the interface also imply that} $\min_{\beta\in J}\Phi_\beta(-K)>0$.
In the
limiting differentiated fixed-point equation, the
inhomogeneous scalar term is $d_\beta$\red{, while} in the second \red{one} it is zero.
All other terms contain either a parameter derivative of the solution
or a factor of $\Phi_{\beta,C}^-$.  \red{We now apply} the half-line scalar and
magnetic Green bounds in \eqref{eq:leftGreenSameWeight}, first with
$p=p_0$ and then with the fixed larger exponent $p_*$, \red{use that $p_\ast\geq p+2$, and shrink the Gaussian exponent as before, in order to infer that}
\[
\begin{split}
 &\sum_{j=0}^1(1+\eta)^{-j}
 \left(
 |\partial_\eta^j\partial_C\Phi_{\beta,C}^-(\eta)|
 +|\partial_\eta^j\partial_C
      \Gamma_{\beta,C}^-(\eta)|
 \right)\\
 &\qquad\le
 C_{K,\beta}(1+\eta)^p e^{-c(\eta^2-K^2)}.
\end{split}
\]
This is \eqref{eq:leftLimitingParameterDecay}. 
At $C=C_{\beta,K}^{\rm ph}$ the limiting solution is
\[
 \Phi_{\beta,C_{\beta,K}^{\rm ph}}^-(\eta)
 =\Phi_\beta(-\eta),
 \qquad
 \Gamma_{\beta,C_{\beta,K}^{\rm ph}}^-(\eta)
 =\Gamma_\beta(-\eta).
\]
Differentiation with respect to $C$ gives a nonzero solution of the
linearized interface equations satisfying the normalized conditions at
$y=-\infty$.  Its Cauchy data at $y=-K$ therefore span
$E_\beta^-(-K)$, which proves
\eqref{eq:leftTraceTangentLine}.
\end{proof}

\subsubsection{The trace of the physical radial solution}

We next show that the physical radial solution satisfies, up to a
negligible error, the conditions imposed at $\eta=\Lambda_n$ in
\eqref{eq:finiteChartBC}.  This conclusion is obtained from regularity at
the origin and does not use the fixed-window estimate.

\begin{Lemma}\label{lem:physicalEndpointDiscrepancy}
 \red{We recall the notations}
 \[
 \Phi_n^-(\eta)=\Phi_n(-\eta),\qquad
 \Gamma_n^-(\eta)=\Gamma_n(-\eta).
\]
\red{We also set}
\[
 \Psi_n^{\rm ph}=u_n^{1/2}\Phi_n^-,
 \qquad
 H_n^{\rm ph}=\Gamma_n^- -\Gamma_n^0.
\]
There are $\sigma<\infty$ and constants $c_0,C>0$ such that the following
holds.  Put
$T=C_{\log}\sqrt{\log R_n}$ and suppose
\bel{eq:physicalScalarTailInput}
 |\Phi_n^-(\eta)|
 +(1+\eta)^{-1}|(\Phi_n^-)'(\eta)|
 \le
 \delta_T:=\red{Ce^{-c_0T^2}},
 \qquad T\le\eta<R_n.
\ee
\red{In particular, we may assume that $\delta_T\leq 1$ for $n$ sufficiently large. }Then
\bel{eq:physicalEndpointDiscrepancy}
\begin{split}
 |H_n^{\rm ph}(T)|
 +(1+T)^{-1}|(H_n^{\rm ph})'(T)|
 &\le CR_n^\sigma\delta_T^2,\\
 \frac{|W(\Psi_n^{\rm ph},D_n^0)(T)|}{|D_n^0(T)|+(1+T)^{-1}|(D_n^0)'(T)|}
 &\le CR_n^\sigma\delta_T^2 .
\end{split}
\ee
Consequently, for every $0<c_1<c_0$, one may choose $C_{\log}$ sufficiently large so that
both quantities in \eqref{eq:physicalEndpointDiscrepancy} are bounded by
\bel{eq:physicalEndpointDiscrepancySmall}
        \red{Ce^{-c_1T^2}}.
\ee
\end{Lemma}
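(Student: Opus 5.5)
We treat the two bounds in \eqref{eq:physicalEndpointDiscrepancy} separately, in both cases writing the defect at $\eta=T$ as an integral over $[T,R_n)$ of a source that is quadratic in $\Phi_n^-$. The tail input \eqref{eq:physicalScalarTailInput} then supplies the factor $\delta_T^2$. Every coefficient, together with the interval length $R_n$, contributes at most a polynomial in $R_n$. The regular-singular endpoint $\eta=R_n$, i.e. $u=0$, produces no boundary term, because both the physical solution and the comparison objects have Frobenius behavior there.

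\emph{Magnetic part.} The function $\Gamma_n^0$ is exactly the gauge profile with zero scalar field and the same quadratic coefficient at the origin. Hence $H_n^{\rm ph}=\Gamma_n^--\Gamma_n^0=(n/u)q_n(u)$, where $q_n$ is the quantity from Lemma~\ref{lem:clearingout}. We use the exact formulas
\[
q_n(u)=\int_0^u\frac{u^2-s^2}{s}a_n\phi_n^2\,ds,\qquad q_n'(u)=2u\int_0^u\frac{a_n\phi_n^2}{s}\,ds.
\]
For $u=R_n-T$, the range $s\in[1,u]$ corresponds to $\eta\in[T,R_n-1]$, where $\phi_n^2\le\delta_T^2$. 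This gives a bound $CR_n^2\log R_n\,\delta_T^2$. On $s\le1$ we use \eqref{eq:normalizedOriginIntegral} together with $\phi_n(1)\le\delta_T$. Multiplying by $n/u\lesssim R_n$ and differentiating once yields the first line with some fixed $\sigma$.

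\emph{Scalar Wronskian.} Set $\Psi=\Psi_n^{\rm ph}$. It solves $\Psi''=(V_n^0+E)\Psi$ with
\[
E=(\Gamma_n^-)^2-(\Gamma_n^0)^2+\beta(\Phi_n^-)^2=2\Gamma_n^0H_n^{\rm ph}+(H_n^{\rm ph})^2+\beta(\Phi_n^-)^2 .
\]
Therefore $W(\Psi,D_n^0)'=E\,\Psi D_n^0$, up to the sign convention. Both $\Psi$ and $D_n^0$ are regular at $u=0$: $\Psi\sim c_nu^{n+1/2}$ and $D_n^0\sim u^{n+1/2}$. So their Wronskian tends to $0$ as $\eta\uparrow R_n$, and
\[
W(\Psi,D_n^0)(T)=-\int_T^{R_n}E\,\Psi D_n^0\,ds.
\]
The estimate for $H_n^{\rm ph}$ obtained in the magnetic part holds at every $\eta\ge T$, not only at $T$. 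Consequently $|E|\lesssim R_n^{C}\delta_T^2$ on $[T,R_n)$. We bound $\Psi(s)\le R_n^{1/2}\delta_T$ and, by \eqref{eq:regularOriginDecayRatio}, $D_n^0(s)\le CD_n^0(T)$. Dividing by $D_n^0(T)$ gives $R_n^{C}\delta_T^3\le R_n^{C}\delta_T^2$, which is the second line. Near $u\le1$ we instead use the Frobenius bounds $\phi_n\le\phi_n(1)u^m$ from \eqref{eq:normalizedOriginPhi}, so that the integrand stays controlled as $u\to0$.

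\emph{Main obstacle.} The delicate step is the endpoint $u=0$: we must justify that the Wronskian vanishes there and keep the integrals uniform as $u\to0$, where $\Gamma_n^0\sim n/u$ is singular. We plan to integrate in $u$ on $[\varepsilon,1]$ using the $u^m$ and $u^{n+1/2}$ powers, which dominate $n^2/u^2$, and then let $\varepsilon\to0$. Once this is done, \eqref{eq:physicalEndpointDiscrepancySmall} follows because $R_n^\sigma\delta_T^2=CR_n^\sigma e^{-2c_0C_{\log}^2\log R_n}$. Choosing $C_{\log}$ large enough that $R_n^\sigma e^{-c_0T^2}\le1$ leaves at most $Ce^{-c_0T^2}\le Ce^{-c_1T^2}$.
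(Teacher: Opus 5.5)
Your proposal is correct and follows the paper's proof essentially step for step. Your radial formula for $q_n$ is the paper's Green representation of $H_n^{\rm ph}$, with kernel $K_{G,n}(T,s)=\frac{u_n(T)^2-u_n(s)^2}{2u_n(T)}$, rewritten in the variable $u$. Your scalar bound is also the paper's argument: the Wronskian identity is integrated from the regular endpoint $u=0$, and the Frobenius factor $u^{m+1/2}$ coming from $\phi_n\le\phi_n(1)u^m$ absorbs the $n/u$ singularity of $\Gamma_n^0$.

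The differences are cosmetic. You use only boundedness of $D_n^0(s)/D_n^0(T)$ where the paper uses its Gaussian decay, which costs a harmless power of $R_n$. Your intermediate claim $|E|\lesssim R_n^C\delta_T^2$ holds only on $u_n\ge1$, not on all of $[T,R_n)$. Your separate treatment of the region $u\le1$ already accounts for this.
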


\begin{proof}
\red{We recall the notation}
\[
        q_n(u)=a_n(u)-\left(1-\frac{u^2}{R_n^2}\right).
\]
Then
\[
        H_n^{\rm ph}(R_n-u)=\frac{n}{u}q_n(u).
\]
Equation \eqref{eq:qeqforbootstrap}, the conditions
$q_n(0)=q_n'(0)=0$, and \eqref{eq:normalizedOriginPhi} show that, for
$0<u\le1$,
\[
 |q_n(u)|\le \frac{C\phi_n(1)^2}{m^2}u^{2m+2},
 \qquad
 |q_n'(u)|\le \frac{C\phi_n(1)^2}{m}u^{2m+1},
 \qquad m=\lfloor n/3\rfloor .
\]
It \red{also} follows that
\[
 H_n^{\rm ph}(R_n)=0,\qquad
 (H_n^{\rm ph})'(R_n)=0.
\]
These are the right-end conditions used in the following Green formula.
The exact equation for $H_n^{\rm ph}$ and the homogeneous basis
$u_n,u_n^{-1}$ give
\bel{eq:physicalGaugeEndpointFormula}
\begin{split}
 H_n^{\rm ph}(T)
 &=\int_T^{R_n}K_{G,n}(T,s)
       2\Gamma_n^-(s)(\Phi_n^-(s))^2\,ds,\\
 (H_n^{\rm ph})'(T)
 &=\int_T^{R_n}\partial_TK_{G,n}(T,s)
       2\Gamma_n^-(s)(\Phi_n^-(s))^2\,ds .
\end{split}
\ee
On $u_n(s)\ge1$ use
\[
 |K_{G,n}(T,s)|\le C(s-T),\qquad
 |\partial_TK_{G,n}(T,s)|\le C,\qquad
 \Gamma_n^-(s)\le \frac{n}{u_n(s)}.
\]
On $0<u_n(s)\le1$, \eqref{eq:normalizedOriginPhi}, with
$m=\lfloor n/3\rfloor$, \red{and \eqref{eq:physicalScalarTailInput} give}
\[
 \Phi_n^-(s)=\phi_n(u_n(s))
 \le\phi_n(1)u_n(s)^m,
 \qquad
 \phi_n(1)\le\delta_T.
\]
Notice that the small quantity here is the radial value $\phi_n(1)$,
not the shifted value $\Phi_n(1)$.  Substitution in
\eqref{eq:physicalGaugeEndpointFormula} now gives the claimed polynomial
loss explicitly.  On $1\le u_n(s)\le R_n-T$,
\[
\begin{split}
 &\int_T^{R_n-1}
 \left(|K_{G,n}(T,s)|
 +(1+T)^{-1}|\partial_TK_{G,n}(T,s)|\right)
 2\Gamma_n^-(s)(\Phi_n^-(s))^2\,ds\\
 &\qquad\le
 Cn\left(R_n+(1+T)^{-1}\right)\delta_T^2
 \int_1^{R_n-T}\frac{du}{u}
 \le CR_n^4\delta_T^2 .
\end{split}
\]
Here coarse localization gives $n\simeq  R_n^2$; the harmless logarithm
has been absorbed into the displayed fourth power.  On
$0<u_n(s)\le1$, \eqref{eq:normalizedOriginPhi} gives instead
\[
\begin{split}
 &\int_{R_n-1}^{R_n}
 \left(|K_{G,n}(T,s)|
 +(1+T)^{-1}|\partial_TK_{G,n}(T,s)|\right)
 2\Gamma_n^-(s)(\Phi_n^-(s))^2\,ds\\
 &\qquad\le
 Cn\left(R_n+(1+T)^{-1}\right)\delta_T^2
 \int_0^1u^{2m-1}\,du
 \le CR_n^4\delta_T^2 .
\end{split}
\]
Consequently
\bel{eq:physicalGaugeEndpointBound}
 |H_n^{\rm ph}(T)|
 +(1+T)^{-1}|(H_n^{\rm ph})'(T)|
 \le CR_n^4\delta_T^2 .
\ee
The same argument with $T$ replaced by any
$\tau\in[T,R_n)$ gives the corresponding uniform estimate for
$H_n^{\rm ph}(\tau)$ and its derivative.
The scalar equation is
\[
\begin{split}
 (\Psi_n^{\rm ph})''
 &=
 V_n^0\Psi_n^{\rm ph}+
 (2\Gamma_n^0H_n^{\rm ph}
 +(H_n^{\rm ph})^2+\beta(\Phi_n^-)^2)
 \Psi_n^{\rm ph}.
\end{split}
\]
The regular-origin solutions $\Psi_n^{\rm ph}$ and $D_n^0$ have the
same Frobenius exponent.  Their Wronskian therefore vanishes at
$\eta=R_n$.  With $W(f,g)=fg'-f'g$, integration  gives
\bel{eq:physicalScalarEndpointFormula}
\begin{split}
 W(\Psi_n^{\rm ph},D_n^0)(T)
 &=
 \int_T^{R_n}
 (2\Gamma_n^0H_n^{\rm ph}
 +(H_n^{\rm ph})^2+\beta(\Phi_n^-)^2)
 \Psi_n^{\rm ph}D_n^0\,ds .
\end{split}
\ee
On $T\le s\le R_n-1$,
\[
 \frac{D_n^0(s)}{D_n^0(T)}
 \le Ce^{-c(s^2-T^2)},\qquad
 |\Psi_n^{\rm ph}(s)|\le R_n^{1/2}\delta_T,
\]
and \eqref{eq:physicalGaugeEndpointBound} bounds
$H_n^{\rm ph}$.  The integral over this interval is therefore at most
\[
        \frac{CR_n^9\delta_T^2}{1+T}|D_n^0(T)|.
\]
 On $0<u_n\le1$\red{, which corresponds to $\displaystyle R_n-1\leq s<R_n$, we have}
 \[
 |\Psi_n^{\rm ph}(R_n-u)|
 \le \delta_Tu^{m+1/2},
 \]
whereas the global decay ratio
\eqref{eq:regularOriginDecayRatio} gives
\[
 \frac{D_n^0(R_n-u)}{D_n^0(T)}
 \le
 Ce^{-c((R_n-u)^2-T^2)}
 \le Ce^{-c((R_n-1)^2-T^2)}.
\]
On this interval,
\[
 |\Gamma_n^0(R_n-u)|\le \frac{Cn}{u},\qquad
 |H_n^{\rm ph}(R_n-u)|\le CR_n^4\delta_T^2.
\]
Thus the factor $u^{m+1/2}$ in $\Psi_n^{\rm ph}$ makes every term
in \eqref{eq:physicalScalarEndpointFormula} integrable at $u=0$, and
direct integration gives
\[
\begin{split}
 &\int_0^1
 \left|
 2\Gamma_n^0H_n^{\rm ph}
 +(H_n^{\rm ph})^2+\beta(\Phi_n^-)^2
 \right|
 |\Psi_n^{\rm ph}D_n^0|\,du\\
 &\qquad\le
 CR_n^9\delta_T^2
 e^{-c((R_n-1)^2-T^2)}|D_n^0(T)|
 \le \frac{CR_n^9\delta_T^2}{1+T}|D_n^0(T)|.
\end{split}
\]
This is the same bound as on $T\le s\le R_n-1$.
Finally, \eqref{eq:regularOriginLogDerivativeBounds} gives
\[
 |D_n^0(T)|+(1+T)^{-1}|(D_n^0)'(T)|
 \simeq  |D_n^0(T)|.
\]
Thus the second quantity in
\eqref{eq:physicalEndpointDiscrepancy} is bounded by
$CR_n^{9}\delta_T^2$.  One may take $\sigma=9$.  Expanding
$R_n^\sigma\delta_T^2$ and choosing $C_{\log}$ \red{as} stated \red{above}
proves \eqref{eq:physicalEndpointDiscrepancySmall}.
\end{proof}

The preceding endpoint estimates allow us to compare the physical
solution with the finite one-parameter family.

\begin{Lemma}\label{lem:physicalNoGrowingChart}
 \red{Let $\displaystyle J\Subset(0,4)$, and }fix $K\ge K_0$ and choose
$C_{\log}\ge C_{\log,0}$ sufficiently large.  
\red{Recall the notation $\displaystyle \Lambda_n=C_{\log}\sqrt{\log R_n}$. }Put
\[
       C_{n,K}^{\rm ph}\red{:}=\Phi_n(-K).
\]
Then $C_{n,K}^{\rm ph}\in I_{\beta,K}$ for all sufficiently large $n$
\red{uniformly for $\beta\in J$},
\bel{eq:leftPhysicalNoGrowingRemainder}
\begin{split}
 &\sum_{j=0}^1(1+\eta)^{-j}
 \left|
 \partial_\eta^j
 (\Phi_n^-(\eta)
   -\Phi_{n,C_{n,K}^{\rm ph}}^-(\eta))
 \right|\le
 \red{C(K,J,C_{\log})}
 \red{e^{-c\Lambda_n^2}}
\end{split}
\ee
for $K\le\eta\le \Lambda_n$.  On the same interval,
\bel{eq:leftPhysicalGaugeRemainder}
\begin{split}
 &\sum_{j=0}^1(1+\eta)^{-j}
 \left|
 \partial_\eta^j
 (\Gamma_n^-(\eta)
   -\Gamma_{n,C_{n,K}^{\rm ph}}^-(\eta))
 \right|\le
 \red{C(K,J,C_{\log})}
 \red{e^{-c\Lambda_n^2}}
\end{split}
\ee
\red{for all $\beta\in J$.}
In particular, after increasing $C_{\log,0}$, both
right sides are $O_{K,J}(R_n^{-2})$.
\end{Lemma}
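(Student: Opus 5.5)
The plan is to show that the physical pair $Z_n^{\rm ph}:=(\Phi_n^-,\,\Gamma_n^--\Gamma_n^0)$ on $[K,\Lambda_n]$ is an approximate fixed point of the finite map $\mathscr P_{n,C}$ of \eqref{eq:finiteTailFixedPoint} with $C=C_{n,K}^{\rm ph}$. The defect will be exactly the two endpoint discrepancies of Lemma~\ref{lem:physicalEndpointDiscrepancy}. The contraction estimate \eqref{eq:leftFixedPointBall} then converts this defect into the asserted bounds.

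\emph{Step 1: membership.} By Lemma~\ref{lem:qualitativeBoundary}, uniformly in $\beta\in J$ we have $C_{n,K}^{\rm ph}=\Phi_n(-K)\to\Phi_\beta(-K)=C_{\beta,K}^{\rm ph}>0$. Since $\inf_J\Phi_\beta(-K)>0$, this gives $C_{n,K}^{\rm ph}\in I_{\beta,K}$ for large $n$.

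Next, $Z_n^{\rm ph}$ lies in the ball $\overline B_{\mathfrak X_{n,K}}(0,r_K)$. The Gaussian bounds \eqref{eq:movingclear} and \eqref{eq:qSharp}--\eqref{eq:qSharpDer} give Gaussian decay for $\Phi_n^-$ and for $H_n^{\rm ph}=\tfrac{n}{u}q_n$. The only size issue is the normalization near $\eta=K$. I would handle it as follows: qualitative convergence gives $Z_n^{\rm ph}\to Z_{\beta,C_{\beta,K}^{\rm ph}}$ on fixed windows $[K,M]$, and the Gaussian tail bounds make the contribution of $[M,\Lambda_n]$ small in the weighted norm. Hence for large $n$ the norm of $Z_n^{\rm ph}$ is at most that of the limiting fixed point plus $o(1)$. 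If the ball was chosen with strict room, this stays inside the ball; otherwise I would enlarge $r_K$ slightly, which preserves the contraction.

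\emph{Step 2: exact representation.} The key is to write $Z_n^{\rm ph}$ via the same Green operators but with inhomogeneous right-end data.

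For the magnetic component, $H_n^{\rm ph}$ solves the equation defining $\mathcal G_{G,n}$ on $[K,\Lambda_n]$ with source $2(\Gamma_n^0+H)\Phi^2$. Its Cauchy data at $\Lambda_n$ are $(H_n^{\rm ph}(\Lambda_n),(H_n^{\rm ph})'(\Lambda_n))$ rather than zero. Therefore
\[
H_n^{\rm ph}=\mathcal G_{G,n}\bigl(2(\Gamma_n^0+H_n^{\rm ph})(\Phi_n^-)^2\bigr)+h_n,
\]
where $h_n=\alpha u_n+\beta' u_n^{-1}$ is the homogeneous solution matching those data. Because $u_n\simeq R_n$ is nearly constant on $[K,\Lambda_n]$, $h_n$ is essentially affine. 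Its $X_p$-norm is then bounded by a polynomial in $\Lambda_n$ times the endpoint data, divided by the Gaussian weight $w_{K,p}(\Lambda_n)^{-1}\sim e^{c\Lambda_n^2}$.

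For the scalar component, $\Psi_n^{\rm ph}=u_n^{1/2}\Phi_n^-$ satisfies $v(K)=u_n(K)^{1/2}C$. In place of $W(v,D_n^0)(\Lambda_n)=0$ it satisfies $W(\Psi_n^{\rm ph},D_n^0)(\Lambda_n)=\omega_n$. Subtracting the homogeneous correction $-\omega_n\,\ell_n/W(\ell_n,D_n^0)=\omega_n\ell_n$ gives
\[
\Phi_n^-=\mathscr P^{(1)}_{n,C}(Z_n^{\rm ph})+u_n^{-1/2}\,\omega_n\,\ell_n .
\]
The growing mode $\ell_n$ at $\Lambda_n$ is at most $C/((1+\Lambda_n)D_n^0(\Lambda_n))$. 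Combined with the normalized Wronskian bound of Lemma~\ref{lem:physicalEndpointDiscrepancy}, which contains a factor $D_n^0(\Lambda_n)$, the correction is bounded by $R_n^{\sigma'}\delta_T^2$ times a ratio of the form $\ell_n(\eta)/(1+\eta)$ that grows at most like $e^{c'\Lambda_n^2}$.

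\emph{Step 3 (the main obstacle).} This step is the bookkeeping of Gaussian exponents. Both correction terms are bounded by
\[
R_n^{\sigma''}\,\delta_{\Lambda_n}^2\,e^{C_*\Lambda_n^2}\,w_{K,p}(\eta),
\]
where $\delta_{\Lambda_n}=Ce^{-c_0\Lambda_n^2}$ and $C_*$ collects the growth of $\ell_n$ (rate about $\sqrt\beta/2$) and of the weight inverse $e^{c\Lambda_n^2}$. I must check that $2c_0>C_*$. Here $c_0$ should be close to the true Gaussian rate $\sqrt\beta/2$, so $2c_0\approx\sqrt\beta$. The weight exponent satisfies $c<\tfrac12\sqrt\beta$, and the growth of $\ell_n$ is at most $\sqrt\beta/2+$ small. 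These inequalities should be checkable by a comparison with $D_n^0$ using \eqref{eq:regularOriginLogDerivativeBounds}, shrinking $c$ if needed.

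Granting this, the contraction \eqref{eq:leftFixedPointBall} with constant $1/3$ gives
\[
\|Z_n^{\rm ph}-Z_{n,C}\|_{\mathfrak X_{n,K}}\le \tfrac32\,\mathrm{defect}\le Ce^{-c\Lambda_n^2}.
\]
Converting the weighted norm into the pointwise bounds of \eqref{eq:leftPhysicalNoGrowingRemainder}--\eqref{eq:leftPhysicalGaugeRemainder} costs only constants, since $w_{K,p}\le C_K$. Finally, choosing $C_{\log}$ large makes $e^{-c\Lambda_n^2}=R_n^{-cC_{\log}^2}\le R_n^{-2}$. Uniformity in $\beta\in J$ is inherited from every ingredient used.
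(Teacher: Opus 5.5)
Your architecture matches the paper's proof. You treat $(\Phi_n^-,\Gamma_n^--\Gamma_n^0)$ as a fixed point of $\mathscr P_{n,C}$ up to homogeneous corrections: a multiple of $\ell_n$ for the scalar Wronskian defect at $\Lambda_n$, and $A u_n+Bu_n^{-1}$ for the magnetic Cauchy data. You bound these by Lemma~\ref{lem:physicalEndpointDiscrepancy} and absorb the nonlinear difference with the contraction constant $1/3$. One minor slip: since $W(\ell_n,D_n^0)=-1$, the correction with Wronskian $\omega_n$ at $\Lambda_n$ is $-\omega_n\ell_n$, not $+\omega_n\ell_n$. This does not affect any estimate.

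The gap is in Step 3. You count the growth of $\ell_n$ (rate $\approx\sqrt\beta/2$) in addition to the weight loss $e^{c\Lambda_n^2}$, which leads you to require $2c_0>\sqrt\beta/2+c$. You then justify this by asserting that $c_0$ is close to $\sqrt\beta/2$. Nothing proved so far supports that. The $c_0$ in \eqref{eq:physicalScalarTailInput} comes from \eqref{eq:fixedclear}, i.e.\ from the non-sharp barrier of Lemma~\ref{lem:coreGaussianBarrier}. Shrinking the weight exponent $c$ does nothing to the $\sqrt\beta/2$ term, so as written the step is unjustified and may fail.

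The requirement is spurious, however. Lemma~\ref{lem:regularOriginMixedGreen} gives $\ell_n(\eta)+(1+\eta)^{-1}|\ell_n'(\eta)|\le C/((1+\eta)D_n^0(\eta))$. Lemma~\ref{lem:physicalEndpointDiscrepancy} together with \eqref{eq:regularOriginLogDerivativeBounds} gives $|\omega_n|\le CR_n^{\sigma}\delta_{\Lambda_n}^2D_n^0(\Lambda_n)$. Finally, $D_n^0$ is decreasing in $\eta$ because $\lambda_n\ge c(1+\eta)>0$, so $D_n^0(\Lambda_n)/D_n^0(\eta)\le1$ on $[K,\Lambda_n]$. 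Hence
\[
|\omega_n\ell_n(\eta)|+(1+\eta)^{-1}|\omega_n\ell_n'(\eta)|\le CR_n^{\sigma}\delta_{\Lambda_n}^2,\qquad K\le\eta\le\Lambda_n,
\]
with no growth at all.

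The only loss is the factor $w_{K,p}(\eta)^{-1}\le e^{c\Lambda_n^2}$ from the weighted norm, plus harmless $u_n^{\pm1/2}$ and polynomial factors. So you need only two things:
\begin{itemize}
\item the weight exponent must be smaller than $2c_0$, which you arrange by shrinking it, as the paper does with $c_{\rm w}<c_1$;
\item $C_{\log}$ must be large enough that $R_n^{\sigma}=e^{\sigma\Lambda_n^2/C_{\log}^2}$ is absorbed.
\end{itemize}
Shrinking the weight exponent below the barrier rate is needed anyway for the ball-membership step. The magnetic correction works the same way: its endpoint data are $O(R_n^4\delta_{\Lambda_n}^2)$ and it grows at most polynomially on $[K,\Lambda_n]$.
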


\begin{proof}
Qualitative boundary convergence gives
$C_{n,K}^{\rm ph}\to C_{\beta,K}^{\rm ph}$, so the parameter belongs to
$I_{\beta,K}$ for large $n$. \red{For simplicity, redenote $T=\Lambda_n$ and recall that}
\[
 \Psi_n^{\rm ph}=u_n^{1/2}\Phi_n^-
 .
\]
\red{As in Lemma \ref{lem:physicalEndpointDiscrepancy}, we set
\[
 H_n^{\rm ph}=\Gamma_n^- -\Gamma_n^0.
\]}
The  estimates \eqref{eq:fixedclear} and
\eqref{eq:fixedgauge}, together with
\eqref{eq:normalizedOriginPhi}, imply
\eqref{eq:physicalScalarTailInput}.  Lemma~\ref{lem:physicalEndpointDiscrepancy} therefore gives
\bel{eq:physicalEndpointDataSmall}
\begin{split}
 |\alpha|+(1+T)^{-1}|\gamma|
 &\le \red{Ce^{-c_1T^2}},\\
 \frac{|\omega|}{|D_n^0(T)|+(1+T)^{-1}|(D_n^0)'(T)|}
 &\le \red{Ce^{-c_1T^2}},
\end{split}
\ee
where
\[
 \alpha=H_n^{\rm ph}(T),\qquad
 \gamma=(H_n^{\rm ph})'(T),\qquad
 \omega=W(\Psi_n^{\rm ph},D_n^0)(T).
\]
We next write explicit homogeneous corrections for these three endpoint
discrepancies.  Set
\[
 E_{n,K}(\eta)
 =D_n^0(\eta)\int_K^\eta D_n^0(s)^{-2}\,ds .
\]
Then $E_{n,K}(K)=0$ and $W(E_{n,K},D_n^0)=-1$.  Hence
\bel{eq:scalarEndpointCorrection}
        P_{S,n}(\eta)=-\omega E_{n,K}(\eta)
\ee
satisfies
\[
        P_{S,n}(K)=0,\qquad
        W(P_{S,n},D_n^0)(T)=\omega .
\]
Writing $u_T=R_n-T$, define
\bel{eq:gaugeEndpointCorrection}
\begin{split}
 A_T&=\frac{1}{2}\left(\frac{\alpha}{u_T}-\gamma\right),\\
 B_T&=\frac{1}{2}\left(u_T\alpha+u_T^2\gamma\right),\\
 P_{G,n}(\eta)&=A_Tu_n(\eta)+B_Tu_n(\eta)^{-1}.
\end{split}
\ee
Direct substitution, using $u_n'=-1$, gives
\[
        P_{G,n}(T)=\alpha,\qquad
        P_{G,n}'(T)=\gamma .
\]
Decrease, if necessary, the exponent used in the weights $w_{K,p_0}$,
and denote the resulting exponent by $c_{\rm w}$, chosen so that
$0<c_{\rm w}<c_1$.  The reduction-of-order formula and
\eqref{eq:regularOriginDecayRatio} give
\bel{eq:scalarEndpointCorrectionBound}
\begin{split}
 \|u_n^{-1/2}P_{S,n}\|_{X_{p_0}([K,T])}
 \le
 C_K(1+T)^C e^{c_{\rm w}T^2}
 \frac{|\omega|}{|D_n^0(T)|+(1+T)^{-1}|(D_n^0)'(T)|}.
\end{split}
\ee
Indeed, $E_{n,K}$ is exactly the function $\ell_n$ used in
Lemma~\ref{lem:regularOriginMixedGreen}.  The estimate
\[
 |E_{n,K}(\eta)|+(1+\eta)^{-1}|E_{n,K}'(\eta)|
 \le \frac{C}{(1+\eta)D_n^0(\eta)}
\]
was proved there.  Combining it with
\eqref{eq:regularOriginLogDerivativeBounds}, and using
$D_n^0(T)/D_n^0(\eta)\le1$, gives
\eqref{eq:scalarEndpointCorrectionBound}.
The explicit formula \eqref{eq:gaugeEndpointCorrection} and
$u_n(\eta)\simeq  R_n$ on $[K,T]$ give
\bel{eq:gaugeEndpointCorrectionBound}
 \|P_{G,n}\|_{X_{p_0}([K,T])}
 \le C_KR_n(1+T)^Ce^{c_{\rm w}T^2}
 \{|\alpha|+(1+T)^{-1}|\gamma|\}.
\ee
Since
$T=C_{\log}\sqrt{\log R_n}$ and $c_{\rm w}<c_1$, increasing
$C_{\log,0}$ \red{and using \eqref{eq:physicalEndpointDataSmall}} gives
\bel{eq:combinedEndpointCorrectionBound}
\begin{split}
 \|u_n^{-1/2}P_{S,n}\|_{X_{p_0}([K,T])}
 +\|P_{G,n}\|_{X_{p_0}([K,T])}
 \le \red{C(K,J,C_{\log})}\red{e^{-cT^2}}.
\end{split}
\ee
Let
\[
 Z_n^{\rm ph}=(\Phi_n^-,H_n^{\rm ph}),\qquad
 Z_{n,C}=
 \bigl(\Phi_{n,C}^-,
       \Gamma_{n,C}^- -\Gamma_n^0\bigr),
\]
with $C=C_{n,K}^{\rm ph}$, and set
\[
        \mathbf P_n=(u_n^{-1/2}P_{S,n},P_{G,n}).
\]
The physical pair \red{$Z_n^{ph}$} and $Z_{n,C}$ have the same scalar value at $K$.
After subtracting $\mathbf P_n$, their \red{Higgs} scalar difference \red{multiplied by $u_n^{1/2}$} has zero value at $K$
and zero Wronskian with $D_n^0$ at $T$; their magnetic difference has
zero value and derivative at $T$.  Applying the two Green operators gives
the exact identity
\bel{eq:physicalFamilyDifference}
 Z_n^{\rm ph}-Z_{n,C}
 =
 \mathbf P_n+\mathbf G_n
 (\mathbf N_n(Z_n^{\rm ph})-\mathbf N_n(Z_{n,C})),
\ee
where
\[
 \mathbf G_n
 =\operatorname{diag}(\widehat{\mathcal G}_{S,n},
                      \mathcal G_{G,n})
\]
and
\bel{eq:leftFiniteNonlinearity}
 \mathbf N_n(\Phi,H)=
 \begin{pmatrix}
 (2\Gamma_n^0H+H^2+\beta\Phi^2)\Phi\\
 2(\Gamma_n^0+H)\Phi^2
 \end{pmatrix}.
\ee
The estimates \eqref{eq:movingclear},
\eqref{eq:qSharp}, and \eqref{eq:qSharpDer} imply, for fixed large $K$,
\[
        \|Z_n^{\rm ph}\|_{\mathfrak X_{n,K}}\le r_K
\]
for all large $n$.  Thus both \red{$Z_n^{ph}$ and $Z_{n,C}$} lie in the ball
\eqref{eq:leftFixedPointBall}.  Its Lipschitz estimate and
\eqref{eq:combinedEndpointCorrectionBound} give
\[
\begin{split}
 \|Z_n^{\rm ph}-Z_{n,C}\|_{\mathfrak X_{n,K}}
 &\le \frac{1}{3}
 \|Z_n^{\rm ph}-Z_{n,C}\|_{\mathfrak X_{n,K}}+\red{C(K,J,C_{\log})}\red{e^{-cT^2}}.
\end{split}
\]
Absorbing the first term proves
\eqref{eq:leftPhysicalNoGrowingRemainder} and
\eqref{eq:leftPhysicalGaugeRemainder}.
\end{proof}

\subsection{Comparison of the two endpoint conditions}

The preceding construction gives the required estimate at $y=-K$.

\begin{Corollary}\label{cor:leftEndpointResidual}
Let $J_0\Subset(0,4)$ be a fixed compact interval and $K$ be fixed and sufficiently large\red{, and assume that $\beta\in J_0$}.  Then
there is a sequence $\varepsilon_{n,K}^-\to0$ such that
\bel{eq:leftProjectedBoundaryResidual}
 |\mathcal R_{\beta,K}^-\mathsf C_-V_n|
 \le \red{C_{K,J_0}}R_n^{-1}
      +\varepsilon_{n,K}^-|\mathsf C_-V_n|
\ee
for all sufficiently large $n$.
\end{Corollary}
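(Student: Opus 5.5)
The plan is to combine Lemma~\ref{lem:physicalNoGrowingChart} with Lemma~\ref{lem:leftNonlinearChart}. Since $y=-\eta$, the Cauchy data satisfy $\mathsf C_-(\Phi_n,\Gamma_n)=\bigl(\Phi_n^-(K),\Gamma_n^-(K),-(\Phi_n^-)'(K),-(\Gamma_n^-)'(K)\bigr)$. Put $C_n:=C_{n,K}^{\rm ph}=\Phi_n(-K)$ and $C_*:=C_{\beta,K}^{\rm ph}=\Phi_\beta(-K)$. Then $\mathsf C_-(\Phi_\beta,\Gamma_\beta)=\Theta_{\beta,K}^-(C_*)$, and $C_n\to C_*$ by Lemma~\ref{lem:qualitativeBoundary}. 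The proof rests on the decomposition
\[
\mathsf C_-V_n
=\bigl[\mathsf C_-(\Phi_n,\Gamma_n)-\Theta_{n,K}^-(C_n)\bigr]
+\bigl[\Theta_{n,K}^-(C_n)-\Theta_{\beta,K}^-(C_n)\bigr]
+\bigl[\Theta_{\beta,K}^-(C_n)-\Theta_{\beta,K}^-(C_*)\bigr].
\]
We then apply $\mathcal R_{\beta,K}^-$, which is an orthogonal projection of norm at most one, to each bracket in turn.

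For the first bracket, evaluate \eqref{eq:leftPhysicalNoGrowingRemainder} and \eqref{eq:leftPhysicalGaugeRemainder} at $\eta=K$. This bounds the bracket by $C(K,J_0,C_{\log})e^{-c\Lambda_n^2}=O(R_n^{-2})$. For the second bracket, \eqref{eq:leftTraceComparison} gives $O_{K}(R_n^{-1})$ uniformly in $C\in I_{\beta,K}$. The point $C_n$ lies in $I_{\beta,K}$ for large $n$, uniformly on $J_0$, by Lemma~\ref{lem:physicalNoGrowingChart}. Together these two brackets contribute the term $C_{K,J_0}R_n^{-1}$.

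The third bracket is where the projection matters. Since $\Theta_{\beta,K}^-$ is $C^1$ in $C$, write
\[
\Theta_{\beta,K}^-(C_n)-\Theta_{\beta,K}^-(C_*)=D\Theta_{\beta,K}^-(C_*)(C_n-C_*)+o(|C_n-C_*|).
\]
By \eqref{eq:leftTraceTangentLine}, the linear term lies in $E_\beta^-(-K)=\ker\mathcal R_{\beta,K}^-$. The projected third bracket is therefore bounded by $\omega(|C_n-C_*|)\,|C_n-C_*|$, where $\omega$ is the modulus of continuity of $D\Theta_{\beta,K}^-$. Lemma~\ref{lem:leftNonlinearChart} gives joint continuity of this derivative on the compact set $\{(\beta,C):\beta\in J_0,\ C\in I_{\beta,K}\}$, so $\omega$ can be taken uniform in $\beta\in J_0$.

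It remains to express $|C_n-C_*|$ through the trace. Its first coordinate is exactly the first coordinate of $\mathsf C_-V_n$, so $|C_n-C_*|\le|\mathsf C_-V_n|$. The factor $\omega(|C_n-C_*|)$ tends to zero uniformly on $J_0$ by \eqref{eq:uniformQualitativeBoundary}. Setting $\varepsilon_{n,K}^-:=\sup_{\beta\in J_0}\omega(|C_n-C_*|)$ then yields \eqref{eq:leftProjectedBoundaryResidual}.

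The main obstacle is uniformity in $\beta$: the modulus $\omega$ must not depend on $\beta$, and $C_n\in I_{\beta,K}$ must hold uniformly. I would obtain both from the joint continuity statement in Lemma~\ref{lem:leftNonlinearChart}, combined with the sequential uniform qualitative convergence \eqref{eq:uniformQualitativeBoundary} and a contradiction argument over sequences $\beta_j\in J_0$.
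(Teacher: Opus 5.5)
Your proposal is correct and follows essentially the same route as the paper. It uses the same three-term splitting: physical minus finite family via Lemma~\ref{lem:physicalNoGrowingChart}, finite minus limiting family via \eqref{eq:leftTraceComparison}, and a first-order Taylor expansion of $\Theta_{\beta,K}^-$ whose linear part is annihilated by \eqref{eq:leftTraceTangentLine}; it then closes with $|C_{n,K}^{\rm ph}-C_{\beta,K}^{\rm ph}|\le|\mathsf C_-V_n|$, and your uniform modulus of continuity for $D\Theta_{\beta,K}^-$ is just an explicit form of the paper's appeal to joint continuity to control the Taylor remainder uniformly on $J_0$.
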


\begin{proof}
Set
\[
 C_{n,K}^{\rm ph}=\Phi_n(-K),
 \qquad
 C_{\beta,K}^{\rm ph}=\Phi_\beta(-K).
\]
Lemma~\ref{lem:physicalNoGrowingChart}, with $C_{\log}$ chosen as in that
lemma and the trace definitions
\eqref{eq:leftTraceMaps} give
\bel{eq:leftPhysicalTraceExpansion}
 \mathsf C_-V_n
 =\Theta_{n,K}^-(C_{n,K}^{\rm ph})
  -\Theta_{\beta,K}^-(C_{\beta,K}^{\rm ph})
 +d_{n,K}^-,
 \qquad
 |d_{n,K}^-|\le \red{C_{K,J_0}}R_n^{-2}.
\ee
The trace comparison \eqref{eq:leftTraceComparison} gives
\[
 \left|
 \Theta_{n,K}^-(C_{n,K}^{\rm ph})
 -\Theta_{\beta,K}^-(C_{n,K}^{\rm ph})
 \right|\le \red{C_{K,J_0}}R_n^{-1}.
\]
Qualitative boundary convergence implies
$C_{n,K}^{\rm ph}\to C_{\beta,K}^{\rm ph}$.
\green{By
\eqref{eq:uniformQualitativeBoundary}, t}his convergence is uniform on $J_0$. \green{Moreover, the positive minimum property of
$\Phi_\beta(-K)$} \green{ensures that} $C_{n,K}^{\rm ph}$ \green{belongs to} $I_{\beta,K}$ for a
common sufficiently large $n$.  \green{The joint continuity property of the derivative in Lemma~\ref{lem:leftNonlinearChart} ensures that the Taylor remainder appearing below is uniformly controlled on $J_0$.} Since
$\Theta_{\beta,K}^-$ is differentiable at
$C_{\beta,K}^{\rm ph}$, there are vectors $q_{n,K}^-$ satisfying
\[
\begin{split}
 \Theta_{\beta,K}^-(C_{n,K}^{\rm ph})
 -\Theta_{\beta,K}^-(C_{\beta,K}^{\rm ph})
 &=D\Theta_{\beta,K}^-(C_{\beta,K}^{\rm ph})
   (C_{n,K}^{\rm ph}-C_{\beta,K}^{\rm ph})+q_{n,K}^-,\\
 |q_{n,K}^-|
 &\le \varepsilon_{n,K}^-
 |C_{n,K}^{\rm ph}-C_{\beta,K}^{\rm ph}|,
 \qquad \varepsilon_{n,K}^-\longrightarrow0.
\end{split}
\]
The linear term lies in $E_\beta^-(-K)$ by
\eqref{eq:leftTraceTangentLine}.  It is therefore annihilated by
$\mathcal R_{\beta,K}^-$.  Finally,
\[
 |C_{n,K}^{\rm ph}-C_{\beta,K}^{\rm ph}|
 =|\Phi_n(-K)-\Phi_\beta(-K)|
 \le |\mathsf C_-V_n|.
\]
Applying $\mathcal R_{\beta,K}^-$ to the preceding expansion proves
\eqref{eq:leftProjectedBoundaryResidual}.
\end{proof}

The corresponding right endpoint estimate uses only the family already
constructed in Lemma~\ref{lem:rightTailManifold}.

\begin{Corollary}\label{cor:rightEndpointResidual}
Let $K$ be fixed and sufficiently large.  Then
there is a sequence $\varepsilon_{n,K}^+\to0$ such that
\bel{eq:rightProjectedBoundaryResidual}
 |\mathcal R_{\beta,K}^+\mathsf C_+V_n|
 \le C_{K,\beta}R_n^{-1}
      +\varepsilon_{n,K}^+|\mathsf C_+V_n|
\ee
for all sufficiently large $n$.
\end{Corollary}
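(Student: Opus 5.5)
The plan is to mirror the proof of Corollary~\ref{cor:leftEndpointResidual}, with the two-parameter family of Lemma~\ref{lem:rightTailManifold} in place of the one-parameter left family. By \eqref{eq:physicalRightTailTrace}, the physical trace at $y=K$ is exactly $\Theta_{n,K}(z_n)$ with $z_n=(\widehat A_n-G_\beta,\widehat B_n-H_\beta)\to0$. By \eqref{eq:rightTailBaseTrace}, the CHMO trace is $\Theta_{\beta,K}(0)$. We therefore decompose
\[
 \mathsf C_+V_n=\bigl(\Theta_{n,K}(z_n)-\Theta_{\beta,K}(z_n)\bigr)
 +\bigl(\Theta_{\beta,K}(z_n)-\Theta_{\beta,K}(0)\bigr).
\]
Here we use that \eqref{eq:rightTailTrace} records $(1-\eta,\Gamma,-\eta',\Gamma')$, which matches the ordering $(\Phi,\Gamma,\Phi',\Gamma')$ of $\mathsf C_+$. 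Unlike on the left, no additional discrepancy term $d_{n,K}$ appears, since the physical tail lies exactly in the radial family.

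The first bracket is bounded by $C_{K,\beta}R_n^{-1}$ by \eqref{eq:rightTailManifoldC2}, which applies because $z_n\in B_\rho(0)$ for all large $n$. For the second bracket, $\Theta_{\beta,K}$ is $C^2$ on $B_\rho(0)$, so Taylor's theorem gives
\[
 \Theta_{\beta,K}(z_n)-\Theta_{\beta,K}(0)=D\Theta_{\beta,K}(0)z_n+q_n,\qquad |q_n|\le C_{K,\beta}|z_n|^2 .
\]
By \eqref{eq:rightTailTangentPlane}, the linear term lies in $E_\beta^+(K)=\ker\mathcal R_{\beta,K}^+$. Since $\mathcal R_{\beta,K}^+$ is an orthogonal projection of norm one, it follows that
\[
 |\mathcal R_{\beta,K}^+\mathsf C_+V_n|\le C_{K,\beta}R_n^{-1}+C_{K,\beta}|z_n|^2 .
\]

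It remains to convert $|z_n|^2$ into $\varepsilon_{n,K}^+|\mathsf C_+V_n|$, and this is the step needing care. We apply the parameter control \eqref{eq:rightTailParameterControl} to $\Theta_{n,K}$ with $\widetilde z=0$:
\[
 |z_n|\le C_{K,J}\,|\Theta_{n,K}(z_n)-\Theta_{n,K}(0)|
 \le C_{K,J}\bigl(|\mathsf C_+V_n|+|\Theta_{\beta,K}(0)-\Theta_{n,K}(0)|\bigr)
 \le C_{K,J}\bigl(|\mathsf C_+V_n|+R_n^{-1}\bigr),
\]
where the last step again uses \eqref{eq:rightTailManifoldC2}. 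Hence
\[
 |z_n|^2\le C|z_n|\,|\mathsf C_+V_n|+C|z_n|R_n^{-1}.
\]
Setting $\varepsilon_{n,K}^+:=C_{K,\beta}|z_n|\to0$ and absorbing $C|z_n|R_n^{-1}\le CR_n^{-1}$ into the first term yields \eqref{eq:rightProjectedBoundaryResidual}.

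The only points requiring attention are that $z_n$ eventually lies in the ball $B_\rho(0)$ on which the trace maps are defined, and that \eqref{eq:rightTailParameterControl} holds for the radial map $\Theta_{n,K}$. Both are asserted for large $n$ in Lemma~\ref{lem:rightTailManifold}, together with $z_n\to0$ from \eqref{eq:AhatLimit} and the analogous statement $\widehat B_n\to H_\beta$.
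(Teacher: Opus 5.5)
Your proof is correct and follows essentially the same route as the paper. You write $\mathsf C_+V_n=\Theta_{n,K}(z_n)-\Theta_{\beta,K}(0)$, bound the radial--planar discrepancy by \eqref{eq:rightTailManifoldC2}, remove the linear Taylor term via \eqref{eq:rightTailTangentPlane}, and convert the quadratic remainder using $|z_n|\le C(|\mathsf C_+V_n|+R_n^{-1})$. The only cosmetic difference is that you derive this last bound from the parameter control for $\Theta_{n,K}$ rather than for $\Theta_{\beta,K}$; both cases are covered by \eqref{eq:rightTailParameterControl}.
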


\begin{proof}
Lemma~\ref{lem:rightTailManifold} gives $z_n\to0$\red{, where $z_n=(\widehat{A}_n-G_\beta,\widehat{B}_n-H_\beta)$,} and
\[
 \mathsf C_+V_n
 =\Theta_{n,K}(z_n)-\Theta_{\beta,K}(0).
\]
\eqref{eq:rightTailManifoldC2} gives
\[
 \sup_{z\in B_\rho(0)}
 |\Theta_{n,K}(z)-\Theta_{\beta,K}(z)|
 \le C_{K,\beta}R_n^{-1}.
\]
The derivative $D\Theta_{\beta,K}(0)$ is injective by
\eqref{eq:rightTailTangentPlane}.  Hence, after reducing $\rho$ if
necessary, the limiting map satisfies
\[
 |z|\le C_{K,\beta}
 |\Theta_{\beta,K}(z)-\Theta_{\beta,K}(0)|,
 \qquad z\in B_\rho(0).
\]
The preceding two displays and the identity for $\mathsf C_+V_n$ imply
\[
 |z_n|\le C_{K,\beta}(|\mathsf C_+V_n|+R_n^{-1}).
\]
Since $z_n\to0$ and $\Theta_{\beta,K}$ is differentiable at zero,
\[
 \Theta_{\beta,K}(z_n)-\Theta_{\beta,K}(0)
 =D\Theta_{\beta,K}(0)z_n+q_{n,K}^+,
 \qquad
 |q_{n,K}^+|\le o(1)|z_n|.
\]
The $C^2$ bounds in Lemma~\ref{lem:rightTailManifold} \green{imply that}
$|q_{n,K}^+|\le C_{K,J_0}|z_n|^2$.
\green{By the amplitude convergence proved in Proposition~\ref{prop:exteriorBessel}, we deduce that t}he convergence $z_n\to0$ is uniform on $J_0$.
\green{This shows that} the $o(1)$ \green{error} here and the sequence in
\eqref{eq:rightProjectedBoundaryResidual} may be chosen uniformly.
The linear term lies in $E_\beta^+(K)$ by
\eqref{eq:rightTailTangentPlane} and is annihilated by
$\mathcal R_{\beta,K}^+$.  Combining the last three displays proves
\eqref{eq:rightProjectedBoundaryResidual}.
\end{proof}

Combining the two corollaries, for $\beta\in J_0$, we have
for a sequence $\varepsilon_{n,K}\to0$,
\bel{eq:finiteWindowBoundaryResidual}
\begin{split}
 &|\mathcal R_{\beta,K}^-\mathsf C_-V_n|
 +|\mathcal R_{\beta,K}^+\mathsf C_+V_n|\le
 C_{K,\beta}R_n^{-1}
 +\varepsilon_{n,K}
 \bigl(|\mathsf C_-V_n|+|\mathsf C_+V_n|\bigr).
\end{split}
\ee

\subsection{Conclusion of the fixed-interval argument}

\begin{Lemma}\label{lem:fixedWindowQuantRate}
Let $J_0\Subset(0,4)$ be a compact interval and $\beta\in J_0$.   For every fixed
$M<\infty$,
\[
 \norm{(\Phi_n,\Gamma_n)-(\Phi_\beta,\Gamma_\beta)}
       _{H^2([-M,M])}
 \Le C_{M,\beta}R_n^{-1}
\]
for all sufficiently large $n$.
\end{Lemma}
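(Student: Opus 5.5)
We first reduce to a single fixed window $[-K,K]$ with $K\ge\max\{M,K_0\}$, where $K_0$ is chosen so that Lemma~\ref{lem:finiteWindowLinearEstimate}, Corollaries~\ref{cor:leftEndpointResidual} and~\ref{cor:rightEndpointResidual}, and Lemmas~\ref{lem:leftline}, \ref{lem:rightstable} all apply uniformly for $\beta\in J_0$. Since $\|\cdot\|_{H^2([-M,M])}\le\|\cdot\|_{H^2([-K,K])}$, it suffices to prove
\[
 \|V_n\|_{H^2([-K,K])}\le C_{K,\beta}R_n^{-1},\qquad V_n=(\Phi_n-\Phi_\beta,\Gamma_n-\Gamma_\beta).
\]

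We apply \eqref{eq:finiteWindowOverdet} to $V_n$. By Lemma~\ref{lem:boundaryResidual} with $L=K$, $\calA_\beta V_n=\calR_n+\calQ_\beta^{\rm int}(V_n)$, where $\|\calR_n\|_{L^2}\le C_{K,\beta}R_n^{-1}$. Using \eqref{eq:boundaryNonlinearLipschitz} with $W=0$ and $\calQ_\beta^{\rm int}(0)=0$, we get $\|\calQ_\beta^{\rm int}(V_n)\|_{L^2}\le C_{K,\beta}\|V_n\|_{H^2}^2$ whenever $\|V_n\|_{H^2}\le1$. The two projected endpoint traces are controlled by \eqref{eq:finiteWindowBoundaryResidual}. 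The Sobolev trace inequality $H^2([-K,K])\hookrightarrow C^1([-K,K])$ gives $|\mathsf C_\pm V_n|\le C_K\|V_n\|_{H^2([-K,K])}$. Combining these bounds yields
\[
 \|V_n\|_{H^2}\le C_{K,\beta}R_n^{-1}+C_{K,\beta}\varepsilon_{n,K}\|V_n\|_{H^2}+C_{K,\beta}\|V_n\|_{H^2}^2 .
\]

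To absorb the last two terms we need $\|V_n\|_{H^2([-K,K])}\to0$, and this step needs care. Lemma~\ref{lem:qualitativeBoundary} gives $V_n\to0$ in $C^1([-K,K])$. Writing $V_n''$ from the exact equations \eqref{eq:exactPhi}--\eqref{eq:exactGamma} and \eqref{eq:interface} expresses it through $C^0$ quantities plus $O(R_n^{-1})$ terms, so $V_n\to0$ in $C^2$ and hence in $H^2$. For $n$ large, $C_{K,\beta}\varepsilon_{n,K}\le1/4$ and $C_{K,\beta}\|V_n\|_{H^2}\le1/4$, which gives $\|V_n\|_{H^2}\le 2C_{K,\beta}R_n^{-1}$.

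The main obstacle is uniformity, since the constants and the threshold on $n$ must not depend on $\beta\in J_0$. We handle it by using the uniform constants in Lemma~\ref{lem:finiteWindowLinearEstimate}, the uniform sequences $\varepsilon_{n,K}^\pm$ from the two corollaries, and the uniform qualitative convergence \eqref{eq:uniformQualitativeBoundary}. Together these make the absorption threshold uniform on $J_0$.
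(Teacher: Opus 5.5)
Your proposal is correct and follows essentially the same route as the paper. You apply \eqref{eq:finiteWindowOverdet} on $[-K,K]$, control the pieces with the $O(R_n^{-1})$ residual, the quadratic bound on $\calQ_\beta^{\rm int}(V_n)$, the projected endpoint estimate \eqref{eq:finiteWindowBoundaryResidual} and the trace inequality, and then absorb the last two terms using qualitative $H^2$ smallness with uniformity on $J_0$. The only difference is cosmetic: you obtain $\|V_n\|_{H^2([-K,K])}\to0$ by reading off $V_n''$ directly from the exact equations, whereas the paper deduces it from $\calA_\beta V_n\to0$ in $L^2$ together with \eqref{eq:fixedWindowElementaryODE}.
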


\begin{proof}
Fix $K>M+4$ large enough for the two endpoint constructions, and apply
Lemma~\ref{lem:finiteWindowLinearEstimate} to $V_n$ on $[-K,K]$.
First note that Lemma~\ref{lem:qualitativeBoundary} gives
$V_n\to0$ in $C^1([-K,K])$.  Equations
\eqref{eq:boundaryDifferenceEquation} and
\eqref{eq:boundaryNonlinearRemainder} then imply
$\calA_\beta V_n\to0$ in $L^2([-K,K])$.  The elementary estimate
\eqref{eq:fixedWindowElementaryODE} therefore gives
\bel{eq:fixedWindowH2Smallness}
        \|V_n\|_{H^2([-K,K])}\longrightarrow0.
\ee
\green{By \eqref{eq:uniformQualitativeBoundary}  and the uniform coefficient bounds on $[-K,K]$, a}ll three convergences are uniform for $\beta\in J_0$ and
for the choice of \green{the} radial minimizer.
We can consequently use \eqref{eq:boundaryNonlinearLipschitz} with
$W=0$.  Together with \eqref{eq:boundaryDifferenceEquation}, it gives
\[
 \|\calA_\beta V_n\|_{L^2([-K,K])}
 \le C_{K,\beta}R_n^{-1}
      +C_{K,\beta}\|V_n\|_{H^2([-K,K])}^2.
\]
The trace theorem gives
\[
 |\mathsf C_-V_n|+|\mathsf C_+V_n|
 \le C_K\|V_n\|_{H^2([-K,K])}.
\]
Substitution of these two estimates and
\eqref{eq:finiteWindowBoundaryResidual} into
\eqref{eq:finiteWindowOverdet} yields, after incorporating the uniform inverse and trace constants
into $\varepsilon_{n,K}$,
\[
 \|V_n\|_{H^2([-K,K])}
 \le C_{K,\beta}R_n^{-1}
      +\varepsilon_{n,K}\|V_n\|_{H^2([-K,K])}
      +C_{K,\beta}\|V_n\|_{H^2([-K,K])}^2.
\]
The constants in this inequality may be replaced by a common
$C_{K,J_0}$.  \green{Once we fix} $K$, \green{we} choose $n$ \green{large enough} that
$\varepsilon_{n,K}\le1/4$ and
$C_{K,J_0}\|V_n\|_{H^2([-K,K])}\le1/4$, uniformly on $J_0$.
\green{Absorbing these} last two terms to the left gives the claimed bound, with
at most twice the preceding constant. Hence
\[
        \|V_n\|_{H^2([-K,K])}
        \le C_{K,\beta}R_n^{-1}.
\]
Restricting to $[-M,M]$ completes the proof.
\end{proof}

We may now state the quantitative boundary-layer result.

\begin{Theorem}\label{thm:boundaryconvergence}
Let $J_0\Subset(0,4)$ be a compact interval and $\beta\in J_0$. Then, for every $M<\infty$,
\bel{eq:boundaryH2conv}
 \norm{(\Phi_n,\Gamma_n)-(\Phi_\beta,\Gamma_\beta)}
       _{H^2([-M,M])}
 \Le C_{M,\beta}R_n^{-1}
\ee
for all sufficiently large $n$.  Consequently,
\bel{eq:boundaryEnergyConv}
 \int_{-M}^M e[\Phi_n,\Gamma_n](y)\,dy
 =
 \int_{-M}^M e_\beta(y)\,dy
 +O_{M,\beta}(R_n^{-1}),
\ee
where
\[
 e[\Phi,\Gamma]
 =\Phi'^2+\frac{1}{2}\Gamma'^2+\Gamma^2\Phi^2
  +\frac{\beta}{2}(1-\Phi^2)^2.
\]
\end{Theorem}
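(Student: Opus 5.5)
The estimate \eqref{eq:boundaryH2conv} is exactly Lemma~\ref{lem:fixedWindowQuantRate}, so the first step is simply to quote that lemma for the given $M$. The threshold on $n$ and the constant $C_{M,\beta}$ are uniform for $\beta\in J_0$. The real content of the theorem is the energy consequence \eqref{eq:boundaryEnergyConv}. I plan to derive it from the $H^2$ bound by a pointwise comparison of the two energy densities on $[-M,M]$.

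For the energy statement, I would write $V_n=(\xi_n,\psi_n)=(\Phi_n-\Phi_\beta,\Gamma_n-\Gamma_\beta)$ and expand $e[\Phi_n,\Gamma_n]-e_\beta$ term by term. Each term is a difference of polynomials in $(\Phi,\Gamma,\Phi',\Gamma')$, so it has the form
\[
 e[\Phi_n,\Gamma_n]-e_\beta
 =(\Phi_n'+\Phi_\beta')\xi_n'+\tfrac12(\Gamma_n'+\Gamma_\beta')\psi_n'
 +\bigl(\Gamma_n^2\Phi_n^2-\Gamma_\beta^2\Phi_\beta^2\bigr)
 +\tfrac\beta2\bigl((1-\Phi_n^2)^2-(1-\Phi_\beta^2)^2\bigr).
\]
The last two brackets factor as (bounded coefficient)$\times(\xi_n,\psi_n)$. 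The coefficients involve $\Phi_n,\Gamma_n,\Phi_\beta,\Gamma_\beta$ and their first derivatives on $[-M,M]$. These are bounded uniformly in large $n$: by Lemma~\ref{lem:qualitativeBoundary} (or directly by the $H^2$ bound together with the $H^2\hookrightarrow C^1$ embedding) the radial profiles stay $C^1$-close to the interface, which is a fixed smooth function on the compact interval.

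Hence, by Cauchy--Schwarz on $[-M,M]$,
\[
 \Bigl|\int_{-M}^M\bigl(e[\Phi_n,\Gamma_n]-e_\beta\bigr)dy\Bigr|
 \le C_{M,\beta}\bigl(\|\xi_n\|_{H^1([-M,M])}+\|\psi_n\|_{H^1([-M,M])}\bigr)
 \le C_{M,\beta}R_n^{-1}.
\]
The quadratic pieces are also of order $R_n^{-2}$ at worst, so they are absorbed. One could even use just $L^1$ bounds here.

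I do not expect a genuine obstacle: the nontrivial input (the rank-three matching of Proposition~\ref{prop:nearBPS} and the two endpoint residual corollaries) has already been consumed in Lemma~\ref{lem:fixedWindowQuantRate}. The only point needing care is that the coefficient bounds are uniform in $n$ and in $\beta\in J_0$. I would get this from the uniform qualitative convergence \eqref{eq:uniformQualitativeBoundary} and the continuity of the interface in Lemma~\ref{lem:interfaceUniformTails}.
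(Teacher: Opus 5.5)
Your proposal is correct and follows the paper's proof essentially verbatim. The $H^2$ bound is quoted from Lemma~\ref{lem:fixedWindowQuantRate}. The energy estimate then follows from the embedding $H^2([-M,M])\hookrightarrow C^1([-M,M])$, the local Lipschitz dependence of the polynomial density on $(\Phi,\Gamma,\Phi',\Gamma')$, and integration over the fixed interval.
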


\begin{proof}
The estimate \eqref{eq:boundaryH2conv} is
Lemma~\ref{lem:fixedWindowQuantRate}.  Since
$H^2([-M,M])$ embeds continuously in $C^1([-M,M])$, each term in the
polynomial density $e[\Phi,\Gamma]$ is locally Lipschitz as a function of
$(\Phi,\Gamma)$ in the $H^2$ norm.  Integrating the resulting pointwise
estimate proves \eqref{eq:boundaryEnergyConv}.
\end{proof}

\section{Quantitative estimates in the left and right tails}
\label{sec:leftMatching}

The  rate of convergence established in Theorem~\ref{thm:boundaryconvergence} on a fixed interval 
controls the translated profiles only while $y$ remains bounded.  We now need to 
obtain analogous control outside of a fixed interval, in other words, in the tails.  Three distinct conclusions are
required later.  First, the Weber formula in the core--interface overlap  has to be
uniform on an interval whose length tends to infinity.  Second, the
$O(R_n^{-1})$ estimate at $y=-K$ will  be propagated through the part of
the left tail extending to $O(\!\sqrt{\log R_n})$,  as this is the range used in
the refined energy calculation.  Third, the estimate at $y=K$ must be
propagated through the entire right tail.

\red{We encounter two} fixed large parameters \red{in the analysis}, \red{each of which plays a different role}.  The
\red{parameter} $L$ \red{specifies} the matching point in the core to interface overlap.  In a
statement involving $L$, one first lets $n\to\infty$ and then
$L\to\infty$.  The \red{parameter} $K$ determines the endpoints from which the
quantitative boundary estimate is propagated to \red{both the normal and superconducting ends}\red{, and i}t remains
fixed as $n\to\infty$.  \red{The parameters} $L$ and $K$ \red{are completely independent of each other}. Throughout this section, we fix an arbitrary compact interval $J_0\Subset(0,4)$.

\subsection{Uniform Weber matching in the left overlap}\label{subsec:leftMatching}

Fix
\[
        0<\alpha<\widehat\alpha<\frac13
\]
as in \eqref{eq:twoOverlapExponents}, and fix $L$ sufficiently large.  Let
$I_n\subset[L,R_n^\alpha]$ be any sequence of intervals such that
\bel{eq:leftMatchingUniformity}
 \sup_{\eta\in I_n}
 \left\{
 |b_n-\sqrt\beta|\,\eta^2+\frac{\eta^3}{R_n}
 \right\}\longrightarrow0.
\ee
Then Proposition~\ref{prop:coreWKB}, its differentiated remainder estimate,
and the coefficient estimate in the proof of
Corollary~\ref{cor:leftCoefficientMatching} give
\bel{eq:leftMatchingOnIntervals}
\begin{split}
 \phi_n(R_n-\eta)
 &=
 C_\beta\eta^{(\sqrt\beta-1)/2}
 e^{-\sqrt\beta\,\eta^2/2}
 \left(1+\varepsilon_{n,L}(\eta)\right),
 \qquad \eta\in I_n,
\end{split}
\ee
where
\[
 \sup_{\eta\in I_n}
 \left\{
 |\varepsilon_{n,L}(\eta)|
 +(1+\eta)^{-1}|\varepsilon_{n,L}'(\eta)|
 \right\}
 \le C_{J_0}L^{-2}+o_{n,J_0,L}(1).
\]
Indeed, the first term in \eqref{eq:leftMatchingUniformity} controls the
difference between
\[
 \eta^{(\beta/b_n-1)/2}e^{-b_n\eta^2/2}
 \quad\hbox{and}\quad
 \eta^{(\sqrt\beta-1)/2}e^{-\sqrt\beta\,\eta^2/2}.
\]
The second term controls the $O(\eta^3/R_n)$ error obtained by integrating
the radial correction in the action.  On the larger interval
$[L,T_n]$, used to construct the WKB basis, this same contribution is
$O(R_n^{3\widehat\alpha-1})=o(1)$.  This is the reason for the restriction
$\widehat\alpha<1/3$.

Under the localization estimate \eqref{eq:SLhyp}, condition
\eqref{eq:leftMatchingUniformity} holds on the entire left overlap.  In fact,
uniformly for $L\le\eta\le R_n^\alpha$,
\[
 |b_n-\sqrt\beta|\,\eta^2=O_{J_0}(R_n^{2\alpha-1}),
 \qquad
 \frac{\eta^3}{R_n}=O(R_n^{3\alpha-1}),
\]
and both quantities tend to zero.  Consequently,
\[
 \lim_{L\to\infty}\limsup_{n\to\infty}
 \sup_{L\le\eta\le R_n^\alpha}
 \left|
 \frac{\phi_n(R_n-\eta)}
 {C_\beta\eta^{(\sqrt\beta-1)/2}
  e^{-\sqrt\beta\,\eta^2/2}}
 -1
 \right|=0.
\]

\subsection{Propagation through the left tail}\label{subsec:leftTail}

We next propagate the $O(R_n^{-1})$ estimate at $y=-K$ into the
logarithmically growing interval needed in Section~\ref{sec:refinedEnergy}.
\red{We recall the notations}
\[
 \Phi_n^-(\eta):=\Phi_n(-\eta)=\phi_n(R_n-\eta),
 \qquad
 \Gamma_n^-(\eta):=\Gamma_n(-\eta),
 \qquad \eta\ge K.
\]
The proof compares the vortex tail with the CHMO tail through the
one-parameter families constructed in
Lemma~\ref{lem:leftNonlinearChart}.  Three errors occur: the difference
between the physical solution and the member of the finite radial family
having the same value at $\eta=K$; the difference between the finite and
limiting families at the same parameter; and the change in the limiting
family caused by the $O(R_n^{-1})$ displacement of that parameter.  The
next lemma estimates these three terms separately.

\begin{Lemma}\label{lem:leftQuantTailPropagation}
Assume $\beta\in J_0$.  There \red{exist 
$K_0<\infty$ and} $c>0$ with the following property.  For every fixed
$C_{\log}<\infty$, every $K\ge K_0$ \red{and all sufficiently large $n$}, one has
\bel{eq:leftQuantitativeScalarTail}
\begin{split}
 &\sum_{j=0}^1(1+\eta)^{-j}
 \left|
 \partial_\eta^j
 \left(\Phi_n^-(\eta)-\Phi_\beta(-\eta)\right)
 \right|\le
 \red{C(K,J_0,C_{\log})R_n^{-1}
 (1+\eta)^2 e^{-c\eta^2}}
\end{split}
\ee
for
\[
        K\le\eta\le C_{\log}\sqrt{\log R_n}.
\]
Let $P_n$ be the explicit error defined in
\eqref{eq:exactAffineDefect}.  On the same interval,
\bel{eq:leftQuantitativeGaugeTail}
\begin{split}
 &\sum_{j=0}^1(1+\eta)^{-j}
 \left|
 \partial_\eta^j
 \left(
 \Gamma_n^-(\eta)-\Gamma_\beta(-\eta)-P_n(\eta)
 \right)
 \right|\le
 \red{C(K,J_0,C_{\log})R_n^{-1}
 (1+\eta)^2 e^{-c\eta^2}}.
\end{split}
\ee
In particular,
\bel{eq:leftQuantitativeGaugeAbsolute}
\begin{split}
 &\sum_{j=0}^1(1+\eta)^{-j}
 \left|
 \partial_\eta^j
 \left(\Gamma_n^-(\eta)-\Gamma_\beta(-\eta)\right)
 \right| \le
 \red{C(K,J_0,C_{\log})R_n^{-1}(1+\eta)^2}.
\end{split}
\ee
\end{Lemma}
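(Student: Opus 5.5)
The plan is the three-term decomposition announced just before the statement. Put $C_n:=C_{n,K}^{\rm ph}=\Phi_n(-K)$ and $C_\beta:=C_{\beta,K}^{\rm ph}=\Phi_\beta(-K)$. On $K\le\eta\le\Lambda_n=C_{\log}\sqrt{\log R_n}$ write
\[
 \Phi_n^--\Phi_\beta(-\cdot)
 =\bigl(\Phi_n^--\Phi_{n,C_n}^-\bigr)
 +\bigl(\Phi_{n,C_n}^--\Phi_{\beta,C_n}^-\bigr)
 +\bigl(\Phi_{\beta,C_n}^--\Phi_{\beta,C_\beta}^-\bigr).
\]
Here we use that $\Phi_{\beta,C_\beta}^-=\Phi_\beta(-\cdot)$, as observed at the end of the proof of Lemma~\ref{lem:leftNonlinearChart}. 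The gauge difference is decomposed the same way, after first subtracting $P_n$.

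The first term is controlled by Lemma~\ref{lem:physicalNoGrowingChart}: it is $O(e^{-c\Lambda_n^2})$ uniformly on the interval. We then enlarge $C_{\log}$ so that this is $\le R_n^{-2}e^{-c\Lambda_n^2/2}$. Since $\eta\le\Lambda_n$, this is bounded by $R_n^{-1}(1+\eta)^2e^{-c\eta^2}$ (with constants depending on $C_{\log}$). If the given $C_{\log}$ is smaller than the threshold, the interval is only shorter, so we simply work with the larger threshold.

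The second term is exactly \eqref{eq:leftChartScalar} and \eqref{eq:leftChartGauge}, applied with $p=2$ at the parameter $C_n\in I_{\beta,K}$. These bounds already include the subtraction of $P_n$ in the gauge component and carry the weight $R_n^{-1}(1+\eta)^2e^{-c(\eta^2-K^2)}$. The factor $e^{cK^2}$ is absorbed into the constant $C(K,J_0,C_{\log})$.

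The third term is where the $O(R_n^{-1})$ input enters. By the mean value theorem in $C$,
\[
 \Phi_{\beta,C_n}^--\Phi_{\beta,C_\beta}^-=(C_n-C_\beta)\int_0^1\partial_C\Phi_{\beta,C_\beta+t(C_n-C_\beta)}^-\,dt,
\]
and similarly for $\Gamma$. The integrand is bounded by \eqref{eq:leftLimitingParameterDecay} with weight $(1+\eta)^2e^{-c(\eta^2-K^2)}$, uniformly over the segment, which lies inside $I_{\beta,K}$ for large $n$. Lemma~\ref{lem:fixedWindowQuantRate}, or Theorem~\ref{thm:boundaryconvergence} on $[-K-1,K+1]$ together with the Sobolev embedding, gives $|C_n-C_\beta|=|\Phi_n(-K)-\Phi_\beta(-K)|\le C_{K,J_0}R_n^{-1}$. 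This term contributes nothing to $P_n$, since $P_n$ has already been absorbed into the second term.

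Adding the three pieces, and shrinking $c$ to a common value, gives \eqref{eq:leftQuantitativeScalarTail} and \eqref{eq:leftQuantitativeGaugeTail}. For \eqref{eq:leftQuantitativeGaugeAbsolute} we add back $P_n$. By \eqref{eq:exactAffineDefect} and \eqref{eq:SLhyp}, $|P_n|\le C(|b_n-\sqrt\beta|\eta+\eta^2/R_n)\le CR_n^{-1}(1+\eta)^2$ and $|P_n'|\le CR_n^{-1}(1+\eta)$, which is consistent with the weight $(1+\eta)^{-1}$ on the derivative.

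The main obstacle is the uniformity bookkeeping. The parameter-derivative bound and the chart comparison must hold uniformly over the segment $[C_\beta,C_n]$ and over $\beta\in J_0$, which relies on the joint continuity asserted in Lemma~\ref{lem:leftNonlinearChart}. In addition, the Gaussian exponents coming from the three sources have to be reconciled, taking the minimum of the exponents and paying for the $e^{cK^2}$ factors with $K$-dependent constants.
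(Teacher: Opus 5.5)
Your proposal is correct and is essentially the paper's proof. It uses the same three-term splitting: the physical solution versus the finite chart via Lemma~\ref{lem:physicalNoGrowingChart} with an enlarged $C_{\log}$, the finite versus the limiting chart via \eqref{eq:leftChartScalar}--\eqref{eq:leftChartGauge} with $p=2$, and the parameter shift via \eqref{eq:leftLimitingParameterDecay} together with the $O(R_n^{-1})$ trace rate from Lemma~\ref{lem:fixedWindowQuantRate}; it then adds back the explicit bound on $P_n$. The only cosmetic difference is in absorbing the first term: you halve the Gaussian exponent on the enlarged interval, whereas the paper keeps $c$ fixed and instead requires $a\widetilde C_{\log}^2>1+cC_{\log}^2$ for the auxiliary constant $\widetilde C_{\log}$.
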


\begin{proof}
Put
\[
 C_{n,K}^{\rm ph}:=\Phi_n^-(K),
 \qquad
 C_{\beta,K}^{\rm ph}:=\Phi_\beta(-K).
\]
The fixed-interval estimate in
Lemma~\ref{lem:fixedWindowQuantRate}, together with
$\partial_\eta\Phi_n^-(K)=-\Phi_n'(-K)$, gives
\bel{eq:leftPhysicalTraceRate}
\begin{split}
 &|C_{n,K}^{\rm ph}-C_{\beta,K}^{\rm ph}|
 +\left|
 \partial_\eta\Phi_n^-(K)+\Phi_\beta'(-K)
 \right|
 \le C_{K,J_0}R_n^{-1}.
\end{split}
\ee
In particular, $C_{n,K}^{\rm ph}\in I_{\beta,K}$ for all sufficiently
large $n$.
  Choose
\[
 \widetilde C_{\log}\ge
 \max\{C_{\log},C_{\log,0}\}
\]
large enough that \red{both Lemmas \ref{lem:leftNonlinearChart} and \ref{lem:physicalNoGrowingChart} are applicable}.  \red{We note that the}
constant $c$ \red{will be independent of this choice}.  Construct the finite and limiting
one-parameter families of Lemma~\ref{lem:leftNonlinearChart} on
$[K,\widetilde\Lambda_n]$, and restrict their estimates to
$[K,C_{\log}\sqrt{\log R_n}]$.

We first prove the scalar estimate.  Add and subtract the members of the
finite and limiting families with parameter $C_{n,K}^{\rm ph}$:
\[
\begin{split}
 \Phi_n^-(\eta)-\Phi_\beta(-\eta)
 &=
 \underbrace{
 \Phi_n^-(\eta)-\Phi_{n,C_{n,K}^{\rm ph}}^-(\eta)
 }_{\text{physical solution minus finite family}}\\
 &\quad+
 \underbrace{
 \Phi_{n,C_{n,K}^{\rm ph}}^-(\eta)
 -\Phi_{\beta,C_{n,K}^{\rm ph}}^-(\eta)
 }_{\text{finite family minus limiting family}}\\
 &\quad+
 \underbrace{
 \Phi_{\beta,C_{n,K}^{\rm ph}}^-(\eta)
 -\Phi_{\beta,C_{\beta,K}^{\rm ph}}^-(\eta)
 }_{\text{change of parameter in the limiting family}}.
\end{split}
\]
Here
\[
        \Phi_{\beta,C_{\beta,K}^{\rm ph}}^-(\eta)
        =\Phi_\beta(-\eta).
\]
Lemma~\ref{lem:physicalNoGrowingChart}, specifically
\eqref{eq:leftPhysicalNoGrowingRemainder} , bounds the first term and its
first derivative by 
\red{\begin{align*}
C(K,J_0,C_{\log})
 (1+\eta)^2 e^{-a\tilde{\Lambda}^2_n},
 \end{align*}
where $a>0$ and $\tilde{\Lambda}_n=\tilde{C}_{\log}\sqrt{\log R_n}$. Upon shrinking $c>0$ if necessary, we may also assume that $c<a$, and we also require $\displaystyle a\tilde{C}_{\log}^2>1+cC^2_{\log}$. The first term and its derivative will then be bounded by
\begin{align*}
C(K,J_0,C_{\log})
 (1+\eta)^2 e^{-a\tilde{\Lambda}^2_n}&=C(K,J_0,C_{\log})
 (1+\eta)^2 e^{-a\tilde{C}^2_{\log}\log R_n}\\&<C(K,J_0,C_{\log})
 (1+\eta)^2 e^{-(cC^2_{\log}+1)\log R_n}\\
 &\leq C(K,J_0,C_{\log})
 (1+\eta)^2R_n^{-1} e^{-c\eta^2}
.\end{align*}
}
Estimate
\eqref{eq:leftChartScalar} \red{with $\displaystyle p=2$} bounds the second term and its first derivative by
\[
 C(K,\red{J_0},C_{\log})
  R_n^{-1}(1+\eta)^2e^{-c(\eta^2-K^2)}.
\]
For the third term, the fundamental theorem of calculus gives
\[
\begin{split}
 &\Phi_{\beta,C_{n,K}^{\rm ph}}^-(\eta)
 -\Phi_{\beta,C_{\beta,K}^{\rm ph}}^-(\eta)=
 (C_{n,K}^{\rm ph}-C_{\beta,K}^{\rm ph})
 \int_0^1
 \partial_C
 \Phi_{\beta,
 C_{\beta,K}^{\rm ph}
 +t(C_{n,K}^{\rm ph}-C_{\beta,K}^{\rm ph})}^-(\eta)
 \,\dd t.
\end{split}
\]
Combining \eqref{eq:leftPhysicalTraceRate} with the parameter-derivative
estimate \eqref{eq:leftLimitingParameterDecay} bounds this term and its
first derivative by
\[
 C_{K,\red{J_0}}R_n^{-1}(1+\eta)^2
 e^{-c(\eta^2-K^2)}.
\]
Absorbing the fixed factor $e^{cK^2}$ into the constant and adding the
three estimates proves \eqref{eq:leftQuantitativeScalarTail}. 
The bounds on the magnetic part follow from 
\[
\begin{split}
 \Gamma_n^-\red{(\eta)}-\Gamma_\beta(-\red{\eta})-P_n\red{(\eta)}
 &=
 \underbrace{
 \Gamma_n^-\red{(\eta)}-\Gamma_{n,C_{n,K}^{\rm ph}}^-\red{(\eta)}
 }_{\text{physical solution minus finite family}}\\
 &\quad+
 \underbrace{
 \Gamma_{n,C_{n,K}^{\rm ph}}^-\red{(\eta)}
 -\Gamma_{\beta,C_{n,K}^{\rm ph}}^-\red{(\eta)}-P_n\red{(\eta)}
 }_{\text{finite family minus limiting family}}\\
 &\quad+
 \underbrace{
 \Gamma_{\beta,C_{n,K}^{\rm ph}}^-\red{(\eta)}
 -\Gamma_{\beta,C_{\beta,K}^{\rm ph}}^-\red{(\eta)}
 }_{\text{change of parameter in the limiting family}}.
\end{split}
\]
The three terms, including their first derivatives, are controlled,
respectively, by \eqref{eq:leftPhysicalGaugeRemainder},
\eqref{eq:leftChartGauge}, and the $C$-derivative estimate
\eqref{eq:leftLimitingParameterDecay} together with
\eqref{eq:leftPhysicalTraceRate}.  Their sum proves
\eqref{eq:leftQuantitativeGaugeTail}.

Finally, \eqref{eq:exactAffineDefect} and \eqref{eq:SLhyp} imply, for
$\eta\le C_{\log}\sqrt{\log R_n}$,
\[
 |P_n(\eta)|
 +(1+\eta)^{-1}|P_n'(\eta)|
 \le C_{\red{J_0},C_{\log}}R_n^{-1}(1+\eta)^2.
\]
Adding this \red{finite-radius profile} correction to
\eqref{eq:leftQuantitativeGaugeTail} proves
\eqref{eq:leftQuantitativeGaugeAbsolute}.
\end{proof}

\subsection{Propagation through the right tail}\label{subsec:rightTail}

Recall the shifted Higgs defect
\[
\red{\eta_\beta}(y):=1-\Phi_\beta(y).
\]
\red{We also introduce $\displaystyle \eta^+_n(y):=1-\Phi_n(y)$, and redenote}
\[
 \red{U_n:=(\Gamma_n,\eta^+_n)},
 \qquad
 \red{U_\beta:=(\Gamma_\beta,\eta_\beta)}.
\]
Lemma~\ref{lem:rightTailManifold} constructed the two-parameter families of
decaying radial and planar tails and their Cauchy data maps
$\Theta_{n,K}$ and $\Theta_{\beta,K}$ \blue{(which we also refer to as the traces)}.  We first use the fixed-interval
estimate to control the two amplitude parameters.  We then compare the
radial and planar tails at the same parameter and finally account for the
parameter displacement.

\begin{Lemma}\label{lem:rightQuantTailPropagation}
Assume $\beta\in J_0$.  There are
$K_1<\infty$ and $c>0$ such that, for every $K\ge K_1$, there is
a constant $C_{K,\red{J_0}}$ \red{such that for all sufficiently large $n$,}
\[
\begin{split}
 &\sum_{j=0}^2
 |\partial_y^j(\Gamma_n(y)-\Gamma_\beta(y))|
 +\sum_{j=0}^2
 |\partial_y^j(\red{\eta^+_n}(y)-\red{\eta}_\beta(y))|\le C_{K,\red{J_0}}R_n^{-1}e^{-cy},
 \qquad y\ge K.
\end{split}
\]
\end{Lemma}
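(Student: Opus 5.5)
The proof follows the three-term decomposition announced just before the statement. By Lemma~\ref{lem:rightTailManifold} and Proposition~\ref{prop:exteriorBessel}, on $[K,\infty)$ the physical tail is $U_n=U_{n,z_n}$ and the CHMO tail is $U_\beta=U_{\beta,0}$. We split
\[
 U_n-U_\beta=\bigl(U_{n,z_n}-U_{\beta,z_n}\bigr)+\bigl(U_{\beta,z_n}-U_{\beta,0}\bigr).
\]
The first term is the radial-minus-planar comparison at equal parameter. The second term is the parameter displacement in the planar family.

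First we control the amplitude parameter. Lemma~\ref{lem:fixedWindowQuantRate}, applied on $[-M,M]$ with $M=K+1$, together with the Sobolev embedding $H^2\hookrightarrow C^1$, gives $|\mathsf C_+V_n|\le C_{K,J_0}R_n^{-1}$. We have $\mathsf C_+V_n=\Theta_{n,K}(z_n)-\Theta_{\beta,K}(0)$, and by \eqref{eq:rightTailManifoldC2} we may replace $\Theta_{n,K}(z_n)$ by $\Theta_{\beta,K}(z_n)$ at cost $C_{K,J_0}R_n^{-1}$. The injectivity estimate \eqref{eq:rightTailParameterControl} for $\Theta_{\beta,K}$, which applies since $z_n\to0$ puts $z_n$ in $B_\rho(0)$, then yields
\[
 |z_n|\le C_{K,J_0}R_n^{-1}.
\]

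Next we bound the two terms in the weighted norm $\mathcal X_{\beta,K}$. This norm weights the $\Gamma$ component by $e^{my}$ and the $\eta$ component by $e^{\mu_\beta y}$, with $\min\{m,\mu_\beta\}\ge c_J>0$ uniformly on $J_0$, so it controls derivatives up to order two by $\|\cdot\|_{\mathcal X_{\beta,K}}e^{-cy}$.

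For the displacement term we use the mean value theorem in $z$, together with the uniform derivative bound \eqref{eq:CrudeDerivativeBounds} on $\partial_zU_{\beta,z}$ in $\mathcal X_{\beta,K}$. This gives $\|U_{\beta,z_n}-U_{\beta,0}\|_{\mathcal X_{\beta,K}}\le C_{K,J_0}|z_n|\le C_{K,J_0}R_n^{-1}$.

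The comparison term is the main obstacle. Estimate \eqref{eq:rightTailWeightedComparison} only gives $\|U_{n,z}-U_{\beta,z}\|$ bounded by $C_{K,J}$ in the $\omega_n$-weighted norm. Since $\omega_n(y)\le \min\{(1+y-K)/(R_n+K),1\}$, this yields pointwise
\[
 |\partial^j(U_{n,z}-U_{\beta,z})|\lesssim \frac{1+y-K}{R_n}e^{-\min(m,\mu_\beta)y}.
\]
The linear growth factor is not a loss: we absorb $(1+y-K)$ by sacrificing a small part of the exponent, $(1+y-K)e^{-c_Jy}\le C_\delta e^{-(c_J-\delta)y}$. This settles the range $y-K\le R_n$. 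For $y-K\ge R_n$ the weight saturates at $1$, but there $e^{-c_Jy}\le e^{-(c_J/2)y}e^{-(c_J/2)R_n}$, and $e^{-(c_J/2)R_n}\le CR_n^{-1}$. Hence in both ranges the comparison term is bounded by $C_{K,J_0}R_n^{-1}e^{-cy}$ with, say, $c=c_J/2$.

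Summing the two bounds over $j=0,1,2$ gives the lemma. The constants are uniform on $J_0$ by the uniformity statements in Lemmas~\ref{lem:rightTailManifold} and \ref{lem:fixedWindowQuantRate}.
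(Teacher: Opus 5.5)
Your proof is correct and follows the paper's argument. You use the same bound $|z_n|\le C_{K,J_0}R_n^{-1}$ from the fixed-window rate and the trace-map estimates, the same splitting $U_n-U_\beta=(U_{n,z_n}-U_{\beta,z_n})+(U_{\beta,z_n}-U_{\beta,0})$, and the same two-case conversion of the saturating weight $\omega_{R_n,K}$ into an absolute $O(R_n^{-1}e^{-cy})$ bound. The only differences are cosmetic: you apply the parameter-control inequality to $\Theta_{\beta,K}$ rather than $\Theta_{n,K}$, and you read the second derivatives directly off the $j\le 2$ weighted norms, whereas the paper recovers them from the subtracted radial and planar equations.
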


\begin{proof}
Let
\[
 z_n=
 \bigl(\widehat A_n-G_\beta,\widehat B_n-H_\beta\bigr)
\]
be the parameter of the vortex tail, see Lemma~\ref{lem:rightTailManifold}.  By
\eqref{eq:physicalRightTailTrace},
\[
 \operatorname{Tr}_K U_n=\Theta_{n,K}(z_n),
 \qquad
 \operatorname{Tr}_K U_\beta=\Theta_{\beta,K}(0).
\]
The bound \eqref{eq:boundaryH2conv} on any fixed interval and \eqref{eq:rightTailManifoldC2} imply that
\[
\begin{split}
 |\Theta_{n,K}(z_n)-\Theta_{\beta,K}(0)|
 &\le C_{K,\red{J_0}}R_n^{-1},\\
 |\Theta_{n,K}(0)-\Theta_{\beta,K}(0)|
 &\le C_{K,\red{J_0}}R_n^{-1}.
\end{split}
\]
Hence \eqref{eq:rightTailParameterControl}, applied to the  trace map,
yields
\bel{eq:rightAmplitudeRate}
        |z_n|\le C_{K,\red{J_0}}R_n^{-1}.
\ee
The physical and limiting tails satisfy
\[
 U_n=U_{n,z_n},
 \qquad
 U_\beta=U_{\beta,0}.
\]
\blue{Here $U_{n,z}$ and $U_{\beta,z}$ are the fixed points introduced first in
Lemma~\ref{lem:rightTailManifold}, defined by the radial and planar tail
fixed-point equations \eqref{eq:radialTailFixedPoint} and
\eqref{eq:planarTailFixedPoint}, respectively; moreover,
$U_{\beta,0}$ is identified in \eqref{eq:rightTailBaseTrace}.
We write}
\bel{eq:rightTailTwoComparisons}
 U_n-U_\beta
 =(U_{n,z_n}-U_{\beta,z_n})
  +(U_{\beta,z_n}-U_{\beta,0}).
\ee
The first term compares the radial and planar equations at the same
parameters.  For $j=0,1$, estimate
\eqref{eq:rightTailWeightedComparison} gives
\[
 |\partial_y^j(U_{n,z_n}(y)-U_{\beta,z_n}(y))|
 \le
 C_{K,\red{J_0}}\omega_{R_n,K}(y)
 \left(e^{-\red{m}y}+e^{-\red{\mu_\beta}y}\right).
\]
The second term in \eqref{eq:rightTailTwoComparisons} is caused only by the
change in the parameters.  The derivative equation
\eqref{eq:rightTailFirstParameterDerivative} and the contraction estimate
\eqref{eq:rightTailContractionEstimate} give uniform parameter
derivatives for the limiting family.  Therefore
\[
 |\partial_y^j(U_{\beta,z_n}(y)-U_{\beta,0}(y))|
 \le
 C_{K,\red{J_0}}|z_n|
 \left(e^{-\red{m}y}+e^{-\red{\mu_\beta}y}\right),
 \qquad j=0,1.
\]
\red{We next} convert the comparison involving the saturating weight into
an absolute $O(R_n^{-1})$ estimate.  Let
$\lambda_G=\red{m}-\rho$ and
$\lambda_H=\red{\mu_\beta}-\rho$ be the exponents from
Lemma~\ref{lem:exteriorEntry}, and choose
\[
        0<c<\red{\inf_{\beta\in J_0}}\min\{\lambda_G,\lambda_H\}.
\]
\blue{Let $q\in\{\red{m},\red{\mu_\beta}\}$ (see \eqref{eq:mMuBetaDef}).  Since
$\lambda_G=\red{m}-\rho<\red{m}$ and $\lambda_H=\red{\mu_\beta}-\rho<\red{\mu_\beta}$,
our choice $c<\red{\inf_{\beta\in J_0}}\min\{\lambda_G,\lambda_H\}$ implies $q>c$.
We now claim that}
\bel{eq:saturatingToAbsolute}
 \omega_{R_n,K}(y)
 \left(e^{-\red{m}y}+e^{-\red{\mu_\beta}y}\right)
 \le C_{K,\red{J_0}}R_n^{-1}e^{-cy},
 \qquad y\ge K.
\ee
Indeed, fix $q\in\{\red{m},\red{\mu_\beta}\}$ and assume $y-K\le R_n$.  Then
\[
 \omega_{R_n,K}(y)e^{-qy}
 \le
 \frac{1+y-K}{R_n+K}e^{-(q-c)\red{(y-K)}}\red{e^{-(q-c)K}}e^{-cy}
 \le C_{K,\red{J_0}}R_n^{-1}e^{-cy}.
\]
If $y-K\ge R_n$, then
\[
 \omega_{R_n,K}(y)e^{-qy}
 \le e^{-(q-c)y}e^{-cy}
 \le C_{K,\red{J_0}}R_n^{-1}e^{-cy},
\]
where the final inequality follows from
$e^{-(q-c)R_n}\le \red{C_{J_0,c}}R_n^{-1}$.
Combining \eqref{eq:rightAmplitudeRate},
\eqref{eq:rightTailTwoComparisons}, and
\eqref{eq:saturatingToAbsolute} proves the assertion for $j=0,1$.

For the second derivatives, put
\[
 \Delta_G:=\Gamma_n-\Gamma_\beta,
 \qquad
 \Delta_{\red{\eta}}:=\red{\eta^+_n}-\red{\eta}_\beta,
 \qquad
 u:=R_n+y.
\]
Subtracting the radial and planar equations gives
\bel{eq:rightTailSecondDerivativeDifference}
\begin{split}
 \Delta_G''
 & =
 2\Delta_G
 +\left(
 \mathcal Q_{\beta,G}(U_n)
 -\mathcal Q_{\beta,G}(U_\beta)
 \right)
 -\frac{\Gamma_n'}{u}
 +\frac{\Gamma_n}{u^2},\\
 \Delta_{\red{\eta}}''
 & =
 2\beta\Delta_{\red{\eta}}
 +\left(
 \mathcal Q_{\beta,\red{\eta}}(U_n)
 -\mathcal Q_{\beta,\red{\eta}}(U_\beta)
 \right)
 -\frac{\red{(\eta^+_n)'}}{u}.
\end{split}
\ee
\red{Here, $\mathcal{Q}_{\beta,\eta}:=Q_{\beta,H}$, where the latter is defined in Section \ref{sec:exterior}.} The polynomial map $\mathcal Q_\beta$ is uniformly Lipschitz on the
bounded set containing the two tails, so its difference is bounded by
\[
 C_{J_0}(|\Delta_G|+|\Delta_{\red{\eta}}|).
\]
Moreover, $u^{-1}\le R_n^{-1}$, and
Lemma~\ref{lem:exteriorEntry} bounds
$\Gamma_n,\Gamma_n'$, and $\red{(\eta^+_n)'}$ exponentially.  Inserting the
already established estimates for $j=0,1$ into
\eqref{eq:rightTailSecondDerivativeDifference} gives
\[
 |\Delta_G''(y)|+|\Delta_{\red{\eta}}''(y)|
 \le C_{K,\red{J_0}}R_n^{-1}e^{-cy}.
\]
This completes the proof.
\end{proof}

\section{Refined energy and radius asymptotics}
\label{sec:refinedEnergy}

The   argument in Section~\ref{sec:preliminaryLocalization}
established \eqref{eq:SLhyp} but retained only an error which is small
relative to $R_n$.  Theorem~\ref{thm:boundaryconvergence} and the two tail
estimates in Section~\ref{sec:leftMatching} now permit a second pass with a
splitting point of size $\sqrt{\log R_n}$.  The core and boundary errors are
then bounded, and the terms depending on the artificial splitting point
cancel explicitly.  This yields both the reduced energy formula and the
reduced first-variation identity.  The latter determines the constant term
in the radius asymptotic.

\subsection{Boundary and exterior contributions with bounded error}

\blue{Using the crucial expansions~\eqref{eq:SLhyp}, the quantitative boundary analysis in
Section~\ref{sec:quantitativeBoundary} and the left/right matching estimates in
Section~\ref{sec:leftMatching} provide uniform control of the boundary and exterior
contributions when we split at $y=-M$ with $M\lesssim\sqrt{\log R_n}$.  The next lemma
packages these bounds in the precise form needed for the refined energy expansion.}

\begin{Lemma}\label{lem:boundaryExteriorEnergy}
Let $\beta\in J_0\Subset(0,4)$, with $J_0$ a fixed compact interval, and fix some finite $C_{\log}>0$.  For $d=h,j$ and \red{$n$ sufficiently large, uniformly for}
$1\le M\le C_{\log}\sqrt{\log R_n}$, define
\bel{eq:refinedBoundaryRemainder}
 \mathfrak R_{n,M}^d
 :=\int_{-M}^{\infty}(R_n+y)
       (d_n(y)-d_\beta(y))\,dy .
\ee
Then
\[
        |\mathfrak R_{n,M}^h|
        +|\mathfrak R_{n,M}^j|\le C_{\beta,C_{\log}}.
\]
Consequently,
\[
\begin{split}
 \int_{-M}^{\infty}(R_n+y)h_n(y)\,dy
 &=R_n\int_{-M}^{\infty}h_\beta(y)\,dy
   +\int_{-M}^{\infty}y h_\beta(y)\,dy+O(1),\\
 \int_{-M}^{\infty}(R_n+y)j_n(y)\,dy
 &=R_n\int_{-M}^{\infty}j_\beta(y)\,dy
   +\int_{-M}^{\infty}y j_\beta(y)\,dy+O(1),
\end{split}
\]
uniformly in the stated range of $M$, where $h_n,j_n$ are the densities defined in \eqref{eq:hnjn} and $h_\beta,j_\beta$ denote the corresponding expressions with $(\Phi_n,\Gamma_n)$ replaced by $(\Phi_\beta,\Gamma_\beta)$.
\end{Lemma}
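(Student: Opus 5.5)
We split the integral defining $\mathfrak R_{n,M}^d$ into three pieces, $[-M,-K]$, $[-K,K]$ and $[K,\infty)$, where $K\ge\max\{K_0,K_1\}$ is fixed large enough that Lemmas~\ref{lem:leftQuantTailPropagation} and~\ref{lem:rightQuantTailPropagation} apply. When $M<K$ the first piece is absent. In that case the middle interval is $[-M,K]\subset[-K,K]$, and it is handled in the same way as the full middle piece. On each piece the difference $d_n-d_\beta$ is a polynomial in $(\Phi_n,\Phi_n',\Gamma_n)$ and in the CHMO quantities. We expand it as a sum of products, each containing one difference factor and bounded factors. The aim is to show that $(R_n+y)|d_n-d_\beta|$ is integrable on each piece with a bound uniform in $n$ and $M$. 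Since $R_n+y\le 2R_n$ on $[-M,K]$, it suffices to show that $|d_n-d_\beta|$ is $R_n^{-1}$ times an integrable majorant there, and that on $[K,\infty)$ it is $R_n^{-1}e^{-cy}$.

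On the middle piece $[-K,K]$ we use Theorem~\ref{thm:boundaryconvergence}. The $H^2$ bound embeds into $C^1$, so $|d_n-d_\beta|\le C_K R_n^{-1}$ pointwise, because the profiles are uniformly bounded in $C^1([-K,K])$. This contributes $O(1)$.

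On the right piece, $h_n-h_\beta$ and $j_n-j_\beta$ are combinations of $\Phi_n'-\Phi_\beta'$, $(1-\Phi_n^2)^2-(1-\Phi_\beta^2)^2$ and $\Gamma_n^2\Phi_n^2-\Gamma_\beta^2\Phi_\beta^2$. Lemma~\ref{lem:rightQuantTailPropagation} bounds each difference factor by $R_n^{-1}e^{-cy}$, and the remaining factors are uniformly bounded. The weight $R_n+y$ then yields $\int_K^\infty (R_n+y)R_n^{-1}e^{-cy}\,dy=O(1)$.

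The left piece $-M\le y\le -K$ is the main point, since there $\Gamma$ grows linearly and the polynomial weights must be controlled. For $h$ we write $h_n-h_\beta=(\Phi_n'-\Phi_\beta')(\Phi_n'+\Phi_\beta')+\tfrac{\beta}{2}[(1-\Phi_n^2)^2-(1-\Phi_\beta^2)^2]$. By \eqref{eq:leftQuantitativeScalarTail}, each difference factor is at most $CR_n^{-1}(1+\eta)^3e^{-c\eta^2}$ with $\eta=-y$, so its $R_n$-weighted integral is $O(1)$. For $j$ the new term is $\Gamma_n^2\Phi_n^2-\Gamma_\beta^2\Phi_\beta^2$. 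We split it as $(\Gamma_n^2-\Gamma_\beta^2)\Phi_n^2+\Gamma_\beta^2(\Phi_n^2-\Phi_\beta^2)$. The gauge difference is controlled only polynomially by \eqref{eq:leftQuantitativeGaugeAbsolute}, namely $|\Gamma_n-\Gamma_\beta|\le CR_n^{-1}(1+\eta)^2$, and $\Gamma_n+\Gamma_\beta=O(1+\eta)$. The Gaussian is supplied by $\Phi_n^2\le Ce^{-c\eta^2}$, which follows from \eqref{eq:movingclear} together with the scalar tail estimate. The second summand is at most $C(1+\eta)^2\cdot R_n^{-1}(1+\eta)^2e^{-c\eta^2}$. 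Both pieces are therefore $R_n^{-1}$ times a polynomial times a Gaussian, integrable uniformly in $M\le C_{\log}\sqrt{\log R_n}$. We have to check that the constants in Lemma~\ref{lem:leftQuantTailPropagation} depend only on $(K,J_0,C_{\log})$; this is true as stated, so the bound $C_{\beta,C_{\log}}$ follows.

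The consequence is then immediate. We write $\int_{-M}^\infty(R_n+y)d_n\,dy=R_n\int_{-M}^\infty d_\beta\,dy+\int_{-M}^\infty y\,d_\beta\,dy+\mathfrak R^d_{n,M}$. The middle integral is finite and uniformly bounded in $M$, because $y\,(d_\beta-\tfrac\beta2\mathbf 1_{(-\infty,0)})\in L^1$ by Lemma~\ref{lem:exactReducedDensities}, while the constant part contributes the explicit $-\tfrac\beta4M^2$. We keep that integral as written, so no further estimate is needed, and the $O(1)$ remainder is $\mathfrak R^d_{n,M}$.
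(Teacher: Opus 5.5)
Your proposal is correct and follows the paper's proof essentially step for step. It uses the same split at $y=\pm K$, with $[-M,K]\subset[-K,K]$ when $M<K$; Theorem~\ref{thm:boundaryconvergence} on the fixed window; and Lemma~\ref{lem:rightQuantTailPropagation} on $[K,\infty)$. On the left it uses the same decomposition of $j_n-j_\beta$, in which the Gaussian factor $\Phi_n^2$ compensates for the merely polynomial gauge bound \eqref{eq:leftQuantitativeGaugeAbsolute}, and the final expansion is just the exact identity with $\mathfrak R^d_{n,M}$ as the $O(1)$ remainder.
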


\begin{proof}
Fix $K$ sufficiently large for both
Lemmas~\ref{lem:leftQuantTailPropagation}
and~\ref{lem:rightQuantTailPropagation}.  If $1\le M<K$, split
\eqref{eq:refinedBoundaryRemainder} at $y=K$. \red{The fixed window-estimate on the interval $[-K,K]$ implies the one for the integral corresponding to $[-M,K]$, while Lemma~\ref{lem:rightQuantTailPropagation} controls the integral on $[K,\infty)$. These directly imply the desired bound.} We may therefore assume \red{that} $M\ge K$
and split
\eqref{eq:refinedBoundaryRemainder} into the intervals
$[-M,-K]$, $[-K,K]$, and $[K,\infty)$.

On $[-K,K]$, Theorem~\ref{thm:boundaryconvergence} gives
\[
 \|(\Phi_n,\Gamma_n)-(\Phi_\beta,\Gamma_\beta)\|_{H^2([-K,K])}
 \le \frac{C_{K,\beta}}{R_n}.
\]
Sobolev embedding and the polynomial definitions of
$h$ and $j$ therefore imply
\[
 \int_{-K}^{K}(R_n+|y|)
 (|h_n\red{(y)}-h_\beta\red{(y)}|+|j_n\red{(y)}-j_\beta\red{(y)}|)\,dy\le C_{K,\beta}.
\]
On the left interval write $\eta=-y$.  The following algebraic identities
make clear where the Gaussian factor enters:
\bel{eq:leftDensityDifferences}
\begin{split}
 h_n-h_\beta
 &=(\Phi_n'-\Phi_\beta')(\Phi_n'+\Phi_\beta')+\frac{\beta}{2}
 \left((1-\Phi_n^2)^2-(1-\Phi_\beta^2)^2\right),\\
 j_n-j_\beta
 &=(\Gamma_n-\Gamma_\beta)(\Gamma_n+\Gamma_\beta)\Phi_n^2+\Gamma_\beta^2(\Phi_n-\Phi_\beta)(\Phi_n+\Phi_\beta)\\
 &\quad+\frac{\beta}{2}
 \left((1-\Phi_n^2)^2-(1-\Phi_\beta^2)^2\right).
\end{split}
\ee
Here all functions are evaluated at $y=-\eta$.  Lemma
\ref{lem:leftQuantTailPropagation} gives
\[
\begin{split}
 |\Phi_n(-\eta)-\Phi_\beta(-\eta)|
 &+(1+\eta)^{-1}
 |\Phi_n'(-\eta)-\Phi_\beta'(-\eta)|\le \red{\frac{C(K,\beta,C_{\log})}{R_n}(1+\eta)^2 e^{-c\eta^2}}.
\end{split}
\]
The absolute magnetic estimate
\eqref{eq:leftQuantitativeGaugeAbsolute} gives
\[
 |\Gamma_n(-\eta)-\Gamma_\beta(-\eta)|
 \le \red{\frac{C(K,\beta,C_{\log})(1+\eta)^2}{R_n}}
.
\]
\red{We fix $\alpha\in(0,1)$. For $n$ sufficiently large, we have $R_n^\alpha>C_{\log}\sqrt{\log R_n}$. }By Lemma~\ref{lem:interfaceUniformTails} and
Lemma~\ref{lem:clearingout},
\[
\begin{split}
 |\Phi_n(-\eta)|+|\Phi_\beta(-\eta)|
 &\le \red{C(1+\eta)^2 e^{-c\eta^2}},\\
 |\Phi_n'(-\eta)|+|\Phi_\beta'(-\eta)|
 &\le \red{C(1+\eta)^{3}e^{-c\eta^2}},\\
 |\Gamma_n(-\eta)|+|\Gamma_\beta(-\eta)|
 &\le C(1+\eta).
\end{split}
\]
Substitution in \eqref{eq:leftDensityDifferences} yields,
\[
 |h_n(-\eta)-h_\beta(-\eta)|
 +|j_n(-\eta)-j_\beta(-\eta)|
 \le \red{\frac{C(K,\beta,C_{\log})}{R_n}(1+\eta)^7 e^{-c\eta^2}}
\]
Since $R_n-\eta\le R_n$ and
$M\le C_{\log}\sqrt{\log R_n}$, integration over $K\le\eta\le M$
gives a bound independent of $n$ and $M$.
On $[K,\infty)$, Lemma~\ref{lem:rightQuantTailPropagation} gives
\[
 |h_n(y)-h_\beta(y)|+|j_n(y)-j_\beta(y)|
 \le \frac{C_{K,\beta}}{R_n}e^{-cy}.
\]
Therefore
\[
 \int_K^\infty(R_n+y)
 (|h_n-h_\beta|+|j_n-j_\beta|)\,dy
 \le C_{K,\beta}.
\]
Adding the estimates on the three intervals proves the bound for
\eqref{eq:refinedBoundaryRemainder}.  The two displayed expansions follow
from the exact identity
\[
 \int_{-M}^{\infty}(R_n+y)d_n\,dy
 =R_n\int_{-M}^{\infty}d_\beta\,dy
 +\int_{-M}^{\infty}y d_\beta\,dy
 +\mathfrak R_{n,M}^d 
\]
as desired.
\end{proof}
\subsection{Cancellation of the splitting point}
\blue{In order to estimate the energy and virial identities we split the $y$--integrals at $-M$, so that $M$ separates the interface  from the bulk. \red{The next lemma shows that once we add the leading terms $\frac{\beta}{4}(R-M)^2$ and $\frac{\beta}{2}(R-M)^2$ to the integral expressions involving $h_\beta$ and $j_\beta$, respectively, the resulting expressions are independent of $M$, modulo exponentially small errors arising from the normal end. This allows us to recover the universal surface contribution $s_\beta R$ along with a remainder of size $O_J(1)$, which will in turn enable us to choose $M$ conveniently later on.}}

\begin{Lemma}\label{lem:cutoffcancel}
Let \red{$\displaystyle J\Subset(0,4)$, and}
\[
 \rho_h(y)=h_\beta(y)-\frac{\beta}{2}{\bf1}_{(-\infty,0)}(y),
 \qquad
 \rho_j(y)=j_\beta(y)-\frac{\beta}{2}{\bf1}_{(-\infty,0)}(y).
\]
There exists $c=c_J>0$ such that, for every integer $n\ge1$, every
$R>0$, and every $M\ge1$,
\[
\begin{split}
 &\frac{n^2}{R^2}
 +\frac{\beta}{4}(R-M)^2
 +R\int_{-M}^{\infty}h_\beta(y)\,dy
 +\int_{-M}^{\infty}y h_\beta(y)\,dy\\
 &\qquad
 =\frac{n^2}{R^2}+\frac{\beta}{4}R^2+s_\beta R
 +O_J(1)+O_J(Re^{-cM^2}),
\end{split}
\]
and
\[
\begin{split}
 &-\frac{2n^2}{R^2}
 +\frac{\beta}{2}(R-M)^2
 +2R\int_{-M}^{\infty}j_\beta(y)\,dy
 +2\int_{-M}^{\infty}y j_\beta(y)\,dy\\
 &\qquad
 =-\frac{2n^2}{R^2}+\frac{\beta}{2}R^2+s_\beta R
 +O_J(1)+O_J(Re^{-cM^2})\red{,}
\end{split}
\]
\red{uniformly in $\beta\in J$.}
The constants in the $O_J(1)$ terms are independent of $R$ and $M$.
\end{Lemma}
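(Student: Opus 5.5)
The plan is a direct algebraic computation based on \eqref{eq:hjSurface}. The $n^2/R^2$ terms appear on both sides, so I discard them immediately. For the $h$-identity, write $h_\beta=\rho_h+\frac{\beta}{2}{\bf1}_{(-\infty,0)}$. This gives
\[
 \int_{-M}^\infty h_\beta\,dy=\frac{\beta}{2}M+\int_{-M}^\infty\rho_h\,dy,
 \qquad
 \int_{-M}^\infty y h_\beta\,dy=-\frac{\beta}{4}M^2+\int_{-M}^\infty y\rho_h\,dy .
\]
Substituting these into the left side, the explicit terms combine as
\[
 \frac{\beta}{4}(R-M)^2+\frac{\beta}{2}RM-\frac{\beta}{4}M^2=\frac{\beta}{4}R^2 .
\]
So the artificial cutoff $M$ cancels exactly, and the only parts of the leading polynomial terms that survive are $\frac{\beta}{4}R^2$. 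What is left is
\[
 R\int_{-M}^\infty\rho_h\,dy+\int_{-M}^\infty y\rho_h\,dy .
\]

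Next I replace the truncated integrals by full-line integrals. By \eqref{eq:hjSurface}, $\int_{\RR}\rho_h=s_\beta$, hence $R\int_{-M}^\infty\rho_h=s_\beta R-R\int_{-\infty}^{-M}\rho_h$. For $y<0$,
\[
 \rho_h=\Phi_\beta'^2+\frac{\beta}{2}\bigl((1-\Phi_\beta^2)^2-1\bigr),
\]
and this is bounded by $C_J\la y\ra^{N}e^{-c_Jy^2}$ by the uniform left-tail estimate \eqref{eq:uniformInterfaceLeft} of Lemma~\ref{lem:interfaceUniformTails}. After shrinking the exponent, integrating over $(-\infty,-M]$ gives $O_J(e^{-cM^2})$, so the tail costs $O_J(Re^{-cM^2})$. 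For the moment term I bound $|\int_{-M}^\infty y\rho_h|\le\int_{\RR}|y\rho_h|$, which is $O_J(1)$. The integrand is controlled by the Gaussian bound on the left and by \eqref{eq:uniformInterfaceRight} on the right, where $\rho_h=h_\beta$ decays exponentially. Both bounds are uniform in $\beta\in J$, and neither depends on $R$ or $M$.

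The $j$-identity is handled the same way, with $2j_\beta=2\rho_j+\beta{\bf1}_{(-\infty,0)}$. The explicit terms combine as
\[
 \frac{\beta}{2}(R-M)^2+\beta RM-\frac{\beta}{2}M^2=\frac{\beta}{2}R^2,
\]
and $2\int_{\RR}\rho_j=s_\beta$ by the second identity in \eqref{eq:hjSurface}. The left tail again involves $\Gamma_\beta^2\Phi_\beta^2$ and $(1-\Phi_\beta^2)^2-1$, both Gaussian by \eqref{eq:uniformInterfaceLeft}, because $\Gamma_\beta$ grows only linearly.

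The only point needing care is uniformity of the constants in $\beta\in J$. This is exactly what Lemma~\ref{lem:interfaceUniformTails} supplies, with constants $c_J$, $C_{J,k}$ and a polynomial factor. I therefore do not expect any step to be a real obstacle.
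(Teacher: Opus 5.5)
Your proposal is correct and follows essentially the same route as the paper: subtract the step function $\frac{\beta}{2}{\bf1}_{(-\infty,0)}$, use \eqref{eq:hjSurface} for the full-line integrals, bound the left tails with the uniform Gaussian estimates of Lemma~\ref{lem:interfaceUniformTails}, and note that the $M$-dependent polynomial terms cancel exactly. The only cosmetic difference is that you bound the moment term directly by $\int_{\RR}|y\rho_h|\,dy$ instead of splitting off its tail, which is equivalent; uniform boundedness on $[-1,1]$ follows from the continuity statement in that lemma.
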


\begin{proof}
By Lemma~\ref{lem:exactReducedDensities},
\[
 \rho_h,\rho_j,y\rho_h,y\rho_j\in L^1(\mathbb R),
 \qquad
 \int_{\mathbb R}\rho_h=s_\beta,
 \qquad
 2\int_{\mathbb R}\rho_j=s_\beta.
\]
The Gaussian estimates in
Lemma~\ref{lem:interfaceUniformTails} for the left tail imply that, for some
$N<\infty$ and $c_0>0$,
\[
 |\rho_h(y)|+|\rho_j(y)|
 \le C(1+|y|)^N e^{-c_0y^2},
 \qquad y\le-1.
\]
Choose $0<c<c_0$.  The polynomial factor is absorbed by decreasing the
Gaussian exponent:
\[
\begin{split}
 \int_{-\infty}^{-M}
 (|\rho_h(y)|+|\rho_j(y)|)\,dy
 &\le C_Je^{-cM^2},\\
 \int_{-\infty}^{-M}
 |y|(|\rho_h(y)|+|\rho_j(y)|)\,dy
 &\le C_Je^{-cM^2}.
\end{split}
\]
It follows that
\[
\begin{split}
 \int_{-M}^{\infty}h_\beta(y)\,dy
 &=\frac{\beta}{2}M+s_\beta+O_J(e^{-cM^2}),\\
 2\int_{-M}^{\infty}j_\beta(y)\,dy
 &=\beta M+s_\beta+O_J(e^{-cM^2}),
\end{split}
\]
and
\[
\begin{split}
 \int_{-M}^{\infty}y h_\beta(y)\,dy
 &=-\frac{\beta}{4}M^2+O_J(1)+\red{O_J(e^{-cM^2})},\\
 2\int_{-M}^{\infty}y j_\beta(y)\,dy
 &=-\frac{\beta}{2}M^2+O_J(1)+\red{O_J(e^{-cM^2})}.
\end{split}
\]
The polynomial terms cancel exactly:
\[
 \frac{\beta}{4}(R-M)^2
 +R\frac{\beta}{2}M-\frac{\beta}{4}M^2
 =\frac{\beta}{4}R^2,
\]
\[
 \frac{\beta}{2}(R-M)^2
 +R\beta M-\frac{\beta}{2}M^2
 =\frac{\beta}{2}R^2.
\]
Since \red{$e^{-cM^2}$} is bounded for $M\ge1$, the two conclusions follow.
\end{proof}

\subsection{The reduced energy and first-variation identities}
\blue{Lemma~\ref{lem:cutoffcancel} allows us to split the identities \eqref{eq:exactReducedEnergy}--\eqref{eq:exactReducedVirialDensity} at $-M$ without incurring more than an $O_J(1)$ error, provided $M$ grows slowly with $R_n$.  Choosing a logarithmic cutoff makes the exponentially small remainders comparable to the  $O_J(1)$ errors coming from Theorem~\ref{thm:boundaryconvergence}. We can then extract the asymptotic expansion of $\calT_n$ and, \red{together with} the virial identity, \red{this allows us to prove }a \red{property of} the optimal radius $R_n$.}
\begin{Proposition}\label{prop:energyreduction}
For $\beta$ in the range of Theorem~\ref{thm:main},
\bel{eq:energyReduced}
 \calT_n=\frac{n^2}{R_n^2}
 +\frac{\beta}{4}R_n^2+s_\beta R_n+O(1).
\ee
In addition,
\bel{eq:reducedVirial}
 R_n\left(
 -\frac{2n^2}{R_n^3}+\frac{\beta}{2}R_n+s_\beta
 \right)=O(1).
\ee
\end{Proposition}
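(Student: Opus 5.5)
We take $M=M_n:=C_{\log}\sqrt{\log R_n}$ with $C_{\log}$ fixed and large, and split both exact identities \eqref{eq:exactReducedEnergy} and \eqref{eq:exactReducedVirialDensity} at $y=-M_n$. The core piece $\int_{-R_n}^{-M_n}$ is handled by Corollary~\ref{cor:bulkExpansion}, which is valid uniformly for $1\le M\le C_{\log}\sqrt{\log R_n}$. It gives $\frac{\beta}{4}(R_n-M_n)^2+O(1)+O(R_ne^{-cM_n^2})$ for both $h_n$ and $j_n$, with $c$ independent of $C_{\log}$. The boundary/exterior piece $\int_{-M_n}^\infty$ is handled by Lemma~\ref{lem:boundaryExteriorEnergy}. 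It replaces $(R_n+y)d_n$ by $R_n d_\beta+y d_\beta$ with a bounded error $\mathfrak R^d_{n,M_n}$, $d=h,j$.

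For the energy, we substitute both pieces into \eqref{eq:exactReducedEnergy}, which yields
\[
\calT_n=\frac{n^2}{R_n^2}+\frac{\beta}{4}(R_n-M_n)^2+R_n\int_{-M_n}^\infty h_\beta+\int_{-M_n}^\infty yh_\beta+O(1)+O(R_ne^{-cM_n^2}).
\]
Applying the first identity of Lemma~\ref{lem:cutoffcancel} with $R=R_n$, $M=M_n$ turns the right side into $\frac{n^2}{R_n^2}+\frac{\beta}{4}R_n^2+s_\beta R_n+O(1)+O(R_ne^{-cM_n^2})$. It remains to dispose of the Gaussian remainder. We have $R_ne^{-cM_n^2}=R_n^{1-cC_{\log}^2}\le1$ once $C_{\log}\ge c^{-1/2}$. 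This is admissible because the exponent $c$ in Corollary~\ref{cor:bulkExpansion} and in Lemma~\ref{lem:cutoffcancel} does not depend on $C_{\log}$; we take the minimum of the two exponents before choosing $C_{\log}$. This proves \eqref{eq:energyReduced}.

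For the virial identity, we multiply \eqref{eq:exactReducedVirialDensity} by $2$. This gives $0=-\frac{2n^2}{R_n^2}+2\int_{-R_n}^{-M_n}(R_n+y)j_n+2\int_{-M_n}^\infty(R_n+y)j_n$. We insert \eqref{eq:jcoreExpansion}, which doubles to $\frac{\beta}{2}(R_n-M_n)^2$, together with Lemma~\ref{lem:boundaryExteriorEnergy} for $d=j$. The second identity of Lemma~\ref{lem:cutoffcancel} then gives
\[
0=-\frac{2n^2}{R_n^2}+\frac{\beta}{2}R_n^2+s_\beta R_n+O(1),
\]
after the same choice of $C_{\log}$. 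This is exactly $R_nF_n(R_n)=O(1)$, i.e.\ \eqref{eq:reducedVirial}.

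The substantive input has already been supplied by the earlier lemmas, so the main point requiring care here is the bookkeeping of constants. One must check that the $O(1)$ constants in Corollary~\ref{cor:bulkExpansion} and Lemma~\ref{lem:boundaryExteriorEnergy} may depend on $C_{\log}$, whereas the Gaussian exponent $c$ must not, so that $C_{\log}$ can be fixed after $c$. One must also check that $M_n$ lies in the admissible ranges of all three results, namely $1\le M_n\le C_{\log}\sqrt{\log R_n}\le R_n^\alpha$ for large $n$; this holds since $R_n\to\infty$ by Proposition~\ref{prop:coarseloc}. Uniformity for $\beta$ in compact subintervals of $(0,4)$ follows since every cited estimate is uniform there.
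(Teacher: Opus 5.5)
Your proposal is correct and follows the paper's proof: split both exact identities at $y=-C_{\log}\sqrt{\log R_n}$, use Corollary~\ref{cor:bulkExpansion} for the core and Lemma~\ref{lem:boundaryExteriorEnergy} for the rest, cancel the cutoff via Lemma~\ref{lem:cutoffcancel}, and fix $C_{\log}$ with $cC_{\log}^2>1$ after choosing the Gaussian exponent $c$ independently of $C_{\log}$. Your bookkeeping of which constants may depend on $C_{\log}$ matches the paper's.
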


\begin{proof}
We invoke
 Theorem~\ref{thm:boundaryconvergence} and
Lemmas~\ref{lem:leftQuantTailPropagation}
and~\ref{lem:rightQuantTailPropagation}.
Choose $C_{\log}$ so large that
$cC_{\log}^2>1$, where $c$ is smaller than the Gaussian rates in Corollary~\ref{cor:bulkExpansion} and Lemma~\ref{lem:cutoffcancel}, and set
\[
        M_n^{\rm log}=C_{\log}\sqrt{\log R_n}.
\]
Then $R_ne^{-c(M_n^{\rm log})^2}=O(1)$.  Split
\eqref{eq:exactReducedEnergy} at $y=-M_n^{\rm log}$.  Corollary~
\ref{cor:bulkExpansion} gives
\[
 \int_{-R_n}^{-M_n^{\rm log}}(R_n+y)h_n(y)\,dy
 =\frac{\beta}{4}(R_n-M_n^{\rm log})^2+O(1),
\]
and Lemma~\ref{lem:boundaryExteriorEnergy} gives
\[
\begin{split}
 \int_{-M_n^{\rm log}}^{\infty}(R_n+y)h_n(y)\,dy
 &=R_n\int_{-M_n^{\rm log}}^{\infty}h_\beta(y)\,dy+\int_{-M_n^{\rm log}}^{\infty}y h_\beta(y)\,dy+O(1).
\end{split}
\]
The first identity in Lemma~\ref{lem:cutoffcancel}, with $R=R_n$,
now proves \eqref{eq:energyReduced}.

Apply the same decomposition to
\eqref{eq:exactReducedVirialDensity}.  Corollary~
\ref{cor:bulkExpansion} and Lemma~\ref{lem:boundaryExteriorEnergy} give
\[
\begin{split}
 0={}&-\frac{2n^2}{R_n^2}
 +\frac{\beta}{2}(R_n-M_n^{\rm log})^2\\
 &+2R_n\int_{-M_n^{\rm log}}^{\infty}j_\beta(y)\,dy
 +2\int_{-M_n^{\rm log}}^{\infty}y j_\beta(y)\,dy+O(1).
\end{split}
\]
The second identity in Lemma~\ref{lem:cutoffcancel} reduces this to
\[
 0=-\frac{2n^2}{R_n^2}
 +\frac{\beta}{2}R_n^2+s_\beta R_n+O(1),
\]
which is equivalent to \eqref{eq:reducedVirial}.
\end{proof}

\begin{Corollary}\label{cor:sharploc}
For $\beta$ in the range of Theorem~\ref{thm:main},
\[
 |R_n-\widehat R_n|\le \frac{C(\beta)}{R_n},
\]
where $\widehat R_n$ is the zero of $F_n$ defined in
\eqref{eq:reducedRadiusFunction}.
\end{Corollary}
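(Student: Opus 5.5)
The plan is to combine the reduced virial identity \eqref{eq:reducedVirial} from Proposition~\ref{prop:energyreduction} with the uniform monotonicity of $F_n$ from \eqref{eq:reducedRadiusFunction}. Dividing \eqref{eq:reducedVirial} by $R_n$ gives
\[
 F_n(R_n)=-\frac{2n^2}{R_n^3}+\frac{\beta}{2}R_n+s_\beta=O(R_n^{-1}).
\]
The implied constant is uniform in $n$, and on compact $J\Subset(0,4)$ also in $\beta$. This uniformity is inherited from Corollary~\ref{cor:bulkExpansion}, Lemma~\ref{lem:boundaryExteriorEnergy} and Lemma~\ref{lem:cutoffcancel}. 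The division is legitimate because $R_n\to\infty$ by Proposition~\ref{prop:coarseloc}.

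Next I will use $F_n(\widehat R_n)=0$ together with $F_n'(R)=6n^2R^{-4}+\beta/2\ge\beta/2$ for every $R>0$. By the mean value theorem there is $\xi_n$ between $R_n$ and $\widehat R_n$ with
\[
 |F_n(R_n)|=|F_n(R_n)-F_n(\widehat R_n)|=F_n'(\xi_n)|R_n-\widehat R_n|\ge\frac{\beta}{2}|R_n-\widehat R_n|.
\]
Combining the two displays gives $|R_n-\widehat R_n|\le \frac{2}{\beta}\,C R_n^{-1}$, which is the claim with $C(\beta)=2C/\beta$. The constant stays bounded for $\beta$ in a compact subinterval of $(0,4)$, since $\beta$ is bounded below there.

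The argument is essentially a one-line consequence, and there is no real obstacle at this step; all the difficulty is absorbed into \eqref{eq:reducedVirial}. The one point to check is that the $O(1)$ in Proposition~\ref{prop:energyreduction} does not secretly depend on $R_n$ through the choice of $C_{\log}$. It does not: $C_{\log}$ is fixed once so that $R_ne^{-c(M_n^{\rm log})^2}=O(1)$, and every other constant in Corollary~\ref{cor:bulkExpansion}, Lemma~\ref{lem:boundaryExteriorEnergy} and Lemma~\ref{lem:cutoffcancel} is then independent of $n$.
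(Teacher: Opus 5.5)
Your proposal is correct and is essentially the paper's own proof: divide \eqref{eq:reducedVirial} by $R_n$ to get $F_n(R_n)=O(R_n^{-1})$, then apply the mean value theorem using $F_n(\widehat R_n)=0$ and $F_n'\ge\beta/2$. Your remarks on the uniformity of the $O(1)$ constant are a reasonable addition. The appeal to $R_n\to\infty$ is unnecessary, since dividing by $R_n>0$ is always allowed.
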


\begin{proof}
Divide \eqref{eq:reducedVirial} by $R_n$.  Then
$F_n(R_n)=O(R_n^{-1})$.  Since $F_n(\widehat R_n)=0$ and
$F_n'\ge\beta/2$, the mean-value theorem gives
\[
        |R_n-\widehat R_n|
        \le \frac{2}{\beta}|F_n(R_n)|
        \le \frac{C(\beta)}{R_n}\red{,}
\]
\red{as claimed.}
\end{proof}
\subsection{Proofs of the two main theorems}

\begin{proof}[Proof of Theorem~\ref{thm:main}]
The radius is determined by the reduced first-variation identity, not by
the energy expansion alone.  By \eqref{eq:reducedVirial},
\[
        F_n(R_n)=O(R_n^{-1}).
\]

Write $\mu_n=R_n-R_n^{(0)}$.  \red{Lemma~\ref{lem:preliminaryLocalization}} gives
$\mu_n=O_\beta(1)$.  Since
\[
 F_n(R_n^{(0)})=s_\beta,\qquad
 F_n'(R_n^{(0)})=2\beta,\qquad
 \sup_{|R-R_n^{(0)}|\le C(\beta)}|F_n''(R)|
 \le \frac{C(\beta)}{R_n^{(0)}},
\]
Taylor's theorem gives
\[
 F_n(R_n)
 =s_\beta+2\beta\mu_n
 +O_\beta\left(\frac{\mu_n^2}{R_n^{(0)}}\right).
\]
Therefore
\bel{eq:mu}
        \mu_n=-\frac{s_\beta}{2\beta}
        +O_\beta((R_n^{(0)})^{-1}),
\ee
which is \eqref{eq:Rasymp}.

The function
\[
        R\longmapsto \frac{n^2}{R^2}+\frac{\beta}{4}R^2
\]
has vanishing derivative at $R=R_n^{(0)}$ and a uniformly bounded second
derivative on bounded neighborhoods of that point.  Hence\red{, by Taylor's theorem,}
\[
 \frac{n^2}{R_n^2}+\frac{\beta}{4}R_n^2
 =\sqrt\beta\,n+O_\beta(1).
\]
Also,
\[
 s_\beta R_n
 =\sqrt2\,s_\beta\beta^{-1/4}n^{1/2}+O_\beta(1).
\]
Substitution in \eqref{eq:energyReduced} proves \eqref{eq:Tasymp}.  At
$\beta=1$, the calculation following \eqref{eq:BPSsurface} gives
$s_1=0$, while the Bogomolny completion \eqref{eq:BPScompletion} gives
$\calT_n=n$ for every $n$.
For $\beta$ in a fixed compact interval $J\Subset(0,4)$,
the surface energy is bounded by Lemma~\ref{lem:interfaceUniformTails},
and \eqref{eq:firstpassSL} gives $\mu_n=O_J(1)$.
\green{It follows that t}he preceding Taylor bounds are uniform on $J$.
\green{Moreover, t}he endpoint, fixed-interval, and density estimates used in
Proposition~\ref{prop:energyreduction} are uniform there as well.
This proves the asserted uniformity of both remainders, for every
choice of radial minimizers.
\end{proof}

\medskip

\begin{proof}[Proof of Theorem~\ref{thm:asymptoticShape}]
The preliminary localization lemma verifies \eqref{eq:SLhyp}.
Theorem~\ref{thm:boundaryconvergence} therefore gives
\eqref{eq:shapeH2}.
We prove the level-set statement uniformly in
$\theta\in[\varepsilon,1-\varepsilon]$.  Set
\[
 I_{\varepsilon,\beta}
 =[y_{\beta,\varepsilon/2},y_{\beta,1-\varepsilon/2}],
 \qquad
 m_{\varepsilon,\beta}
 =\min_{y\in I_{\varepsilon,\beta}}\Phi_\beta'(y)>0.
\]
The positivity follows from \eqref{eq:interface-monotone}.  By
\eqref{eq:shapeH2} and the embedding
$H^2(I_{\varepsilon,\beta})\hookrightarrow C^1(I_{\varepsilon,\beta})$,
\[
 \|\Phi_n-\Phi_\beta\|_{C^1(I_{\varepsilon,\beta})}
 \le \frac{C_{\varepsilon,\beta}}{R_n}.
\]
For all large $n$, the values of $\Phi_n$ at the two endpoints of
$I_{\varepsilon,\beta}$ lie below $\varepsilon$ and above
$1-\varepsilon$, respectively.  Monotonicity therefore places
$x_{n,\theta}:=u_{n,\theta}-R_n$ in
$I_{\varepsilon,\beta}$ for every
$\theta\in[\varepsilon,1-\varepsilon]$.  Since
$\Phi_n(x_{n,\theta})=\Phi_\beta(y_{\beta,\theta})=\theta$,
\[
 |\Phi_\beta(x_{n,\theta})-\Phi_\beta(y_{\beta,\theta})|
 \le \frac{C_{\varepsilon,\beta}}{R_n}.
\]
The mean-value theorem and the definition of
$m_{\varepsilon,\beta}$ give
\[
 |x_{n,\theta}-y_{\beta,\theta}|
 \le \frac{C_{\varepsilon,\beta}}{m_{\varepsilon,\beta}R_n},
\]
uniformly in $\theta$.  This is \eqref{eq:shapeLevels};
subtracting the two level-radius formulae gives \eqref{eq:shapeWidth}.

It remains to verify the left overlap.  Fix
$0<\alpha<\widehat\alpha<1/3$.  Proposition~\ref{prop:coreWKB},
the error estimate \eqref{eq:coreWKBerr}, and
Corollary~\ref{cor:leftCoefficientMatching} give the finite Weber
asymptotic with coefficient converging to $C_\beta$.

Finally, \eqref{eq:Rasymp} gives
$b_n-\sqrt\beta=O_\beta(R_n^{-1})$, and therefore
\[
 |b_n-\sqrt\beta|\,y^2
 \le C(\beta) R_n^{2\alpha-1}\longrightarrow0,
 \qquad
 \left|\frac{\beta}{b_n}-\sqrt\beta\right|\log(2+|y|)
 \longrightarrow0
\]
uniformly on that interval.  Thus the Gaussian exponent, the algebraic
power, and the coefficient in \eqref{eq:coreWKB} converge uniformly to
the corresponding quantities in \eqref{eq:shapeLeftOverlap}.  Taking
first $n\to\infty$ with $L$ fixed and then $L\to\infty$ proves
\eqref{eq:shapeLeftOverlap}.

The  estimate \eqref{eq:shapeRightComparison} for the right tail is
Lemma~\ref{lem:rightQuantTailPropagation}.  The first formula in
\eqref{eq:shapeInterfaceTails} is \eqref{eq:lefttail}; the two formulas at
$+\infty$ follow from Proposition~\ref{prop:righttail} and
\eqref{eq:rightSub}.
\end{proof}

\bigskip 

\bigskip

\section*{Notation}

\begin{longtable}{@{}>{$}l<{$}p{0.65\textwidth}@{}}
\toprule
\textbf{Symbol} & \textbf{Meaning}\\
\midrule
\endhead
J,\ J_0 & Fixed compact subintervals of $(0,4)$ on which the parameter estimates are uniform.\\
\beta & Coupling parameter in the (quartic) Abelian Higgs or Ginzburg--Landau model.\\
n & Vortex number (magnetic flux and topological degree).\\
\calT_n & (Dimensionless) string tension / energy functional; see \eqref{eq:energy}.\\
R_n & Magnetic radius; see \eqref{eq:frob} and Proposition~\ref{prop:coarseloc}.\\
R_n^{(0)} & Leading-order radius $R_n^{(0)}=\sqrt2\,\beta^{-1/4}n^{1/2}$; see \eqref{eq:coarseloc}.\\
F_n(R),\ \widehat{R}_n & Reduced radius function and its unique positive zero; see \eqref{eq:reducedRadiusFunction}.\\ 
u=evr,\ y=u-R_n,\ \eta=-y & Dimensionless radial variable and the two centered interface coordinates; see \eqref{eq:energy}, \eqref{eq:boundaryvars} and \eqref{eq:PhiGamma_def}.\\
b_n=\frac{2n}{R_n^2} & Effective magnetic slope associated with the magnetic radius $R_n$, which has the property that $b_n\rightarrow\sqrt{\beta}$; see \eqref{eq:bncoarse}.\\
m:=\sqrt{2},\mu_\beta=\sqrt{2\beta} & Gauge and Higgs decay rates on the superconducting side; see \eqref{eq:mMuBetaDef}\\
\phi_n(u),\ a_n(u) & Radial vortex profiles (Higgs and gauge); see \eqref{eq:vortex}--\eqref{eq:bc}.\\
\Phi_n(y),\ \Gamma_n(y) & Shifted profiles in $y=u-R_n$; see Definition~\ref{def:PhiGamma} and \eqref{eq:boundaryvars}.\\
\Phi_n^-(\eta),\ \Gamma_n^-(\eta) & Shifted profiles in $\eta=R_n-u$; see \eqref{eq:PhiGamma_def}.\\
\eta_n(u),\ \eta_n^+(y),\ \eta_\beta(y) & Radial and centered Higgs defects; see the beginning of section \ref{sec:exterior}, subsection \ref{subsec:rightTail}, and \eqref{eq:etaInterface}.\\
\Phi_\beta,\ \Gamma_\beta & CHMO interface orbit; see \eqref{eq:interface}--\eqref{eq:interfaceright} and \eqref{eq:CHMOscaling}.\\
\Psi,\ A & CHMO variables for the interface orbit; see \eqref{eq:CHMOeq}--\eqref{eq:CHMObc}.\\
e_\beta(y) & Interface energy density along $(\Phi_\beta,\Gamma_\beta)$; see \eqref{eq:edens}.\\
s_\beta & Renormalized surface energy; see \eqref{eq:sbeta}.\\
\sigma_\beta & Rescaled surface energy $\sigma_\beta=\sqrt2\,s_\beta$; see \eqref{eq:sigma}.\\
W[f,g]=fg'-f'g & Wronskian convention used in the Weber and modified-Bessel constructions.\\
\alpha,\widehat{\alpha} & Auxiliary overlap exponents, with restrictions stated locally in the results in which they are needed. In the argument from the proof of Theorem \ref{thm:asymptoticShape}, they are chosen so that $0<\alpha<\widehat{\alpha}<\frac{1}{3}$; see \ref{subsec:leftMatching} and \eqref{eq:twoOverlapExponents}.\\
d_{n,L},\ g_{n,L} & Exact recessive and dominant finite Weber branches, normalized by $W[d_{n,L},g_{n,L}]=1$; see Lemma \ref{lem:finiteWeberFrame}.\\
d_{\beta,L},\ g_{\beta,L} & Corresponding exact recessive and dominant branches for the limiting interface; see Definition \ref{def:leftWeberFrame}.\\
C_\beta & Coefficient in the $y\to-\infty$ asymptotics of $(\Phi_\beta,\Gamma_\beta)$; see \eqref{eq:lefttail}.\\
\widetilde{C}_{n,L} & Finite Weber coefficient in the context of the asymptotics in the overlap region; see \eqref{eq:coreWKBcoefficient} , with $\displaystyle \widetilde{C}_{n,L}\rightarrow C_\beta$ in the sense of \eqref{eq:Cmatch}.\\
G_\beta,\ H_\beta & Coefficients in the $y\to+\infty$ asymptotics of $\Gamma_\beta$ and $\eta_\beta$; see \eqref{eq:rightSub}.\\
\mathfrak g_\beta,\ \mathfrak h_\beta & Auxiliary quantities used to prove $G_\beta,H_\beta>0$ (see the proof following \eqref{eq:rightSub}).\\
D_{\beta,L}^{\rm WKB},\ G_{\beta,L}^{\rm WKB} & Recessive and dominant WKB Weber comparison branches for the limiting interface scalar equation; see Definition \ref{def:leftWeberFrame}.\\
\mathfrak B_n(u) & Magnetic field $\mathfrak B_n(u)=-(n/u)a_n'(u)$; see \eqref{eq:magneticField} and \eqref{eq:magneticCouplingIdentity}.\\
\calM_n,\ \calV_n & Magnetic and potential terms in the Pohozaev or virial identity; see Lemma~\ref{lem:scalingVirial}.\\
h_n,\ j_n,\ h_\beta,\ j_\beta & Radial reduced densities and their limiting interface counterparts; see \eqref{eq:hnjn}.\\
A_n,\ B_n & Exterior modified-Bessel coefficients; see \eqref{eq:aVolterra}--\eqref{eq:etaVolterra}.\\
\widehat A_n,\ \widehat B_n & Renormalized coefficients corresponding to $A_n,B_n$; see \eqref{eq:Ahat}--\eqref{eq:Bhat}.\\
\operatorname{Tr}_K & Trace operator at $y=K$ (Cauchy data); see \eqref{eq:rightTailTrace}.\\
\Theta_{\beta,K},\ \Theta_{n,K} & Trace maps $\Theta_{\beta,K}(z)=\operatorname{Tr}_K U_{\beta,z}$ and $\Theta_{n,K}(z)=\operatorname{Tr}_K U_{n,z}$; see \eqref{eq:rightTailTraceMaps}.\\
\Lambda_n:=C_{\log}\sqrt{\log R_n},\ \Lambda_n^+:=2\Lambda_n & Logarithmic matching scales used in the left\red{/normal} endpoint construction; see the beginning of subsection \ref{subsec:CauchyData}\\
\calA_\beta & Linearized interface operator at $(\Phi_\beta,\Gamma_\beta)$; see \eqref{eq:Alinearization}.\\
E_\beta^+(Y) & Two-dimensional stable plane of Cauchy data at $y=Y$ (solutions decaying as $y\to+\infty$); see Lemma~\ref{lem:rightstable}.\\
E_\beta^-(-Y) & Corresponding line of Cauchy-data at $y=-Y$ generated by solutions satisfying \eqref{eq:normalized}; see Lemma \ref{lem:leftline}.\\
Z_{H,\beta},\ Z_{G,\beta} & Distinguished basis of decaying linearized modes spanning $E_\beta^+(Y)$; see Lemma~\ref{lem:rightstable}.\\
P_n(\eta):=\Gamma^0_n(\eta)-\sqrt{\beta}\eta & Explicit finite-radius correction to the affine limiting gauge profile; see \eqref{eq:exactAffineDefect}.\\
\bottomrule
\end{longtable}

\end{document}